\DeclareMathOperator{\ch}{char}
\DeclareMathOperator{\gr}{gr}
\DeclareMathOperator{\ad}{ad}
\DeclareMathOperator{\Der}{Der}
\DeclareMathOperator{\Lie}{Lie}
\DeclareMathOperator{\Alg}{Alg}
\DeclareMathOperator{\Jord}{Jord}
\DeclareMathOperator{\Poisson}{Poisson}
\DeclareMathOperator{\End}{End}
\DeclareMathOperator{\wt}{wt}    
\DeclareMathOperator{\swt}{swt}  
\DeclareMathOperator{\jwt}{jwt}  
\DeclareMathOperator{\sswt}{\overline {swt}}  
\DeclareMathOperator{\Wt}{Wt}    
\DeclareMathOperator{\WtR}{WtR}    
\DeclareMathOperator{\Gr}{Gr}     
\DeclareMathOperator{\mult}{mult}     
\DeclareMathOperator{\wWtR}{WtR^\sharp}    
\DeclareMathOperator{\wGr}{Gr^\sharp}     
\DeclareMathOperator{\wdeg}{deg^\sharp}     
\renewcommand {\limsup}{\operatorname* {\overline{lim}}}
\renewcommand {\liminf}{\operatorname* {\underline{lim}}}
\renewcommand{\Re}{\mathop{\mathrm {Re}}}
\renewcommand{\Im}{\mathop{\mathrm {Im}}}
\DeclareMathOperator{\GKdim}{GKdim}
\DeclareMathOperator{\LGKdim}{\underline{GKdim}}
\newcommand\dd{\partial}
\renewcommand{\a}{\alpha}
\renewcommand{\b}{\beta}
\newcommand{\Z}{\mathbb Z}            
\newcommand{\R}{\mathbb R}            
\newcommand{\Q}{\mathbb Q}            
\newcommand{\N}{\mathbb N}            
\newcommand{\F}{\mathbb F}            
\newcommand{\C}{\mathbb C}            
\newcommand\NO{\mathbb N_0}           
\renewcommand{\H}{\mathcal H}         
\newcommand{\LL}{\mathbf L}        
\newcommand{\QQ}{\mathbf Q}        
\newcommand{\RR}{\mathbf R}        
\newcommand{\PP}{\mathbf P}        
\newcommand{\JJ}{\mathbf J}        
\newcommand{\KK}{\mathbf K}        
\renewcommand{\AA}{\mathbf A}      
\newcommand{\HH}{\mathbf H}        
\newcommand{\uu}{\mathbf u}         
\newcommand{\WW}{\mathbf W}         
\newcommand{\Kan}{\mathcal Kan}      
\newcommand{\Jor}{\mathcal Jor}      
\newtheorem{Theorem}{Theorem}[section]
\newtheorem{Corollary}[Theorem]{Corollary}
\newtheorem{Lemma}[Theorem]{Lemma}
\theoremstyle{remark}
\newtheorem{Remark}{Remark}
\theoremstyle{Example}
\theoremstyle{Conjecture}
\renewcommand{\theenumi}{\roman{enumi}}   
\begin{document}
\title{Fractal nil graded Lie, associative, Poisson, and Jordan superalgebras}
\author{Victor Petrogradsky}
\address{Department of Mathematics, University of Brasilia, 70910-900 Brasilia DF, Brazil}
\email{petrogradsky@rambler.ru}
\thanks{The first author was partially supported by grants CNPq~309542/2016-2, FAPESP~2016/18068-9}
\author{I.P.~Shestakov}
\address{Instituto de Mathem\'atica e Estat\'istica,
Universidade de Sa\~o Paulo, Caixa postal 66281, 05315-970, Sa\~o Paulo, Brazil}
\email{shestak@ime.usp.br}
\thanks{The second author was partially supported by grants FAPESP 2014/09310-5, CNPq 2014/09310-5
}
\subjclass[2000]{
16P90, 
16N40, 
16S32, 
17B50, 
17B65, 
17B66, 
17B70, 
17A70, 
17B63, 
17C10, 
17C50, 
17C70  
}
\keywords{restricted Lie algebras, p-groups, growth, self-similar algebras, nil-algebras, graded algebras,
Lie superalgebra, Lie algebras of differential operators, Poisson superalgebras, Jordan superalgebras, algebraic quantization}

\begin{abstract}
The Grigorchuk and Gupta-Sidki groups play fundamental role in modern group theory.
They are natural examples of self-similar finitely generated periodic groups.
The first author constructed their analogue in case of restricted Lie algebras of characteristic 2~\cite{Pe06},
Shestakov and Zelmanov extended this construction to an arbitrary positive characteristic~\cite{ShZe08}.
Thus, we have examples of finitely generated restricted Lie algebras with a nil $p$-mapping.
In characteristic  zero, similar examples of Lie and Jordan algebras do not exist
by results of Martinez and Zelmanov~\cite{MaZe99} and~\cite{Zelmanov}.
The first author constructed analogues of the Grigorchuk and Gupta-Sidki groups
in the world of Lie superalgebras of arbitrary characteristic,
the virtue of that construction is that Lie superalgebras have clear monomial bases~\cite{Pe16},
they have slow polynomial growth.
As an analogue of periodicity, $\mathbb{Z}_2$-homogeneous elements  are $\ad$-nilpotent.
A recent example of a Lie superalgebra is of linear growth, of finite width 4,
just infinite but not hereditary just infinite~\cite{PeOtto}.
By that examples, an extension of the result of Martinez and Zelmanov~\cite{MaZe99}
for Lie superalgebras of characteristic zero is not valid.
\par

Now, we construct a  just infinite fractal 3-generated Lie superalgebra ${\mathbf Q}$ over arbitrary field,
which gives rise to an associative hull ${\mathbf A}$, a Poisson superalgebra ${\mathbf P}$, and
two Jordan superalgebras ${\mathbf J}$ and ${\mathbf K}$, the latter being a factor algebra of ${\mathbf J}$.
In case $\mathrm{char}\, K\ne 2$, ${\mathbf A}$ has a natural filtration, which associated graded algebra has
a structure of a Poisson superalgebra such that $\mathrm{gr}\, {\mathbf A}\cong{\mathbf P}$,
also ${\mathbf P}$ admits an algebraic quantization using a deformed superalgebra ${\mathbf A}^{(t)}$.
The Lie superalgebra ${\mathbf Q}$ is finely $\mathbb{Z}^3$-graded by multidegree in the generators,
${\mathbf A}$, ${\mathbf P}$ are also $\mathbb{Z}^3$-graded, while ${\mathbf J}$ and ${\mathbf K}$ are
$\mathbb{Z}^4$-graded by multidegree in four generators.
By virtue of our construction, these five superalgebras have clear monomial bases and slow polynomial growth.
We describe multihomogeneous coordinates of bases of ${\mathbf Q}$, ${\mathbf A}$, ${\mathbf P}$ in space as
bounded by "almost cubic paraboloids".
We determine a similar hypersurface in $\mathbb R^4$ that bounds monomials of ${\mathbf J}$ and ${\mathbf K}$.
Constructions of the paper can be applied to Lie (super)algebras studied before to
obtain Poisson and Jordan superalgebras as well.
\par

The algebras ${\mathbf Q}$, ${\mathbf A}$, and the algebras without unit ${\mathbf P}^o$, ${\mathbf J}^o$, ${\mathbf K}^o$
are direct sums of two locally nilpotent subalgebras and
there are continuum such decompositions.
Also, ${\mathbf Q}={\mathbf Q}_{\bar 0}\oplus {\mathbf Q}_{\bar 1}$ is a nil graded Lie superalgebra,
so, ${\mathbf Q}$ again shows that an extension of the result of Martinez and Zelmanov
for Lie superalgebras of characteristic zero is not valid.
In case $\mathrm{char}\, K=2$, ${\mathbf Q}$ has a structure of a restricted Lie algebra with a nil $p$-mapping.
The Jordan superalgebra ${\mathbf K}$ is nil finely $\mathbb{Z}^4$-graded,
in contrast with non-existence of such examples (roughly speaking, analogues of the Grigorchuk group)
of Jordan algebras in characteristic zero~\cite{Zelmanov}.
Also, ${\mathbf K}$ is of slow polynomial growth, just infinite, but not hereditary just infinite.

We call the superalgebras  ${\mathbf Q}$, ${\mathbf A}$, ${\mathbf P}$, ${\mathbf J}$, ${\mathbf K}$
fractal because they contain infinitely many copies of themselves.
\end{abstract}
\maketitle

In Section~1 we survey known results,
Section~2 supplies basic definitions.
In Section~3 we briefly describe constructions
and formulate main properties of our five  main objects:
a Lie superalgebra $\QQ$, its associative hull $\AA$,
a related Poisson superalgebra $\PP$, and two Jordan superalgebras $\JJ$, $\KK$.
The present research is a continuation of a series of papers on fractal (self-similar) (restricted) Lie (super)algebras,
the main feature is that we extend the results to the classes of Poisson and Jordan superalgebras.

\section{Introduction: Self-similar groups and algebras}

\subsection{Golod-Shafarevich algebras and groups}
The General Burnside Problem puts the question whether a finitely generated periodic group is finite.
The first negative answer was given by Golod and Shafarevich,
who proved that, for each prime $p$, there exists a finitely generated infinite $p$-group~\cite{Golod64}.
The construction is based on a famous construction of a family of finitely generated
infinite dimensional associative nil-algebras~\cite{Golod64}.
This construction also yields examples of infinite dimensional finitely generated Lie algebras $L$
such that $(\ad x)^{n(x,y)}(y)=0$, for all $x,y\in L$, the field being arbitrary~\cite{Golod69}.
The field being of positive characteristic $p$,
one obtains an infinite dimensional finitely generated restricted Lie algebra $L$ such that the $p$-mapping is nil, namely,
$x^{[p^{n(x)}]}=0$, for all $x\in L$.
This gives a negative answer to a question of Jacobson whether
a finitely generated restricted Lie algebra $L$ is finite dimensional provided that
each element $x\in L$ is algebraic, i.e. satisfies some $p$-polynomial $f_{p,x}(x)=0$
(\cite[Ch.~5, ex.~17]{JacLie}). 
It is known that the construction of Golod yields associative nil-algebras of exponential growth.
Using specially chosen relations, Lenagan and Smoktunowicz
constructed associative nil-algebras of polynomial growth~\cite{LenSmo07}.
On further developments concerning Golod-Shafarevich algebras and groups see~\cite{Voden09}, \cite{Ershov12}.

A close by spirit but different construction was motivated by respective group-theoretic results.
A restricted Lie algebra $G$ is called {\it large} if there is a subalgebra  $H\subset G$ of finite codimension
such that $H$ admits a surjective homomorphism on a nonabelian free restricted Lie algebra.
Let $K$ be a perfect at most countable field of positive characteristic.
Then there exist infinite-dimensional finitely generated nil restricted Lie algebras over $K$ that
are residually finite dimensional and direct limits of large restricted Lie algebras~\cite{BaOl07}.

\subsection{Grigorchuk and Gupta-Sidki groups}
The construction of Golod is rather undirect, Grigorchuk gave a direct and elegant construction of
an infinite 2-group generated by three elements of order 2~\cite{Grigorchuk80}.
This group was defined as a group of transformations of the interval $[0,1]$ from which
rational points of the form $\{k/2^n\mid  0\le k\le 2^n,\ n\ge 0\}$ are removed.
For each prime $p\ge 3$, Gupta and Sidki gave a direct construction of an infinite $p$-group
on two generators, each of order $p$~\cite{GuptaSidki83}.
This group was constructed as a subgroup of an automorphism group of an infinite regular tree of degree $p$.

The Grigorchuk and Gupta-Sidki groups are counterexamples to the General Burnside Problem.
Moreover, they gave answers to important problems in group theory.
So, the Grigorchuk group and its further generalizations
are first examples of groups of intermediate growth~\cite{Grigorchuk84}, thus answering
in negative to a conjecture of Milnor that groups of intermediate growth do not exist.
The construction of Gupta-Sidki also yields groups of subexponential growth~\cite{FabGup85}.
The Grigorchuk and Gupta-Sidki groups are {\it self-similar}.
Now self-similar, and so called {\it branch groups}, form a well-established area in
group theory~\cite{Grigorchuk00horizons,Nekr05}.
Below we discuss existence of analogues of the Grigorchuk and Gupta-Sidki groups for other algebraic structures.

\subsection{Self-similar nil graded associative algebras}
The study of these groups lead to investigation of group rings and other related associative algebras~\cite{Sidki97}.
In particular, there appeared self-similar associative algebras defined by matrices
in a recurrent way~\cite{Bartholdi06}.
Sidki suggested two examples of self-similar associative matrix algebras~\cite{Sidki09}.
A more general family of associative algebras was introduced in~\cite{PeSh13ass},
this family generalizes the second example of Sidki~\cite{Sidki09},
also it yields a realization of a Fibonacci restricted Lie algebras (see below)
in terms of self-similar matrices~\cite{PeSh13ass}.
Another important feature of some associative  algebras $A$ constructed in~\cite{PeSh13ass} is that
they are sums of two locally nilpotent subalgebras $A=A_+\oplus A_-$
(see similar decompositions~\eqref{decompLL} below).
Recall that an algebra is said {\em locally nilpotent} if every finitely generated subalgebra is nilpotent.
But the desired analogues of the Grigorchuk and Gupta-Sidki groups should be (self-similar) associative nil-algebras,
in a standard way yielding new examples of finitely generated periodic groups.
But such examples are not known yet.
On similar open problems in theory of infinite dimensional algebras see review~\cite{Zelmanov07}.

\subsection{Self-similar nil restricted Lie algebras, Fibonacci Lie algebra}
Unlike associative algebras, for restricted Lie algebras,
natural analogues of the Grigorchuk and Gupta-Sidki groups are known.
Namely, over a field of characteristic 2,
the first author constructed an example of an infinite dimensional restricted Lie algebra $\LL$
generated by two elements, called a {\it Fibonacci restricted Lie algebra}~\cite{Pe06}.
Let $\ch K=p=2$ and $R=K[t_i| i\ge 0 ]/(t_i^p| i\ge 0)$ a truncated polynomial ring.
Put $\dd_i=\frac {\dd}{\partial t_i}$, $i\ge 0$.
Define the following two derivations of $R$:
\begin{align*}
v_1 & =\dd_1+t_0(\dd_2+t_1(\dd_3+t_2(\dd_4+t_3(\dd_5+t_4(\dd_6+\cdots )))));\\
v_2 & =\qquad\quad\;\,
\dd_2+t_1(\dd_3+t_2(\dd_4+t_3(\dd_5+t_4(\dd_6+\cdots )))).
\end{align*}
These two derivations generate
a restricted Lie algebra $\LL=\Lie_p(v_1,v_2)\subset\Der R$ and an associative algebra $\AA=\Alg(v_1,v_2)\subset \End R$.
The Fibonacci restricted Lie algebra has a slow polynomial growth
with Gelfand-Kirillov dimension $\GKdim \LL=\log_{(\sqrt 5+1)/2} 2\approx 1.44$~\cite{Pe06}.
Further properties of the Fibonacci restricted Lie algebra and its generalizations are studied in~\cite{PeSh09,PeSh13fib}.

Probably, the most interesting property of $\LL$ is that it has a nil $p$-mapping~\cite{Pe06},
which is an analog of the periodicity of the Grigorchuk and Gupta-Sidki groups.
We do not know whether the associative hull $\AA$ is a nil-algebra.
We have a weaker statement.
The algebras $\LL$, $\AA$, and the augmentation ideal
of the restricted enveloping algebra $\uu=\omega u(\LL)$ are direct sums of two locally nilpotent subalgebras~\cite{PeSh09}:
\begin{equation}\label{decompLL}
\LL=\LL_+\oplus \LL_-,\quad \AA=\AA_+\oplus \AA_-,\quad \uu=\uu_+\oplus \uu_-.
\end{equation}
There are examples of infinite dimensional associative algebras which
are direct sums of two locally nilpotent subalgebras~\cite{Kel93,DreHam04}.
Infinite dimensional restricted Lie algebras can have
different decompositions into a direct sum of two locally nilpotent subalgebras~\cite{PeShZe10}.
\medskip

In case of arbitrary prime characteristic,
Shestakov and Zelmanov suggested an example of a finitely generated restricted Lie algebra
with a nil $p$-mapping~\cite{ShZe08}.
That example yields the same decompositions~\eqref{decompLL} for some primes~\cite{Kry11,PeSh13ass}.
An example of a $p$-generated  nil restricted Lie algebra $L$,
characteristic $p$ being arbitrary, was studied in~\cite{PeShZe10}.
The virtue of that example is that for all primes $p$
we have decompositions~\eqref{decompLL} into direct sums of two locally nilpotent subalgebras.
But computations for that example are rather complicated.

Observe that only the original example has a clear monomial basis~\cite{Pe06,PeSh09}.
In other examples, elements of a Lie algebra are  linear combinations of monomials,
to work with such linear combinations is sometimes an essential technical difficulty, see e.g.~\cite{ShZe08,PeShZe10}.
A family of nil restricted Lie algebras of slow growth having good monomial bases
is constructed in~\cite{Pe17},
these algebras are close relatives of a two-generated Lie superalgebra of~\cite{Pe16}.

\subsection{Narrow groups and Lie algebras}
Let $G$ be a group and $G=G_1\supseteq G_2\supseteq \cdots$ its lower central series.
One constructs a related $\N$-graded Lie algebra
$L_K(G)=\oplus_{i\ge 1} L_i$, where $L_i= G_i/G_{i+1}\otimes_{\Z} K$, $i\ge 1$.
A product is given by $[a G_{i+1},b G_{j+1}]=(a,b)G_{i+j+1}$,
where $a\in G_i$, $b\in G_j$, and
$(a,b)=a^{-1}b^{-1}ab$ the group commutator.

A residually $p$-group $G$ is said of {\it finite width} if
all factors $G_i/G_{i+1}$ are finite groups with uniformly bounded orders.
The Grigorchuk group $G$ is of finite width, namely,
$\dim_{\F_2} G_i/G_{i+1}\in\{1,2\}$ for $i\ge 2$~\cite{Rozh96,BaGr00}.
In particular, the respective Lie algebra $L=L_K(G)=\oplus_{i\ge 1} L_i$ has a linear growth.
Bartholdi presented $L_{K}(G)$ as a self-similar restricted Lie algebra
and proved that the restricted Lie algebra $L_{\F_2}(G)$ is nil while $L_{\F_4}(G)$ is not nil~\cite{Bartholdi15}.
Also, $L_K(G)$ is {\it nil graded}, namely,
for any homogeneous element $x\in L_i$, $i\ge 1$, the mapping $\ad x$ is nilpotent,
because the group $G$ is periodic.

A Lie algebra $L$ is called of {\it maximal class} (or {\it filiform}),
if the associated graded algebra with respect to the lower central series
$\gr L=\mathop{\oplus}\limits_{n=1}^\infty \gr L_n$, where $\gr L_n=L^n/L^{n+1}$, $n\ge 1$,
satisfies
\begin{equation}\label{filiform}
    \dim \gr L_1=2,\quad \dim \gr L_n\le 1,\ n\ge 2,\quad \gr L_{n+1}=[\gr L_1, \gr L_n],\ n \ge 1,
\end{equation}
in particular, $\gr L$ is generated by $\gr L_1$.
An infinite dimensional filiform Lie algebra $L$ has the
smallest nontrivial growth function: $\gamma_L(n)=n+1$, $n\ge 1$.
In case of positive characteristic, there are uncountably many such algebras~\cite{CarMatNew97}.
Nevertheless, in case $p>2$, they were classified in~\cite{CarNew00}.
There are generalizations  of filiform Lie algebras.
Naturally $\N$-graded Lie algebras over $\R$ and $\C$
satisfying the condition $\dim L_n+\dim L_{n+1}\le 3$, $n\ge 1$, are classified recently by Millionschikov~\cite{Mil}.
More generally, an $\N$-graded Lie algebra
$L=\mathop{\oplus}\limits_{n=1}^\infty L_n$ is said of finite {\it width} $d$ in the case that $\dim L_n\le d$, $n\ge 1$,
the integer $d$ being minimal.

Pro-$p$-groups and $\N$-graded Lie algebras cannot be simple.
Instead, appears an important notion of being {\it just infinite}, namely,
not having non-trivial normal subgroups (ideals) of infinite index (codimension).
A group (algebra) is said {\it hereditary just infinite}
if and only if any normal subgroup (ideal) of finite index (codimension) is just infinite.
The Gupta-Sidki groups were the first in the class of periodic groups to be shown to be just infinite~\cite{GuptaSidki83A}.
The Grigorchuk group is also just infinite but not hereditary just infinite~\cite{Grigorchuk00horizons}.

\subsection{Lie algebras in characteristic zero}
Since the Grigorchuk group is of finite width,
a right analogue of it should be a Lie algebra of finite width having $\ad$-nil elements, in the next result
the components are of bounded dimension and consist of $\ad$-nil elements.
Informally speaking, there are no "natural analogues" of the Grigorchuk and Gupta-Sidki groups
in the world of Lie algebras of characteristic zero,
strictly in terms of the following result.
\begin{Theorem}[{Martinez and Zelmanov~\cite{MaZe99}}]
\label{TMarZel}
Let $L=\oplus_{\a\in\Gamma}L_\alpha$ be a Lie algebra over a field $K$ of
characteristic zero graded by an abelian group $\Gamma$. Suppose that
\begin{enumerate}
\item
there exists $d>0$ such that $\dim_K L_\alpha \le d $ for all $\alpha\in\Gamma$,
\item
every homogeneous element $a\in L_\a$, $\a\in\Gamma$, is ad-nilpotent.
\end{enumerate}
Then the Lie algebra $L$ is locally nilpotent.
\end{Theorem}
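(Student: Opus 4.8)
The plan is to reduce to the finitely generated homogeneous case and then to run the Kostrikin--Zelmanov machinery of Lie algebras with sandwiches (absolute zero divisors).

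\emph{Reduction to a finitely generated homogeneous algebra.} Local nilpotency is a property of finitely generated subalgebras, and every finitely generated subalgebra of $L$ is contained in the subalgebra generated by the finitely many homogeneous components of its generators; the latter is again $\Gamma$-graded and still satisfies hypotheses (i) and (ii). So it suffices to show that a subalgebra $L'=\langle a_1,\dots,a_m\rangle$ generated by homogeneous $a_i$ is \emph{nilpotent}; replacing $\Gamma$ by $\langle \deg a_1,\dots,\deg a_m\rangle$ we may assume $\Gamma$ is a finitely generated abelian group. Let $N\trianglelefteq L'$ be the largest graded locally nilpotent ideal (the sum of all of them, which is again locally nilpotent). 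Then $\bar L=L'/N$ is finitely generated, $\Gamma$-graded over $K$, with $\dim_K\bar L_\alpha\le d$, with $\ad$-nilpotent homogeneous elements, and with no nonzero graded locally nilpotent ideal. If we show $\bar L=0$, then $L'=N$ is locally nilpotent, hence nilpotent because it is finitely generated, and the theorem follows.

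\emph{Reduction to producing a sandwich.} Recall that $c\in\bar L$ is a \emph{sandwich} if $(\ad c)^2=0$ and $\ad c\,\ad x\,\ad c=0$ for all $x\in\bar L$. By Zelmanov's theorem on Lie algebras with absolute zero divisors, the ideal generated by a nonzero sandwich is nonzero and locally nilpotent; if moreover the sandwich is homogeneous, that ideal is graded, contradicting the choice of $\bar L$. Hence it is enough to prove: every nonzero finitely generated $\Gamma$-graded Lie algebra over a field of characteristic zero with graded components of dimension $\le d$ and with $\ad$-nilpotent homogeneous elements contains a nonzero homogeneous sandwich.

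\emph{Producing the sandwich, and the main obstacle.} This last step is the heart of the matter, and it is where $\ch K=0$ and the bound $d$ are both essential. The Kostrikin--Zelmanov linearization identities manufacture sandwiches out of $\ad$-nilpotent elements, \emph{provided} the relevant indices of $\ad$-nilpotency are uniformly bounded and one may divide by the factorials occurring in those identities (hence $\ch K=0$). The role of hypothesis (i) is precisely to supply the uniform bound: since $K$ is infinite and each $\bar L_\alpha$ has dimension $\le d$, one expects (by a density or minimal-counterexample argument inside the finite-dimensional graded components, using the finitely generated grading group) a bound on the $\ad$-nilpotency indices of the homogeneous elements entering the construction depending only on $d$ and $m$; feeding a homogeneous $\ad$-nilpotent element of least index into the sandwich construction then yields a nonzero homogeneous sandwich, the desired contradiction. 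The genuine difficulty, which I expect to be the main obstacle, is exactly this: extracting a true sandwich from nothing more than $\ad$-nilpotency of homogeneous elements, together with converting hypothesis (i) into usable uniform bounds. Everything else is formal — the reduction to the finitely generated graded case, and the appeal to Zelmanov's structure theory, by which a Lie algebra generated by finitely many sandwiches, equivalently the ideal generated by all sandwiches, is locally nilpotent.
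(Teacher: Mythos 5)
First, a point of reference: the paper does not prove this statement at all --- Theorem~\ref{TMarZel} is quoted as a known result of Martinez and Zelmanov~\cite{MaZe99}, so there is no internal proof to compare yours against; your proposal has to stand on its own as a proof of their theorem. As such it does not succeed, because its central step is only asserted, not carried out. The reduction you make (pass to a finitely generated homogeneous subalgebra, kill a locally nilpotent graded radical, and argue that it suffices to exhibit a nonzero homogeneous sandwich, which in characteristic zero follows from $(\ad c)^2=0$ and whose ideal is locally nilpotent by Kostrikin--Zelmanov) is a sensible top-level strategy in the spirit of Zelmanov's methods. But the paragraph ``Producing the sandwich'' is exactly the content of the Martinez--Zelmanov theorem, and you leave it at ``one expects (by a density or minimal-counterexample argument) a bound on the $\ad$-nilpotency indices.'' That expectation is not routine: hypothesis (i) bounds $\dim L_\alpha$, but $\ad a$ for $a\in L_\alpha$ acts on the whole of $L$, not on a single component, so a dimension bound on components does not by itself bound nilpotency indices, and no mechanism is given for converting bounded indices into a homogeneous element $c$ with $(\ad c)^2=0$. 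Since everything else in your write-up is, as you say, formal, what remains is an outline whose essential analytic/combinatorial core is missing.

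There is also a secondary gap in the reduction itself. You let $N$ be the sum of all graded locally nilpotent ideals of $L'$ (its local nilpotency needs the Lie-algebra analogue of the Hirsch--Plotkin theorem, which you should at least cite), and then assert that $\bar L=L'/N$ has no nonzero graded locally nilpotent ideal. That does not follow: the preimage in $L'$ of a locally nilpotent ideal of $\bar L$ is only (locally nilpotent)-by-(locally nilpotent), and that class is not closed under extensions, so the quotient by the radical can again have a nonzero such ideal. Consequently, producing one homogeneous sandwich in $\bar L$ does not immediately contradict anything; you would need a transfinite iteration of the radical, or a different organization of the argument (for instance, deriving a contradiction with the finite generation and the dimension bound directly), to make the reduction airtight. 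Both gaps --- the unproved sandwich construction and the radical-quotient step --- would have to be filled before this can be regarded as a proof.
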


\subsection{Fractal nil graded Lie superalgebras}\label{SSsuper}
In the world of {\it Lie  superalgebras} of an {\it arbitrary characteristic},
the first author constructed analogues of the Grigorchuk and Gupta-Sidki groups~\cite{Pe16}.
Namely, two Lie superalgebras $\RR$, $\QQ$ were constructed,
which are also analogues of the Fibonacci restricted Lie algebra and other (restricted) Lie algebras mentioned above.
Constructions of both Lie superalgebras $\RR$, $\QQ$ are similar,
computations for $\RR$ are simpler, but $\QQ$ enjoys some more specific interesting properties.
The virtue of both examples is that they have clear monomial bases.
They have slow polynomial growth, namely,
$\GKdim \RR=\log_34\approx 1.26$ and $\GKdim \QQ=\log_38\approx 1.89$.
Thus, both Lie superalgebras are of infinite width.
In both examples, $\ad a$ is nilpotent,
$a$ being an even or odd element with respect to the $\Z_2$-gradings as Lie superalgebras.
This property is an analogue of the periodicity of the Grigorchuk and Gupta-Sidki groups.
The Lie superalgebra $\RR$ is $\Z^2$-graded, while $\QQ$ has a natural fine $\Z^3$-gradation
with at most one-dimensional components
(See on importance of fine gradins for Lie and associative algebras~\cite{BaSeZa01,Eld10}).
In particular, $\QQ$ is a nil finely graded Lie superalgebra, which shows
that an extension of Theorem~\ref{TMarZel} (Martinez and Zelmanov~\cite{MaZe99})
for the Lie {\it super}algebras of characteristic zero is not valid.
Also, $\QQ$ has a $\Z^2$-gradation which yields a continuum of different decompositions
into sums of two locally nilpotent subalgebras $\QQ=\QQ_+\oplus\QQ_-$.
Both Lie superalgebras are {\it self-similar},
they also contain infinitely many copies of itself, we call them {\it fractal} due to the last property.
(Except this paragraph, $\QQ$ denotes another Lie superalgebra, one of the main object of this paper).
\medskip

In~\cite{PeOtto}, we construct a similar but simpler and "smaller" example.
Namely, we construct a 2-generated fractal Lie superalgebra $\mathbf{R}$
(the same notation as above but this is a different algebra) over arbitrary field.
This Lie superalgebra $\RR$ is $\Z^2$-graded by multidegree in the generators and the
$\Z^2$-components are at most one-dimensional.
As an analogue of periodicity,
we establish that homogeneous elements of the $\Z_2$-grading $\mathbf{R}=\mathbf{R}_{\bar 0}\oplus\mathbf{R}_{\bar 1}$ are $\ad$-nilpotent.
In case of $\mathbb{N}$-graded algebras, a close analogue to being simple is being just infinite.
Unlike previous examples of Lie superalgebras~\cite{Pe16}, we are able to prove that $\mathbf{R}$ is just infinite,
but not hereditary just infinite.
This example is close to the smallest possible one, because $\mathbf{R}$ has a linear growth
with a growth function $\gamma_\mathbf{R}(m)\approx 3m$, as $m\to\infty$.
Moreover, its degree $\mathbb{N}$-gradation is of finite width 4 ($\ch K\ne 2$).
In case $\ch K=2$, we obtain a Lie algebra of width 2 that is not thin.

\subsection{Poisson and Jordan (super)algebras}
Poisson algebras naturally appear in different areas of algebra, topology and physics.
Probably, Poisson algebras were first introduced in 1976 by Berezin~\cite{Ber67}, see also Vergne~\cite{Ver69} (1969).
The free Poisson (super)algebras were introduced by Shestakov~\cite{Shestakov93}.
Applying Poisson algebras, Shestakov and Umirbaev managed to solve a long-standing problem:
they proved that the Nagata automorphism of the polynomial ring in three variables $\mathbb{C}[x,y,z]$ is wild~\cite{SheUmi04}.
Related algebraic properties of free Poisson algebras were studied by
Makar-Limanov, Shestakov and Umirbaev~\cite{MakShe12,MakUmi11}.
A basic theory of identical relations for Poisson algebras  was developed by Farkas~\cite{Farkas98,Farkas99}.
See further developments on the theory of identical relations of Poisson algebras,
in particular, the theory of so called  codimension growth in characteristic zero
by Mishchenko, Petrogradsky, Regev~\cite{MiPeRe}, and Ratseev~\cite{Ratseev13}.

Simple finite dimensional nontrivial Jordan superalgebras over an algebraically closed field of characteristic zero
were classified ~\cite{Kac77CA,Kantor92}.   
Infinite-dimensional $\Z$-graded simple Jordan superalgebras
with a unit element over an algebraically closed field of characteristic zero which components are uniformly bounded
are classified in~\cite{KacMarZel01}.
Recently, just infinite Jordan superalgebras were studied in~\cite{ZhePan17}.

\begin{Theorem}[{Zelmanov, private communication~\cite{Zelmanov}}]\label{TZelmanov}
Jordan algebras in characteristic zero satisfy a verbatim analogue of Theorem~\ref{TMarZel}.
\end{Theorem}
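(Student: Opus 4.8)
\emph{Strategy.} The plan is to carry over to Jordan algebras the internal scheme behind the proof of Theorem~\ref{TMarZel}, replacing Lie sandwiches by Jordan absolute zero divisors and the Kostrikin--Zelmanov sandwich theorem by its Jordan analogue. First I would reduce to the finitely generated case: since local nilpotency is a local property, it suffices to show that the subalgebra generated by any finite set of homogeneous elements $a_1,\dots,a_n$ is nilpotent, and then, replacing $\Gamma$ by the subgroup generated by $\deg a_1,\dots,\deg a_n$, one may assume $\Gamma$ is a finitely generated abelian group.

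\emph{Why not simply pass to a Lie algebra.} The tempting shortcut is the Tits--Kantor--Koecher construction $\mathcal L=\mathrm{TKK}(J)=\mathcal L_{-1}\oplus\mathcal L_0\oplus\mathcal L_1$, which inherits a $(\mathbb Z\times\Gamma)$-grading with $\mathcal L_{\pm1}\cong J$ as graded spaces; by the $3$-grading every element of $\mathcal L_{\pm1}$ is $\ad$-nilpotent of index $\le 3$, and the $\Gamma$-components of $\mathcal L_{\pm1}$ have dimension $\le d$. However Theorem~\ref{TMarZel} does not apply to $\mathcal L$: the degree-zero part $\mathcal L_0=\mathrm{instr}(J)$ has $\Gamma$-components of unbounded dimension in general, and a homogeneous inner transformation $[L_a,L_b]$ with $a,b$ in different components need not act nilpotently even when $a,b$ are nilpotent in $J$ (already $[L_{e_{12}},L_{e_{21}}]$ on $M_2(K)^+$ has a nonzero eigenvalue). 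Hence one must work inside $J$ itself, following the proof-scheme of Theorem~\ref{TMarZel} rather than its statement.

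\emph{The Jordan argument.} Let $\mathrm{Mc}(J)$ be the McCrimmon (degenerate) radical of $J$, the smallest ideal with $J/\mathrm{Mc}(J)$ nondegenerate; since $J$ is graded, $\mathrm{Mc}(J)$ is a graded ideal. The first point is that $\mathrm{Mc}(J)$ is locally nilpotent: by a transfinite induction along its defining radical series one reduces to showing that, in a graded Jordan algebra with $\dim J_\gamma\le d$, the ideal generated by the homogeneous absolute zero divisors (elements $z$ with $U_z=0$) is locally nilpotent --- here the Jordan analogue of the Kostrikin--Zelmanov theorem (a Jordan algebra generated by finitely many absolute zero divisors is nilpotent), the bound $\dim J_\gamma\le d$, and a bootstrapping argument are combined, and one must be able to extract homogeneous absolute zero divisors from general ones via the fundamental identities $U_{U_a b}=U_aU_bU_a$ and their linearizations. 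The second point is that $\overline J:=J/\mathrm{Mc}(J)$ must vanish: $\overline J$ is nondegenerate, still finely $\Gamma$-graded with $\dim\overline J_\gamma\le d$, and all its homogeneous elements are nil, so Zelmanov's structure theory for nondegenerate Jordan algebras applies --- a nonzero such algebra is (close to) one of the classical types, namely $H(A,\ast)$, a quadratic-form algebra $J(Q,f)$, or an Albert ring, hence, after a Peirce/grading analysis, contains a nonzero homogeneous idempotent, or else is finite dimensional; but a non-nil homogeneous element contradicts the hypothesis, and a finite-dimensional nil Jordan algebra in characteristic zero is nilpotent, so it has absolute zero divisors, contradicting nondegeneracy. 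Therefore $J=\mathrm{Mc}(J)$ is locally nilpotent.

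\emph{Main obstacle.} The core difficulty is the nondegenerate step: controlling a nondegenerate, finely $\Gamma$-graded Jordan algebra with uniformly bounded components all of whose homogeneous elements are nil. This is exactly where Zelmanov's deep classification is indispensable, and where one must also handle torsion in $\Gamma$ --- for $\gamma$ of finite order $n$ an element $a\in J_\gamma$ satisfies $a^n\in J_0$ but with no uniform bound on its nil index. A secondary but genuine technical point is the extraction of homogeneous absolute zero divisors in the first step, since the condition $U_z=0$ is quadratic in $z$, so a naive leading-component argument does not immediately yield a homogeneous solution and the $U$-operator identities must be used.
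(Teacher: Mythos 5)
There is nothing in the paper to compare your argument with: Theorem~\ref{TZelmanov} is stated without proof, being attributed to an unpublished private communication of Zelmanov, and the authors only use its statement (to contrast with the superalgebra $\KK$). So the only question is whether your text itself constitutes a proof, and it does not: it is a road map in the style of the Martinez--Zelmanov argument for Theorem~\ref{TMarZel}, in which the two steps that carry the actual content of the theorem are asserted or explicitly deferred rather than established. Concretely: (1) in the radical step you invoke a ``Jordan analogue of the Kostrikin--Zelmanov theorem,'' a transfinite induction along the defining series of the McCrimmon radical, and an extraction of \emph{homogeneous} absolute zero divisors from the quadratic condition $U_z=0$, but none of these is carried out, and even the preliminary claim that $\mathrm{Mc}(J)$ is a graded ideal for an arbitrary abelian grading group $\Gamma$ needs justification; (2) in the nondegenerate step you appeal to Zelmanov's classification to conclude that a nondegenerate, finely $\Gamma$-graded Jordan algebra with components of dimension $\le d$ and all homogeneous elements nil must vanish, but the reduction from nondegenerate to strongly prime quotients, the compatibility of the grading with that subdirect decomposition, the point where the bound $\dim J_\gamma\le d$ is actually used, and the torsion elements of $\Gamma$ (which you correctly note destroy any uniform nil bound) are all left open. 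You yourself label this the ``main obstacle,'' which is an honest diagnosis, but a proof must resolve these obstacles, not locate them.

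That said, your negative remark about the shortcut through the Tits--Kantor--Koecher construction is sound and worth keeping: $\mathrm{TKK}(J)$ does not satisfy the hypotheses of Theorem~\ref{TMarZel} because the $\Gamma$-components of $\mathcal L_0$ are not uniformly bounded and homogeneous elements of $\mathcal L_0$ need not be $\ad$-nilpotent, so the Jordan theorem cannot be obtained as a formal corollary of the Lie one; any proof has to go through Jordan-theoretic structure theory, as your outline suggests. Since the paper gives no argument, one cannot say whether your intended route coincides with Zelmanov's, but as it stands your submission should be presented as a strategy sketch, not as a proof of Theorem~\ref{TZelmanov}.
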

Strictly in terms of this result, we say again that
there are no natural analogues of the Grigorchuk and Gupta-Sidki groups in the class of Jordan algebras too.
On the other hand, the Jordan superalgebra $\KK$ constructed in the present paper shows
that an extension of this result to the Jordan superalgebras is not valid.
These facts resemble those for Lie algebras and superalgebras mentioned above.

We continue this research and construct
a similar but "smaller" example, namely, a fractal nil Jordan superalgebra of finite width in~\cite{PeSh18Jslow}.

\section{Basic definitions: superalgebras, growth}\label{Sdef}

\subsection{Associative and Lie Superalgebras}
Denote $\NO=\{0,1,2,\dots\}$.
By $K$ denote the ground field, $\langle S\rangle_K$ a linear span of a subset $S$ in a $K$-vector space.

Superalgebras appear naturally in physics and mathematics~\cite{Kac77,Scheunert,BMPZ}.
Put $\Z_2=\{\bar 0,\bar 1\}$, the group of order 2.
A {\em superalgebra} $A$ is a $\Z_2$-graded algebra $A=A_{\bar 0}\oplus A_{\bar 1}$.
The elements $a\in A_\alpha$ are called {\em homogeneous of degree} $|a|=\alpha\in\Z_2$.
The elements of $A_{\bar 0}$ are {\em even}, those of $A_{\bar 1}$ {\em odd}.
In what follows, if $|a|$ enters an expression,
then it is assumed that $a$ is homogeneous of degree $|a|\in\Z_2$,
and the expression extends to the other elements by linearity.
Let $A,B$ be superalgebras, a {\em tensor product} $A\otimes B$ is the superalgebra
whose space is the tensor product of the spaces $A$ and $B$ with the induced $\Z_2$-grading and the product:
$$
(a_1\otimes b_1) (a_2\otimes b_2)=(-1)^{|b_1|\cdot |a_2|}a_1a_2\otimes b_1b_2,\quad a_i\in A,\ b_i\in B.
$$
An {\em associative superalgebra} $A$ is a
$\Z_2$-graded associative algebra $A=A_{\bar 0}\oplus A_{\bar 1}$.
A {\em Lie superalgebra} is a $\Z_2$-graded algebra $L=L_{\bar 0}\oplus L_{\bar 1}$ with an
operation $[\ ,\ ]$ satisfying the axioms ($\ch K\ne 2,3$):
\begin{itemize}
\item
$[x,y]=-(-1)^{|x|\cdot |y| }[y,x]$,\qquad\qquad (super-anticommutativity);
\item
$[x,[y,z]]=[[x,y],z]+(-1)^{|x|\cdot| y|}[y,[x,z]]$,\qquad (Jacobi identity).
\end{itemize}
All commutators in the present paper are supercommutators.
Long commutators are {\it right-normed}: $[x,y,z]=[x,[y,z]]$.
We use a standard notation $\ad x(y)=[x,y]$, where $x,y\in L$.

Assume that $A=A_{\bar 0}\oplus A_{\bar 1}$ is an associative superalgebra.
One obtains a Lie superalgebra $A^{(-)}$
by supplying the same vector space $A$  with a {\em supercommutator}:
$$
[x,y]=xy-(-1)^{|x|\cdot |y|}yx,\quad x,y\in A.
$$
If 
$A^{(-)}$ is abelian,
then 
$A$ is called {\it supercommutative}.
Let $L$ be a Lie superalgebra,
one defines a {\it universal enveloping algebra}
$U(L)=T(L)/(x\otimes y- (-1)^{|x|\cdot|y|}y\otimes x-[x,y]\mid x,y\in L)$, where
$T(L)$ is the tensor algebra of the vector space $L$.
Now, the product in $L$ coincides with the supercommutator in $U(L)^{(-)}$.
A basis of  $U(L)$ is given by PBW-theorem~\cite{BMPZ,Scheunert}.

Let $V=V_{\bar 0}\oplus V_{\bar 1}$ be a vector space, we say that it is $\Z_2$-graded.
The associative algebra of all vector space endomorphisms
$\End V$ is an associative superalgebra:
$\End V=\End_{\bar 0} V\oplus \End_{\bar 1} V$,
where $\End_\alpha V=\{\phi\in\End V\mid \phi(V_\beta)\subset V_{\alpha+\beta}, \beta\in\Z_2\}$, $\alpha\in\Z_2$.
Thus, $\End^{(-)}V$ is a Lie superalgebra, called the {\em general linear superalgebra} ${gl}(V)$.

Let $A=A_{\bar 0}\oplus A_{\bar 1}$ be a $\Z_2$-graded algebra of arbitrary signature.
A linear mapping $\phi\in\End_{\beta} A$, $\beta\in\Z_2$,
is a {\em superderivative} of degree $\beta$ if it satisfies
\begin{equation*}
\phi(a\cdot b)=\phi(a)\cdot b+(-1)^{\beta |a|}a\cdot\phi(b),\quad a,b\in A.
\end{equation*}
Denote by $\Der_{\alpha}A\subset \End_{\alpha} A$ the space of all superderivatives of degree $\alpha\in\Z_2$.
One checks that $\Der A=\Der_{\bar 0} A\oplus \Der_{\bar 1} A$
is a subalgebra of the Lie superalgebra $\End^{(-)}A$.
All superderivations of the Grassmann algebra $\Lambda(n)=\Lambda(x_1,\dots,x_n)$
is a simple Lie superalgebra $\mathbf W(n)$ for $n\ge 2$.
In this paper by a derivation we always mean a superderivation.

\subsection{Lie superalgebras in small characteristics}
In case $\ch K=2,3$ the axioms of the Lie superalgebra have to be augmented
(\cite[section 1.10]{BMPZ}, \cite{Bou-Leites-09}, \cite{Pe16}).
\begin{itemize}
\item
$[z,[z,z]]= 0$,  $z\in L_{\bar 1}$\quad  (in case $\ch K= 3$).
\end{itemize}
Substituting $x=y\in L_{\bar 1}$ in the Jacobi identity,  we get $2 (\ad x)^2 z=[[x,x], z]$.
In case $\ch K\ne 2$ we get an identity
\begin{equation*}
(\ad x)^2 z= \frac 12 [[x,x],z],\qquad  x\in L_{\bar 1}, \ z\in L.
\end{equation*}
In the present paper we study Lie superalgebras of the form $A^{(-)}$,
they have squares for odd elements: $[x,x]=2 x^2$, $x\in A_{\bar 1}^{(-)}$.
One obtains an identity which is also valid for algebras $A^{(-)}$ in case $\ch K=2$:
\begin{equation}\label{squares}
(\ad x)^2 z= [x^2,z] 
,\qquad x\in A^{(-)}_{\bar 1},\ z\in A^{(-)}.
\end{equation}
So, in case $\ch K=2$, we add more axioms for the Lie superalgebras:
\begin{itemize}
\item there exists a {\em quadratic mapping (a formal square)}:
$(\ )^{[2]}:L_{\bar 1}\to L_{\bar 0}$, $x\mapsto x^{[2]}$, $x\in L_{\bar 1}$, satisfying:
\begin{align}
 (\lambda x)^{[2]}&=\lambda^2 x^{[2]},\qquad x\in L_{\bar 1},\ \lambda\in K;\nonumber\\
 (x+y)^{[2]}&=x^{[2]}+[x,y]+y^{[2]},\qquad x,y\in L_{\bar 1}; \label{square2}\\
 (\ad x)^2 z &= [x^{[2]},z],\qquad  x\in L_{\bar 1},\  z\in L,\
 \text{(a formal substitute of~\eqref{squares})};\nonumber
\end{align}
\item $[x,x]=0$, $x\in L_{\bar 0}$.
By putting $y=x$ in the second relation above, we get $[y,y]=0$, $y\in L_{\bar 1}$.
\end{itemize}
Thus, a Lie superalgebra in case $\ch K=2$ is just a $\Z_2$-graded Lie algebra
supplied with a quadratic mapping
$L_{\bar 1}\to L_{\bar 0}$, which is similar to the  $p$-mapping (see below).
In case $p=2$, to get the universal enveloping algebra,
we additionally factor out $\{y\otimes y-y^{[2]}\mid y\in L_{\bar 1}\}$.


\subsection{Restricted Lie (super)algebras}
Let $\ch K=p>0$.
A Lie algebra~$L$ is a
\textit{restricted Lie algebra} (or \textit{Lie $p$-algebra}),
if it is supplied with a unary operation
 $x\mapsto x^{[p]}$, $x\in L$, that satisfies the following axioms~\cite{JacLie,Strade1,StrFar}:
\begin{itemize}
\item $(\lambda x)^{[p]}=\lambda^px^{[p]}$, for $\lambda\in K$, $x\in L$;
\item $\ad(x^{[p]})=(\ad x)^p$, $x\in L$;
\item $(x+y)^{[p]}=x^{[p]}+y^{[p]}+\sum_{i=1}^{p-1}s_i(x,y)$, $x,y\in L$,
where $is_i(x,y)$~is the coefficient of $t^{i-1}$ in the polynomial
$\operatorname{ad}(tx+y)^{p-1}(x)\in L[t]$.
\end{itemize}
This notion is motivated by the following observation.
Let $A$ be an associative algebra over a field~$K$, $\ch K=p>0$.
Then the mapping  $x\mapsto x^p$, $x\in A^{(-)}$,
satisfies these conditions considered in the Lie algebra $A^{(-)}$.

A {\em restricted Lie superalgebra} $L=L_{\bar 0}\oplus L_{\bar 1}$ is a Lie superalgebra
such that the even component $L_{\bar 0}$ is a restricted Lie algebra and $L_{\bar 0}$-module
$L_{\bar 1}$ is restricted, i.e. $\ad(x^{[p]}) y=(\ad x)^p y$, for all $x\in L_{\bar 0}$, $y\in L_{\bar 1}$
(see. e.g.~\cite{Mikh88,BMPZ}).
Remark that in case $\ch K=2$, the restricted Lie superalgebras and
$\Z_2$-graded restricted Lie algebras are the same objects.
(Let $L=L_{\bar 0}\oplus L_{\bar 1}$ be a restricted Lie superalgebra,
it has the $p$-mapping on the even part: $L_{\bar 0}\to L_{\bar 0}$ and the formal square
on the odd part: $L_{\bar 1}\to L_{\bar 0}$.
We obtain the $p$-mapping
on the whole of algebra by setting $(x+y)^{[2]}=x^{[2]}+y^{[2]}+[x,y]$, $x\in L_{\bar 0}$,  $y\in L_{\bar 1}$).

Let $L$ be a restricted Lie (super)algebra, and
$J$ the ideal of the universal enveloping algebra~$U(L)$
generated by $\{x^{[p]}-x^p\mid x\in L_{\bar 0}\}$.
Then $u(L)=U(L)/J$ is the \textit{restricted enveloping algebra}.
In this algebra, the formal operation $x^{[p]}$ coincides with the ordinary power $x^p$ for all $x\in L_{\bar 0}$.
One has an analogue of PBW-theorem
describing a basis of $u(L)$ \cite[p.~213]{JacLie}, \cite{BMPZ}.

Let $L$ be a Lie (super)algebra. One defines the {\it lower central series} as
$L^1=L$ and $L^{n}=[L,L^{n-1}]$, $n\ge 2$. In case $\ch K=2$ the terms above
are augmented by $\langle x^2| x\in (L^{[n/2]})_{\bar 1}\rangle_K$.
In case of a restricted Lie (super)algebra,
we also add $\langle x^p| x\in (L^{[n/p]})_{\bar 0}\rangle_K$.

\subsection{Poisson superalgebras}\label{SSPoisson}
A $\Z_2$-graded vector space $A=A_{\bar 0}\oplus A_{\bar 1}$ is called a {\it Poisson superalgebra}
provided that, beside the addition, $A$ has two $K$-bilinear operations as follows:
\begin{itemize}
\item
$A=A_{\bar 0}\oplus A_{\bar 1}$ is an associative superalgebra with unit whose multiplication is denoted by
$a\cdot b$ (or $ab$), where $a, b\in A$.
We assume that $A$ is {\it supercommutative}, i.e. $a\cdot b=(-1)^{|a|\cdot |b|}b\cdot a$,
for all $a,b\in A$.
\item
$A=A_{\bar 0}\oplus A_{\bar 1}$
is a Lie superalgebra whose product is traditionally denoted by the {\it Poisson bracket}
$\{a, b\}$, where $a, b\in A$.
\item these two operations are related by the {\it super Leibnitz rule}:
\begin{equation*}
\{a\cdot b, c\}=a\cdot\{b, c\}+(-1)^{|b|\cdot |c|}\{a, c\}\cdot b,\qquad  a, b, c \in A.
\end{equation*}
\end{itemize}
Let $L$ be a Lie superalgebra,
$\{U_n| n\ge 0\}$ the natural filtration of its universal enveloping algebra $U(L)$.
Consider the {\it symmetric algebra}
$S(L)=\gr U(L)=\mathop{\oplus}\limits_{n=0}^\infty U_{n}/U_{n+1}$ (see~\cite{Dixmier}).
Recall that $S(L)$ is identified with a supercommutative algebra $K[v_i\,|\, i\in I]\otimes \Lambda (w_j,|\,j\in J)$, where
$\{v_i\,|\, i\in I\}$, $\{w_j\,|\, j\in J\}$, are bases of $L_{\bar 0}$, $L_{\bar 1}$, respectively.
Define a Poisson bracket by setting $\{v,w\}=[v,w]$, $v,w\in L$,
and extending to the whole of $S(L)$ by linearity and using the Leibnitz rule.
Thus, $S(L)$ is turned into a Poisson superalgebra, called the {\it symmetric algebra} of $L$.
Let $L(X)$ be the free Lie superalgebra generated by a graded set $X$,
then $S(L(X))$ is a free Poisson superalgebra~\cite{Shestakov93}.

Let $\ch K=2$, the axioms of a Lie superalgebra require existence of a formal square $y\mapsto y^{[2]}$ for all odd $y$.
Consider a free Poisson superalgebra $A=A_{\bar 0}\oplus A_{\bar 1}$ over $\Q$,
let $a\in A_{\bar 0}$, $b\in A_{\bar 1}$, then
$(ab)^{[2]}=\frac 12 \{ab,ab\}=\frac 12 ( \{a,a\}bb+aa\{b,b\}+ 2ab\{a,b\})=a^2b^{[2]}+ab\{a,b\}$.
Thus,  we add additional axioms for a {\it Poisson superalgebra in case} $\ch K=2$:
\begin{itemize}
\item
$(ab)^{[2]}=a^2b^{[2]}+ab\{a,b\}$ for all $a\in A_{\bar 0}$, $b\in A_{\bar 1}$;
\item $b^2=0$, for all $b\in A_{\bar 1}$.
\end{itemize}

One checks that validity of these axioms on any basis imply them for all elements (the second axiom is needed here).
Also, the computation above yields an additional axiom for a {\it restricted Poisson algebra $A$ in case} $\ch K=2$:
\begin{itemize}
\item
$(ab)^{[2]}=a^2b^{[2]}+a^{[2]}b^{2}+ab\{a,b\}$ for all $a\in A$.
\end{itemize}
Again, one checks that  it is sufficient to verify validity of this axiom on any basis.
Observe that the case $p=2$ was not considered in
a definition of a {\it restricted Poisson algebra} given for all  $p>2$ in~\cite{BezKal07,BaoYeZhang17}.

Let $A$, $P$ be Poisson superalgebras, their tensor product $A\otimes P$ is a Poisson superalgebra with operations:
$(a\otimes v)\cdot (b\otimes w)= (-1)^{|v||b|}ab\otimes vw$ and
$\{a\otimes v, b\otimes w\}= (-1)^{|v||b|} (\{a, b\}\otimes vw+ ab\otimes \{v,w\})$, where $a,b\in A$, $v,w\in P$.

Let $\Lambda(n)=\Lambda(x_1,\dots,x_n)$ be the Grassmann algebra in $n$ variables.
It is an associative superalgebra, where the $\Z_2$-grading
$\Lambda(n)=\Lambda_{\bar 0}(n)\oplus \Lambda_{\bar 1}(n)$
is given by parity of monomials in the generators.
One supplies $\Lambda(n)$ with a bracket:
$$
\{f,g\}=(-1)^{|f|-1}\sum_{i=1}^n
\frac{\partial f}{\partial x_i}\frac{\partial g}{\partial x_i},\qquad f,g\in\Lambda(n).
$$
This bracket is induced by relations $\{x_i,x_j\}=\delta_{i,j}$, $1\le i,j\le n$.
Then $\Lambda(n)$ is a simple Poisson superalgebra.

Consider a modification of this construction.
Let $H_{n}=\Lambda(x_1,\dots,x_n,y_1,\dots,y_n)$
be the Grassmann superalgebra supplied with a bracket determined by:
$\{x_i,y_j\}=\delta_{i,j}$, $\{x_i,x_j\}=\{y_i,y_j\}=0$ for $1\le i,j\le n$.
We obtain a simple {\it Hamiltonian} Poisson superalgebra with a bracket:
$$
\{f,g\}=(-1)^{|f|-1}\sum_{i=1}^n
\bigg(\frac{\partial f}{\partial x_i}\frac{\partial g}{\partial y_i}
+\frac{\partial f}{\partial y_i}\frac{\partial g}{\partial x_i}\bigg),\qquad f,g\in H_{n}.
$$

Let $P= P_{\bar  0} \oplus P_{\bar 1}$ be a Poisson superalgebra with products $\cdot$ and $\{\ ,\ \}$.
Recall that an {\it algebraic quantization} of $P$ is a polynomial extension
$P[t]$ supplied with an associative product $*$ that  agrees with the grading
$P[t]= P_{\bar  0}[t] \oplus P_{\bar 1}[t]$  and such that (see e.g.~\cite{Shestakov93}):
\begin{itemize}
\item $a * b = a\cdot b\ (\mathrm{ mod}\ t)$,\qquad  $a,b\in P$;
\item $a * b - (-1)^{|a||b|} b * a = t \{a,b\}\ (\mathrm{mod}\ t^ 2)$,\qquad  $a,b\in P$;
\item $f * t = t * f = ft$,\qquad   $f\in P[t]$.
\end{itemize}

\subsection{Jordan superalgebras}
While studying Jordan (super)algebras we always assume that $\ch K\ne 2$.
A {\it Jordan algebra} is an algebra $J$ satisfying the identities
\begin{itemize}
\item  $ab=ba$;
\item  $a^2(ca)=(a^2c)a$.
\end{itemize}
A {\em Jordan superalgebra} is a $\Z_2$-graded algebra  $J=J_{\bar 0} \oplus J_{\bar 1}$ satisfying the graded identities:
\begin{itemize}
\item  $ab=(-1)^{|a||b|}ba$;
\item  $(ab)(cd)+(-1)^{|b||c|}(ac)(bd)+(-1)^{(\!|b|+|c|)|d|}(ad)(bc) \\
=((ab)c)d+(-1)^{|b|(\!|c|+|d|)+|c||d|}((ad)c)b+(-1)^{|a|(\!|b|+|c|+|d|)+|c||d|}((bd)c)a.$
\end{itemize}

Let $A=A_{\bar 0}\oplus A_{\bar 1}$ be an associative superalgebra.
The same space supplied with
the product $a\circ b=\frac 12(ab+(-1)^{|a||b|}ba)$ is a Jordan superalgebra $A^{(+)}$.
A Jordan superalgebra $J$ is called {\it special} if it can be embedded into a Jordan
superalgebra of the type $A^{(+)}$.
Also, $J$ is called {\it i-special} (or {\it weakly special})
if it is a homomorphic image of a special one.

I.L.~Kantor suggested the following doubling process, which is applied to a Poisson (super)algebra $A$
and the result is  a Jordan superalgebra $\Kan(A)$~\cite{Kantor92}.
The $K$-module $\Kan(A)$ is the direct sum $A\oplus \bar A$,
where $\bar A$ is a copy of $A$,
let $a\in A$ then $\bar a$ denotes the respective element in $\bar A$.
Also, $\bar A$ is supplied with the opposite $\Z_ 2$-grading, i.e., $|\bar a| = 1 - |a|$ for a $\Z_2$-homogeneous $a\in A$.
The multiplication $\bullet$ on $\Kan(A)$ is defined by:
\begin{align*}
a \bullet b      &= ab,\\
\bar a \bullet b &= (-1)^{|b|} \overline{ab},\\
a \bullet \bar b &= \overline{ab}, \\
\bar a \bullet \bar b &= (-1)^{|b|} \{a,b\},\qquad a,b\in A.
\end{align*}
This construction is important because it yielded a new series of finite dimensional simple Jordan superalgebras
$\Kan(\Lambda(n))$, $n\ge 2$~\cite{Kantor92,KingMcCrimon92}.

\subsection{Growth}
We recall the notion of {\em growth}. Let $A$  be an associative (or Lie) algebra  generated by a finite set $X$.
Denote  by $A^{(X,n)}$ the subspace of $A$ spanned by all  monomials  in $X$
of length not  exceeding  $n$, $n\ge 0$.
In case of a Lie superalgebra of $\ch K=2$ we also consider formal squares of odd monomials of length at most $n/2$.
If $A$ is a restricted Lie algebra, put
$A^{(X,n)}=\langle\, [x_{1},\dots,x_{s}]^{p^k}\mid x_{i}\in X,\, sp^k\le n\rangle_K$~\cite{Pape01}.
Similarly, one defines the growth for restricted Lie superalgebras.
In either situation, one defines an {\em  (ordinary) growth function}:
$$
\gamma_A(n)=\gamma_A(X,n)=\dim_KA^{(X,n)},\quad n\ge 0.
$$
Let $f,g:\N\to\R^+$ be eventually increasing and positive valued functions.
Write $f(n)\preccurlyeq g(n)$ if and only if there exist positive constants $N,C$ such that $f(n)\le g(Cn)$
for all $n\ge N$.
Introduce equivalence $f(n)\sim g(n)$ if and only if  $f(n)\preccurlyeq g(n)$ and $g(n)\preccurlyeq f(n)$.
Different generating sets of an algebra yield equivalent growth functions~\cite{KraLen}.

It is well known that the
exponential growth is the highest possible growth for finitely generated Lie and
associative algebras. A growth function $\gamma_A(n)$ is
compared with polynomial functions $n^\alpha$, $\alpha\in\R^+$, by
computing the {\em upper and lower Gelfand-Kirillov
dimensions}~\cite{KraLen}:
\begin{align*}
\GKdim A&=\limsup_{n\to\infty} \frac{\ln\gamma_A(n)}{\ln n}=\inf\{\a>0\mid \gamma_A(n)\preccurlyeq n^\a\} ;\\
\LGKdim A&=\liminf_{n\to\infty}\,  \frac{\ln\gamma_A(n)}{\ln n}=\sup\{\a>0\mid \gamma_A(n)\succcurlyeq n^\a\}.
\end{align*}
By Bergman's theorem, the Gelfand-Kirillov dimension of an associative algebra cannot belong to the interval $(1,2)$~\cite{KraLen}.
Similarly, there are no finitely generated Jordan algebras with Gelfand-Kirillov dimension strictly between 1 and 2~\cite{MaZe96}.
Such a gap for Lie algebras does not exist, the Gelfand-Kirillov
dimension of a finitely generated Lie algebra can be arbitrary number $\{0\}\cup [1,+\infty)$~\cite{Pe97}.

Assume that generators $X=\{x_1,\dots,x_k\}$ are assigned positive weights $\wt(x_i)=\lambda_i$, $i=1,\dots,k$.
Define a {\it weight growth function}:
$$
\tilde \gamma_A(n)=\dim_K\langle x_{i_1}\cdots x_{i_m}\mid \wt(x_{i_1})+\cdots+\wt(x_{i_m})\le n,\
          x_{i_j}\in X\rangle_K,\quad n\ge 0.
$$
Set $C_1=\min\{\lambda_i\mid i=1,\dots,k \}$, $C_2=\max\{\lambda_i\mid i=1,\dots,k \}$,
then $\tilde\gamma_A(C_1 n) \le \gamma_A(n)\le \tilde\gamma_A(C_2 n)$ for $n\ge 1$.
Thus, we obtain an equivalent growth function $\tilde \gamma_A(n)\sim\gamma_A(n)$.
Therefore, we can use the weight growth function $\tilde\gamma_A(n)$ in order to
compute the Gelfand-Kirillov dimensions.
By $f(n)\approx g(n)$, $n\to\infty$, denote that $\mathop{\lim}\limits_{n\to\infty} f(n)/g(n)=1$.
Similarly, one studies the growth for Poisson and Jordan superalgebras.

Suppose that $L$ is a Lie (super)algebra and $X\subset L$.
By $\Lie(X)$ denote the subalgebra of $L$ generated by $X$,
(including application of the quadratic mapping in case $\ch K=2$).
Let $L$ be a restricted Lie (super)algebra,
by $\Lie_p(X)$ denote the restricted subalgebra of $L$ generated by $X$.
Assume that $X$ is a subset of an associative algebra $A$.
Write $\Alg(X)\subset A$ to denote the associative subalgebra (without unit) generated by~$X$.
In case of Poisson and Jordan superalgebras we use notations $\Poisson(X)$ and $\Jord(X)$.
A grading of an algebra is called {\em fine} if
it cannot be splitted by taking a bigger grading group (see definitions in~\cite{BaSeZa01,Eld10}).

\subsection{Lie superalgebra  $\WW(\Lambda_I)$ of special superderivations}
Assume that $I$ is a well-ordered set of arbitrary cardinality. Put $\Z_2=\{0,1\}$.
Let $\Z_2^I=\{\a: I\to\Z_2\}$ be a set of functions with finitely many nonzero values.
Suppose that $\a \in \Z_2^I$ has nonzero values at $\{i_1,\dots,i_t\}\subset I$, where $i_1<\cdots< i_t$, put
$\mathbf x^\a=x_{i_1}x_{i_2}\cdots x_{i_t}$ and $|\a|=t$.
Now $\{ \mathbf x^\a\mid \a\in \Z_2^I\}$ is a basis of the Grassmann algebra $\Lambda_I=\Lambda(x_i\mid i\in I)$,
which is an associative superalgebra $\Lambda_I=\Lambda_{\bar 0}\oplus \Lambda_{\bar 1}$,
all $x_i$, $i\in I$, being odd.
Let $\partial_i$, $i\in I$, denote the superderivatives of
$\Lambda$, which are determined by the values
$\partial_i(x_j)=\delta_{ij}$, $i,j\in I$.
We identify $x_i$, $i\in I$, with the operator
of the left multiplication on $\Lambda_I$, thus we get odd elements
 $x_i\in \End_{\bar 1}(\Lambda_I)$, $i\in I$.
Consider a space of all formal sums
\begin{equation}\label{WW}
\WW(\Lambda_I)=\bigg\{\sum_{\a\in \Z_2^I} {\mathbf x}^{\a}\sum_{j=1}^{m(\a)}
                           \lambda_{\a,i_j}\,\partial_{i_j}
                    \ \bigg|\ \lambda_{\a,i_j}\in K,\ i_j\in I
                    \bigg\}.
\end{equation}
It is essential that the sum at each ${\mathbf x}^{\a}$, $\a\in \Z_2^I$,  is finite.
This construction is similar to the Lie algebra of {\em special derivations},
see~\cite{Rad86}, \cite{Razmyslov}, \cite{PeRaSh}.
It is similarly verified that
the product in $\WW(\Lambda_I)$ is well defined and
$\WW(\Lambda_I)$ acts on $\Lambda_I$ by superderivations.

\section{Main results: superalgebras $\QQ$, $\AA$, $\PP$, $\JJ$, $\KK$, and their properties}

In this paper, we study the following five objects.
A core of our constructions is a Lie superalgebra $\QQ$.
Next we construct the associative hull $\AA$,
a related Poisson superalgebra $\PP$, and two Jordan superalgebras $\JJ$ and $\KK$.
We call these superalgebras {\em fractal} because they contain infinitely many copies of themselves.

Let us briefly describe their constructions, the next picture shows relations between constructions.
\begin{center}
\begin{picture}(140,50)
\put(5,22){$\mathbf{Q}$}
\put(52,40){$\mathbf{A}$}
\put(52,04){$\mathbf{P}$}
\put(98,04){$\mathbf{J}$}
\put(142,22){$\mathbf{K}$}
\put(18,32){\vector(3,1){30}}
\put(18,18){\vector(3,-1){30}}
\put(56,35){\vector(0,-1){21}}
\put(106,08){\vector(3,1){33}}
\put(64,07){\vector(3,0){28}}
\put(18,25){\vector(1,0){120}}
\end{picture}
\end{center}

Consider the Grassmann algebra  in infinitely many variables $\Lambda=\Lambda(x_i\mid i\ge 0)$.
Let $\dd_i$ 
be its superderivative defined by $\dd_i(x_j)=\delta_{i,j}$, $i,j\ge 0$.
Observe that $\{x_i,\dd_i\mid i\ge 0\}$ are odd elements of the associative superalgebra $\End\Lambda$,
where $x_i$ is identified with the left miltiplication on $\Lambda$.
These elements anticommute except for nontrivial relations:
\begin{equation*}
[\dd_i,x_i]=\dd_ix_i+x_i\dd_i=1;\qquad
x_i^2=0,\quad \dd_i^2=0,\qquad i\ge 0.
\end{equation*}

Now we define {\em pivot elements}:
\begin{equation}\label{pivot}
v_i = \dd_i + x_ix_{i+1}(\dd_{i+3} + x_{i+3}x_{i+4}(\dd_{i+6} + x_{i+6}x_{i+7}(\dd_{i+9}+\cdots) )),\qquad i\ge 0.
\end{equation}
The action of
the pivot elements on the Grassmann letters is well defined and produces letters with smaller indices:
\begin{equation}\label{action}
v_n(x_k)=
\begin{cases}
0, & k<n;\\
1, & k=n;\\
x_n x_{n+1}\hat x_{n+2} x_{n+3} x_{n+4}\hat x_{n+5}\cdots \hat x_{k-4} x_{k-3}x_{k-2},\quad & k=n+3l, \quad l\ge 1;\\
0, & k-n\ne 0\, (\mathrm{mod}\, 3);
\end{cases}
\end{equation}
where $\hat x_i$ denote omitted variables.
Thus, we obtain a sequence of superderivatives $\{v_i \mid i\ge 0\}\subset \Der \Lambda$,
moreover, they belong to $\WW(\Lambda)$.
First, we define a Lie superalgebra
$\QQ=\Lie(v_0,v_1,v_2)\subset\WW(\Lambda)\subset \Der \Lambda$ generated by $\{v_0,v_1,v_2\}$.
Second, we take its {\it associative hull}, namely, we consider the associative superalgebra
$\AA=\Alg(v_0,v_1,v_2)\subset \End \Lambda$ generated by $\{v_0,v_1,v_2\}$.
(We warn that another Lie superalgebra was denoted by $\QQ$ in~\cite{Pe16}, see also subsection~\ref{SSsuper}).
We start the present paper with a study of  properties of the algebras $\QQ$ and $\AA$.

Next, in Section~\ref{SPoisson} we consider the Grassmann algebra
$H_\infty=\Lambda(x_i,y_i| i\ge 0)$ which is turned into a Poisson superalgebra by a bracket determined by relations:
\begin{equation*} 
\{y_i,x_j\}=\delta_{i,j},\quad \{x_i,x_j\}=\{y_i,y_j\}=0,\quad  i,j\ge 0.
\end{equation*}
In its completion $\tilde H_\infty$,
the next elements will be referred to as the {\em pivot elements} as well:
$$ 
V_i = y_i + x_ix_{i+1}(y_{i+3} + x_{i+3}x_{i+4}(y_{i+6} + x_{i+6}x_{i+7}(y_{i+9}+\cdots) ))\in
\tilde H_\infty ,\qquad i\ge 0.
$$ 
We actually obtain the same Lie superalgebra: $\QQ=\Lie(v_0,v_1,v_2)\cong \Lie(V_0,V_1,V_2)$.

Third, we define a Poisson subalgebra $\PP=\Poisson(V_0,V_1,V_2)\subset \tilde H_\infty$ generated by $\{V_0,V_1,V_2\}$.
Using the Kantor double, we
construct the forth object, a Jordan superalgebra $\JJ=\Kan(\PP(V_0,V_1,V_2))=\PP\oplus \bar \PP$
and prove that $\JJ=\Jord(V_0,V_1,V_2,\bar 1)$ (Section~\ref{SJordan}).

Finally, a Jordan superalgebra $\KK$ is a factor algebra of $\JJ$, it also can be constructed directly
as a double $\KK=\Jor(\QQ)$, namely, as a vector space
supplied with an operation as follows (Section~\ref{SJordanK}):
$$
\KK=\langle 1\rangle \oplus \QQ\oplus \langle \bar 1\rangle \oplus \bar \QQ,\qquad
\bar x\bullet \bar y=[x,y], \quad x\bullet \bar 1=(-1)^{|x|}\bar 1\bullet x=\bar x,\quad x,y\in \QQ; \ 1 \text{ the unit}.
$$

Now we formulate main properties of these five superalgebras established in the paper.

\begin{enumerate}
\renewcommand{\theenumi}{\roman{enumi}}
\item Section~\ref{Srelations} yields multiplication rules of the Lie superalgebra $\QQ$.
\item $\QQ$ has a clear monomial basis consisting of standard monomials of two types ($\ch K\ne 2$, Theorem~\ref{Tbasis3}).
      In case $\ch K=2$, a basis of $\QQ$ consists of monomials of the first type
      and squares of the pivot elements (Corollary~\ref{Cbasis2}), and $\QQ$
      coincides with the restricted Lie (super)algebra $\Lie_p(v_0,v_1,v_2)$.
\item In Section~\ref{SPoisson} we define the Poisson superalgebra $\PP(V_0,V_1,V_2)=\Poisson(V_0,V_1,V_2)$,
      determined (actually, generated) by the Lie superalgebra $\QQ$.
\item We describe monomial bases of the Poisson superalgebra $\PP$ 
      and associative hull $\AA$. 
      In case $\ch K\ne 2$, we prove that for a filtration of $\AA$, the associated graded algebra has
      a structure of a Poisson superalgebra such that
      $\gr \AA\cong\PP$, in particular, both algebras have "the same" bases.
      Also, the Poisson superalgebra $\PP$ admits an algebraic quantization using a deformed superalgebra $\AA^{(t)}$ (Section~\ref{SbasesPA}).
\item We essentially use weight functions additive on products of monomials.
      We prove that $\QQ$, $\AA$, $\PP$, $\JJ$, and $\KK$ are $\NO^3$-graded by multidegree in three generators 
      (Theorem~\ref{TZ3graduacao}, Lemma~\ref{LZ3graduacaoP}, Lemma~\ref{LZ3graduacaoJ},
       but the Jordan superalgebras have one more generator).
      This allows us to introduce coordinate systems in space:
      multidegree coordinates $(X_1,X_2,X_3)$, and twisted weight coordinates $(Y_1,Y_2,Y_3)$
      (Section~\ref{Sweight}).
\item Components of the $\NO^3$-gradation of $\QQ$ by multidegree in the generators are at most one-dimensional (Theorem~\ref{Tfine}),
      so the $\NO^3$-grading of $\QQ$ is fine.
\item $\QQ$ is just infinite but nor hereditary just infinite (Section~\ref{Sfunctions}).
\item We compute initial coefficients of generating functions of $\QQ$  (Section~\ref{Sfunctions}).
      The results and proofs on basis monomials of $\QQ$ are illustrated by Figure~\ref{Fig1}.
\item We find bounds on weights of the basis monomials of $\QQ$, $\PP$, and $\AA$
      (Sections~\ref{SweightboundsQ}, \ref{SweightboundsAP})
      and prove that images of their monomials in space
      are inside "almost cubic paraboloids" (Theorem~\ref{Tparab}, see Figure~\ref{Fig1}, and Theorem~\ref{TparabP}).
      Asymptotically, a nonzero share of lattice points inside the first paraboloid corresponds to monomials of~$\QQ$ (Corollary~\ref{Cparab}).
\item We conjecture that the superalgebras $\QQ$, $\AA$, and $\PP$ are not self-similar.
      We discuss the notion of self-similarity for Jordan superalgebras in~\cite{PeSh18Jslow}.
\item The Jordan superalgebras $\JJ$, $\KK$ are $\NO^4$-graded by multidegree in the generators (Corollary~\ref{CgradedZ4J-A}),
      we determine a hypersurface in $\R^4$ that bounds monomials of $\JJ$ and $\KK$ (Theorems~\ref{TparabJ}, \ref{TallK}).
\item $\QQ$, $\AA$, $\PP$, $\JJ$, $\KK$ have slow polynomial growth:
      $\GKdim\QQ=\GKdim\KK=\log_\lambda 2\approx 1.6518$ and
      $\GKdim\AA=\GKdim\PP=\GKdim\JJ=2\log_\lambda 2\approx  3.3036$
      (Theorems~\ref{TgrowthQ}, \ref{TgrowthP}, \ref{TgrowthJ}, \ref{TallK}).
\item $\JJ$, $\KK$ are weakly special, but not special (Corollary~\ref{Cspecial}, Theorem~\ref{TallK}).
\item $\QQ$, $\AA$, and the algebras without unit $\PP^o$, $\JJ^o$, $\KK^o$ are direct sums
      of two locally nilpotent subalgebras and there are continuum such different decompositions (Theorem~\ref{Ttwosums}).
\item $\QQ=\QQ_{\bar 0}\oplus \QQ_{\bar 1}$ is a nil graded Lie superalgebra (Theorem~\ref{Tadnilpotente}).
      Thus, $\QQ$ again shows that an extension of~Theorem~\ref{TMarZel} (Martinez and Zelmanov~\cite{MaZe99})
      for Lie superalgebras of characteristic zero is not valid.
      Such a counterexample of a nil finely $\Z^3$-graded Lie superalgebra of slow polynomial growth $\QQ$ was suggested before~\cite{Pe16}.
      There is also a recent counterexample of a nil finely $\Z^2$-graded Lie superalgebra of linear growth and of finite width 4~\cite{PeOtto}.
\item In case $\ch K=2$, $\QQ$ has a structure of a restricted Lie algebra $\QQ=\Lie_p(v_0,v_1,v_2)$
       with a nil $p$-mapping (Theorem~\ref{TnilQ2}).
\item Components of the $\NO^4$-gradation of $\KK$ by multidegree in the generators are at most one-dimensional (Theorem~\ref{TallK}),
      so the $\NO^4$-grading of $\KK$ is fine.
\item $\KK$ is just infinite but nor hereditary just infinite (Theorem~\ref{TallK}).
\item An extension of Theorem~\ref{TZelmanov} to Jordan {\it super}algebras of characteristic zero is not valid.
      Indeed, $\KK$ is a $\NO^4$-graded Jordan superalgebra with at most one-dimensional components, where
      the subalgebra without unit $\KK^o$ is nil of bounded degree.
\item  The constructions of the paper can be applied to Lie (super)algebras studied before to
       obtain Poisson and Jordan superalgebras as well.
\end{enumerate}

\begin{Remark}
Indeed, one can apply constructions of the paper to two Lie superalgebras of~\cite{Pe16}
and one more Lie superalgebra of~\cite{PeOtto}
and obtain respective associative, Poisson, and Jordan superalgebras.
But these new superalgebras shall enjoy only {\em triangular decompositions}~\eqref{decompLL} as
sums of three subalgebras, e.g. $\tilde\JJ^o=\tilde\JJ_-\oplus \tilde\JJ_0\oplus \tilde\JJ_+$,
because the roots of that characteristic polynomials are integers.
In the present paper we get decompositions into sums of two locally nilpotent subalgebras because of
nonintegral roots of the characteristic polynomial.
\end{Remark}

\begin{Remark}
In particular, recall that the Lie superalgebra $\RR$ constructed in~\cite{PeOtto}
is just infinite, two-generated, nil $\Z_2$-graded, with at most one-dimensional $\Z^2$-components, of linear growth,
moreover, of finite width~4.
Namely, its $\N$-gradation by degree in the generators has non-periodic components of dimensions $\{2,3,4\}$.
The arguments of the present paper yield the following.
Consider  the related Jordan superalgebra $\tilde\KK=\Jor(\RR)$.
Then $\tilde \KK$ is just infinite, three-generated, $\Z^3$-graded with at most one-dimensional components, 
the ideal without unit $\tilde \KK^o$ is nil of bounded degree.
Also,  $\tilde \KK$ is of linear growth, moreover, of finite width~4, 
namely, its $\N$-gradation by degree in the generators has components of dimensions $\{0,2,3,4\}$,
their sequence is non-periodic~\cite{PeSh18Jslow}.
That example also shows that just infinite $\Z$-graded Jordan superalgebras of finite width can have
a fractal complicated structure
unlike the classification of such simple algebras over an algebraically closed field of characteristic zero~\cite{KacMarZel01}.
\end{Remark}
\begin{Remark}
We continue this research in~\cite{PeSh18Jslow}, were in particular we discuss {\it self-similarity} of different types of superalgebras.
Despite that all our superalgebras look very "self-similar",
we conjecture that $\QQ$ is not self-similar in terms of the definition of Bartholdi~\cite{Bartholdi15}. 
\end{Remark}

\section{Multiplication rules of Lie superalgebra $\QQ$}\label{Srelations}

Since $\{x_i,\dd_i\mid i\ge 0\}$ are odd,
the pivot elements~\eqref{pivot} are also odd. Write them recursively:
\begin{equation}\label{pivot2}
v_i = \dd_i + x_ix_{i+1} v_{i+3},\qquad i\ge 0.
\end{equation}

Recall that we consider the Lie superalgebra
$\QQ=\Lie(v_0,v_1,v_2)\subset\WW(\Lambda)\subset \Der \Lambda$ and the associative algebra
$\AA=\Alg(v_0,v_1,v_2)\subset \End \Lambda$, where
\begin{equation}\label{aibici}
\begin{split}
v_0 &= \dd_0 + x_0x_1 v_3,\\
v_1 &= \dd_1 + x_1x_2 v_4,\\
v_2 &= \dd_2 + x_2x_3 v_5,
\end{split}
\qquad\qquad i\ge 0.
\end{equation}

Define a {\em shift} mapping $\tau:\Lambda\to \Lambda$, $\tau:\WW(\Lambda)\to\WW(\Lambda)$ by
$\tau(x_i)=x_{i+1}$,  $\tau(\dd_i)=\dd_{i+1}$,  $i\ge 0$.
Clearly, we get endomorphisms such that $\tau(v_i)=v_{i+1}$ for all $i\ge 0$.

We shall use the following basic commutation relations without special mentioning.
\begin{Lemma}\label{L_BASIC_PROD}
For all $i\ge 0$ we have:
\begin{enumerate}
\item $v_i^2= x_{i+1}v_{i+3}$;
\item $[v_i,v_i]=2 v_i^2=  2 x_{i+1}v_{i+3}$;
\item $[v_i,v_{i+1}] =-x_i v_{i+3}$;
\item $[v_i^2,v_{i+1}] =-v_{i+3}$;
\item $[v_i,v_{i+2}] = - x_ix_{i+1}x_{i+2} v_{i+5}$.
\end{enumerate}
\end{Lemma}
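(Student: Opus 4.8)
The plan is to verify each of the five identities by direct computation inside $\End\Lambda$, using only the recursive definition~\eqref{pivot2} of the pivot elements, $v_i=\dd_i+x_ix_{i+1}v_{i+3}$, together with the anticommutation relations among the odd operators $\{x_i,\dd_i\}$: namely $x_i^2=0$, $\dd_i^2=0$, $\dd_ix_i+x_i\dd_i=1$, and $x_i,\dd_i$ anticommute with $x_j,\dd_j$ for $i\ne j$. The key structural observation I would record first is that $v_{i+3}$ involves only letters with index $\ge i+3$, hence $v_{i+3}$ anticommutes with $x_i$, $x_{i+1}$, $\dd_i$, $\dd_{i+1}$ (all odd, disjoint supports); this "disjoint support" principle is what makes the computations collapse.

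For part~(i): expand $v_i^2=(\dd_i+x_ix_{i+1}v_{i+3})^2=\dd_i^2+\dd_i x_ix_{i+1}v_{i+3}+x_ix_{i+1}v_{i+3}\dd_i+(x_ix_{i+1}v_{i+3})^2$. The first term vanishes. In the last term, $x_ix_{i+1}v_{i+3}x_ix_{i+1}v_{i+3}$: move the second $x_ix_{i+1}$ leftwards past $v_{i+3}$ (they anticommute, $v_{i+3}$ being odd with disjoint support — two sign changes, net $+$) to get $x_ix_{i+1}x_ix_{i+1}v_{i+3}^2=0$ because $x_i^2=0$. For the two cross terms, note $v_{i+3}$ commutes past $\dd_i$ (disjoint support, but $v_{i+3}$ is odd and $\dd_i$ is odd, so they anticommute): $\dd_i x_ix_{i+1}v_{i+3}+x_ix_{i+1}v_{i+3}\dd_i = \dd_ix_ix_{i+1}v_{i+3}-x_ix_{i+1}\dd_iv_{i+3}=(\dd_ix_i+x_i\dd_i)x_{i+1}v_{i+3}$ after also moving $\dd_i$ past $x_{i+1}$ — wait, one must be careful: $\dd_i$ and $x_{i+1}$ anticommute, producing a sign that I would track explicitly; the upshot is $=(\dd_ix_i+x_i\dd_i)x_{i+1}v_{i+3}=x_{i+1}v_{i+3}$, giving~(i). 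Part~(ii) is then immediate from $[v_i,v_i]=2v_i^2$ and~(i).

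For part~(iii): $[v_i,v_{i+1}]=v_iv_{i+1}+v_{i+1}v_i$ (supercommutator of two odd elements). Substitute $v_i=\dd_i+x_ix_{i+1}v_{i+3}$ and $v_{i+1}=\dd_{i+1}+x_{i+1}x_{i+2}v_{i+4}$ and expand into four products each way. Most terms cancel in pairs by anticommutativity of operators with disjoint support; the surviving contribution comes from the interaction of $\dd_{i+1}$ in $v_{i+1}$ with the $x_{i+1}$ in $v_i$, i.e. from $\{\dd_{i+1},x_{i+1}\}=1$, and one should get $-x_iv_{i+3}$; I would organize this by grouping terms according to which letters contract. Part~(iv) follows by applying $\ad$: by~\eqref{squares}, $(\ad v_i)^2 z=[v_i^2,z]$; apply with $z=v_{i+1}$ to get $[v_i^2,v_{i+1}]=(\ad v_i)^2 v_{i+1}=[v_i,[v_i,v_{i+1}]]=[v_i,-x_iv_{i+3}]$, then compute this last bracket using the Leibniz rule for the derivation $v_i$ acting on the product $x_iv_{i+3}$ (or directly in $\End\Lambda$): $v_i(x_i)=1$ and $v_i$ kills $v_{i+3}$ up to lower interaction, yielding $-v_{i+3}$. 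Part~(v) is the analogous but longer computation for $[v_i,v_{i+2}]$: here $v_i$ and $v_{i+2}$ have \emph{partially overlapping} tails (the letter $x_{i+1}$ appears in $v_i$, the derivation $\dd_{i+2}$ in $v_{i+2}$, etc.), so more terms survive; expanding $v_i=\dd_i+x_ix_{i+1}v_{i+3}$ and $v_{i+2}=\dd_{i+2}+x_{i+2}x_{i+3}v_{i+5}$, the $\dd_i$ and $\dd_{i+2}$ pieces anticommute trivially, the crucial surviving term comes from $v_{i+3}$'s leading $\dd_{i+3}$ meeting $x_{i+3}$ in $v_{i+2}$, producing the factor $x_ix_{i+1}x_{i+2}v_{i+5}$ with a minus sign.

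The main obstacle is purely bookkeeping: keeping the signs straight when commuting odd operators past one another, since every transposition of two odd elements contributes a $-1$ and the expansions in~(iii) and especially~(v) have many terms. I expect~(v) to be the most delicate, because there the supports of $v_i$ and $v_{i+2}$ are not disjoint (they share the "middle" region of indices), so the usual collapse-by-disjoint-support argument applies only partially and one must actually carry out the cancellations term by term. A clean way to control this is to first prove, as a preliminary lemma, precise commutation rules of $v_n$ with each $x_k$ and $\dd_k$ (which follow from~\eqref{action} and the Leibniz rule for the derivation $v_n$), and then assemble~(i)--(v) from those rules rather than re-deriving everything from the atomic relations each time.
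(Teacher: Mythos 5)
Your proposal is correct and takes essentially the same route as the paper: direct computation in $\End\Lambda$ from the recursion $v_i=\dd_i+x_ix_{i+1}v_{i+3}$ and the basic anticommutation relations, with cancellations controlled by disjointness of supports, and with (iv) obtained from (iii) via $(\ad v_i)^2=\ad(v_i^2)$, while for (v) the surviving contraction $\dd_{i+3}$ against $x_{i+3}$ is exactly the one the paper isolates by expanding $v_i$ one level deeper. One small correction to your justification of (iv): the Leibniz term $x_i[v_i,v_{i+3}]$ vanishes not because ``$v_i$ kills $v_{i+3}$'' (in fact $[v_i,v_{i+3}]=2x_ix_{i+1}x_{i+4}v_{i+6}\neq 0$), but because this bracket carries the factor $x_i$, so multiplying by $x_i$ annihilates it.
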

\begin{proof} We check the first claim
$$
v_i^2=(\dd_i + x_ix_{i+1} v_{i+3})^2=[\dd_i,x_ix_{i+1} v_{i+3} ]=x_{i+1} v_{i+3}.
$$
Now, the second claim is evident. We check claims (iii) and (iv):
\begin{align*}
[v_i,v_{i+1}]&=[\dd_i + x_ix_{i+1} v_{i+3}, \dd_{i+1} + x_{i+1}x_{i+2} v_{i+4} ]=
[x_ix_{i+1} v_{i+3}, \dd_{i+1}]=-x_iv_{i+3};\\
[v_i^2,v_{i+1}]&=[v_i,[v_i,v_{i+1}]]=[v_i,-x_iv_{i+3}]=-v_{i+3}.
\end{align*}
Finally, let us check claim (v):
\begin{align*}
[v_i,v_{i+2}]&=[\dd_i + x_ix_{i+1} \dd_{i+3} + x_ix_{i+1}x_{i+3}x_{i+4} v_{i+6}  , \dd_{i+2} + x_{i+2}x_{i+3} v_{i+5} ]\\
  & =  [x_ix_{i+1}\dd_{i+3}, x_{i+2}x_{i+3} v_{i+5}]=- x_ix_{i+1}x_{i+2} v_{i+5}.
\qedhere
\end{align*}
\end{proof}

\begin{Lemma}\label{Lproducts}
General multiplication rules for the pivot elements are as follows. Let $i,k\ge 0$.
\begin{align*}
[v_i,v_{i+3k}] &=2\bigg(\prod_{n=0}^{k-1} x_{i+3n}x_{i+3n+1}\bigg)  x_{i+3k+1}v_{i+3k+3};\\
[v_i,v_{i+3k+1}] &=-\bigg(\prod_{n=0}^{k-1} x_{i+3n}x_{i+3n+1}\bigg)  x_{i+3k}v_{i+3k+3};\\
[v_i,v_{i+3k+2}] &=-\bigg(\prod_{n=0}^{k} x_{i+3n}x_{i+3n+1}\bigg)  x_{i+3k+2}v_{i+3k+5}.
\end{align*}
\end{Lemma}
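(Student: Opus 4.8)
The natural approach is induction on $k$, using the recursive definition~\eqref{pivot2} together with the basic relations collected in Lemma~\ref{L_BASIC_PROD}. The base case $k=0$ is immediate: for the first formula the empty product convention gives $[v_i,v_i]=2x_{i+1}v_{i+3}$, which is Lemma~\ref{L_BASIC_PROD}(ii); for the second, $[v_i,v_{i+1}]=-x_iv_{i+3}$ is Lemma~\ref{L_BASIC_PROD}(iii); and for the third, $[v_i,v_{i+2}]=-x_ix_{i+1}x_{i+2}v_{i+5}$ is Lemma~\ref{L_BASIC_PROD}(v). So the content is the inductive step.

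For the inductive step I would expand the second argument once using $v_{i+3k+\epsilon}=\dd_{i+3k+\epsilon}+x_{i+3k+\epsilon}x_{i+3k+\epsilon+1}v_{i+3k+\epsilon+3}$ and exploit that $v_i$, as a derivation supported on indices $\ge i$ with the explicit action~\eqref{action}, kills all the Grassmann letters $x_{i+3k+\epsilon}$ appearing here once $k$ is large enough, while $[v_i,\dd_{i+3k+\epsilon}]=0$ for $3k+\epsilon$ not a multiple of $3$ exceeding the reach. Concretely, the cleanest route is to relate $[v_i,v_{i+3k+\epsilon}]$ to $[v_i,v_{i+3(k-1)+\epsilon}]$ after applying the shift $\tau$: since $\tau^3$ sends $v_j\mapsto v_{j+3}$, one has $[v_i,v_{i+3k+\epsilon}]=[v_i,\tau^3(v_{i+3(k-1)+\epsilon})]$, but $\tau$ is only an endomorphism, not compatible with fixing $v_i$, so instead I would argue directly. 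Write $v_{i+3k+\epsilon}$ via one step of recursion and compute $[v_i, x_{i+3k+\epsilon}x_{i+3k+\epsilon+1}v_{i+3k+\epsilon+3}]$ using the super Leibniz rule; the terms where $v_i$ hits $x_{i+3k+\epsilon}$ or $x_{i+3k+\epsilon+1}$ vanish by~\eqref{action} (their indices are $\equiv \epsilon,\epsilon+1\pmod 3$ relative to $i$, hence never hit by $v_i$ when $\epsilon\in\{0,1\}$; for $\epsilon=2$ only the case $k=0$ could contribute and that is the base case), leaving $x_{i+3k+\epsilon}x_{i+3k+\epsilon+1}[v_i,v_{i+3k+\epsilon+3}]$. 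This expresses the $(k)$-term in terms of a commutator $[v_i,v_{i+3(k')+\epsilon'}]$ with a strictly smaller parameter, and iterating (equivalently, a second induction or a direct telescoping) produces exactly the stated products $\prod x_{i+3n}x_{i+3n+1}$, with the sign and the leftover factor ($x_{i+3k+1}$, $x_{i+3k}$, or $x_{i+3k+2}$) matching the three cases. I would organize this as: first prove the case $\epsilon=0$ and $\epsilon=1$ simultaneously by induction on $k$, feeding each into the other via the recursion step, then deduce $\epsilon=2$ from the $\epsilon=0$ formula by expanding $v_{i+3k+2}$ once more.

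The main obstacle is bookkeeping the signs and the anticommutation of the odd Grassmann letters $x_j$ when pulling them out of the supercommutator: each transposition of an odd element past another contributes a sign, and one must check these cancel to give the clean coefficients $2$ and $-1$ stated. I would handle this by noting that the product $\prod_{n=0}^{k-1}x_{i+3n}x_{i+3n+1}$ has even parity, so it commutes with everything, which trivializes most sign issues; the only delicate point is the single factor $[v_i,v_{i+3k+\epsilon+3}]$ emerging with the correct sign at each stage, which follows because the extracted $x_jx_{j+1}$ pairs are even and the recursion~\eqref{pivot2} introduces no extra sign. A secondary subtlety is making sure that in the $\epsilon=2$ case the product runs to $n=k$ rather than $n=k-1$ and the surviving letter is $x_{i+3k+2}$ with shift to $v_{i+3k+5}$; this comes out automatically from substituting $i\mapsto i$, $k\mapsto k$ into the $\epsilon=0$ formula applied to $[v_i,v_{(i+3k+2)}]=[v_i,\dd_{i+3k+2}+x_{i+3k+2}x_{i+3k+3}v_{i+3k+5}]=x_{i+3k+2}x_{i+3k+3}[v_i,v_{i+3k+5}]$ and then using the already-proven $\epsilon=0$ case with the next value of $k$. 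Once the sign discipline is set up, the computation is routine.
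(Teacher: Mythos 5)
Your base cases are fine, but the inductive step is built on vanishing claims that are false, and the terms you discard are in fact the whole answer. You expand the second argument, $v_{i+3k+\epsilon}=\dd_{i+3k+\epsilon}+x_{i+3k+\epsilon}x_{i+3k+\epsilon+1}v_{i+3k+\epsilon+3}$, and keep only $x_{i+3k+\epsilon}x_{i+3k+\epsilon+1}[v_i,v_{i+3k+\epsilon+3}]$. But by~\eqref{action} the derivation $v_i$ acts nontrivially exactly on the letters $x_m$ with $m\equiv i\ (\mathrm{mod}\ 3)$, $m\ge i$, and $v_i$ itself contains the letters $x_{i+3n},x_{i+3n+1}$ for all $n\ge 0$. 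Hence for $\epsilon=0$ both $[v_i,\dd_{i+3k}]\ne 0$ and $v_i(x_{i+3k})\ne 0$; for $\epsilon=1$ one has $[v_i,\dd_{i+3k+1}]\ne 0$; for $\epsilon=2$ one has $v_i(x_{i+3k+3})\ne 0$ for every $k$, not only $k=0$. These are precisely the contributions that produce the stated right-hand sides, while the term you retain is actually zero: by the very formulas being proved, $[v_i,v_{i+3k+\epsilon+3}]$ already contains the letters $x_{i+3k+\epsilon}$ or $x_{i+3k+\epsilon+1}$, and Grassmann squares vanish. A concrete check: $[v_0,v_4]=[v_0,\dd_4]+x_4x_5[v_0,v_7]$, where $[v_0,\dd_4]=-x_0x_1x_3v_6$ is the entire answer and $x_4x_5[v_0,v_7]=x_4x_5\,(-x_0x_1x_3x_4x_6v_9)=0$; so your reduction $[v_i,v_{i+3k+1}]=x_{i+3k+1}x_{i+3k+2}[v_i,v_{i+3k+4}]$ (and likewise the $\epsilon=2$ identity you write explicitly) is simply false. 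In addition, your step passes from the bracket with $v_{i+3k+\epsilon}$ to the bracket with $v_{i+3(k+1)+\epsilon}$, i.e.\ to a \emph{larger} parameter, so even formally it is not a well-founded induction.

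The correct nearby argument expands the \emph{first} (lower-index) argument: from $v_i=\dd_i+x_ix_{i+1}v_{i+3}$ and the facts that $[\dd_i,v_j]=0$ and $v_j(x_i)=v_j(x_{i+1})=0$ for $j\ge i+2$, one gets $[v_i,v_j]=x_ix_{i+1}[v_{i+3},v_j]$, and induction on $k$ (base cases being Lemma~\ref{L_BASIC_PROD}) yields the three formulas, the extracted pairs $x_{i+3n}x_{i+3n+1}$ being even so that no signs appear. This is exactly the paper's route: it iterates~\eqref{pivot2} into presentation~\eqref{recursive_present}, observes that all the $\dd$-terms of $v_i$ supercommute with $v_{i+3k+\epsilon}$, and applies Lemma~\ref{L_BASIC_PROD} to the single surviving term $x_ix_{i+1}\hat x_{i+2}\cdots x_{i+3k-3}x_{i+3k-2}[v_{i+3k},v_{i+3k+\epsilon}]$. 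As written, your proposal does not prove the lemma.
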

\begin{proof}
Iterating~\eqref{pivot2}, we get another presentation:
\begin{align}\nonumber
v_i=\dd_i
+ x_ix_{i+1} \dd_{i+3}+\ldots
&+ x_ix_{i+1} \hat x_{i+2} x_{i+3}x_{i+4} \hat x_{i+5}\cdots  x_{i+3k-6}x_{i+3k-5}\dd_{i+3k-3}\\
&+ x_ix_{i+1}\hat x_{i+2} x_{i+3}x_{i+4}\hat x_{i+5} \cdots   x_{i+3k-3}x_{i+3k-2}v_{i+3k},\qquad i\ge 0,\  k\ge 1.
\label{recursive_present}
\end{align}
Using presentation~\eqref{recursive_present} and Lemma~\ref{L_BASIC_PROD}, we obtain
\begin{align*}
[v_i,v_{i+3k}]
&=x_ix_{i+1}\hat x_{i+2} x_{i+3}x_{i+4}\hat x_{i+5} \cdots   x_{i+3k-3}x_{i+3k-2} [v_{i+3k},v_{i+3k}]\\
&=2x_ix_{i+1}\hat x_{i+2} x_{i+3}x_{i+4}\hat x_{i+5} \cdots   x_{i+3k-3}x_{i+3k-2}\cdot x_{i+3k+1}v_{i+3k+3};\\
[v_i,v_{i+3k+1}]
&=x_ix_{i+1}\hat x_{i+2} x_{i+3}x_{i+4}\hat x_{i+5} \cdots   x_{i+3k-3}x_{i+3k-2} [v_{i+3k},v_{i+3k+1}]\\
&=-x_ix_{i+1}\hat x_{i+2} x_{i+3}x_{i+4}\hat x_{i+5} \cdots   x_{i+3k-3}x_{i+3k-2}\cdot x_{i+3k}v_{i+3k+3};\\
[v_i,v_{i+3k+2}]
&=x_ix_{i+1}\hat x_{i+2} x_{i+3}x_{i+4}\hat x_{i+5} \cdots   x_{i+3k-3}x_{i+3k-2} [v_{i+3k},v_{i+3k+2}]\\
&=-x_ix_{i+1}\hat x_{i+2} x_{i+3}x_{i+4}\hat x_{i+5} \cdots   x_{i+3k-3}x_{i+3k-2}\cdot x_{i+3k}x_{i+3k+1}x_{i+3k+2}v_{i+3k+5}.
\qedhere
\end{align*}
\end{proof}
Consider Lie superalgebras $L_i=\Lie(v_i,v_{i+1},v_{i+2})$ for all  $i\ge 0$, so $L_0=\QQ$.
\begin{Corollary} \label{Cbases}
Let $\QQ=\Lie(v_0,v_1,v_2)$. Then
\begin{enumerate}
\item $v_i\in \QQ$, $i\ge 0$  (we get these elements using Lie bracket only in case of an arbitrary $K$);
\item $\tau^i:\QQ\to L_i$ is an isomorphism for any $i\ge 1$;
\item we get a proper chain of isomorphic subalgebras:
$$\QQ=L_0\supsetneqq L_1 \supsetneqq \cdots \supsetneqq L_i\supsetneqq L_{i+1}\supsetneqq \cdots,\qquad
\mathop{\cap}_{n=0}^\infty L_i=\{0\}.
$$
\item $\QQ$ is infinite dimensional.
\end{enumerate}
\end{Corollary}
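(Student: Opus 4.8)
The plan is to extract all four claims from the explicit multiplication rules of Lemma~\ref{Lproducts} together with the recursive presentation~\eqref{recursive_present} and the defining relations~\eqref{aibici}. For claim (i), I would argue by induction on $i$: the cases $i=0,1,2$ are the generators themselves, and for the inductive step I would use Lemma~\ref{L_BASIC_PROD}(iv), $[v_i^2,v_{i+1}]=-v_{i+3}$, together with $v_i^2=\frac12[v_i,v_i]$ (available since $\ch K\neq 2$; in characteristic $2$ one instead uses the quadratic map $v_i^{[2]}=x_{i+1}v_{i+3}$ from Lemma~\ref{L_BASIC_PROD}(i), so the parenthetical remark is justified). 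Thus from $v_i,v_{i+1}\in\QQ$ one gets $v_{i+3}\in\QQ$, and since $v_0,v_1,v_2\in\QQ$ by definition, all $v_i$ with $i\geq 0$ lie in $\QQ$.

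For claim (ii), the shift endomorphism $\tau$ of $\WW(\Lambda)$ satisfies $\tau(v_j)=v_{j+1}$ for all $j$, hence $\tau^i$ maps the generating set $\{v_0,v_1,v_2\}$ of $\QQ$ onto the generating set $\{v_i,v_{i+1},v_{i+2}\}$ of $L_i$, so $\tau^i(\QQ)=L_i$ and $\tau^i$ is a surjective homomorphism of Lie superalgebras; it is injective because $\tau$ is injective on $\Lambda$ and hence on $\WW(\Lambda)$ (the index shift is reversible). Therefore $\tau^i\colon\QQ\to L_i$ is an isomorphism. Claim (iv) is then immediate once claim (iii) is in place: by (i) all $v_i\in\QQ$, and if $\QQ$ were finite-dimensional the $v_i$ would be linearly dependent; but each $v_i$ has $\dd_i$ as its unique lowest-index term in the presentation~\eqref{recursive_present}, so the $\{v_i\mid i\ge 0\}$ are linearly independent, forcing $\dim\QQ=\infty$. (Alternatively, (iv) follows from (iii) since a strictly descending infinite chain of subalgebras cannot occur in a finite-dimensional algebra.)

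The main point, and the step I expect to require the most care, is the strictness of the inclusions in claim (iii), i.e. $L_{i+1}\subsetneq L_i$ and $\bigcap_{i}L_i=\{0\}$. The chain $L_i\supseteq L_{i+1}$ holds because $v_{i+1},v_{i+2}\in L_{i+1}$ trivially and $v_{i+3}\in L_{i+1}$ by claim (i) applied to $L_{i+1}\cong\QQ$ via $\tau$, and $L_i=\Lie(v_i,v_{i+1},v_{i+2})$ with $v_{i+1},v_{i+2},v_{i+3}$ all in... more precisely, one checks $\{v_{i+1},v_{i+2},v_{i+3}\}\subset L_i$ generate $L_{i+1}$-like data — here I would use that $L_{i+1}=\Lie(v_{i+1},v_{i+2},v_{i+3})$ by the shifted version of claim (i) and that all three lie in $L_i$. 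For strictness, I would exhibit $v_i\in L_i\setminus L_{i+1}$: every element of $L_{i+1}$ is a (super)polynomial in $v_{i+1},v_{i+2},\dots$, and inspection of~\eqref{recursive_present} and Lemma~\ref{Lproducts} shows that every monomial occurring in any element of $L_{i+1}$ involves only derivatives $\dd_j$ and letters $x_j$ with $j\geq i+1$ (the grading by the "first index" is preserved), whereas $v_i$ contains the term $\dd_i$; hence $v_i\notin L_{i+1}$. The same index-bound observation gives $\bigcap_{i\ge 0}L_i=\{0\}$: a nonzero element of the intersection would have to involve $\dd_j$ or $x_j$ only with $j\ge i$ for every $i$, which is impossible for a nonzero element of $\WW(\Lambda)$. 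Making the "every monomial of $L_{i+1}$ uses only indices $\ge i+1$" claim precise — ideally by invoking the $\NO^3$-grading by multidegree in the generators, or a suitable index-filtration of $\WW(\Lambda)$ — is the technical heart of the argument.
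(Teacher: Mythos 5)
Your proposal is correct, and for claims (i) and (ii) it runs along the same lines as the paper: induction via $[v_i,[v_i,v_{i+1}]]=[v_i^2,v_{i+1}]=-v_{i+3}$, and the shift $\tau$ carrying generators to generators. Where you genuinely diverge is claim (iii): the paper disposes of the intersection (and, implicitly, strictness) by a forward reference to the monomial basis of Theorem~\ref{Tbasis3}, whereas you argue directly inside $\WW(\Lambda)$ that $L_{i+1}$ is contained in the subalgebra of special superderivations supported on the letters $x_j,\dd_j$ with $j\ge i+1$, so $v_i\notin L_{i+1}$ (its expansion~\eqref{recursive_present} contains the term $\dd_i$, and representations~\eqref{WW} are unique), and no nonzero formal sum can be supported on indices $\ge i$ for every $i$. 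This is more elementary and self-contained --- it needs only that the span of formal sums in indices $\ge m$ is closed under the bracket (and under the formal square when $\ch K=2$), which is immediate --- and it avoids leaning on a theorem proved later; the paper's route is shorter once the basis is available. Two small adjustments: the ``technical heart'' you flag is best handled exactly by this index-support subalgebra, not by the $\NO^3$-grading, which records multidegree in $v_0,v_1,v_2$ and does not directly control which Grassmann indices occur; and the parenthetical in (i) about ``Lie bracket only'' over arbitrary $K$ is justified not by switching to the quadratic map in characteristic $2$ (that proves membership but uses the square), but by the identity~\eqref{squares} for $A^{(-)}$, valid in every characteristic, which makes $[v_i,[v_i,v_{i+1}]]=-v_{i+3}$ a pure bracket computation. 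Your two arguments for (iv) (linear independence of the $v_i$, or the strictly descending chain) are both fine.
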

\begin{proof}
We have $v_0,v_1,v_2\in\QQ$. By Lemma, $[v_0^2,v_1]=-v_3\in\QQ$.
Similarly, by induction we conclude that $v_i\in\QQ$ for all $i\ge 0$.
Claim (ii) follows because we have an isomorphism $\tau:\WW(\Lambda)\to\WW(\Lambda)$ such that $\tau(v_i)=v_{i+1}$, $i\ge 0$.
The intersection of $L_i$ is trivial by a description of a basis of $\QQ$ (Theorem~\ref{Tbasis3}).
\end{proof}

\section{Monomial basis of Lie superalgebra $\QQ$}

By $r_n$ denote a {\em tail} monomial:
\begin{equation}\label{rmm}
r_n=x_0^{\xi_0}\cdots x_n^{\xi_n}=x_0^*\cdots x_n^*\in \Lambda,\quad \xi_i\in\{0,1\};\ n\ge 0,
\end{equation}
where $x_i^*$ denote any power $\{0,1\}$.
If $n<0$, we consider that $r_n=1$.
Another monomials of type~\eqref{rmm} will be denoted by $r_n'$, $\tilde r_n$, etc.
Below, $\hat x_i$ denote the missing variable in a product.


We call $r_{n-3} v_n$, where $n\ge 0$, a {\em quasi-standard monomial of the first type},
and $r_{n-5} x_{n-2}v_n$, where $n\ge 2$, a {\em quasi-standard monomial of the second type}.
Among them, we exclude 24 {\em false monomials}, see below,
the remaining monomials are {\em standard monomials},
we prove that they constitute a basis of $\QQ$ in case $\ch K\ne 2$.
Let us call $n$ the {\em length}, $v_n$ the {\em head}, $r_{n-3}$ (or $r_{n-5}$) the {\em tail},
and $x_{n-2}$ the {\em neck} of a (quasi)standard monomial.

\begin{Theorem}\label{Tbasis3}
Let $\ch K\ne 2$.
A basis of the Lie superalgebra $\QQ=\Lie(v_0,v_1,v_2)$ is given by the following
{\em standard monomials} of two types
(where $r_n$ are tail monomials~\eqref{rmm})
\begin{enumerate}
\item monomials of the first type:
$$\{r_{n-3} v_n \mid n\ge 0\}
\setminus \{x_0x_1^*v_4,\, x_0x_1^*x_2^*x_3x_4^* v_7\},
$$
(i.e. in case of length 4 we exclude monomials containing $x_0$,
and in case of length 7 we exclude monomials containing both $\{x_0,x_3\}$).
We shall refer to the excluded monomials as {\em false monomials of the first type});
\item monomials of the second type:
$$
\{ x_1v_3, x_2v_4, x_3v_5\}\cup \{r_{n-5} x_{n-2}v_n \mid n\ge 6\}
\setminus \{x_0x_1^*x_2^*\, x_5 v_7,\, x_0x_1^*x_2^*x_3x_4^*x_5\, x_8v_{10}\},
$$
(i.e. in case of length 7 we exclude monomials containing $x_0$,
and in case of length 10 we exclude monomials containing all  three letters $\{x_0,x_3,x_5\}$).
We refer to the excluded monomials and $\{x_0v_2, x_0x_3v_5\}$
as {\em false monomials of the second type}.
\end{enumerate}
\end{Theorem}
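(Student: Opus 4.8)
The plan is to prove the statement in three stages: first that every standard monomial lies in $\QQ$, then that the standard monomials are linearly independent in $\Der\Lambda$, and finally that they span $\QQ$. All computations take place inside $\WW(\Lambda)\subset\Der\Lambda$, using the explicit action formula \eqref{action} and the multiplication rules of Lemmas~\ref{L_BASIC_PROD} and~\ref{Lproducts}. Linear independence is the routine part, membership in $\QQ$ is a bookkeeping exercise with the relations already recorded, and the spanning statement — equivalently, that the $K$-span $U$ of the standard monomials is closed under the supercommutator — is the real content.

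For linear independence, I would use a leading-term/triangular argument. A standard monomial $r_{n-3}v_n$ of the first type of length $n$ sends $x_n\mapsto r_{n-3}$, a standard monomial $r_{n-5}x_{n-2}v_n$ of the second type of length $n$ sends $x_n\mapsto r_{n-5}x_{n-2}$, and $v_m(x_n)=0$ whenever $m>n$ by \eqref{action}. Hence in a finite linear dependence among standard monomials, evaluation on $x_n$ for the smallest length $n$ occurring annihilates all monomials of greater length and leaves a combination of the pairwise distinct Grassmann monomials $r_{n-3}$ and $r_{n-5}x_{n-2}$ (here one uses that a first-type tail involves only $x_0,\dots,x_{n-3}$, whereas a second-type monomial necessarily contains $x_{n-2}$). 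So all length-$n$ coefficients vanish, and one descends on $n$.

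Next I would show every standard monomial lies in $\QQ$. Starting from $v_0,v_1,v_2\in\QQ$, the relation $[v_i^2,v_{i+1}]=-v_{i+3}$ produces every $v_n$, and then the identities of Lemma~\ref{Lproducts}, combined with further brackets of the form $[v_j,\,r v_k]$, let one attach admissible tails $r_{n-3}$ (respectively $r_{n-5}x_{n-2}$) one Grassmann letter at a time, using that $v_j$ acting on such a tail lowers indices in the controlled way of \eqref{action} and that Grassmann squares vanish. This is carried out by induction on the length; the short lengths (up to about $10$) require a direct check, and this is precisely where the $24$ exceptional tails — all of which contain $x_0$ — are seen to be the ones that cannot be reached, i.e.\ the false monomials.

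The main step is that $U$ is a Lie sub-superalgebra. Given standard monomials $r v_n$ and $r' v_m$, I would expand $[r v_n,r'v_m]$ by the superderivation property into a sum of terms $\pm\,r\,v_n(r')\,v_m$, $\pm\,r'\,v_m(r)\,v_n$, and $\pm\,r r'\,[v_n,v_m]$, evaluate $v_n(r')$ via \eqref{action} and $[v_n,v_m]$ via Lemma~\ref{Lproducts}, and discard the many summands that vanish because two Grassmann letters coincide. One then has to recognise each surviving summand as (a Grassmann monomial)$\cdot v_k$ whose monomial part has precisely the shape of an admissible tail $r_{k-3}$ or neck-tail $r_{k-5}x_{k-2}$, and never that of a false monomial. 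This case analysis — organised by the residues modulo $3$ of the relevant indices, which govern both the holes $\hat x$ in \eqref{action} and the three cases of Lemma~\ref{Lproducts} — is where I expect the main obstacle to be; the delicate points are the brackets involving $v_0,v_1,v_2$ and the short monomials, where coincidences among small indices could a priori produce one of the $24$ false monomials, so one must verify that such terms cancel or do not arise. The self-similarity $\tau^i : \QQ\to L_i$ of Corollary~\ref{Cbases} helps here: brackets of $x_0$-free standard monomials are controlled by the same statement for $L_1\cong\QQ$, so the genuinely new analysis is confined to a bounded range of lengths. Once $U$ is known to be a subalgebra, it contains $v_0,v_1,v_2$, hence $\QQ\subseteq U$; together with the first two stages this shows the standard monomials form a basis of $\QQ$.
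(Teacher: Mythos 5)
Your outline matches the paper's proof: membership of the standard monomials is obtained by the same induction (generate the $v_n$ from $[v_i^2,v_{i+1}]=-v_{i+3}$, then build tails and delete letters, with ad hoc work at the short lengths), and the spanning step is the same expansion of $[r v_n, r' v_m]$ (with $n\le m$, so $v_m(r)=0$) into action-on-the-tail terms plus a head-collision term evaluated by Lemma~\ref{Lproducts}; your explicit evaluation-on-$x_n$ argument for linear independence is a sound addition which the paper leaves implicit.

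The genuine gap is in the spanning step, which is exactly where the content of Theorem~\ref{Tbasis3} lies: one must verify that no surviving term of such a bracket is a false monomial, i.e.\ that a missing letter among $x_0,x_3,x_5$ can never be restored without destroying another. You explicitly defer this case analysis ("where I expect the main obstacle to be"), whereas it occupies the bulk of the paper's argument (the checks that the head-collision terms of lengths $4,7,10$ and the actions on tails of lengths $7$ and $10$ never recreate the forbidden configurations, e.g.\ that producing $x_0$ via $v_0$ or $x_0v_3$ forces the loss of $x_3$ or $x_5$). Moreover, the one device you offer to confine this analysis is incorrect: it is not true that $x_0$-free standard monomials are governed by the statement for $L_1\cong\QQ$, because an $x_0$-free standard monomial need not lie in $L_1$. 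For instance $x_1x_2v_5$ is standard, yet $\tau^{-1}(x_1x_2v_5)=x_0x_1v_4$ is a false monomial, so (granting the theorem) $x_1x_2v_5\notin L_1=\tau(\QQ)$; hence the self-similar reduction does not apply to such brackets, and as stated it is in any case circular unless organized as an induction on length. The correct reason the dangerous checks form a bounded family is different: every false monomial contains $x_0$ and has length at most $10$, while every term of $[a,b]$ has head $v_m$, $v_{m+2}$ or $v_{m+3}$, so its length is at least $m$; thus a false term can only arise when $m\le 10$, and when neither factor contains $x_0$ nor equals $v_0$ no letter $x_0$ can appear at all (the action \eqref{action} and Lemma~\ref{Lproducts} only create letters of index $\ge n\ge 1$). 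Even with this observation, the finite but delicate verification that the paper carries out still has to be done; without it the proposal does not establish that the span of the standard monomials is closed under the bracket, and hence does not prove the theorem.
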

\begin{proof}
A) We prove that all standard monomials belong to $\QQ$.
A1) We start with monomials of the first type.
By Corollary~\ref{Cbases},  $\{v_i\mid i\ge 0\}\subset \QQ$.
Using Lemma~\ref{L_BASIC_PROD}, $[v_0,v_1]=-x_0v_3$ and $[v_1,v_2]=-x_1v_4$ belong to $\QQ$.
Thus, all non-false monomials of the first type of length at most 4 belong to $\QQ$.
This is the base of induction. Let $n\ge 5$ and assume that
the standard monomials of the first type of length less than $n$ belong to $\QQ$.
Using claim~(v) of Lemma~\ref{L_BASIC_PROD}, we get
\begin{equation}\label{n63}
[r_{n-6} v_{n-3}, v_{n-5}]= r_{n-6} [v_{n-5},v_{n-3}]=-r_{n-6}x_{n-5}x_{n-4}x_{n-3}v_{n}\in\QQ.
\end{equation}
Multiplying by $v_{n-5}$ and (or) $v_{n-4}$, $v_{n-3}$
we can delete any subset of letters $\{x_{n-5},x_{n-4},x_{n-3}\}$ in~\eqref{n63} and obtain all monomials of the first type of length $n$.
But this argument fails when $r_{n-6} v_{n-3}$ was a false monomial.
We have two cases.

a) Consider that $r_{n-6}v_{n-3}$ above is a false monomial of the first type of length 4, so $n=7$.
By setting $r_{n-6} v_{n-3}=x_1^*v_4$ in~\eqref{n63},
we get all standard monomials of the first type of degree 7 without $x_0$.
Using $[r_{2}v_5,v_4]=-r_2\hat x_3 x_4 v_7$ and deleting $x_4$ (if necessary), we obtain all
standard monomials of the first type of degree 7 without $x_3$.

b) Let $r_{n-6}v_{n-3}$ be a false monomial of the first type of length 7, so $n=10$.
Using
$$ [x_1^*x_2^*x_3^*x_4^*v_7, x_0^*v_5]=\pm x_0^*x_1^*x_2^*x_3^*x_4^* x_5x_6x_7 v_{10}\in\QQ,$$
and deleting (if necessary) letters $x_5,x_6,x_7$
we get all standard monomials of the first type of length 10.

A2) Next, we deal with monomials of the second type.
Using (formal) squares, we get $v_{n-3}^2=x_{n-2}v_{n}\in\QQ$ for all $n\ge 3$.
In particular, we obtain all non-false standard monomials of the second type of length at most~5.
Let $n\ge 6$.
We commute monomials of the first type with the pivot elements or their squares:
$$
[r_{n-6} v_{n-3}, x_{n-5}^* v_{n-3}]=\pm r_{n-6}x_{n-5}^* [v_{n-3},v_{n-3}]
=\pm 2 r_{n-6}x_{n-5}^* x_{n-2}v_{n}\in\QQ,\quad n\ge 6.
$$
As a rule, we get all required monomials of the second type.
The arguments fail in case $r_{n-6} v_{n-3}$ is a false monomial (of the first type).
a) The case of a false monomial of the first type of length 4.
Nevertheless, using $r_{n-6} v_{n-3}=x_1^*v_4$ above, we obtain
$[x_1^*v_4,x_2^*v_4]=\pm 2 x_1^*x_2^*x_5v_7\in\QQ$,
the required standard monomials of the second type of length 7, i.e. those without $x_0$.

b) Consider that $r_{n-6} v_{n-3}$ is a false monomial of the first type of length 7.
Nevertheless, we can get the following monomials:
\begin{align*}
   [x_1^*x_2^*x_3^*x_4^* v_7, x_5^*v_7]&=\pm 2\hat x_0 x_1^*x_2^*x_3^*x_4^* x_5^*\, x_8 v_{10}\in \QQ; \\
   [x_0^*x_1^*x_2^*\hat x_3x_4^* v_7, x_5^*v_7]&=\pm 2x_0^* x_1^*x_2^*\hat x_3x_4^* x_5^*\, x_8 v_{10}\in \QQ; \\
   [x_1^*x_2^*x_3^*x_4^* v_7, x_0^*v_7]&=\pm 2 x_0^*x_1^*x_2^*x_3^*x_4^* \hat x_5\, x_8 v_{10}\in \QQ.
\end{align*}
Thus, we can obtain all monomials of the second type of length 10, i.e. those that contain at most two of
the letters $\{x_0,x_3,x_5\}$, as required.

B) We prove that products of the standard monomials are expressed via the standard monomials.
We write two standard monomials as
$a= r_{n-2}v_n$, $b=\tilde r_{m-2}v_m$ and assume that their lengths satisfy $0\le n\le m$.
B1). Let $m \equiv n\, (\,\mathrm{mod}\, 3)$.
Using presentation~\eqref{recursive_present}, we have
\begin{align}
  a&=r_{n-2}\dd_n+r_{n+1}\dd_{n+3}+\cdots+r_{m-5}\dd_{m-3}+r_{m-2}v_m; \nonumber\\
  \label{sum_above}
  [a,b] &=\Big(r_{n-2}\dd_n(\tilde r_{m-2})+r_{n+1}\dd_{n+3}(\tilde r_{m-2})
  +\cdots+r_{m-5}\dd_{m-3}(\tilde r_{m-2})\Big)v_m\\
  &\quad +r''_{m-2}[v_m,v_m].
  \label{sum_above2}
\end{align}
The last term~\eqref{sum_above2} is of the second type because $r''_{m-2}[v_m,v_m]=2r''_{m-2}v_{m+1}v_{m+3}$.
If $b$ was of the first type, namely, $b=\tilde r_{m-3}v_m$, then
all terms~\eqref{sum_above} remain of the first type.
Assume that $b$ was of the second type $b=\tilde r_{m-5}x_{m-2}v_m$, then
all terms~\eqref{sum_above} remain to be of the second type.

We need to check that~\eqref{sum_above2} cannot yield a false monomial of the second type.
Suppose the contrary and it is false of length 10, then $m=7$.
The second factor $b$ is one of three types:
$b=\hat x_0x_1^*x_2^*x_3^*x_4^* v_7$, or $b=x_0^*x_1^*x_2^*\hat x_3x_4^* v_7$,
or $b=\hat x_0x_1^*x_2^* x_5 v_7$.
Consider different possibilities for the first factor $a$.
a) Let $n=7$, then the first factor $a$ is of the same three types.
Their mutual product does not contain one of the letters $\{x_0,x_3,x_5\}$.
b) Let $n=4$. Then the first factor in~\eqref{sum_above2} comes from the last term in
$a=r_2v_4=x_1^*(\dd_4+x_4x_5v_7)$ or $a=r_2v_4=x_2(\dd_4+x_4x_5v_7)$.
The product does not contain one of $\{x_0,x_3\}$.
c) Let $n=1$.
Then the first factor in~\eqref{sum_above2} comes from the last term in $a=v_1=\dd_1+x_1x_2\dd_3+ x_1x_2\hat x_3 x_4x_5 v_7$.
Again, the product does not contain one of $\{x_0,x_3\}$.
Now, let us check that~\eqref{sum_above2} cannot be a false monomial of the second type of length 7.
Otherwise, either $b=x_1^*v_4$ or $b=x_2v_4$.
The first factor $a$ is either of the same type or the last term in $a=\dd_1+x_1x_2v_4$.
Their products lack $x_0$, as required.
Also, the false monomial $x_0x_3v_5$ cannot appear in~\eqref{sum_above2}
because in this case $m=2$ but we have only the product $[v_2,v_2]=x_3v_5$.
Moreover, we cannot obtain the false monomial $x_0v_2$.

Similarly, one needs a special check that the action on tails~\eqref{sum_above} cannot produce false monomials.
Recall that we cannot change the type, i.e. a neck remains the same.
The case of a standard monomial of length 4 is trivial.
Next, consider a standard monomial of length 7.
Let it does not contain $x_0$.
(e.g. $b=\tilde r_{m-2}v_m=x_1^*x_2^*x_3^*x_4^* v_7$.)
We are acting by monomials of length at most 4.
Observe that all standard monomials of length 4 do not contain $x_0$, thus, the action by them cannot help.
The only possibility to obtain $x_0$ is to use either $x_0v_3=x_0(\dd_3+x_3x_4 v_6)$
or $v_0=\dd_0+x_0x_1\dd_3+x_0x_1x_3x_4v_6$.
Thus, we can obtain $x_0$ at price of loosing $x_3$ and the resulting monomial is not false.
If a standard monomial of length 7 lacked $x_3$,
then the cation cannot produce $x_3$,
because we act "at most" by $+\cdots \hat x_3 \dd_4+\cdots$.
Now, consider a standard monomial of the second type of length 10, namely
$b=\tilde r_{m-2}v_{m}=x_0^*x_1^*x_2^*x_3^*x_4^*x_5^*\, x_8v_{10}$.
If it is lacking $x_5$, then the result is lacking it as well,
because for this we need to kill a senior absent letter $x_7$, (recall that the neck $x_8$ is untouchable).
Next, assume that $b$ does not contain $x_3$, we can produce it only by using $\cdots x_2x_3\dd_5+\cdots$ or $x_3v_5=x_3(\dd_5+\cdots)$,
thus loosing $x_5$.
Finally, assume that $b$ lacks $x_0$.
The action by a monomial of length 5 (i.e. $a=r_{n-2}v_5=r_{n-2}(\dd_5+x_5x_6v_8)$) deletes $x_5$.
Recall that all standard monomials of length 4 do not contain $x_0$
and all their terms do not as well.
Consider a monomial of length 3: $a=x_0v_3=x_0(\dd_3+x_3x_4\dd_5+x_3x_4 x_6x_7 v_8)$,
it can yield $x_0$ but we loose either $x_3$ or $x_5$.
Again, the standard monomials of lengths 1,2 do not contain $x_0$
and all their terms do not as well.
It remains to consider the monomial of length 0:
$v_0=\dd_0+x_0x_1\dd_3+ x_0x_1x_3x_4\dd_5+x_0x_1x_3x_4x_6x_7v_9$.
Again, we can get $x_0$ but loose either $x_3$ or $x_5$.
All these considerations also apply to the actions in the brackets of cases B2), B3) below.

B2). Let $m-n\equiv 1\, (\,\mathrm{mod}\, 3)$.
Using presentation~\eqref{recursive_present},
\begin{align*}
  a&=r_{n-2}\dd_n+r_{n+1}\dd_{n+3}+\cdots+r_{m-6}\dd_{m-4}+r_{m-3}v_{m-1}; \\
  [a,b] &=\Big(r_{n-2}\dd_n(\tilde r_{m-2})+r_{n+1}\dd_{n+3}(\tilde r_{m-2})
  +\cdots+r_{m-6}\dd_{m-4}(\tilde r_{m-2})\Big)v_m+r''_{m-2}[v_{m-1},v_m].
\end{align*}
The last term is $r''_{m-2}[v_{m-1},v_m]=-r''_{m-2}x_{m-1}v_{m+2}$, which is of the first type,
one again needs to check that it cannot be false.
Consider length 4, then $m=2$ and we have only $[v_1,v_2]=-x_1v_4$.
Consider length 7, then $m=5$ and
either $x_1^*v_4$ or $x_2v_4$ is multiplied by either $x_0^*x_1^*x_2^*v_5$ or $x_3v_5$.
The product does not contain either $x_0$ or $x_3$.

B3). Let $m-n\equiv 2\, (\,\mathrm{mod}\, 3)$.
Using presentation~\eqref{recursive_present},
\begin{align*}
  a&=r_{n-2}\dd_n+r_{n+1}\dd_{n+3}+\cdots+r_{m-7}\dd_{m-5}+r_{m-4}v_{m-2}; \\
  [a,b] &=\Big(r_{n-2}\dd_n(\tilde r_{m-2})+r_{n+1}\dd_{n+3}(\tilde r_{m-2})
  +\cdots+r_{m-7}\dd_{m-5}(\tilde r_{m-2})\Big)v_m+r''_{m-2}[v_{m-2},v_m].
\end{align*}
The last term is
$r''_{m-2}[v_{m-2},v_m]=-r''_{m-2}x_{m-2}x_{m-1}x_{m}v_{m+3}$, which is of the first type.
We check that it cannot be false.
Consider length 4, then $m=1$ and there are no such products.
Consider length 7, then $m=4$ and we have either
$[v_2,x_1^*v_4]=\pm x_1^*x_2x_3x_4v_7$ or $[v_2,x_2v_4]=v_4$.
\end{proof}

\begin{Corollary}\label{Cbasis2}
Let $\ch K=p=2$. Then
\begin{enumerate}
\item
a basis of the Lie algebra $\QQ=\Lie(v_0,v_1,v_2)$ is given by the standard monomials of the first type;
\item
a basis of the Lie superalgebra $\QQ=\Lie(v_0,v_1,v_2)$,
as well as a basis of the restricted Lie (super)algebra $\Lie_p(v_0,v_1,v_2)$,
is given by
\begin{enumerate}
\item the standard monomials of the first type;
\item squares of the pivot elements: $\{x_{n-2}v_{n}\mid n\ge 3\}$.
\end{enumerate}
\end{enumerate}
\end{Corollary}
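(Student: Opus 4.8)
The plan is to re-run the proof of Theorem~\ref{Tbasis3}, keeping track of the two features forced by $\ch K=2$: first, $[v_i,v_i]=2v_i^2=0$, so the Lie bracket alone can no longer produce anything of the second type carrying a nonempty tail; second, the formal squares $v_i^{[2]}=x_{i+1}v_{i+3}$ survive only through the quadratic mapping (equivalently through the $p$-mapping, since for $p=2$ restricted Lie superalgebras and $\Z_2$-graded restricted Lie algebras coincide). Linear independence of all the monomials listed in (i) and (ii) is settled once and for all, and is characteristic-free: expanding $r_{n-3}v_n$ (resp.\ $x_{n-2}v_n$) by the recursive presentation~\eqref{recursive_present} exhibits the summand of smallest derivative index, $r_{n-3}\dd_n$ (resp.\ $x_{n-2}\dd_n$), as a leading term in the basis $\{\mathbf x^\alpha\dd_j\}$ of $\WW(\Lambda)$; these leading terms are pairwise distinct (in particular $r_{n-3}$ only involves $x_0,\dots,x_{n-3}$, so it can never coincide with $x_{n-2}$), so a triangularity argument on the smallest derivative index occurring in a vanishing finite combination gives the independence.

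For part (i) I would first note that part A of the proof of Theorem~\ref{Tbasis3}, which puts every standard monomial of the first type into $\QQ$, uses only iterated Lie brackets: the element $v_{i+3}=-[v_i,[v_i,v_{i+1}]]$ of Corollary~\ref{Cbases}, the relations of Lemma~\ref{L_BASIC_PROD}(iii),(v), and the deletion of tail letters by the operators $\ad v_\bullet$; so it stays valid verbatim for $\ch K=2$. For closure under the bracket I would re-run part B with $a=r_{n-3}v_n$ and $b=\tilde r_{m-3}v_m$ both of the first type: in the case $m\equiv n\ (\,\mathrm{mod}\, 3)$ the only term of~\eqref{sum_above}--\eqref{sum_above2} that could be of the second type is $r''_{m-2}[v_m,v_m]=2r''_{m-2}v_{m+1}v_{m+3}$, which now vanishes; the remaining summands of~\eqref{sum_above}, and the terms $r''_{m-2}[v_{m-1},v_m]$ and $r''_{m-2}[v_{m-2},v_m]$ in the other two congruence cases, are of the first type and non-false by exactly the checks already performed in the proof of Theorem~\ref{Tbasis3}. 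Hence the span $V_1$ of the first-type standard monomials is a subalgebra, and being generated by $v_0,v_1,v_2$ it equals the Lie algebra $\Lie(v_0,v_1,v_2)$.

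For part (ii) I would add the pivot squares: $v_{n-3}^{[2]}=x_{n-2}v_n$ for $n\ge 3$, so these, together with all first-type monomials from part (i), lie in the Lie superalgebra and in $\Lie_p(v_0,v_1,v_2)$. Put $V=V_1+\langle\, x_{n-2}v_n\mid n\ge 3\,\rangle_K$. Since $V$ contains the generators, it is enough to show $V$ is closed under the bracket and under squaring. For the bracket, the first$\times$first case is part (i); for a pivot square $x_{n-2}v_n=v_{n-3}^{[2]}$ one has $[v_{n-3}^{[2]},z]=(\ad v_{n-3})^2 z$ by~\eqref{square2}, and $\ad v_{n-3}$ sends a first-type monomial into $V_1$ and sends a pivot square $v_\ell^{[2]}$ to $[v_{n-3},v_\ell^{[2]}]=-[v_\ell,[v_\ell,v_{n-3}]]$ (by super-antisymmetry and~\eqref{square2}), again in $V_1$; so $\ad v_{n-3}(V)\subseteq V_1\subseteq V$ and hence $[x_{n-2}v_n,z]\in V$. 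For squaring, each $v_n$ annihilates $x_0,\dots,x_{n-3}$, hence supercommutes with $r_{n-3}$, so $(r_{n-3}v_n)^2=\pm r_{n-3}^2 v_n^2$, which is the pivot square $x_{n+1}v_{n+3}$ when $r_{n-3}=1$ and is $0$ otherwise (a tail with a Grassmann factor squares to zero), and similarly $(x_{n-2}v_n)^2=-x_{n-2}^2 v_n^2=0$; the square of a general element then lands in $V$ via $(\lambda w)^{[2]}=\lambda^2 w^{[2]}$ and $(w_1+w_2)^{[2]}=w_1^{[2]}+[w_1,w_2]+w_2^{[2]}$. This covers the quadratic mapping on odd elements and, for $p=2$, the $p$-mapping on even elements, so $V=\Lie(v_0,v_1,v_2)=\Lie_p(v_0,v_1,v_2)$; with the independence from the first paragraph this is the asserted basis.

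The genuinely new ingredient, compared with Theorem~\ref{Tbasis3}, is controlling the output of the quadratic/$p$-mapping: one must check both that squares of basis monomials never leave $V$ (this rests on the supercommutation of $v_n$ with the low-index Grassmann letters) and that bracketing with a pivot square collapses to iterated bracketing with a single pivot element through $(\ad v)^2 z=[v^{[2]},z]$. That is precisely what keeps the general second-type monomials $r_{n-5}x_{n-2}v_n$ with a nonempty tail from reappearing, so that in characteristic $2$ only the bare squares $x_{n-2}v_n$ remain; the rest is a routine specialization of the proof of Theorem~\ref{Tbasis3}.
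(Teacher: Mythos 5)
Your proposal is correct and follows exactly the route the paper intends: the Corollary is stated without proof as a direct specialization of the proof of Theorem~\ref{Tbasis3}, where in characteristic $2$ the terms $r''_{m-2}[v_m,v_m]=2(\dots)$ and the products $\pm 2\,r_{n-6}x_{n-5}^*x_{n-2}v_n$ vanish, so only the bare pivot squares $v_{n-3}^{[2]}=x_{n-2}v_n$ survive via the quadratic/$p$-mapping. Your extra bookkeeping (leading-term independence, closure of the span under the formal square using $(\ad v)^2z=[v^{[2]},z]$ and the vanishing of squares of monomials with nonempty tails) is precisely the omitted verification and is sound.
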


\section{Weight functions, $\NO^3$-gradation, and three coordinate systems}
\label{Sweight}

In this section we introduce different weight functions.
Using theses functions we prove that our algebras are $\NO^3$-graded my multidegree in the generators and derive further corollaries.
We introduce three coordinate systems that allow to put monomials in space and determine their positions.

We start with the Lie superalgebra $\WW(\Lambda_I)$
of special superderivations of the Grassmann algebra
$\Lambda_I=\Lambda(x_i\mid i\in I)$
and consider a subalgebra spanned by {\it pure} Lie monomials:
$$\WW_{\mathrm{fin}}(\Lambda_I)= \langle x_{i_1}\cdots x_{i_m}\dd_j
\mid i_k,j\in I\rangle_K\subset \WW(\Lambda_I).$$
Define a {\it weight function} on the Grassmann variables and respective superderivatives related as:
$$wt(\dd_i)=-wt(x_i)=\alpha_i\in\C,\qquad i\geq 0,$$
and extend it to pure Lie monomials as
$wt(x_{i_1}\cdots x_{i_m}\dd_j)=-\a_{i_1}-\cdots-\a_{i_m}+\a_j$, $i_k,j\in I$.
One checks that the weight function is {\it additive}, namely,
$wt([w_1,w_2])=wt(w_1)+wt(w_2)$, where $w_1,w_2$ are pure Lie monomials.
The weight function is also extended to an associative hull $\Alg(\WW_{\mathrm{fin}}(\Lambda_I))$
and it is additive on associative products of its monomials.

Now we return to our algebras $\QQ=\Lie(v_0,v_1,v_2)$ and $\AA=\Alg(v_0,v_1,v_2)$.
We want to extend  a weight function on the pivot elements so that
all terms in~\eqref{pivot2} have the same weight.
Namely, we additionally assume that the weight function satisfies the equalities:
\begin{eqnarray*}
wt(v_i)=wt(\dd_i)=\alpha_{i}=-\alpha_{i}-\alpha_{i+1}+\alpha_{i+3},\qquad i\ge 0.
\end{eqnarray*}
We get a recurrence relation
\begin{equation}\label{recorrencia}
\alpha_{i+3}=\alpha_{i+1}+2\alpha_{i},\qquad i\geq 0.
\end{equation}
It has the characteristic polynomial $t^3-t-2=0$.
Using Cardano's formula, denote
$$\epsilon=e^{2/3\pi i}=\frac{-1+\sqrt 3i}2,\qquad
\theta_1=\sqrt[3]{1+\sqrt{26/27}}\approx 1.255,\qquad \theta_2=\sqrt[3]{1-\sqrt{26/27}}\approx 0.265.$$
Observe that $\theta_1\theta_2=1/3$.
One has three different roots:
\begin{align*}
t_{k}=\epsilon^k \theta_1+ \epsilon^{-k} \theta_2,\qquad k=0,1,2.
\end{align*}
Denote these roots as (we keep these notations for the whole of the paper):
\begin{align*}
\lambda&=t_0=\theta_1+\theta_2 \approx 1.5214  
,\\
\mu&=t_1= \epsilon \theta_1+\epsilon^2 \theta_2\approx -0.761+0.858i,\\   
\bar \mu&=t_2=\epsilon^2 \theta_1+\epsilon \theta_2\approx -0.761-0.858i.
\end{align*}
By Viet's formulas, one has
\begin{align*}
&\lambda+\mu+\bar\mu=0;\\
&\lambda\mu+\lambda\bar\mu+\mu\bar\mu=-1;\\
&\lambda\mu\bar\mu=2.
\end{align*}
Thus, $|\mu|=\sqrt{2/\lambda}\approx1.147$. 
The characteristic equation also yields
\begin{equation}\label{frac2}
\frac 2 \lambda=\lambda^2-1,\quad \frac 2\mu=\mu^2-1,\quad \frac 2{\bar\mu}=\bar\mu^2-1.
\end{equation}

Thus, a weight function  $wt(*)$ satisfies
$wt(\dd_n)=wt(v_n)=-wt(x_n)$, $n\ge 0$.
Moreover, by construction, all pure Lie monomials of the expansion of a pivot element~\eqref{pivot}
have the same weight as the pivot element.

Below, a {\it monomial} is
any (Lie or associative) product of the letters
$\lbrace x_i,\dd_i,v_i\mid i\geq 0\rbrace\subset \End\Lambda$.

\begin{Lemma} \label{Lpesos}
We identify weight functions with the space of solutions of recurrence equation~\eqref{recorrencia}, then
\begin{enumerate}
\item
A basis of the space of weight functions given by:
\begin{align*}
\wt(v_n)&=\lambda^n,\quad n\geq 0,\quad \text {{\rm (weight)}};\\
\swt(v_n)&=\mu^n,\quad n\geq 0,\quad \text{{\rm (superweight)}};\\
\sswt(v_n)&=\bar\mu^n,\quad n\geq 0,\quad \text{{\rm (conjugate superweight)}}.
\end{align*}
\item We replace the superweight functions by two real functions:
\begin{align*}
\wt_1(v_n)&=\Re(\mu^n)=\frac{\mu^n+\bar\mu^n}2,\quad n\geq 0;\\
\wt_2(v_n)&=\Im(\mu^n)=\frac{\mu^n-\bar\mu^n}{2i},\quad  n\geq 0.
\end{align*}
\item We combine these functions together into two {\rm vector weight functions}:
\begin{align*}
\Wt(v_n)&=\left(\wt(v_n),\swt(v_n),\sswt(v_n)\right)=(\lambda^n,\mu^n,\bar\mu^n),\quad n\geq 0,
\quad \text {{\rm (vector weight)}};\\
\WtR(v_n)&=\left(\wt(v_n),\wt_1(v_n),\wt_2(v_n)\right)=(\lambda^n,\Re(\mu^n),\Im(\mu^n)),\quad n\geq 0,
\quad \text {{\rm (twisted vector weight)}}.
\end{align*}
\item
The weight functions are well defined on monomials.
They are additive on (Lie or associative) products of monomials, e.g.,
$\Wt(a\cdot b)=\Wt(a)+\Wt(b)$, where $a,b$ are monomials of $\AA$.
\item
Let $w$ be a monomial, then
$\WtR (w)=(\wt w, \Re (\swt w),\Im(\swt w) ).$
\end{enumerate}
\end{Lemma}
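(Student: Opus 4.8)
The plan is to deduce the entire statement from the elementary theory of constant–coefficient linear recurrences, the only non-formal ingredient being that the recurrence~\eqref{recorrencia} was imposed precisely so that every summand of a pivot element~\eqref{pivot} carries one and the same weight, and this \emph{for every} solution of~\eqref{recorrencia}, not only for $\lambda^n$. So I would treat part (iv) first, since the identification of "weight functions" with solutions of~\eqref{recorrencia} presupposed in the statement rests on it, and parts (i)--(iii), (v) are then bookkeeping.

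For (iv), first I would fix the grading concretely. On the span in $\End\Lambda$ of the pure operator monomials in the letters $\{x_i,\dd_i\}$, the assignment $\wt(\dd_i)=\alpha_i=-\wt(x_i)$ (with $\wt(1)=0$), extended additively, is well defined and additive on products, because all the reduction rules among the letters, namely $x_i^2=\dd_i^2=0$, $x_ix_j=-x_jx_i$ and $\dd_i\dd_j=-\dd_j\dd_i$ and $\dd_ix_j=-x_j\dd_i$ for $i\ne j$, and $\dd_ix_i=1-x_i\dd_i$, are weight-homogeneous for any choice of the sequence $(\alpha_i)$. Next, imposing that the summands of $v_i=\dd_i+x_ix_{i+1}v_{i+3}+\cdots$ in~\eqref{pivot} all have the same weight is equivalent to $\alpha_i=-\alpha_i-\alpha_{i+1}+\alpha_{i+3}$, i.e.\ to~\eqref{recorrencia}; granting~\eqref{recorrencia}, an easy induction along the presentation~\eqref{recursive_present} shows that every term of $v_i$ has weight $\alpha_i$, hence $v_i$ is weight-homogeneous of weight $\alpha_i$, and therefore every Lie or associative monomial in $\{x_i,\dd_i,v_i\}$ is weight-homogeneous with weight additive on products (additivity on Lie brackets coming from $[a,b]=ab-(-1)^{|a||b|}ba$, both terms of which have weight $\wt(a)+\wt(b)$). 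This is the crux, and the step most prone to hidden trouble: a monomial in the $v_i$ is an \emph{infinite} series of pure monomials, and it is exactly the recurrence that makes such a series homogeneous; I would make sure the induction is phrased in terms of the finite truncations in~\eqref{recursive_present}.

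The remainder is linear algebra. The recurrence~\eqref{recorrencia} has degree three, so its solution space is three-dimensional, and by the identification justified in (iv) this is the space of weight functions. Its characteristic polynomial $t^3-t-2$ has the three pairwise distinct roots $\lambda,\mu,\bar\mu$ exhibited above (it suffices to note that $\lambda$ is real while $\mu,\bar\mu$ form a genuine complex-conjugate pair, or that $\gcd(t^3-t-2,\,3t^2-1)=1$), so the geometric sequences $(\lambda^n)$, $(\mu^n)$, $(\bar\mu^n)$ are linearly independent---the relevant Vandermonde determinant is nonzero---hence form a basis, which is (i). For (ii), the coefficients of~\eqref{recorrencia} being real, any $\C$-combination of solutions is a solution, in particular $\wt_1(v_n)=\Re(\mu^n)$ and $\wt_2(v_n)=\Im(\mu^n)$; the passage from $(\swt,\sswt)$ to $(\wt_1,\wt_2)$ is an invertible linear change of coordinates, so $\{\wt,\wt_1,\wt_2\}$ is again a basis. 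Part (iii) is just the repackaging of these scalar functions as vectors.

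Finally, for (v), the maps $\Re,\Im\colon\C\to\R$ are $\R$-linear, so $\Re\circ\swt$ and $\Im\circ\swt$ are still additive on products of monomials; on each letter $v_n$ (and on each $x_i,\dd_i$) they agree respectively with the second and third coordinates of $\WtR$ by the very definition of $\wt_1,\wt_2$; since two functions that are additive on products and agree on the generating letters agree on all monomials, this yields $\WtR(w)=(\wt w,\Re(\swt w),\Im(\swt w))$ for every monomial $w$, completing the proof.
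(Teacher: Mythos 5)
Your argument is correct and follows essentially the same route as the paper: the paper's own proof only verifies claim (v) (by checking it on pivot elements and extending by additivity), while claims (i)--(iv) are exactly the content of the discussion preceding the lemma — additivity of weights of pure monomials, the recurrence~\eqref{recorrencia} forcing all terms of each $v_i$ to be homogeneous, and the standard fact that the three distinct roots $\lambda,\mu,\bar\mu$ of $t^3-t-2$ give a basis of the solution space. Your write-up merely spells out in more detail what the paper asserts with ``one checks,'' including the correct care with the infinite sums via the truncations~\eqref{recursive_present}.
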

\begin{proof}
Let us check the last claim.
Let $w$ be a pivot element, the equality follows by definition.
Now, the relation extends to all monomials by additivity.
\end{proof}
As a first application, we establish $\NO^3$-gradations.
\begin{Theorem}\label{TZ3graduacao}
The Lie superalgebra $\QQ=\Lie(v_0,v_1,v_2)$ and its associative hull
$\AA=\Alg(v_0,v_1,v_2)$ are $\NO^3$-graded
by multidegree in the generators $\lbrace v_0,v_1,v_2\rbrace$:
$$\QQ=\mathop{\oplus}\limits_{n_1,n_2,n_3\geq 0}\QQ_{n_1, n_2,n_3},\qquad
  \AA=\mathop{\oplus}\limits_{n_1,n_2,n_3\geq 0}\AA_{n_1, n_2,n_3}.$$
\end{Theorem}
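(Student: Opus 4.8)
The plan is to exhibit $\QQ$ and $\AA$ as homomorphic images of free graded objects and then use the weight functions of Lemma~\ref{Lpesos} to check that the resulting decomposition is a \emph{direct} sum. Concretely, I would fix the surjection $\pi$ from the free Lie superalgebra $F$ on three odd generators $y_0,y_1,y_2$ (in case $\ch K=2$, the free such object also closed under the quadratic mapping, which multiplies multidegree by $2$) onto $\QQ$, given by $y_i\mapsto v_i$, and likewise the surjection from the free associative superalgebra on $y_0,y_1,y_2$ onto $\AA$. Both free objects are $\NO^3$-graded by multidegree in the $y_i$, so, setting $\QQ_{n_1,n_2,n_3}:=\pi(F_{n_1,n_2,n_3})$ and similarly for $\AA$, one obtains for free that $\QQ=\sum_{\mathbf n\in\NO^3}\QQ_{\mathbf n}$ with $[\QQ_{\mathbf n},\QQ_{\mathbf m}]\subseteq\QQ_{\mathbf n+\mathbf m}$, together with the analogous statement for the associative product of $\AA$. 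Everything then reduces to showing that these sums are direct.

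For directness I would argue via weight-homogeneity. By construction, every pure Lie monomial occurring in the expansion~\eqref{recursive_present} of a pivot element $v_i$ has the same vector weight $\Wt=(\lambda^i,\mu^i,\bar\mu^i)$; this is exactly what the recurrence~\eqref{recorrencia} was set up to guarantee. Since the weight functions are additive on (Lie or associative) products of monomials (Lemma~\ref{Lpesos}(iv)), any monomial of multidegree $(n_1,n_2,n_3)$ in $v_0,v_1,v_2$ is again weight-homogeneous, all of its pure-monomial components sharing the weight $\Phi(n_1,n_2,n_3):=(n_1+n_2\lambda+n_3\lambda^2,\,n_1+n_2\mu+n_3\mu^2,\,n_1+n_2\bar\mu+n_3\bar\mu^2)$; hence $\QQ_{n_1,n_2,n_3}$ and $\AA_{n_1,n_2,n_3}$ lie in the subspace spanned by the pure monomials of that one weight. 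Distinct pure monomials are linearly independent, so homogeneous subspaces of distinct weights intersect only in $0$; and the map $\Phi\colon\NO^3\to\C^3$ is injective, being the restriction of the linear map whose matrix is the Vandermonde matrix with rows $(1,\lambda,\lambda^2),(1,\mu,\mu^2),(1,\bar\mu,\bar\mu^2)$, which is invertible because $\lambda,\mu,\bar\mu$ are the three distinct roots of $t^3-t-2$. (In fact the single real weight $\wt$ already suffices: $1,\lambda,\lambda^2$ are $\Q$-linearly independent since $t^3-t-2$ is irreducible over $\Q$, so $n_1+n_2\lambda+n_3\lambda^2$ separates the points of $\NO^3$.) Thus the spaces $\QQ_{\mathbf n}$, and the $\AA_{\mathbf n}$, lie in pairwise independent weight subspaces, so the sums are direct and the asserted $\NO^3$-gradings follow.

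The one point that needs a word of care is that elements of $\QQ\subset\WW(\Lambda)$ and $\AA\subset\End\Lambda$ are a priori infinite $K$-combinations of pure monomials; but this causes no difficulty, since a putative relation $\sum_{\mathbf n}a_{\mathbf n}=0$ with $a_{\mathbf n}\in\QQ_{\mathbf n}$ (finitely many nonzero) can be tested against each individual pure monomial $x_{i_1}\cdots x_{i_m}\dd_j$ separately, and such a monomial has a unique well-defined weight, hence receives a contribution from at most one of the $a_{\mathbf n}$; so each $a_{\mathbf n}$ vanishes. I expect the only genuine content here to be the weight-homogeneity of the pivot elements, which is essentially built into their definition; the remaining steps — passing to the free algebra, the Vandermonde/irreducibility argument for injectivity of $\Phi$, and the coefficientwise check — are formal.
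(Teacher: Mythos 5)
Your proposal is correct and follows essentially the same route as the paper: both define $\QQ_{n_1,n_2,n_3}$ (resp.\ $\AA_{n_1,n_2,n_3}$) as the span of products of the given multidegree and then use the additive vector weight of Lemma~\ref{Lpesos}, together with the invertibility of the Vandermonde matrix in the distinct roots $\lambda,\mu,\bar\mu$ (injectivity of your $\Phi$), to see that distinct multidegree components consist of combinations of pure monomials of distinct weights, so the sum is direct. Your extra remarks (the free-algebra packaging, the observation that the single real weight already separates multidegrees by irreducibility of $t^3-t-2$, and the coefficientwise check for infinite sums) are only more explicit versions of steps the paper treats tersely.
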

\begin{proof}
By Lemma~\ref{Lpesos}, the generators have
the following vector weights:
$$\Wt(v_0)=(1,1,1),\quad
\Wt(v_1)=(\lambda,\mu,\bar \mu),\quad
\Wt(v_2)=(\lambda^2,\mu^2,\bar \mu^2).$$
For any $n_1,n_2,n_3\geq 0$, let $\QQ_{n_1 n_2 n_3}\subset\QQ$ be the subspace spanned
by all Lie products of multidegree $(n_1,n_2,n_3)$ in $\{v_0,v_1,v_2\}$.
By Lemma~\ref{Lpesos}, all $v\in\QQ_{n_1 n_2 n_3}$ have the same vector weight:
\begin{equation*}
\Wt(v)=n_1\Wt(v_0)+n_2\Wt(v_1)+n_3\Wt(v_2).
\end{equation*}
Elements of $\QQ_{n_1, n_2, n_3}\subset\WW(\Lambda_I)$
are infinite linear combinations of pure Lie monomials having the same vector weight.
Since $\Wt(v_0)$, $\Wt(v_1)$, $\Wt(v_2)$ are linearly independent,
different components $\QQ_{n_1, n_2,n_3}$ and $\QQ_{n_1', n_2',n_3'}$  have different vector weights,
hence their elements are expressed via different sets of pure Lie monomials.
Hence, the sum of the components  is direct.
The $\NO^3$-gradation follows by definition of these components.
\end{proof}

Given a nonzero homogeneous element
$v\in \AA_{n_1 n_2 n_3}$, $n_1,n_2,n_3\geq 0$, we define its {\it multidegree (vector)}
and a  {\it (total) degree}:
$$\Gr(v)=(n_1,n_2,n_3)\in\NO^3\subset\mathbb{R}^3,\qquad \deg(v)=n_1+n_2+n_3.$$
We put it in space using {\it standard coordinates} $(X_1,X_2,X_3)\in\mathbb{R}^3$,
which we also call {\it multidegree coordinates}.
Thus, we write $\Gr(v)=(n_1,n_2,n_3)=(X_1,X_2,X_3)$.
We also introduce complex {\it weight coordinates} $(Z_1,Z_2,Z_3)=\Wt(v)\in\C^3$
and real {\it twisted (weight) coordinates} $(Y_1,Y_2,Y_3)=\WtR(v)\in\R^3$.

Using Lemma~\ref{Lpesos}, we introduce {\it transition matrices}:
\begin{align}
\label{matrixB}
B=\Big(\Wt^T(v_0),\Wt^T(v_1),\Wt^T(v_2)\Big)&=
\begin{pmatrix}
    1 & \lambda & \lambda^2 \\
    1 & \mu   & \mu^2 \\
    1 & \bar\mu & \bar \mu^2
  \end{pmatrix};\\
\nonumber
C=\Big(\WtR{}^T(v_0),\WtR{}^T(v_1),\WtR{}^T(v_2)\Big)&=
\begin{pmatrix}
    1 & \lambda & \lambda^2 \\
    1 & \frac{\mu+\bar \mu}2   & \frac{\mu^2+\bar \mu^2}{2} \\
    0 &  \frac{\mu-\bar \mu}{2i}& \frac{\mu^2-\bar \mu^2}{2i}
  \end{pmatrix}
\approx
\begin{pmatrix}
  1 & 1.521   & 2.313 \\
  1 & -0.761  &  -0.157 \\
  0 &  0.858  &  -1.306
\end{pmatrix}.
\end{align}
\begin{Lemma}\label{Linv}
$$B^{-1}
=\begin{pmatrix}
  \frac{2/\lambda}{3\lambda^2-1}
          &\frac{2/\mu}{3\mu^2-1}
                  & \frac{2/\bar\mu}{3\bar\mu^2-1}\\
  \frac{\lambda}{3\lambda^2-1}
         &\frac{\mu}{3\mu^2-1}
                  & \frac{\bar\mu}{3\bar\mu^2-1}\\
  \frac{1}{3\lambda^2-1}
         & \frac{1}{3\mu^2-1}
                  & \frac{1}{3\bar\mu^2-1}
  \end{pmatrix}.
$$
\end{Lemma}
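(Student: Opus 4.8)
The matrix $B$ is the Vandermonde matrix with nodes $\lambda,\mu,\bar\mu$, which are the three \emph{distinct} roots of the characteristic polynomial $f(t)=t^3-t-2$; hence $B$ is invertible and its inverse is governed by Lagrange interpolation at these nodes. The plan is to identify the columns of the matrix $M$ in the statement with the coefficient vectors of the Lagrange basis polynomials for $\lambda,\mu,\bar\mu$, and then to verify the product $BM=I_3$ directly.

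First I would write out the Lagrange basis polynomials. Since $f(t)=(t-\lambda)(t-\mu)(t-\bar\mu)$ and $f'(t)=3t^2-1$, one gets
$$\ell_1(t)=\frac{(t-\mu)(t-\bar\mu)}{(\lambda-\mu)(\lambda-\bar\mu)}=\frac{(t-\mu)(t-\bar\mu)}{3\lambda^2-1},$$
and likewise $\ell_2(t)=(t-\lambda)(t-\bar\mu)/(3\mu^2-1)$ and $\ell_3(t)=(t-\lambda)(t-\mu)/(3\bar\mu^2-1)$. Next I would expand the numerators via Viet's formulas $\lambda+\mu+\bar\mu=0$ and $\lambda\mu\bar\mu=2$: for instance $\mu+\bar\mu=-\lambda$ and $\mu\bar\mu=\lambda\mu\bar\mu/\lambda=2/\lambda$, so $(t-\mu)(t-\bar\mu)=t^2+\lambda t+2/\lambda$, and cyclically. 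Therefore
$$\ell_1(t)=\frac{1}{3\lambda^2-1}\Bigl(\tfrac{2}{\lambda}+\lambda t+t^2\Bigr),$$
with the analogous expressions for $\ell_2,\ell_3$ obtained by replacing $\lambda$ by $\mu,\bar\mu$.

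Reading off coefficients, the $k$-th column of $M$ is precisely the vector $(c_{0,k},c_{1,k},c_{2,k})^T$ of coefficients of $\ell_k(t)=c_{0,k}+c_{1,k}t+c_{2,k}t^2$. Writing $x_1=\lambda$, $x_2=\mu$, $x_3=\bar\mu$, so that $B_{ij}=x_i^{\,j-1}$, the $(i,k)$ entry of $BM$ is $\sum_{j=1}^{3}x_i^{\,j-1}c_{j-1,k}=\ell_k(x_i)=\delta_{ik}$ by the defining property of the Lagrange polynomials. Hence $BM=I_3$, and since $B$ is invertible this gives $M=B^{-1}$.

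There is no genuinely difficult step; the only care needed is the bookkeeping — matching row $j$ of $B^{-1}$ with the coefficient of $t^{j-1}$ and column $k$ with the node $x_k$ — together with the correct use of Viet's formulas, in particular $\mu\bar\mu=2/\lambda$ and its cyclic analogues coming from $\lambda\mu\bar\mu=2$. As an alternative, one may skip interpolation and simply multiply $B$ by the displayed matrix entrywise, reducing each entry to $\delta_{ik}$ using $\lambda+\mu+\bar\mu=0$, $\lambda\mu+\lambda\bar\mu+\mu\bar\mu=-1$, $\lambda\mu\bar\mu=2$ together with $2/\lambda=\lambda^2-1$ and its cyclic analogues~\eqref{frac2}.
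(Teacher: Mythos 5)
Your proof is correct and follows essentially the same route as the paper: both compute $B^{-1}$ as the inverse of a Vandermonde matrix, i.e. via the Lagrange interpolation coefficients at the nodes $\lambda,\mu,\bar\mu$, and both use Viet's formulas to rewrite the symmetric functions of the other two roots ($\mu+\bar\mu=-\lambda$, $\mu\bar\mu=2/\lambda$, and cyclically) so as to obtain the displayed entries. The only cosmetic difference is that you get the denominators $3\lambda^2-1$ directly as $f'(\lambda)$ for $f(t)=t^3-t-2$, whereas the paper expands $(\lambda-\mu)(\lambda-\bar\mu)=\lambda^2+\lambda^2+2/\lambda$ and then uses $2/\lambda=\lambda^2-1$ from~\eqref{frac2}.
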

\begin{proof}
Using the formula of the inverse matrix, one computes the inverse of Vandermonde's matrix
(alternatively, a direct check shows that $B\cdot \text{(the matrix below)}=I$):
$$ 
B^{-1}
=\begin{pmatrix}
  \frac{\mu\bar\mu}{(\lambda-\mu)(\lambda-\bar\mu)}
          &\frac{\lambda\bar\mu}{(\mu-\lambda)(\mu-\bar\mu)}
                  & \frac{\lambda\mu}{(\bar\mu-\lambda)(\bar\mu-\mu)}\\
  \frac{-\mu-\bar\mu}{(\lambda-\mu)(\lambda-\bar\mu)}
         &\frac{-\lambda-\bar\mu}{(\mu-\lambda)(\mu-\bar\mu)}
                  & \frac{-\lambda-\mu}{(\bar\mu-\lambda)(\bar\mu-\mu)}\\
  \frac{1}{(\lambda-\mu)(\lambda-\bar\mu)}
         & \frac{1}{(\mu-\lambda)(\mu-\bar\mu)}
                  & \frac{1}{(\bar\mu-\lambda)(\bar\mu-\mu)}
  \end{pmatrix}.
$$
Using Viet's formulas and~\eqref{frac2}, we
treat the denominators in the columns above as follows:
$(\lambda-\mu)(\lambda-\bar\mu)=\lambda^2-(\mu+\bar\mu)\lambda+\mu\bar\mu
=\lambda^2-(-\lambda)\lambda+2/\lambda=3\lambda^2-1$.
\end{proof}
One also has
$$C^{-1}\approx
\begin{pmatrix}
  0.221 & 0.779   & 0.298 \\
  0.256 & -0.256  & 0.484 \\
  0.168 & -0.168  & -0.447
\end{pmatrix}.
$$

\begin{Lemma}\label{Ltrans}
Let $v\in\AA$ be a monomial
with the multidegree coordinates $\Gr(v)=(X_1,X_2,X_3)\in\NO^3$.
Let $\Wt^T(v)=(Z_1,Z_2,Z_3)$ and $\WtR{}^T(v)=(Y_1,Y_2,Y_3)$ be the respective weight and twisted coordinates. Then
\begin{enumerate}
\item  $(Y_1,Y_2,Y_3)=(Z_1,\Re Z_2,\Im Z_2)$;
\item $Z_1\in \R$ and $Z_3=\bar Z_2$;
\item  $\Wt^T(v)=B\cdot\Gr^T(v)$;
\item  $\WtR{}^T(v)=C\cdot \Gr^T(v)$.
\end{enumerate}
\end{Lemma}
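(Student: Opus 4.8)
The plan is to derive all four claims from a single fact already in hand: the additivity of the vector weight functions on (Lie or associative) products of monomials, established in Lemma~\ref{Lpesos}(iv), combined with the explicit values $\Wt(v_j)=(\lambda^j,\mu^j,\bar\mu^j)$ and $\WtR(v_j)=(\lambda^j,\Re\mu^j,\Im\mu^j)$ for $j=0,1,2$.

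First I would prove (iii). Write $\Gr(v)=(n_1,n_2,n_3)$, so $v$ is, up to the grading, a product of $n_1$ copies of $v_0$, $n_2$ copies of $v_1$, and $n_3$ copies of $v_2$. Additivity of $\Wt$ gives $\Wt(v)=n_1\Wt(v_0)+n_2\Wt(v_1)+n_3\Wt(v_2)$; transposing, and recalling that by~\eqref{matrixB} the columns of $B$ are exactly $\Wt^T(v_0),\Wt^T(v_1),\Wt^T(v_2)$, the right-hand side is $B\cdot(n_1,n_2,n_3)^T=B\cdot\Gr^T(v)$. Claim (iv) is proved verbatim with $\Wt$ replaced by $\WtR$ and $B$ by $C$.

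Claim (i) follows immediately from Lemma~\ref{Lpesos}(v), which says $\WtR(w)=(\wt w,\Re(\swt w),\Im(\swt w))$ for every monomial $w$: since by definition $\Wt(v)=(\wt v,\swt v,\sswt v)=(Z_1,Z_2,Z_3)$ and $\WtR(v)=(Y_1,Y_2,Y_3)$, this is precisely $(Y_1,Y_2,Y_3)=(Z_1,\Re Z_2,\Im Z_2)$. For (ii), from (iii) one reads off $Z_1=\wt v=n_1+n_2\lambda+n_3\lambda^2\in\R$ because $\lambda\in\R$, while $Z_2=\swt v=n_1+n_2\mu+n_3\mu^2$ and $Z_3=\sswt v=n_1+n_2\bar\mu+n_3\bar\mu^2$, so $Z_3=\bar Z_2$ since the $n_i$ are real and complex conjugation fixes $\lambda$ and interchanges $\mu$ and $\bar\mu$.

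The only point needing a word of care — what I would flag as the (minor) obstacle — is that a monomial of $\AA$ has a well-defined multidegree, hence a well-defined vector weight, independent of how it is expressed as a product of $v_0,v_1,v_2$; this is guaranteed by the $\NO^3$-grading of Theorem~\ref{TZ3graduacao} together with additivity of $\Wt$ in Lemma~\ref{Lpesos}(iv), which makes the computation in (iii) consistent. Everything else is routine bookkeeping.
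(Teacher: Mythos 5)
Your proposal is correct and follows essentially the same route as the paper: additivity of the vector weight functions from Lemma~\ref{Lpesos} together with the definition of $B$ (and $C$) as the matrices whose columns are the vector weights of the generators gives (iii) and (iv), while (i) and (ii) are read off from Lemma~\ref{Lpesos} and the conjugate roots $\mu,\bar\mu$. The extra remark on well-definedness of the multidegree (Theorem~\ref{TZ3graduacao}) is a sensible, if implicit in the paper, point of care.
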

\begin{proof}
The first two claims follow from Lemma~\ref{Lpesos}.
By assumption, $v$ is a product that involves $X_1$ factors $v_0$,  $X_2$ factors $v_1$, and  $X_3$ factors $v_2$.
We check the last two claims using additivity and~\eqref{matrixB}
\begin{equation*}
\Wt^T(v)=X_1\Wt^T(v_0)+X_2\Wt^T(v_1)+X_3\Wt^T(v_2)
=B\cdot
\begin{pmatrix} X_1\\X_2\\X_3 \end{pmatrix}.
\qedhere
\end{equation*}
\end{proof}
\begin{Corollary}
Let $(X_1,X_2,X_3)\in\mathbb{R}^3$ be a point of space in standard coordinates.
We introduce its {\it weight coordinates} $(Z_1,Z_2,Z_3)$ and {\it twisted coordinates} $(Y_1,Y_2,Y_3)$
using formulas of Lemma.
\end{Corollary}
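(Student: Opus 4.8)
The plan is to note that the Corollary carries no new mathematical content beyond invertibility of the two transition matrices: it simply extends the assignments of Lemma~\ref{Ltrans}, namely $\Gr^T(v)\mapsto\Wt^T(v)=B\cdot\Gr^T(v)$ and $\Gr^T(v)\mapsto\WtR{}^T(v)=C\cdot\Gr^T(v)$, from the lattice $\NO^3$ of monomial multidegrees to all of $\R^3$. Thus, for an arbitrary point $(X_1,X_2,X_3)\in\R^3$ one \emph{defines} its weight coordinates by $(Z_1,Z_2,Z_3)^T=B\cdot(X_1,X_2,X_3)^T$ and its twisted coordinates by $(Y_1,Y_2,Y_3)^T=C\cdot(X_1,X_2,X_3)^T$; by Lemma~\ref{Ltrans}(iii)--(iv) these agree with $\Wt$ and $\WtR$ precisely when $(X_1,X_2,X_3)=\Gr(v)$ for a monomial $v$, so the three coordinate systems on $\R^3$ are mutually compatible and interpolate the ones already defined on monomials.

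First I would recall that $B$ is the Vandermonde matrix of the three roots $\lambda,\mu,\bar\mu$ of $t^3-t-2$, which are pairwise distinct (one real, $\lambda\approx1.5214$, and a genuinely complex conjugate pair $\mu,\bar\mu$, since $|\mu|=\sqrt{2/\lambda}\ne\lambda$), so $\det B\ne 0$ --- indeed Lemma~\ref{Linv} already exhibits $B^{-1}$ in closed form. Then I would observe $C=M\cdot B$, where $M$ is the matrix of the $\R$-linear real-ification $(Z_1,Z_2,Z_3)\mapsto(Z_1,\Re Z_2,\Im Z_2)=\bigl(Z_1,\tfrac{Z_2+Z_3}{2},\tfrac{Z_2-Z_3}{2i}\bigr)$ of Lemma~\ref{Ltrans}(i); since $\det M=-\tfrac1{2i}\ne 0$, the matrix $C$ is invertible with $C^{-1}=B^{-1}M^{-1}$, which matches the numerical matrix displayed after Lemma~\ref{Linv}. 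This identity $C=MB$ also records the compatibility $\WtR=M\circ\Wt$ of the two new coordinate systems, and the transition maps are the unique $\R$-linear extensions of the monomial assignments, so the coordinates are well defined with no ambiguity.

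I do not anticipate a real obstacle here, since everything reduces to the non-vanishing of $\det B$ and $\det M$, both already available in the text. The only two points worth a sentence of care are: (a) the image of $v\mapsto\Wt(v)$ (and hence the range of the weight coordinates) is not all of $\C^3$ but the real three-dimensional subspace $\{(Z_1,Z_2,Z_3)\mid Z_1\in\R,\ Z_3=\bar Z_2\}$ cut out in Lemma~\ref{Ltrans}(i)--(ii), so ``weight coordinates'' should be understood inside that subspace; and (b) one should state explicitly that the extension of the monomial formulas to $\R^3$ is the unique $\R$-linear one, so the coordinate systems are canonical.
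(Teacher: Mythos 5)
Your proposal is correct and matches the paper's (implicit) treatment: the Corollary is definitional, resting only on the formulas of Lemma~\ref{Ltrans} and the invertibility of the transition matrices, with $B^{-1}$ already supplied by Lemma~\ref{Linv} and $C$ obtained from $B$ by the real-ification $(Z_1,Z_2,Z_3)\mapsto(Z_1,\Re Z_2,\Im Z_2)$. Your added remarks on the conjugate-symmetric subspace and on the unique $\R$-linear extension are harmless elaborations of the same observation.
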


Consider the axis $OY_1\subset\R^3$ which is determined by $Y_2=Y_3=0$ in terms of the twisted coordinates.
\begin{Lemma}\label{L_OY1}
The axis $OY_1$ is determined by the vector $(2/\lambda,\lambda,1)$ in terms of the standard coordinates.
\end{Lemma}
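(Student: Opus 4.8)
The statement asks us to identify the line $OY_1$ (defined in twisted coordinates by $Y_2 = Y_3 = 0$) in terms of the standard multidegree coordinates $(X_1, X_2, X_3)$. By Lemma~\ref{Ltrans}(iv), the twisted coordinates are obtained from the standard ones via $\WtR{}^T = C\cdot \Gr^T$, so the condition $Y_2 = Y_3 = 0$ is exactly $C\cdot (X_1,X_2,X_3)^T = (Y_1, 0, 0)^T$, i.e.\ the point $(X_1,X_2,X_3)$ lies on the line $C^{-1}\cdot (t,0,0)^T$, $t\in\R$. Thus the direction vector of $OY_1$ in standard coordinates is the first column of $C^{-1}$, and the claim reduces to checking that this first column is proportional to $(2/\lambda,\lambda,1)$.

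First I would recall, via Lemma~\ref{Ltrans}(i), that the twisted coordinates satisfy $(Y_1,Y_2,Y_3) = (Z_1, \Re Z_2, \Im Z_2)$, so the condition $Y_2 = Y_3 = 0$ is equivalent to $Z_2 = 0$ (and then automatically $Z_3 = \bar Z_2 = 0$ by Lemma~\ref{Ltrans}(ii)). Hence it is cleaner to work with the weight coordinates: we need the set of standard points whose weight vector is $(Z_1, 0, 0)$. By Lemma~\ref{Ltrans}(iii), $\Wt^T = B\cdot \Gr^T$, so the direction vector of $OY_1$ in standard coordinates is the first column of $B^{-1}$. From the explicit formula in Lemma~\ref{Linv}, the first column of $B^{-1}$ is
\[
\left(\frac{2/\lambda}{3\lambda^2-1},\ \frac{\lambda}{3\lambda^2-1},\ \frac{1}{3\lambda^2-1}\right)^T,
\]
which is manifestly proportional (with scalar factor $1/(3\lambda^2-1)$, nonzero since $\lambda$ is a simple root of $t^3-t-2$) to $(2/\lambda,\ \lambda,\ 1)$. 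This already gives the result.

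As a consistency check I would verify directly that the vector $w = (2/\lambda,\lambda,1)$ indeed has vanishing superweight: using $\Wt(v_0)=(1,1,1)$, $\Wt(v_1)=(\lambda,\mu,\bar\mu)$, $\Wt(v_2)=(\lambda^2,\mu^2,\bar\mu^2)$, the second coordinate of $B\cdot w^T$ is $\tfrac{2}{\lambda} + \lambda\mu + \mu^2 = \tfrac{2}{\lambda} + \mu(\lambda+\mu)$, and since $\lambda+\mu+\bar\mu = 0$ this equals $\tfrac{2}{\lambda} - \mu\bar\mu = \tfrac{2}{\lambda} - \tfrac{2}{\lambda} = 0$ by Viet's formulas $\mu\bar\mu = \lambda\mu\bar\mu/\lambda = 2/\lambda$; the first coordinate is $\tfrac{2}{\lambda} + \lambda^2 + \lambda^2\cdot 1$, wait—rather one checks $\tfrac{2}{\lambda}\cdot 1 + \lambda\cdot\lambda + 1\cdot\lambda^2 = \tfrac{2}{\lambda} + 2\lambda^2$, a real number, consistent with $Z_1\in\R$. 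There is essentially no obstacle here: the only thing to be careful about is bookkeeping with which matrix ($B$ or $C$, and their inverses) governs which coordinate change, and invoking that $3\lambda^2 - 1 \ne 0$ so that the column of $B^{-1}$ is a genuine nonzero direction vector. The whole argument is a two-line application of Lemmas~\ref{Ltrans} and~\ref{Linv}.
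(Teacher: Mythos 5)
Your proof is correct and follows essentially the same route as the paper: translate $Y_2=Y_3=0$ into $Z_2=\bar Z_3=0$, then apply $B^{-1}$ to $(1,0,0)^T$ (equivalently, read off the first column of $B^{-1}$ from Lemma~\ref{Linv}), obtaining $\frac{1}{3\lambda^2-1}(2/\lambda,\lambda,1)^T$, which is proportional to $(2/\lambda,\lambda,1)$. The extra consistency check via Viet's formulas is fine but not needed.
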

\begin{proof}
Since, $Z_2=Y_2+iY_3$, the condition $Y_2=Y_3=0$ is equivalent to $Z_2=\bar Z_3=0$.
We take $\Wt(v)=(1,0,0)$ and
use claim (iii) of Lemma~\ref{Ltrans} and Lemma~\ref{Linv}. The axis $OY_1$ is determined by the vector:
\begin{equation*}
\Gr^T(v)=
B^{-1} \begin{pmatrix} 1 \\ 0\\ 0\end{pmatrix}
=\frac 1 {3\lambda^2-1}\begin{pmatrix} 2/\lambda \\ \lambda \\ 1 \end{pmatrix}.
\qedhere
\end{equation*}
\end{proof}

\begin{Lemma}\label{Laxis}
The axis $OY_1$ does not contain the lattice points $\Z^3\subset\R^3$
in terms of the standard coordinates $(X_1,X_2,X_3)$, except the origin $O=(0,0,0)$.
\end{Lemma}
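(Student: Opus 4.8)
The plan is to argue by contradiction: suppose a nonzero lattice point $(X_1,X_2,X_3)\in\Z^3$ lies on the axis $OY_1$. By Lemma~\ref{L_OY1} this means $(X_1,X_2,X_3)$ is a scalar multiple of $(2/\lambda,\lambda,1)$, equivalently $(X_1,X_2,X_3)=X_3\cdot(2/\lambda,\lambda,1)$ with $X_3\in\Z$, $X_3\ne 0$. Dividing through, we would get $\lambda=X_2/X_3\in\Q$ and $2/\lambda=X_1/X_3\in\Q$. So the whole argument reduces to showing that $\lambda$ is irrational — indeed, that $\lambda\notin\Q$ already contradicts $\lambda=X_2/X_3$.

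The key step is therefore the irrationality of $\lambda$. Recall $\lambda$ is a root of the characteristic polynomial $t^3-t-2$. By the rational root theorem, any rational root of $t^3-t-2$ must be an integer dividing $2$, i.e. one of $\pm1,\pm2$; a direct check gives values $-2,-2,4,8$ respectively, none of which is zero, so $t^3-t-2$ has no rational root. Since $\lambda=\theta_1+\theta_2\approx 1.5214$ is a root of this polynomial (as established in the derivation preceding Lemma~\ref{Lpesos}), $\lambda$ is irrational. This is the only real content; everything else is the linear-algebra bookkeeping of the previous paragraph.

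To finish, I would spell out the contradiction cleanly. If $(X_1,X_2,X_3)\in\Z^3\setminus\{(0,0,0)\}$ lies on $OY_1$, then by Lemma~\ref{L_OY1} there is a real scalar $c\ne 0$ with $X_1=2c/\lambda$, $X_2=c\lambda$, $X_3=c$. From $X_3=c$ we get $c\in\Z\setminus\{0\}$, and then $\lambda=X_2/c\in\Q$, contradicting the irrationality of $\lambda$ just established. Hence no such lattice point exists, proving the lemma.

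I do not expect a genuine obstacle here; the only place to be slightly careful is to make sure the reduction uses Lemma~\ref{L_OY1} correctly (the direction vector $(2/\lambda,\lambda,1)$ is only defined up to a nonzero scalar, which is exactly what lets us take $X_3$ as the free parameter), and to note that $X_3=0$ forces $X_1=X_2=0$ as well, so the excluded case is precisely the origin.
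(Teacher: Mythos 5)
Your proposal is correct and matches the paper's proof: both reduce via Lemma~\ref{L_OY1} to $(X_1,X_2,X_3)=r(2/\lambda,\lambda,1)$ and deduce $\lambda=X_2/X_3\in\Q$, contradicting the irrationality of $\lambda$. Your explicit rational-root check of $t^3-t-2$ and the remark on the $X_3=0$ case are just details the paper leaves implicit.
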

\begin{proof}
Consider a lattice point $O\ne (n_1,n_2,n_3)=A\in \Z^3\subset \R^3$.
Assume that $A$ belongs to $OY_1$. Then $(n_1,n_2,n_3)=r (2/\lambda,\lambda,1)$ for some $r\in\R$.
Hence $\lambda=n_2/n_3\in\Q$, a contradiction with irrationality of $\lambda$.
\end{proof}

\begin{Lemma}
Let $\sigma=\log_{|\mu|}\lambda \approx 3.068$.
The pivot elements $\{v_n\mid n\ge 0\}$
belong to a paraboloid-like surface with equation in twisted coordinates:
$$Y_1=(Y_2^2+Y_3^2)^{\sigma/2}. $$
\end{Lemma}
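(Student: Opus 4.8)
The plan is to read off the twisted coordinates of each pivot element directly from Lemma~\ref{Lpesos} and then verify the defining equation by a one-line computation, so I do not expect a genuine obstacle here. By Lemma~\ref{Lpesos}, for every $n\ge 0$ one has $\WtR(v_n)=\big(\wt(v_n),\wt_1(v_n),\wt_2(v_n)\big)=(\lambda^n,\Re(\mu^n),\Im(\mu^n))$. Hence, writing $(Y_1,Y_2,Y_3)$ for the twisted coordinates of $v_n$, we have $Y_1=\lambda^n$ and $Y_2+iY_3=\mu^n$, so in particular $Y_2^2+Y_3^2=|\mu^n|^2=|\mu|^{2n}$.

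Next I would raise this to the power $\sigma/2$. Since $|\mu|=\sqrt{2/\lambda}\approx 1.147>0$ (as already recorded after Viet's formulas), the real power is unambiguous and $(Y_2^2+Y_3^2)^{\sigma/2}=|\mu|^{n\sigma}=(|\mu|^{\sigma})^{n}$. By the very definition $\sigma=\log_{|\mu|}\lambda$ we have $|\mu|^{\sigma}=\lambda$, and therefore $(Y_2^2+Y_3^2)^{\sigma/2}=\lambda^{n}=Y_1$, which is exactly the asserted identity $Y_1=(Y_2^2+Y_3^2)^{\sigma/2}$. This holds for every $n\ge 0$, so all pivot elements lie on the stated surface.

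The statement is thus a direct translation of the explicit formula for $\Wt(v_n)$ together with the elementary fact $|\mu^{n}|=|\mu|^{n}$ and the definition of $\sigma$; no serious difficulty arises. The only point worth a remark is that $|\mu|\approx 1.147>1$ and $\lambda\approx 1.521>1$, so $\sigma>0$ and the equation $Y_1=(Y_2^2+Y_3^2)^{\sigma/2}$ indeed describes a generalized paraboloid opening along the positive $OY_1$ axis, which by Lemma~\ref{L_OY1} corresponds to the direction $(2/\lambda,\lambda,1)$ in the standard (multidegree) coordinates; this is the geometric picture that will later be used to locate the basis monomials of $\QQ$.
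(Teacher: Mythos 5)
Your proposal is correct and follows essentially the same route as the paper: both read $\WtR(v_n)=(\lambda^n,\Re(\mu^n),\Im(\mu^n))$ from Lemma~\ref{Lpesos}, compute $Y_2^2+Y_3^2=|\mu|^{2n}$, and convert between $\lambda^n$ and $|\mu|^{n\sigma}$ via the definition $\sigma=\log_{|\mu|}\lambda$ (the paper just writes this exponent manipulation in the reverse direction). No gap to report.
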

\begin{proof}
By Lemma~\ref{Lpesos},
$\Wt(v_n)=(Z_1,Z_2,Z_3)=(\lambda^n,\mu^n,\bar \mu^n)$ and
$\WtR(v_n)=(Y_1,Y_2,Y_3)=(Z_1,\Re Z_2, \Im Z_2)$, $n\ge 0$.
Then
\begin{align*}
&Y_2^2+Y_3^2=|Z_2|^2=|\mu|^{2n};\\
&Y_1=Z_1=\lambda^n=\lambda^{1/2\log_{|\mu|}(Y_2^2+Y_3^2) }=(Y_2^2+Y_3^2)^{1/2\log_{|\mu|}\lambda}.
\qedhere
\end{align*}
\end{proof}

\begin{Lemma}
The multidegree coordinates of the pivot elements $\Gr(v_n)=(X_1,X_2,X_3)$  are as follows:
\begin{align*}
\begin{pmatrix} X_1 \\ X_2 \\ X_3 \end{pmatrix}
=\frac{\lambda^n}{3\lambda^2-1}
    \begin{pmatrix} 2/\lambda \\ \lambda \\ 1 \end{pmatrix}
 +\frac{\mu^n}{3\mu^2-1}
    \begin{pmatrix} 2/\mu \\ \mu \\ 1\end{pmatrix}
 + \frac{\bar\mu^n}{3\bar\mu^2-1}
    \begin{pmatrix} 2/\bar\mu \\ \bar\mu \\ 1\end{pmatrix},\quad n\ge 0.
\end{align*}
\end{Lemma}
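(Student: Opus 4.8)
The plan is to simply compute $\Gr^T(v_n)$ from the known weight data using the inverse transition matrix. By Lemma~\ref{Lpesos}, the vector weight of the pivot element is $\Wt(v_n)=(\lambda^n,\mu^n,\bar\mu^n)$, and by claim (iii) of Lemma~\ref{Ltrans} one has $\Wt^T(v_n)=B\cdot\Gr^T(v_n)$, hence $\Gr^T(v_n)=B^{-1}\cdot\Wt^T(v_n)$. So the entire statement reduces to multiplying the explicit matrix $B^{-1}$ from Lemma~\ref{Linv} by the column $(\lambda^n,\mu^n,\bar\mu^n)^T$.

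Concretely, I would write
\begin{align*}
\Gr^T(v_n)=B^{-1}\begin{pmatrix}\lambda^n\\\mu^n\\\bar\mu^n\end{pmatrix}
=\begin{pmatrix}
  \frac{2/\lambda}{3\lambda^2-1} &\frac{2/\mu}{3\mu^2-1} & \frac{2/\bar\mu}{3\bar\mu^2-1}\\[4pt]
  \frac{\lambda}{3\lambda^2-1} &\frac{\mu}{3\mu^2-1} & \frac{\bar\mu}{3\bar\mu^2-1}\\[4pt]
  \frac{1}{3\lambda^2-1} & \frac{1}{3\mu^2-1} & \frac{1}{3\bar\mu^2-1}
  \end{pmatrix}
\begin{pmatrix}\lambda^n\\\mu^n\\\bar\mu^n\end{pmatrix},
\end{align*}
and then read off each coordinate: the first entry is $\frac{\lambda^n}{3\lambda^2-1}\cdot\frac{2}{\lambda}+\frac{\mu^n}{3\mu^2-1}\cdot\frac{2}{\mu}+\frac{\bar\mu^n}{3\bar\mu^2-1}\cdot\frac{2}{\bar\mu}$, the second is $\frac{\lambda^{n+1}}{3\lambda^2-1}+\frac{\mu^{n+1}}{3\mu^2-1}+\frac{\bar\mu^{n+1}}{3\bar\mu^2-1}$, and the third is $\frac{\lambda^n}{3\lambda^2-1}+\frac{\mu^n}{3\mu^2-1}+\frac{\bar\mu^n}{3\bar\mu^2-1}$. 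Collecting these three scalars into the three column vectors $(2/\lambda,\lambda,1)^T$, $(2/\mu,\mu,1)^T$, $(2/\bar\mu,\bar\mu,1)^T$ scaled respectively by $\lambda^n/(3\lambda^2-1)$, $\mu^n/(3\mu^2-1)$, $\bar\mu^n/(3\bar\mu^2-1)$ gives exactly the claimed formula.

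There is essentially no obstacle here: everything needed — the matrix identity $\Wt^T(v)=B\cdot\Gr^T(v)$, the closed form of $B^{-1}$, and the values $\Wt(v_n)=(\lambda^n,\mu^n,\bar\mu^n)$ — is already established. The only point worth a sentence is that $v_n\in\QQ\subset\AA$ so that Lemma~\ref{Ltrans} applies to $v_n$ as a monomial of $\AA$, and that the columns of $B^{-1}$ are the vectors $\tfrac{1}{3\lambda^2-1}(2/\lambda,\lambda,1)^T$ etc., which makes the rearrangement into a sum of three scaled vectors immediate rather than a regrouping that needs checking.
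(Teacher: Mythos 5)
Your proposal is correct and matches the paper's own proof exactly: the paper also computes $\Gr^T(v_n)=B^{-1}\Wt^T(v_n)=B^{-1}(\lambda^n,\mu^n,\bar\mu^n)^T$ by combining Lemma~\ref{Ltrans}, Lemma~\ref{Lpesos}, and the explicit $B^{-1}$ of Lemma~\ref{Linv}, then reads the result as a sum of the three scaled column vectors. No difference in approach and no gaps.
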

\begin{proof}
We use Lemma~\ref{Ltrans}, Lemma~\ref{Lpesos}, and Lemma~\ref{Linv}:
\begin{align*}
\Gr^T(v_n)&=B^{-1}\Wt^T(v_n)
          =B^{-1}\begin{pmatrix}\lambda^n \\ \mu^n \\ \bar\mu^n\end{pmatrix}\\
&=\frac{\lambda^n}{3\lambda^2-1}
    \begin{pmatrix} 2/\lambda \\ \lambda \\ 1 \end{pmatrix}
 +\frac{\mu^n}{3\mu^2-1}
    \begin{pmatrix} 2/\mu \\ \mu \\ 1\end{pmatrix}
 + \frac{\bar\mu^n}{3\bar\mu^2-1}
    \begin{pmatrix} 2/\bar\mu \\ \bar\mu \\ 1\end{pmatrix}.\qedhere
\end{align*}
\end{proof}
\begin{Corollary}
The total degrees of the pivot elements in the generators $\{v_0,v_1,v_2\}$ are as follows
$$
\deg(v_n)=\frac{4\lambda^2+\lambda+6}{26}\lambda^n+
\frac{4\mu^2+\mu+6}{26}\mu^n+ \frac{4\bar\mu^2+\bar\mu+6}{26}\bar\mu^n,\quad n\ge 0.
$$
\end{Corollary}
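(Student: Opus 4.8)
The plan is to obtain this directly from the formula for $\Gr(v_n)$ in the preceding Lemma by summing the three multidegree coordinates and simplifying the scalar coefficients with the algebraic relations satisfied by $\lambda,\mu,\bar\mu$. Recall that $\deg(v_n)=X_1+X_2+X_3$ where $(X_1,X_2,X_3)=\Gr(v_n)$. Adding the three entries of each column vector in the expression for $\Gr^T(v_n)$ gives
$$\deg(v_n)=\frac{2/\lambda+\lambda+1}{3\lambda^2-1}\,\lambda^n+\frac{2/\mu+\mu+1}{3\mu^2-1}\,\mu^n+\frac{2/\bar\mu+\bar\mu+1}{3\bar\mu^2-1}\,\bar\mu^n.$$

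Next I would simplify the coefficient of $\lambda^n$; the identical computation applies to $\mu$ and $\bar\mu$, since all three are roots of $t^3-t-2=0$. Using $2/\lambda=\lambda^2-1$ from~\eqref{frac2}, the numerator becomes $2/\lambda+\lambda+1=\lambda^2+\lambda$, so the coefficient equals $(\lambda^2+\lambda)/(3\lambda^2-1)$. It then suffices to verify the scalar identity $(\lambda^2+\lambda)/(3\lambda^2-1)=(4\lambda^2+\lambda+6)/26$, i.e. $26(\lambda^2+\lambda)=(4\lambda^2+\lambda+6)(3\lambda^2-1)$. Expanding the right side and reducing the higher powers via $\lambda^3=\lambda+2$ and $\lambda^4=\lambda^2+2\lambda$ collapses it to $26\lambda^2+26\lambda$, confirming the identity. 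One should also observe that $3\lambda^2-1\ne 0$, and likewise for $\mu,\bar\mu$, so all the fractions are legitimate; this holds because $3t^2-1$ and $t^3-t-2$ have no common root, a fact already used implicitly in Lemma~\ref{Linv}.

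There is essentially no obstacle here: the statement is a routine algebraic simplification built on the displayed formula for $\Gr(v_n)$. The only minor point requiring care is the justification that a rational identity in $\lambda$ may be checked by reducing the corresponding polynomial identity modulo the minimal polynomial $t^3-t-2$, and that the reduced identity then automatically yields the matching coefficients attached to $\mu^n$ and $\bar\mu^n$ as well.
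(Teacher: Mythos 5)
Your proposal is correct and follows essentially the same route as the paper: sum the multidegree coordinates from the preceding Lemma, use $2/\lambda+\lambda+1=\lambda^2+\lambda$ via~\eqref{frac2}, and verify $\frac{\lambda^2+\lambda}{3\lambda^2-1}=\frac{4\lambda^2+\lambda+6}{26}$ by a computation in $\Q[\lambda]$, applied verbatim to $\mu$ and $\bar\mu$. You merely spell out the polynomial reduction modulo $t^3-t-2$ that the paper leaves as "a direct check," so there is nothing to correct.
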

\begin{proof}
By definition of the degree, $\deg(v_n)=X_1+X_2+X_3$,
the sum of the multidegree coordinates, the latter are computed in Theorem.
By~\eqref{frac2},  $2/\lambda+\lambda+1=\lambda^2+\lambda$.
A direct check in the field $\Q[\lambda]$ shows that
$$
\frac{\lambda^2+\lambda}{3\lambda^2-1}=\frac{4\lambda^2+\lambda+6}{26}.
$$
The same computations are valid for the remaining roots $\mu,\bar\mu$.
\end{proof}

\section{$\QQ$ is finely $\NO^3$-graded and just infinite,
its generating functions}\label{Sfunctions}\label{Sfine}

The second example in~\cite{Pe16} yields a $\Z^3$-graded Lie superalgebra with
at most one-dimensional components.
Similarly, the Lie superalgebra constructed in~\cite{PeOtto} is $\Z^2$-graded with at most one-dimensional components.
Now, we establish a similar fact, that components of the multidegree $\Z^3$-grading of the Lie superalgebra $\QQ$
are at most one-dimensional  (Theorem~\ref{Tfine}).
This implies that the $\Z^3$-grading of $\QQ$ is {\it fine} (see definitions in~\cite{Eld10}).
We also prove that $\QQ$ is just infinite but nor hereditary just infinite,
the same properties were established for the example~\cite{PeOtto}.
At the end of this section we supply computations of generating functions for $\QQ$.
Figure~\ref{Fig1} below gives a geometric illustration of the results and proofs of the paper.

\begin{Lemma}\label{Lend}
Let $\tau:\AA\to \AA$ be the shift endomorphism.
Consider a multihomogeneous element $0\neq v\in\AA$ with $\Gr(v)=(n_1,n_2,n_3)$, $n_1,n_2,n_3\ge 0$.
Then
$$ \Gr(\tau(v))=(2n_3,n_1+n_3,n_2). $$
\end{Lemma}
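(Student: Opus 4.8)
The plan is to track what the shift endomorphism $\tau$ does to the multidegree by seeing how $\tau$ acts on the three generators and then using the $\NO^3$-gradation from Theorem~\ref{TZ3graduacao} together with additivity of multidegree. Recall $\tau(v_i)=v_{i+1}$, so in particular $\tau(v_0)=v_1$ and $\tau(v_1)=v_2$, while $\tau(v_2)=v_3$. The only nontrivial point is that $v_3$ is not a generator, so I must express $\Gr(v_3)$ in terms of the basis $\{v_0,v_1,v_2\}$ and verify it equals $(2,0,1)$; from $v_3 = [v_1,[v_0,v_0]]$ up to scalars — more precisely, by Lemma~\ref{L_BASIC_PROD}(iv), $v_3 = -[v_0^2,v_1]$, which as an element of $\AA$ is $-(v_0 v_0 v_1 + \cdots)$ built from two factors $v_0$ and one factor $v_1$ — we read off $\Gr(v_3)=(2,1,0)$. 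Wait: I should instead invoke Lemma~\ref{Lpesos} and compute $\Wt(v_3)$ directly, since weights are the cleanest invariant.

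First I would compute $\Wt(v_3)$: by Lemma~\ref{Lpesos}, $\Wt(v_3)=(\lambda^3,\mu^3,\bar\mu^3)$, and by the recurrence~\eqref{recorrencia}, $\lambda^3=\lambda+2$, so $\Wt(v_3)=(\lambda+2,\mu+2,\bar\mu+2)=2\Wt(v_0)+\Wt(v_1)$ (using $\Wt(v_0)=(1,1,1)$ and $\Wt(v_1)=(\lambda,\mu,\bar\mu)$). Since $\Wt(v_0),\Wt(v_1),\Wt(v_2)$ are linearly independent and weight is additive and determines the multidegree uniquely (this is exactly how the $\NO^3$-gradation was set up in Theorem~\ref{TZ3graduacao}), we conclude $\Gr(v_3)=(2,1,0)$. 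Hence $\Gr(\tau(v_0))=(0,1,0)$, $\Gr(\tau(v_1))=(0,0,1)$, $\Gr(\tau(v_2))=(2,1,0)$.

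Next, for a multihomogeneous $v\in\AA_{n_1 n_2 n_3}$, write $v$ as a linear combination of associative monomials each involving $n_1$ factors $v_0$, $n_2$ factors $v_1$, $n_3$ factors $v_2$. Since $\tau$ is an algebra endomorphism, $\tau(v)$ is the corresponding combination with each $v_0$ replaced by $v_1$, each $v_1$ by $v_2$, each $v_2$ by $v_3$; and multidegree is additive on associative products. Therefore
\begin{align*}
\Gr(\tau(v)) &= n_1\Gr(v_1)+n_2\Gr(v_2)+n_3\Gr(v_3)\\
&= n_1(0,1,0)+n_2(0,0,1)+n_3(2,1,0)=(2n_3,\,n_1+n_3,\,n_2),
\end{align*}
which is the claim. (One should note $\tau(v)\ne 0$ because $\tau$ is injective on $\WW(\Lambda)$ — it is the restriction of the index-shift automorphism of $\Lambda$ — so the multidegree of $\tau(v)$ is genuinely well defined.)

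The only real obstacle is the bookkeeping for $v_3$: one must be careful that $\tau(v_2)=v_3$ really has multidegree $(2,1,0)$ and not, say, be tempted to think $\tau$ preserves the "number of generators." Everything else is a direct consequence of additivity of $\Wt$ (equivalently $\Gr$) and the fact, already built into Theorem~\ref{TZ3graduacao}, that distinct multidegrees have linearly independent vector weights, so the multidegree of a multihomogeneous element is recovered from its weight.
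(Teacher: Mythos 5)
Your proof is correct and essentially the same as the paper's: both reduce the claim to $\Gr(v_3)=(2,1,0)$ (the paper via the relation $[v_0^2,v_1]=-v_3$, you equivalently via the weight computation $\Wt(v_3)=2\Wt(v_0)+\Wt(v_1)$ and the injectivity of multidegree $\mapsto$ vector weight) and then conclude by additivity of $\Gr$ under the endomorphism $\tau$ with $\tau(v_0)=v_1$, $\tau(v_1)=v_2$, $\tau(v_2)=v_3$. The stray ``$(2,0,1)$'' in your planning sentence is only a slip, since your actual computation correctly uses $\Gr(v_3)=(2,1,0)$.
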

\begin{proof}
The relation $[v_0^2,v_1]=-v_3$ implies that $\Gr(v_3)=(2,1,0)$.
By assumption,
$v$ is a linear combination of products involving
$n_1,n_2,n_3$ factors $v_0,v_1,v_2$, respectively.
Since $\tau$ is an endomorphism,
$\tau(v)$ is a linear combination of products involving $n_1$ factors
$\tau(v_0)=v_1$, $n_2$ factors $\tau(v_1)=v_2$, and  $n_3$ factors $\tau(v_2)=v_3$.
Using additivity of the multidegree function, we get
\begin{equation*}
\Gr(\tau(v))=n_1\Gr(v_1)+n_2\Gr(v_2)+n_3\Gr(v_3)=n_1(0,1,0)+n_2(0,0,1)+n_3(2,1,0)
=(2n_3,n_1+n_3,n_2).
\qedhere
\end{equation*}
\end{proof}

\begin{Theorem}\label{Tfine}
Components of the $\NO^3$-gradation
$\QQ=\mathop{\oplus}\limits_{n_1,n_2,n_3\geq 0}\QQ_{n_1, n_2,n_3}$
by multidegree in the generators $\lbrace v_0,v_1,v_2\rbrace$
(Theorem~\ref{TZ3graduacao}) are at most one-dimensional.
\end{Theorem}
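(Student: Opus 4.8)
The plan is to read off the statement from the explicit monomial basis together with the weight functions of Lemma~\ref{Lpesos}. Since the standard monomials of Theorem~\ref{Tbasis3} (resp. the basis of Corollary~\ref{Cbasis2} when $\ch K=2$) form a basis of $\QQ$ and each is multihomogeneous in $\{v_0,v_1,v_2\}$, the component $\QQ_{n_1,n_2,n_3}$ is the span of those standard monomials of multidegree $(n_1,n_2,n_3)$. So it suffices to show distinct standard monomials have distinct multidegree vectors $\Gr$; and since the transition matrix $B$ is invertible (Lemma~\ref{Ltrans}(iii)), $\Gr(w)$ is recovered from $\Wt(w)$, so it is enough to prove distinct standard monomials have distinct vector weights.

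To a standard monomial $w$ of length $n$, of the form $r_{n-3}v_n$ or $r_{n-5}x_{n-2}v_n$, I would attach the polynomial $p_w(t)=t^n-\sum_{i\in A(w)}t^i\in\Z[t]$, where $A(w)\subseteq\{0,\dots,n-2\}$ is the set of indices of the Grassmann letters occurring in the tail and neck of $w$; for the first type $A(w)\subseteq\{0,\dots,n-3\}$, while for the second type $n-2\in A(w)$ and $A(w)\setminus\{n-2\}\subseteq\{0,\dots,n-5\}$, so $w$ is of the second type exactly when $n-2\in A(w)$. By additivity of the weight functions (Lemma~\ref{Lpesos}(iv)) together with $\Wt(v_i)=(\lambda^i,\mu^i,\bar\mu^i)$ and $\Wt(x_i)=-(\lambda^i,\mu^i,\bar\mu^i)$, one gets $\Wt(w)=\bigl(p_w(\lambda),p_w(\mu),p_w(\bar\mu)\bigr)$. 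As $\lambda,\mu,\bar\mu$ are the three distinct roots of the characteristic polynomial $t^3-t-2$ of~\eqref{recorrencia}, an integer polynomial vanishing at all three is divisible by $t^3-t-2$, so $\Wt(w)=\Wt(w')$ holds if and only if $t^3-t-2\mid p_w-p_{w'}$.

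The arithmetic input I would isolate is: a polynomial in $\Z[t]$ all of whose coefficients lie in $\{-1,0,1\}$ and which is divisible by $t^3-t-2$ must be zero — writing it as $(t^3-t-2)h$ and comparing coefficients from the bottom, the constant term is $-2h_0\in\{-1,0,1\}$, forcing $h_0=0$, and inductively, once $h_0=\dots=h_{k-1}=0$ the coefficient of $t^k$ equals $-2h_k$, so $h_k=0$ and $h=0$. Now take $w\neq w'$ standard with $\Wt(w)=\Wt(w')$ and $\ell(w)=n\ge n'=\ell(w')$. If $n=n'$ the leading terms cancel and $p_w-p_{w'}$ has coefficients in $\{-1,0,1\}$, hence is $0$; then $A(w)=A(w')$, and since the type is read off from whether $n-2\in A(w)$ this gives $w=w'$, a contradiction.

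The remaining case $n>n'$ is the heart of the matter. Here $p_w-p_{w'}$ has all coefficients in $\{-1,0,1\}$ except that of $t^{n'}$, which equals $-[\,n'\in A(w)\,]-1\in\{-2,-1\}$; if it is $-1$ the arithmetic input forces $p_w-p_{w'}=0$, impossible as that polynomial is monic of degree $n\ge1$. So $n'\in A(w)$ and the coefficient at $t^{n'}$ is $-2$. Writing $p_w-p_{w'}=(t^3-t-2)h$ with $h$ monic of degree $n-3$, the same bottom-up comparison yields $h_j=0$ for $j<n'$ and $h_{n'}=1$; if $n'=n-2$ this already contradicts $h_{n-3}=1$. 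Otherwise $n'\le n-3$; set $N=n-n'\ge3$ and factor $h=t^{n'}g$ with $g$ monic of degree $N-3$, so $(p_w-p_{w'})/t^{n'}=(t^3-t-2)\,g$ equals $t^N-2-\sum_{k\in B}t^k$, where $B\subseteq\{1,\dots,N-1\}$ records which of $n'+1,\dots,n-1$ lie in $A(w)$. A bottom-up comparison now forces $g_k=1$ precisely when $3\mid k$ and $k\in B$ precisely when $k\not\equiv2\pmod3$; matching $g_{N-3}=1$ forces $3\mid N$, whence $N-2\in B$, and also $N-3\in B$ when $N\ge6$, while for $N=3$ one has $n-3=n'\in A(w)$ by assumption. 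In every case $n-2\in A(w)$ and $n-3\in A(w)$; but $n-2\in A(w)$ makes $w$ a monomial of the second type, which forbids $n-3\in A(w)$ — a contradiction. Thus $w=w'$, so $\dim_K\QQ_{n_1,n_2,n_3}\le1$. I expect the main obstacle to be exactly this last case: the divisibility $t^3-t-2\mid p_w-p_{w'}$ does by itself admit nontrivial solutions, and one really must use the structural constraints on the admissible tails and necks of standard monomials to rule them out.
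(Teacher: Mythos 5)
Your proof is correct, but it takes a genuinely different route from the paper's. The paper proves the stronger claim that all \emph{quasi-standard} monomials (standard and false alike) have pairwise distinct multidegrees, by induction on length: a quasi-standard monomial is written as $x_0^\alpha\tau(v')$, Lemma~\ref{Lend} ($\Gr(\tau(v))=(2n_3,n_1+n_3,n_2)$) lets one strip $x_0^\alpha$ and unshift a minimal counterexample, and the exceptional base case $v_0$ is handled separately by comparing with the false monomials via the weight estimates of Corollary~\ref{Cpesos}; the larger class of quasi-standard monomials is exactly what keeps that induction closed, since unshifting a standard monomial may produce a false one. You instead prove directly that distinct standard monomials have distinct vector weights (in effect Corollary~\ref{Cweights_different}, but established independently of Theorem~\ref{Tfine}, so there is no circularity), encoding a monomial as a $\{0,\pm1\}$-polynomial $p_w$ and exploiting that $t^3-t-2$ has constant term $-2$, so that any $\{0,\pm1\}$-polynomial it divides vanishes; in the unequal-length case the forced coefficient pattern of the quotient makes both $x_{n-2}$ and $x_{n-3}$ occur in $w$, which no standard monomial of either type of Theorem~\ref{Tbasis3} permits. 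Your argument needs only the actual basis (so it covers $\ch K=2$ at once, the basis of Corollary~\ref{Cbasis2} consisting of standard monomials), avoids any induction on monomial structure, and yields the weight-separation statement as a byproduct; the paper's argument buys the extra information about false monomials that its own induction requires. One small repair: after deducing $h_j=0$ for $j<n'$ you split only into $n'=n-2$ and $n'\le n-3$, omitting $n'=n-1$; that case is excluded by exactly the same monicity contradiction ($h_{n-3}=0$ from the vanishing below $n'$ versus $h_{n-3}=1$), so state it explicitly.
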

\begin{proof}
Recall that the standard monomials and the false monomials are the { quasi-standard monomials}.
Let us list all quasi-standard monomials of length at most 4,
of the first type:
$\{v_0,v_1,v_2,x_0^*v_3,x_0^*x_1^*v_4 \}$,
and of the second type: $\{x_0v_2,x_1v_3,x_2v_4\}$.
We shall prove a more general fact, namely,
that different quasi-standard monomials have different multidegrees.

We make an observation.
Let $v$ be a quasi-standard monomial.
We can present it as $v=x_0^\alpha \tau(v')$, where $\alpha\in\{0,1\}$,
$\tau$ the shift endomorphism, and
$v'$ is a quasi-standard monomial of length less by one (and of the same type as a rule).
There is one exception: $v=v_0$, let us treat it now.
One has the multidegree $\Gr(v_0)=(1,0,0)$.
So, the standard monomials with the same multidegree
must contain the only factor $v_0$.
Thus, the only standard monomial of the same multidegree is $v_0$.
It remains to compare with multidegrees of the false monomials.
(First, consider the false monomials of small length. We have $\Gr(x_0v_2)=(-1,0,2)$;
using $[v_1^2,v_2]=-v_4$ we get $\Gr(x_0v_4)=(-1,2,1)$ and $\Gr(x_0x_1v_4)=(-1,3,1)$;
since $v_2^2=x_3v_5$ we get $\Gr(x_0x_3v_5)=(-1,0,2)$.
Let $v$ be a false monomial of length $n\ge 5$.
Being of the same multidegree implies that it has the same weight.
But by Corollary~\ref{Cpesos}, $\wt(v)> \lambda^{n-5}\ge 1=\wt v_0$.)

By way of contradiction, assume that
$u\ne v$ are quasi-standard monomials of the same multidegree, i.e. $\Gr(u)=\Gr(v)$.
Also, assume that in this counterexample the minimum of lengths of $u,v$ is the minimum possible.
By the observation above, $u=x_0^{\alpha}\tau(u')$ and $v=x_0^{\beta}\tau(v')$,
where $u',v'$ are quasi-standard monomials of the same types and of lengths less by one and $\a,\b\in\{0,1\}$.
Let $\Gr(u')=(n_1,n_2,n_3)$ and $\Gr(v')=(m_1,m_2,m_3)$.
Since $\Gr(x_0)=(-1,0,0)$, using Lemma~\ref{Lend}, we have
$$\Gr(u)=(2n_3-\alpha,n_1+n_3,n_2)=(2m_3-\beta,m_1+m_3,m_2)=\Gr(v).$$
Since $\alpha,\beta\in\{0,1\}$ we conclude that $n_3=m_3$ and $\alpha=\beta$, then
also $n_1=m_1$ and $n_2=m_2$.
Hence, $\Gr(u')=\Gr(v')$. By minimality of the example, $u'=v'$.
Therefore, $u=v$, a contradiction.
\end{proof}
\begin{Corollary}\label{Cweights_different}
Let $u,w$ be standard monomials of $\QQ$ such that $\wt u=\wt w$. Then $u=w$.
\end{Corollary}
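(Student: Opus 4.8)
The plan is to deduce Corollary~\ref{Cweights_different} directly from Theorem~\ref{Tfine} together with the linear independence of the three basic weight functions. The key point is that the full vector weight $\Wt$ separates the multidegree components, while the single scalar weight $\wt$ is the first coordinate of $\Wt$; so a priori two standard monomials of equal $\wt$ could still live in different multidegree components having the same first weight coordinate. I would rule this out by showing that, for standard monomials of $\QQ$, equality of the scalar weight $\wt$ already forces equality of the whole multidegree, hence (by Theorem~\ref{Tfine}) equality of the monomials themselves.

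First I would recall that every standard monomial $w$ lies in a one-dimensional component $\QQ_{n_1,n_2,n_3}$, and by Lemma~\ref{Lpesos}(iv) its scalar weight is $\wt(w)=n_1\lambda^0+n_2\lambda+n_3\lambda^2=n_1+n_2\lambda+n_3\lambda^2$, where $(n_1,n_2,n_3)=\Gr(w)$. Thus two standard monomials $u,w$ with $\wt u=\wt w$ satisfy $n_1+n_2\lambda+n_3\lambda^2=m_1+m_2\lambda+m_3\lambda^2$ with $\Gr(u)=(n_1,n_2,n_3)$, $\Gr(w)=(m_1,m_2,m_3)\in\NO^3$. Since $\lambda$ is a root of the irreducible (over $\Q$) cubic $t^3-t-2$, the set $\{1,\lambda,\lambda^2\}$ is $\Q$-linearly independent, so the coefficients must agree: $n_i=m_i$ for $i=1,2,3$, i.e. $\Gr(u)=\Gr(w)$. (Here I would invoke that $t^3-t-2$ is irreducible over $\Q$; it has no rational root since the only candidates $\pm1,\pm2$ fail, and a cubic with no rational root is irreducible over $\Q$. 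This is the same irrationality fact already used in Lemma~\ref{Laxis}.)

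Then $u$ and $w$ both belong to the same component $\QQ_{n_1,n_2,n_3}$, which by Theorem~\ref{Tfine} is at most one-dimensional. Since standard monomials are, by construction (Theorem~\ref{Tbasis3} when $\ch K\ne 2$, Corollary~\ref{Cbasis2} when $\ch K=2$), nonzero basis elements of $\QQ$, a one-dimensional component contains at most one standard monomial; hence $u=w$, which is the claim.

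The only genuine obstacle is making sure the argument covers all characteristics uniformly: the basis description splits into the cases $\ch K\ne 2$ and $\ch K=2$, but in both cases the standard monomials form a basis and sit inside the multidegree grading of Theorem~\ref{TZ3graduacao}, so Theorem~\ref{Tfine} applies verbatim and the proof goes through without change. A minor point to state carefully is that the scalar weight of an element of $\QQ_{n_1,n_2,n_3}$ is well defined and equals $n_1+n_2\lambda+n_3\lambda^2$ (additivity of $\wt$, Lemma~\ref{Lpesos}(iv)); this is immediate. No serious calculation is needed.
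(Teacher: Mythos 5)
Your proof is correct and follows essentially the same route as the paper: equality of scalar weights gives a $\Q$-linear relation among $1,\lambda,\lambda^2$, which is impossible by irreducibility of $t^3-t-2$ unless the multidegrees coincide, and then Theorem~\ref{Tfine} (one-dimensional components) forces $u=w$. The extra remarks on irreducibility and on the case $\ch K=2$ are fine but not needed beyond what the paper already uses.
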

\begin{proof}
Consider respective multidegrees and
assume that $\Gr u=(n_1,n_2,n_3)\ne \Gr w=(m_1,m_2,m_3)$.
Then $\wt u= n_1+ n_2\lambda+ n_3 \lambda^2 =\wt w= m_1+m_2 \lambda + m_3 \lambda^2 $ and
$(m_3-n_3)\lambda^2+(m_2-n_2)\lambda +(m_1-n_1)=0$,
a contradiction with the fact that $\lambda$ satisfies an irreducible polynomial of degree 3.
Hence, $\Gr u=\Gr w$. By Theorem, $u=w$.
\end{proof}

\begin{Theorem}
The Lie superalgebra $\QQ$ is just infinite.
\end{Theorem}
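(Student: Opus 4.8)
The plan is to show that every nonzero graded ideal $I$ of $\QQ$ has finite codimension; since $\QQ$ is $\NO^3$-graded with at most one-dimensional components (Theorem~\ref{Tfine}) and each homogeneous component is spanned by a single standard monomial, any ideal is automatically graded, so it suffices to treat graded ideals. The key structural fact I would exploit is the self-similarity encoded in Corollary~\ref{Cbases}: the shift $\tau$ gives an isomorphism $\QQ\cong L_i=\Lie(v_i,v_{i+1},v_{i+2})$ for each $i\ge 1$, together with the explicit multiplication rules of Lemma~\ref{L_BASIC_PROD} and Lemma~\ref{Lproducts}. The heart of the argument is to prove that if $I\ne 0$ is a graded ideal, then $I$ contains some ``tail'' $L_N$ (equivalently, contains all pivot elements $v_n$ with $n\ge N$, or at least a cofinite set of standard monomials), which immediately forces $\dim \QQ/I<\infty$ since only finitely many standard monomials have length $<N$.

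First I would take $0\ne w\in I$ a standard monomial, say of length $n$ (so $w = r_{n-3}v_n$ or $r_{n-5}x_{n-2}v_n$ with some tail of Grassmann letters $x_i$, all of index $<n$). The goal is to ``clean off'' the tail: by repeatedly bracketing $w$ with suitable pivot elements $v_j$ (or their squares $v_j^2=x_{j+1}v_{j+3}$), using that $v_j(x_k)$ produces letters of strictly smaller index via~\eqref{action} while $[v_j,\,\cdot\,]$ acts as a superderivation, one reduces the number of tail letters. Concretely, the relations $[v_i^2,v_{i+1}]=-v_{i+3}$ and $[v_i,v_{i+1}]=-x_iv_{i+3}$ from Lemma~\ref{L_BASIC_PROD} show that brackets can both strip letters from tails and push the head index upward. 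I would argue, by downward induction on the number of letters in the tail, that $I$ contains some pivot element $v_m$; then applying $[v_m^2,v_{m+1}]=-v_{m+3}$, etc., and the three multiplication rules of Lemma~\ref{Lproducts}, one sees $I\supseteq L_m$ — every $v_\ell$ with $\ell\ge m$ lies in $I$, and in fact every standard monomial of length $\ge m+\mathrm{const}$ does too, because each such monomial is obtained (as in part A of the proof of Theorem~\ref{Tbasis3}) from pivot elements by bracketing, and those operations stay inside the ideal.

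Once $I$ contains a cofinite set of standard monomials, $\QQ/I$ is spanned by the images of the finitely many remaining standard monomials of bounded length, hence is finite-dimensional; this is exactly the assertion that $\QQ$ is just infinite. The main obstacle I anticipate is the tail-cleaning step: one must check that bracketing a standard monomial with pivot elements never annihilates it outright (i.e. the superderivation action on the Grassmann tail does not accidentally kill the whole element) and that the bookkeeping of which letters survive genuinely terminates in a pivot element rather than stalling — here the sign bookkeeping inherent to superalgebras and the exceptional ``false monomials'' of Theorem~\ref{Tbasis3} require care, and one likely needs the characteristic-$2$ case (Corollary~\ref{Cbasis2}) handled separately since there squares $v_n^2$ replace the second-type monomials. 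A clean way to organize the termination is to use the weight function $\wt$ of Lemma~\ref{Lpesos}: each successful bracket with a $v_j$ strictly changes $\wt$ in a controlled way, and Corollary~\ref{Cweights_different} guarantees distinct standard monomials have distinct weights, so one can track progress numerically and rule out cycling.
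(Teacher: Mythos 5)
Your argument has a genuine gap at its very first step: the claim that, because the $\NO^3$-grading of $\QQ$ has at most one-dimensional components, ``any ideal is automatically graded.'' This is false in general (fine grading of the algebra does not force ideals to respect the grading: already the span of $v_0+v_1$ generates an ideal with no a priori reason to contain $v_0$ and $v_1$ separately, and preimages of generic subspaces of finite-dimensional quotients like $\QQ/\QQ(m)$ give non-graded ideals), and it is precisely the point where all the work in the paper's proof lies. To prove just infiniteness you must show that \emph{every} nonzero ideal, graded or not, has finite codimension, so you must start from a general element $a=\nu_1w_1+\cdots+\nu_mw_m$ with $w_j$ distinct standard monomials. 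The paper runs an induction on the number of terms $m$: using that distinct standard monomials have distinct weights (Corollary~\ref{Cweights_different}) and that the multihomogeneous components are one-dimensional (Theorem~\ref{Tfine}), it multiplies $a$ by carefully chosen homogeneous elements (e.g. by $[v_{i_k},\ldots,v_{i_1},\,\cdot\,]$ to turn the senior term into a pivot element, by $[v_{n-1},v_{n-1},\cdot\,]$ to push the head index up, and by elements such as $x_iv_{n+2}$ that kill a junior term while keeping the senior term nonzero), and it uses the quantitative weight bounds of Corollary~\ref{Cpesos} and Lemma~\ref{Lestimativas} to show that after these operations only finitely many monomial shapes can accompany the senior pivot element, all of which are excluded by weight comparisons. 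None of this appears in your proposal, and it cannot be bypassed by ``it suffices to treat graded ideals.''

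By contrast, the remainder of your outline does track the easy part of the paper's argument: once a single standard monomial (and hence, after stripping the tail by bracketing with pivot elements, a pivot element $v_N$) lies in $I$, the relations of Lemma~\ref{L_BASIC_PROD} give $v_k\in I$ for all large $k$, and the same computations as in part A of the proof of Theorem~\ref{Tbasis3} regenerate inside $I$ all standard monomials of both types of sufficiently large length, so $\QQ/I$ is finite-dimensional; your worries about signs, false monomials and the $\ch K=2$ case are manageable there. To repair the proposal you would need to replace the ``ideals are graded'' claim by the senior-term induction on the number of summands (or some equivalent device for non-homogeneous elements), which is the actual content of the paper's proof.
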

\begin{proof}
Let $I$ be a nonzero ideal of $\QQ$ and $0\ne a\in I$.
By Corollary~\ref{Cweights_different},
\begin{equation}\label{anu}
a=\nu_1 w_1+\cdots +\nu_m w_m\in I,\quad 0\ne \nu_j\in K,\ \text{ $w_j$ are standard monomials}, \ \wt w_1<\cdots<\wt w_m.
\end{equation}
Let us prove by induction on $m$ that some pivot element belongs to $I$.
We shall multiply~\eqref{anu} by monomials, the senior term $w_m$ will be transformed into a senior term,
we shall keep its coefficient nonzero.
By Theorem~\ref{Tfine},
the terms move to different at most one-dimensional multihomogeneous components.
Hence, we get a similar decomposition~\eqref{anu} with the same (or smaller) number of terms  $m$.
Consider the senior term $w_m=x_{i_1}\cdots x_{i_k} v_n$, $i_1<\cdots <i_k$.
Then $[v_{i_k},\ldots, v_{i_1},w_m]=v_n$ is a senior term of $[v_{i_k},\ldots, v_{i_1},a]$.
Thus, we get a pivot element $w'_{m'}=v_n$ in~\eqref{anu}.
If $m'=1$, the base of induction is proved.

By our arguments, we can assume that the senior term in~\eqref{anu} is $w_m=v_n$.
Using $[v_{n-1},v_{n-1},v_n]=-v_{n+2}$, we can make $n$ arbitrary big.
Since we always multiply by homogeneous monomials,
we either keep the following difference
or it is even getting smaller in case the smallest term disappear:
$\wt w'_{m'}-\wt w_1'\le \wt w_m-\wt w_1=C$.
Thus, $\wt w_1'\ge \wt v'_{m'}-C=\lambda^n-C$,
and the last number exceeds $\lambda^{n-1}=\wt v_{n-1}$ for sufficiently large $n$.
Hence, we can consider that all standard monomials in~\eqref{anu} are of length at least $n$.
On the other hand, using $\wt w_j \le \wt w_m=\wt v_n=\lambda^n$
and the lower estimates of Lemma~\ref{Lestimativas}, we can have
only standard monomials of the first type of length at most $n+4$ and
of the second type of length at most $n+3$.
Take a standard monomial
$w=r_{k-2}v_k$, where $n\le k\le n+5$, of our decomposition~\eqref{anu} and assume that it has a factor $x_i$ where $i<n$.
Then $x_iv_{n+2}\in\QQ$ is a standard monomial of the first type and
we get a new senior term $[x_iv_{n+2},v_n]=-x_i x_nx_{n+1}x_{n+2}v_{n+5}\ne 0$ (Lemma~\ref{L_BASIC_PROD})
while $[x_iv_{n+2},w]=\pm [x_iv_{n+2},x_ir'_{k-2}v_k]=0$,
thus reducing the number of monomials, and we apply the inductive assumption.

It remains to consider a few standard monomials with restrictions on lengths above having no factors $x_i$, $i<n$.
We compute their weights,
monomials of the second type:
$\wt(x_{n+1}v_{n+3})=\wt(v_n^2)=2\lambda^n$,
$\wt(x_{n}v_{n+2})=2\lambda^{n-1}$
and of the first type:
$\wt(x_n^*v_{n+3})\ge \lambda^n(\lambda^3-1)=\lambda^{n}(\lambda+1)$,
$\wt (x_{n}^*x_{n+1}^*v_{n+4})\ge \lambda^n(\lambda^4-\lambda-1)=\lambda^n(\lambda^2+\lambda-1)>\lambda^n$,
and $\{v_{n+2},v_{n+1},v_n\}$.
These monomials except $v_n$ cannot appear because their weights exceed
the weight of the senior term $\wt w_m=\wt v_n=\lambda^n$.
Hence, our decomposition  consists of a unique pivot element $v_n$.

Thus, we have $v_N\in \QQ$ for a large integer $N$. By Lemma~\ref{L_BASIC_PROD},
$[v_{N-1}^2,v_{N}] =-v_{N+2}\in I$ and
$b=[v_{N-2},v_{N}] = - x_{N-2}x_{N-1}x_{N} v_{N+3}\in I$,
$[v_N, v_{N-1},v_{N-2},b]=-v_{N+3}\in I$.
By induction, we derive that $v_k\in I$ for $k\ge N+2$.
Fix $k\ge N+2$, using claim~(v) of Lemma~\ref{L_BASIC_PROD}, we get
\begin{equation*}
[r_{k-1} v_{k+2}, v_{k}]= r_{k-1} [v_{k+2},v_{k}]=-r_{k-1}x_{k}x_{k+1}x_{k+2}v_{k+5}\in I.
\end{equation*}
Multiplying by $v_{k}$ and (or) $v_{k+1}$, $v_{k+2}$
we get all standard monomials of the first type of length $k+5\ge N+7$.
Using (formal) squares, we get $v_{n-3}^2=x_{n-2}v_{n}\in I$ for all $n\ge N+5$.
In case $\ch K\ne 2$ we  also get
$$
[r_{n-6} v_{n-3}, x_{n-5}^* v_{n-3}]
=\pm r_{n-6}x_{n-5}^* [v_{n-3},v_{n-3}]
=\pm 2 r_{n-6}x_{n-5}^* x_{n-2}v_{n}\in I,\quad n\ge N+10.
$$
We proved that $I$ contains all basis monomials of lengths $n\ge N+10$.
Therefore, $\dim \QQ/I$ is bounded by a finite number of basis monomials of length at most $N+9$.
\end{proof}

\begin{Lemma}\label{Lnonjustinf}
The Lie superalgebra $\QQ$ is not hereditary just infinite.
\end{Lemma}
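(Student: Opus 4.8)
The plan is to exhibit one finite-codimension ideal $I\triangleleft\QQ$ that is itself not just infinite; since the previous theorem shows $\QQ$ is just infinite, this is exactly what it means for $\QQ$ not to be hereditary just infinite. Concretely I will produce a nonzero ideal $M$ of $I$ with $\dim_K I/M=\infty$. Observe at the outset that $M$ cannot be an ideal of $\QQ$ itself: a nonzero ideal of the just-infinite algebra $\QQ$ has finite codimension in $\QQ$, hence in $I$. So $M$ must come from the internal, fractal structure of $I$ rather than from $\QQ$ — this is the algebraic counterpart of the branch structure that makes the Grigorchuk group non-hereditarily just infinite.

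For the ideal $I$ I would take $I=\langle\,\text{standard monomials of length}\geq N\,\rangle_K$ for a fixed, sufficiently large $N$ (equivalently, once $N$ is large, the tail $\bigoplus_{d\geq d_0}\QQ_d$ of the total-degree $\NO$-grading of $\QQ$). That $I$ is an ideal of finite codimension follows from the product computations in the proof of Theorem~\ref{Tbasis3}: a supercommutator of standard monomials of lengths $n\leq m$ is again a combination of standard monomials, all with head index $\geq m$, so bracketing by any monomial cannot decrease the minimal length, and the codimension of $I$ equals the finite number of standard monomials of length $<N$. The core of the argument is then to split $I$, using the self-similar embeddings $\tau^i\colon\QQ\xrightarrow{\sim}L_i$, as a direct sum $I=M\oplus M'$ (or a sum of several summands) of ideals of $I$ whose cross-brackets vanish — the vanishing being forced by the relations $x_i^2=0$ in the Grassmann algebra. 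One takes $M$ to be one summand: it is nonzero, while $M'$ is infinite-dimensional (it contains infinitely many standard monomials of pivot type), so $\dim I/M=\infty$. Checking that each summand is invariant under $\ad I$ uses Lemmas~\ref{L_BASIC_PROD} and~\ref{Lproducts} in the same spirit as the just-infinite proof, and $M$ is automatically not $\ad\QQ$-invariant by the remark above, the obstruction being carried by the low pivots $v_0,v_1,v_2\notin I$. Then $M\triangleleft I$, $M\neq 0$, $\dim I/M=\infty$, so $I$ is not just infinite, and since $I$ has finite codimension in $\QQ$ we conclude that $\QQ$ is not hereditary just infinite.

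The step I expect to be the main obstacle is identifying the correct invariant along which $I$ decomposes. The product rules of Lemma~\ref{L_BASIC_PROD} shift a head index by $0$, $+2$ or $+3$, so a crude splitting ``by head index modulo $3$'' is not respected; instead one must work with the finer $\NO^3$-multidegree of Theorem~\ref{TZ3graduacao} — in particular with Lemma~\ref{Lend}, which records how $\tau$ permutes the multidegree coordinates — together with the combinatorics of the tails $r_n$ and of the explicit action~\eqref{action} on Grassmann letters, so as to isolate a refinement that is preserved by bracketing with all of $I$ but broken by $\ad v_0$. This bookkeeping, rather than any conceptual difficulty, is where the work lies.
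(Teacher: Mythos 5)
Your first half coincides with the paper: the finite-codimension ideal is exactly the paper's $\QQ(m)$, the span of standard monomials of length $\ge m$ (and your justification that brackets cannot lower the maximal head index is the right one). But the heart of the lemma — actually exhibiting a nonzero ideal of $\QQ(m)$ of infinite codimension — is missing from your proposal: you reduce it to finding a direct-sum decomposition $I=M\oplus M'$ into ideals of $I$ with vanishing cross-brackets, you do not construct it, and you explicitly defer "identifying the correct invariant" to future bookkeeping. That deferred step is the entire content of the lemma, and the structure you ask for is both stronger than needed and dubious: for the natural candidate (monomials containing $x_0$ versus $x_0$-free monomials) the cross-brackets do not vanish and the $x_0$-free part is not an ideal, e.g. $[x_0v_k,v_{k+1}]=-x_0x_kv_{k+3}$ lands back in the $x_0$-part. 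No multidegree refinement, use of Lemma~\ref{Lend}, or splitting "preserved by $I$ but broken by $\ad v_0$" is known to produce such a product decomposition, so as written the plan would stall precisely where you predict the work lies.

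What the paper does instead is a one-step construction requiring no decomposition: inside $\QQ(m)$ take $J=x_0\QQ(m)$, the span of basis monomials involving the Grassmann letter $x_0$. Since $\dd_0$ occurs only in the expansion of $v_0$ (every standard monomial of length $k\ge 1$ expands through $\dd_j$ with $j\ge k$), and $v_0\notin\QQ(m)$, no element of $\QQ(m)$ can delete $x_0$; hence $[\QQ(m),J]\subseteq J$, and $[J,J]=0$ because $x_0^2=0$, so $J$ is a (nonzero, abelian) ideal of $\QQ(m)$. Its codimension in $\QQ(m)$ is infinite because all pivots $v_i$, $i\ge m$, are $x_0$-free, so $\QQ(m)$ is not just infinite. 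Your instinct that the obstruction is carried by $v_0\notin I$ and by the Grassmann relations $x_i^2=0$ is exactly right, but you need to convert it into this single ideal $J$ rather than into a direct sum of ideals; only one infinite-codimension ideal is required.
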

\begin{proof}
Fix $m\ge 1$.
Let $\QQ(m)\subset\QQ$ be the linear span of its basis monomials of length at least $m$,
so $v_0\notin \QQ(m)$.
By multiplication rules (see Section~\ref{Srelations}), $\QQ(m)$ is an ideal of $\QQ$.
Observe that the ideal $\QQ(m)\subset \QQ$ has a finite codimension.
In particular, $\QQ=\langle v_0\rangle \oplus \QQ(1)$ and $\dim\QQ/\QQ(1)=1$.
Let $J=x_0\QQ(m)$ be the subspace of $\QQ(m)$ spanned by its basis monomials involving $x_0$.
Since $x_0$ can be deleted only by $v_0$ that does not belong to $\QQ(m)$,
we see that $J$ is an abelian ideal of $\QQ(m)$.
Since $v_i\in \QQ(m){\setminus} J$ for all $i\ge m$, we conclude that
$\dim \QQ(m)/J=\infty$ and the ideal $\QQ(m)$ is not just infinite.
\end{proof}

Let $A=\oplus_{n,m,k} A_{nmk}$ be a $\Z^3$-graded algebra,
one has an induced $\Z$-gradation: $A=\mathop{\oplus}\limits_n  A_{n}$, where $A_l=\mathop{\oplus}\limits_{n+m+k=l} A_{nmk}$.
Define respective {\em generating functions}:
\begin{align*}
\H(A,t_1,t_2,t_3)&=\sum_{n,m,k} \dim A_{nmk} t_1^{n}t_2^m t_3^k;\\
\H(A,t)&=\sum_{n} \dim A_{n} t^n=\H(A,t,t,t).
\end{align*}
Using somewhat recursive structure of the basis of $\QQ$ (Theorem~\ref{Tbasis3}),
computer calculations yield the following series:
\begin{align*}
\H(\QQ,t_1,t_2,t_3)&=
t_1+t_2+t_3+
t_1^2+t_1t_2+t_1t_3+t_2^2+t_2t_3+t_3^2\\
&+t_1^2t_2+ t_1^2t_3+ t_1t_2^2+ t_1t_2t_3+ t_1t_3^2+ t_2^2t_3+ t_2t_3^2\\
&+t_1^3t_2+ t_1^2t_2^2+ t_1^2t_2t_3+ t_1^2t_3^2+ t_1t_2^3+ t_1t_2^2t_3+ t_1t_2t_3^2+ t_1t_3^3+ t_2^3t_3+ t_2^2t_3^2+ t_2t_3^3\\
&+t_1^4t_2+ t_1^3t_2^2+ t_1^3t_2t_3+ t_1^2t_2^3+ t_1^2t_2^2t_3+ t_1^2t_2t_3^2\\
&\ \ +t_1^2t_3^3+ t_1t_2^3t_3+ t_1t_2^2t_3^2+ t_1t_2t_3^3+ t_1t_3^4+ t_2^4t_3+ t_2^3t_3^2+ t_2^2t_3^3\\ 
&+t_1^4t_2^2+ t_1^4t_2t_3+ t_1^3t_2^3+ t_1^3t_2^2t_3+ t_1^3t_2t_3^2+ t_1^2t_2^3t_3+ t_1^2t_2^2t_3^2\\
&\ \ + t_1^2t_2t_3^3+ t_1^2t_3^4+ t_1t_2^4t_3+t_1t_2^3t_3^2+ t_1t_2^2t_3^3+ t_1t_2t_3^4+ t_2^4t_3^2+ t_2^3t_3^3+\cdots\\
\H(\QQ,t)&=
3t+6t^2+7t^3+11t^4+14t^5+15t^6+17t^7+18t^8+21t^9+25t^{10}\\
&+25t^{11}+26t^{12}+30t^{13}+32t^{14}+33t^{15}+35t^{16}+35t^{17}+35t^{18}+38t^{19}+39t^{20}\\
&+38t^{21}+38t^{22}+39t^{23}+43t^{24}+44t^{25}+42t^{26}+47t^{27}
+51t^{28}+50t^{29}+53t^{30}+\cdots
\end{align*}

\begin{figure}[h]
\caption{Three small read vectors at origin are generators $v_0,v_1,v_3$.
Dots show standard monomials of $\QQ$
(first type -- green, second -- blue).
Pivot elements are red, marked by red dashed arrows, and belong to small "paraboloid".
Two "paraboloids" are cut by plane of fixed weight:}
\label{Fig1} 
\begin{center}
\epsfig{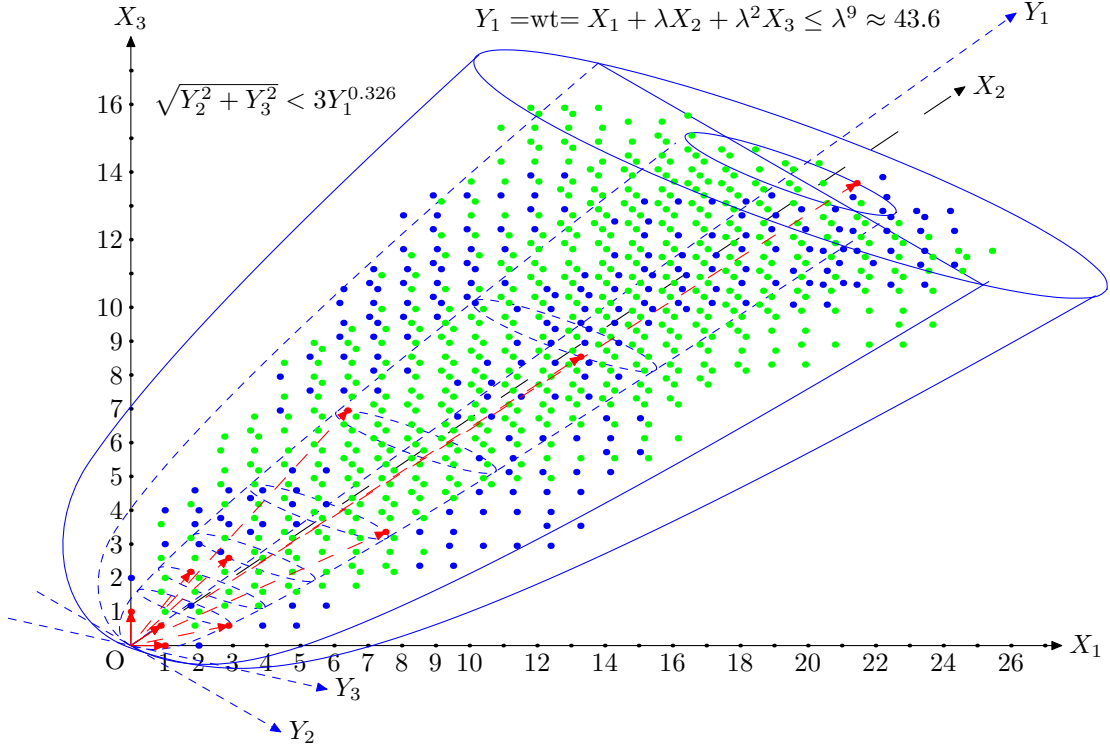}
\end{center}
\end{figure}

\section{Bounds on weights, growth, and paraboloid for Lie superalgebra $\QQ$}\label{SweightboundsQ}

In this section, we establish estimates on weights and superweights of
standard monomials of the Lie superalgebra $\QQ$.
Using these estimates we specify the growth of $\QQ$ (Theorem~\ref{TgrowthQ}) and
prove that the standard monomials are situated in a region of space restricted by
a surface of rotation close to a cubic paraboloid (Theorem~\ref{Tparab}, see Fig.~\ref{Fig1}).
Below, $\lambda$, $\mu$ are the roots of the characteristic polynomial (Section~\ref{Sweight}).
\begin{Lemma}\label{Lestimativas}
We have estimates for weights of the quasi-standard monomials of the first and second type:
\begin{align*}
1.3 \lambda^{n-5} &<\wt(r_{n-3}v_n)\leq \lambda^n,\qquad\qquad\ \, n\geq 0;\\
1.1 \lambda^{n-4} &<\wt(r_{n-5}x_{n-2}v_n)\leq 2\lambda^{n-3},\quad n\geq 2.
\end{align*}
\end{Lemma}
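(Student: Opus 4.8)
The plan is to write the weight of a quasi-standard monomial as an explicit $\Q[\lambda]$-valued expression and then reduce each of the four inequalities to a numerical estimate on the single real root $\lambda$. Recall from Lemma~\ref{Lpesos} that $\wt(v_n)=\lambda^n$; since $\wt(x_i)=-\wt(\dd_i)=-\lambda^i$ and $\wt$ is additive on associative products, a first-type monomial $r_{n-3}v_n$ with $r_{n-3}=x_0^{\xi_0}\cdots x_{n-3}^{\xi_{n-3}}$, $\xi_i\in\{0,1\}$, has $\wt(r_{n-3}v_n)=\lambda^n-\sum_{i=0}^{n-3}\xi_i\lambda^i$, while a second-type monomial has $\wt(r_{n-5}x_{n-2}v_n)=\lambda^n-\lambda^{n-2}-\sum_{i=0}^{n-5}\xi_i\lambda^i$ (using the convention $r_m=1$, i.e.\ an empty sum, when $m<0$, which already makes the small-$n$ cases correct). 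The two upper bounds are then immediate: every subtracted term is nonnegative, so $\wt(r_{n-3}v_n)\le\lambda^n$; and dropping the tail and invoking $\lambda^2-1=2/\lambda$ from~\eqref{frac2} gives $\wt(r_{n-5}x_{n-2}v_n)\le\lambda^n-\lambda^{n-2}=\lambda^{n-2}(\lambda^2-1)=2\lambda^{n-3}$.

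For the lower bounds I would bound the tail by a finite geometric series, $\sum_{i=0}^{n-3}\xi_i\lambda^i\le\sum_{i=0}^{n-3}\lambda^i<\lambda^{n-2}/(\lambda-1)$ and similarly $\sum_{i=0}^{n-5}\xi_i\lambda^i<\lambda^{n-4}/(\lambda-1)$, whence $\wt(r_{n-3}v_n)>\lambda^{n-5}\bigl(\lambda^5-\lambda^3/(\lambda-1)\bigr)$ and $\wt(r_{n-5}x_{n-2}v_n)>\lambda^{n-4}\bigl(\lambda^4-\lambda^2-1/(\lambda-1)\bigr)$. Using $\lambda^3=\lambda+2$ and~\eqref{frac2}, a short computation in $\Q[\lambda]$ collapses the bracketed constants: $\lambda^5-\lambda^3/(\lambda-1)=\lambda(2-\lambda)/(\lambda-1)$ and $\lambda^4-\lambda^2-1/(\lambda-1)=(2\lambda^2-2\lambda-1)/(\lambda-1)$. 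Since $\lambda-1>0$, the required facts $\lambda(2-\lambda)/(\lambda-1)>1.3$ and $(2\lambda^2-2\lambda-1)/(\lambda-1)>1.1$ are equivalent to the quadratic inequalities $\lambda^2-0.7\lambda-1.3<0$ and $2\lambda^2-3.1\lambda+0.1>0$; both follow from the rational enclosure $1.52<\lambda<1.53$, obtained in turn by observing that $t\mapsto t^3-t-2$ is increasing for $t>1/\sqrt3$ and checking its sign at $t=1.52$ and $t=1.53$.

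No step presents a genuine obstacle; the one point demanding a little care is the second numerical inequality, whose margin is narrow ($(2\lambda^2-2\lambda-1)/(\lambda-1)\approx1.125$ versus the stated $1.1$, with the larger root of $2t^2-3.1t+0.1$ located at $\approx1.517$, just below $\lambda$), so the left endpoint of the enclosure must be kept at least $1.52$ --- indeed $2\cdot(1.52)^2-3.1\cdot1.52+0.1=0.0088>0$ and, $2t^2-3.1t+0.1$ being increasing for $t>0.775$, positivity at $\lambda$ follows. The rest is routine bookkeeping with the finite geometric sums and the two identities $\lambda^3=\lambda+2$ and $\lambda^2-1=2/\lambda$.
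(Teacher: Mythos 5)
Your proposal is correct and follows essentially the same route as the paper: bound the tail by the geometric series, use the minimal polynomial of $\lambda$ (equivalently $(\lambda-1)^{-1}=(\lambda^2+\lambda)/2$, $\lambda^3=\lambda+2$) to collapse the constants, and finish with a numerical check near $\lambda\approx 1.52$; your explicit rational enclosure $1.52<\lambda<1.53$ just makes rigorous the approximations $\approx 1.39$ and $\approx 1.12$ that the paper quotes.
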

\begin{proof}
One checks that $(\lambda-1)^{-1}=(\lambda^2+\lambda)/2$.
The upper bound $\wt(r_{n-3}v_n)\leq \lambda^n$, $n\ge 0$, is trivial.
First, consider a tail
$r_m=x_0^{\xi_0}\cdots x_m^{\xi_m}$, $\xi_i\in\lbrace 0,1\rbrace,$ and find a bound on its weight:
\begin{equation}\label{desigualdades}
\wt(r_m)\geq -(\lambda^0+\lambda^1+\cdots +\lambda^m)
>-\frac{\lambda^{m+1}}{\lambda-1}
=-\frac{\lambda^{m+1}(\lambda^2+\lambda)}2=-\frac{\lambda^{m+3}+\lambda^{m+2}}2,\quad m\geq 0.
\end{equation}
This bound is formally valid for $m=-1,-2,-3$. Using~\eqref{desigualdades}, we get:
$$\wt(r_{n-3}v_n)> -\frac{\lambda^{n}+\lambda^{n-1}}2+\lambda^n=\frac{\lambda^{n-1}(\lambda-1)}2>1.3\lambda^{n-5}, \qquad n\geq 0, $$
because $\lambda^4(\lambda-1)/2\approx 1.39$.
For monomials of the second type, one has an upper bound
$$\wt(r_{n-5}x_{n-2}v_n)\le \lambda^n-\lambda^{n-2}=\lambda^{n-3}(\lambda^3-\lambda)
=\lambda^{n-3}(\lambda+2-\lambda)=2\lambda^{n-3},\quad n\ge 2.$$
Using~\eqref{desigualdades} and $\lambda(3-\lambda)/2\approx 1.12$, we check the lower bound for monomials of the second type:
\begin{align*}
\wt(r_{n-5}x_{n-2}v_n)
&>-\frac{\lambda^{n-2}+\lambda^{n-3}}2-\lambda^{n-2}+\lambda^n
=\lambda^{n-3}(-3/2 \lambda-1/2+\lambda^3)\\
&=\lambda^{n-3}(-3/2 \lambda-1/2+\lambda+2)
=\lambda^{n-3}\frac{3-\lambda}2>1.1\lambda^{n-4},
\qquad\quad n\ge 2.\qedhere
\end{align*}
\end{proof}
\begin{Corollary}\label{Cpesos}
Let $w$ be a quasi-standard monomial of length $n\geq 0$. Then
$$\lambda^{n-5} <\wt w\leq \lambda^n. $$
\end{Corollary}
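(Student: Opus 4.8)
The statement to prove is Corollary~\ref{Cpesos}: for a quasi-standard monomial $w$ of length $n\geq 0$, one has $\lambda^{n-5} < \wt w \leq \lambda^n$.

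The plan is to deduce this immediately from Lemma~\ref{Lestimativas}, which already treats the two types of quasi-standard monomials separately with sharper constants. A quasi-standard monomial of length $n$ is either of the first type, $w = r_{n-3}v_n$, or of the second type, $w = r_{n-5}x_{n-2}v_n$ (with $n\geq 2$ in the latter case). So I would split into these two cases.

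First, if $w$ is of the first type, Lemma~\ref{Lestimativas} gives directly $1.3\,\lambda^{n-5} < \wt w \leq \lambda^n$, and since $1.3 > 1$ the desired bound $\lambda^{n-5} < \wt w \leq \lambda^n$ follows a fortiori. Second, if $w$ is of the second type with $n\geq 2$, Lemma~\ref{Lestimativas} gives $1.1\,\lambda^{n-4} < \wt w \leq 2\lambda^{n-3}$. For the lower bound, $1.1\,\lambda^{n-4} = 1.1\lambda\cdot\lambda^{n-5} > \lambda^{n-5}$ since $1.1\lambda \approx 1.67 > 1$. For the upper bound, $2\lambda^{n-3} = (2/\lambda^{3})\lambda^{n}$, and since $\lambda > \sqrt[3]{2}$ (indeed $\lambda \approx 1.521$ while $\sqrt[3]{2}\approx 1.26$), we get $2/\lambda^3 < 1$, hence $\wt w < \lambda^n$, so in particular $\wt w \leq \lambda^n$.

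There is essentially no obstacle here; the corollary is a routine weakening of the preceding lemma, obtained by absorbing the explicit constants into the powers of $\lambda$ using only the numerical fact $1 < \sqrt[3]{2} < \lambda$ (equivalently $\lambda^3 > 2$, which is clear from $\lambda^3 = \lambda + 2 > 2$ via the characteristic equation) together with $1.1\lambda > 1$ and $1.3 > 1$. The only point worth stating carefully is that both types are covered and that the second-type bound $n\geq 2$ is not restrictive for the conclusion, since for $n \leq 1$ only first-type monomials (the pivot elements $v_0, v_1$) occur.
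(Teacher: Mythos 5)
Your proposal is correct and follows exactly the route the paper intends: the corollary is an immediate weakening of Lemma~\ref{Lestimativas}, obtained by absorbing the constants $1.3$, $1.1\lambda$, and $2/\lambda^{3}$ (all compared to $1$ via $\lambda^{3}=\lambda+2>2$) into the powers of $\lambda$, with both types of quasi-standard monomials covered and the case $n\le 1$ handled by noting only first-type monomials occur there.
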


\begin{Lemma}\label{Lsuperpeso}
Let $w$ be a quasi-standard monomial of length $n\geq 0$. Then
$|\swt w|< 7 |\mu|^n$.
\end{Lemma}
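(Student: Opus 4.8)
The statement bounds the superweight of a quasi-standard monomial $w$ of length $n$ by $|\swt w| < 7|\mu|^n$. The structure of the argument should parallel the weight estimate in Lemma~\ref{Lestimativas} and Corollary~\ref{Cpesos}, but now using the complex root $\mu$ with $|\mu| = \sqrt{2/\lambda} \approx 1.147$. A quasi-standard monomial of the first type is $r_{n-3} v_n$ and of the second type is $r_{n-5} x_{n-2} v_n$, where the tails $r_m = x_0^{\xi_0}\cdots x_m^{\xi_m}$ contribute $\swt(r_m) = -\sum_{i} \xi_i \mu^i$. Since $\swt(v_n) = \mu^n$ and $\swt(x_i) = -\mu^i$, additivity of $\swt$ on products (Lemma~\ref{Lpesos}) gives $\swt(r_{n-3}v_n) = \mu^n - \sum_{i=0}^{n-3}\xi_i \mu^i$ and similarly for the second type.

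First I would bound the tail contribution: $\bigl|\sum_{i=0}^{m}\xi_i\mu^i\bigr| \le \sum_{i=0}^{m}|\mu|^i = \frac{|\mu|^{m+1}-1}{|\mu|-1} < \frac{|\mu|^{m+1}}{|\mu|-1}$. With $|\mu| \approx 1.147$ we have $|\mu|-1 \approx 0.147$, so $\frac{1}{|\mu|-1} \approx 6.8$. For a monomial of the first type, $|\swt(r_{n-3}v_n)| \le |\mu|^n + \frac{|\mu|^{n-2}}{|\mu|-1} = |\mu|^n\bigl(1 + \frac{|\mu|^{-2}}{|\mu|-1}\bigr)$. A numerical check gives $|\mu|^{-2}/(|\mu|-1) \approx 0.76/0.147 \approx 5.17$, hence the bracket is roughly $6.17 < 7$. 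For the second type, $|\swt(r_{n-5}x_{n-2}v_n)| \le |\mu|^n + |\mu|^{n-2} + \frac{|\mu|^{n-4}}{|\mu|-1} = |\mu|^n\bigl(1 + |\mu|^{-2} + \frac{|\mu|^{-4}}{|\mu|-1}\bigr)$, and again a numerical check shows the bracket is comfortably below $7$ (about $1 + 0.76 + 3.93 \approx 5.69$). One should also check the finitely many small-length quasi-standard monomials (lengths $0$ through, say, $5$, including the false monomials and the exceptional ones like $x_0 v_2$, $x_0 x_3 v_5$) directly, since the tail formula $r_m$ with $m < 0$ is vacuous there; these are handled by a trivial finite inspection.

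I do not expect any genuine obstacle here: the whole proof is a geometric-series estimate combined with the exact values $|\mu|^2 = 2/\lambda$ and the Viète relations from Section~\ref{Sweight}. The only mild care needed is (i) making the constant $7$ honestly come out — one wants to keep the slack comfortable, so presenting the bound as $|\mu|^n\bigl(1 + \frac{1}{|\mu|^2(|\mu|-1)}\bigr)$ and then estimating $|\mu|^2(|\mu|-1) = \frac{2}{\lambda}(|\mu|-1)$ numerically (or rationally via $\lambda^3 = \lambda + 2$) is the cleanest route — and (ii) remembering that for the second-type monomials the neck $x_{n-2}$ already costs a factor $|\mu|^{-2}$ relative to the head, so the bound is in fact smaller than for the first type. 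A single unified estimate covering both types with the constant $7$ therefore suffices.
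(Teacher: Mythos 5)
Your proposal is correct and uses essentially the same argument as the paper: additivity of $\swt$ plus a geometric-series bound on the tail, with a numerical check against the constant $7$. The paper simply treats both types at once by writing any quasi-standard monomial as $r_{n-2}v_n$, obtaining $|\swt w|\le |\mu|^n+\sum_{i=0}^{n-2}|\mu|^i<|\mu|^n\bigl(1+\tfrac{1}{|\mu|^2-|\mu|}\bigr)\approx 6.95\,|\mu|^n<7|\mu|^n$, whereas you split into the two types (and small lengths), which only gives slightly better constants.
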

\begin{proof}
Write monomials of both types as $w=r_{n-2}v_n$, $n\ge 0$. Below, we use that $|\mu|\approx 1.14656$:
\begin{equation*}
|\swt w|=|\swt(r_{n-2}v_n)|
\le |\mu|^n+\sum_{i=0}^{n-2}|\mu|^{i}
< |\mu|^n+\frac{|\mu|^{n-2}}{1-1/|\mu|}=|\mu|^n\bigg(1+\frac1{|\mu|^2-|\mu|}\bigg)< 7|\mu|^n.\qedhere
\end{equation*}
\end{proof}
\begin{Theorem}\label{TgrowthQ}
Consider the Lie superalgebra $\QQ=\Lie(v_0,v_1,v_2)$ over an arbitrary field  $K$.  Then
$$\GKdim\QQ=\LGKdim\QQ=\log_\lambda 2\approx 1.6518.$$ 
\end{Theorem}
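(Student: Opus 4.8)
The plan is to read off the growth of $\QQ$ from its standard monomial basis (Theorem~\ref{Tbasis3}, and Corollary~\ref{Cbasis2} when $\ch K=2$), working with the weighted growth function rather than the ordinary one. Assign to the generators the weights $\wt(v_0)=1$, $\wt(v_1)=\lambda$, $\wt(v_2)=\lambda^2$; by the discussion of Section~\ref{Sdef} the resulting weighted growth function $\tilde\gamma_\QQ$ is equivalent to $\gamma_\QQ$, hence computes the same $\GKdim$ and $\LGKdim$. The point of this choice is that, the weight function being additive with $\wt(v_i)=\lambda^i$ (Lemma~\ref{Lpesos}), a standard monomial $w$ with $\Gr(w)=(X_1,X_2,X_3)$ has weighted length precisely $X_1+\lambda X_2+\lambda^2 X_3=\wt(w)$, and, since $\QQ=\bigoplus\QQ_{n_1n_2n_3}$ is multihomogeneous (Theorem~\ref{TZ3graduacao}), every Lie monomial of weighted length $\le n$ expands into standard monomials of the same multidegree, hence of the same weight $\le n$. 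Therefore $\tilde\gamma_\QQ(n)$ equals the number of standard monomials $w$ with $\wt(w)\le n$.

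Next I would count standard monomials by length. For each $m$ the standard monomials of length $m$ are the first-type tails $r_{m-3}v_m$, of which there are $2^{m-2}$, together with the second-type monomials $r_{m-5}x_{m-2}v_m$, of which there are $2^{m-4}$, minus at most the finitely many (at most $24$) false monomials; hence their number is $\Theta(2^m)$. By Corollary~\ref{Cpesos} every quasi-standard monomial of length $m$ has weight in the interval $(\lambda^{m-5},\lambda^m]$.

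The two estimates now follow. Upper bound: if $\wt(w)\le n$ then $\lambda^{m-5}<n$, so $m<\log_\lambda n+5$, giving $\tilde\gamma_\QQ(n)\le\sum_{m<\log_\lambda n+5}\Theta(2^m)=\Theta\bigl(2^{\log_\lambda n}\bigr)=\Theta\bigl(n^{\log_\lambda 2}\bigr)$, whence $\GKdim\QQ\le\log_\lambda 2$. Lower bound: with $m=\lfloor\log_\lambda n\rfloor$, all $\Theta(2^m)$ standard monomials of length $m$ have weight $\le\lambda^m\le n$, so $\tilde\gamma_\QQ(n)\ge\Theta(2^m)=\Omega\bigl(n^{\log_\lambda 2}\bigr)$, whence $\LGKdim\QQ\ge\log_\lambda 2$. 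Since $\LGKdim\QQ\le\GKdim\QQ$ always, this yields $\GKdim\QQ=\LGKdim\QQ=\log_\lambda 2$.

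The only points requiring care are routine: verifying that $\tilde\gamma_\QQ$ genuinely counts the standard monomials of bounded weight (the direct-sum decomposition of Theorem~\ref{TZ3graduacao} makes every multihomogeneous element, in particular each basis monomial, carry a well-defined weight, and a bracket of weighted length $\le n$ lies in the span of standard monomials of that weight), and checking that the finitely many false monomials and the small lengths $m\le 2$ do not affect the $\Theta$-asymptotics; the argument is uniform in $\ch K$, since for $\ch K=2$ the basis of Corollary~\ref{Cbasis2} still has $\Theta(2^m)$ monomials of each length $m$ subject to the same weight bounds. The conceptual heart — and the reason the exponent is irrational — is that the basis has $\Theta(2^m)$ monomials of length $m$ while the weight grows like $\lambda^m$, so $\gamma_\QQ(n)\sim n^{\log_\lambda 2}$.
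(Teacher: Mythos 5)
Your proposal is correct and follows essentially the same route as the paper: pass to the weighted growth function with $\wt(v_i)=\lambda^i$, identify it (via the $\NO^3$-grading and additivity of the weight) with the count of standard monomials of weight at most $n$, and combine the $\Theta(2^m)$ count of standard monomials of length $m$ with the weight bounds $\lambda^{m-5}<\wt w\le\lambda^m$ of Corollary~\ref{Cpesos} to get matching upper and lower bounds of order $n^{\log_\lambda 2}$. The only cosmetic difference is that you use $\Theta$-asymptotics where the paper writes explicit constants ($32m^{\log_\lambda 2}$ and $2^{-3}m^{\log_\lambda 2}$), and you spell out the identification of $\tilde\gamma_\QQ$ with the monomial count, which the paper leaves implicit.
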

\begin{proof}
Let us find an upper bound on the weight growth function
$\tilde\gamma_\QQ(m)$ which counts standard monomials $w$ such that  $\wt w\le m$, where $m\ge 1$.
Consider such a monomial $w$ of length $n$.
By Corollary~\ref{Cpesos}, $\lambda^{n-5}<\wt w\le m$, hence $n\le n_0=[\log_\lambda m]+5$.
Counting standard monomials of both types of length at most $n_0$, we get a desired upper bound
$$
\tilde\gamma_\QQ(m)
\le 3+\sum_{n=2}^{n_0}2^{n-2}+ \sum_{n=4}^{n_0}2^{n-4}
<3+2^{n_0-1}+2^{n_0-3}<2^{n_0}\le 2^{\log_\lambda m+5}=32m^{{\log_\lambda}2}.
$$
Fix $m\ge 1$. Set $n=[\log_\lambda m]$.
Consider all monomials $w=r_{n-3}v_{n}$ of the first type of length $n$.
By Corollary~\ref{Cpesos}, $\wt w\le \lambda^n\le m$. Counting all such monomials we get a lower bound
in case of any characteristic:
\begin{equation*}
\tilde\gamma_\QQ(m)\ge 2^{n-2}\ge 2^{\log_\lambda m-3}=2^{-3}m^{\log_\lambda 2}.
\qedhere
\end{equation*}
\end{proof}

\begin{Theorem}  \label{Tparab}
Put $\sigma=\log_{|\mu|}\lambda \approx 3.068$.
The points of space depicting the (quasi)standard monomials of the Lie superalgebra $\QQ$
are inside an "almost cubic paraboloid", which equation is written in terms of
the twisted coordinates $\WtR(w)=(Y_1,Y_2,Y_3)$:
$$ \sqrt{Y_2^2+Y_3^2}< 14 \sqrt[\strut\sigma]{Y_1}. $$
\end{Theorem}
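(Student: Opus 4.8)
The plan is to rewrite the asserted geometric inclusion as the scalar inequality $|\swt w| < 14\,(\wt w)^{1/\sigma}$ for every quasi-standard monomial $w$, and then to obtain it by feeding the length bound of Corollary~\ref{Cpesos} into the superweight bound of Lemma~\ref{Lsuperpeso}.

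First I would recall, using claim (v) of Lemma~\ref{Lpesos} (equivalently Lemma~\ref{Ltrans}), that for any monomial $w$ one has $\WtR(w) = (\wt w, \Re(\swt w), \Im(\swt w))$, so that $Y_1 = \wt w$ and $\sqrt{Y_2^2 + Y_3^2} = |\swt w|$. Thus the displayed inequality $\sqrt{Y_2^2+Y_3^2} < 14\sqrt[\sigma]{Y_1}$ is literally $|\swt w| < 14\,(\wt w)^{1/\sigma}$, and it suffices to verify this for each quasi-standard monomial.

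Now let $n$ be the length of $w$. By Lemma~\ref{Lsuperpeso}, $|\swt w| < 7|\mu|^n$, and by Corollary~\ref{Cpesos}, $\wt w > \lambda^{n-5}$, i.e. $\lambda^n < \lambda^5\,\wt w$. Since $\sigma = \log_{|\mu|}\lambda$ amounts to $\lambda = |\mu|^\sigma$, raising the last inequality to the power $1/\sigma$ gives $|\mu|^n = (\lambda^n)^{1/\sigma} < \lambda^{5/\sigma}(\wt w)^{1/\sigma} = |\mu|^5 (\wt w)^{1/\sigma}$. Combining the two bounds,
$$ |\swt w| < 7|\mu|^5\,(\wt w)^{1/\sigma} = 7|\mu|^5\,\sqrt[\sigma]{Y_1}. $$
It then remains to check $7|\mu|^5 < 14$, i.e. $|\mu|^5 < 2$; since $|\mu| = \sqrt{2/\lambda}$ (Section~\ref{Sweight}) this is equivalent to $\lambda^5 > 8$, which follows from $\lambda^3 = \lambda + 2$ since then $\lambda^5 = \lambda^3 + 2\lambda^2 = \lambda + 2 + 2\lambda^2 > 8$ (using $\lambda > 3/2$), or simply by the numerical value $\lambda \approx 1.5214$ which gives $7|\mu|^5 \approx 13.87$. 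Hence $|\swt w| < 14\sqrt[\sigma]{Y_1}$, as claimed.

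The argument is short and essentially computational; the only subtlety, and the single place to be careful, is to use the lower estimate $\wt w > \lambda^{n-5}$ rather than the trivial $\wt w \le \lambda^n$, since only the former converts the bound $7|\mu|^n$ into a bound by a power of $\wt w$, and to track the exponent $5/\sigma$ so that the resulting constant $7|\mu|^5$ stays below $14$. No real obstacle is anticipated.
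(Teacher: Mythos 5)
Your proposal is correct and follows essentially the same route as the paper: both reduce the claim to $|\swt w|<14(\wt w)^{1/\sigma}$ via Lemma~\ref{Lpesos}/Lemma~\ref{Ltrans}, then combine the lower bound $\wt w>\lambda^{n-5}$ of Corollary~\ref{Cpesos} with the superweight bound $|\swt w|<7|\mu|^n$ of Lemma~\ref{Lsuperpeso} to arrive at the constant $7|\mu|^5<14$. The only (harmless) difference is that you verify $7|\mu|^5<14$ algebraically from $\lambda^3=\lambda+2$, whereas the paper just cites the numerical value $7|\mu|^5\approx 13.89$.
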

\begin{proof}
Let $w$ be a standard monomial of $\QQ$ of length $n\geq 0$
and the weight coordinates $\Wt(w)=(Z_1,Z_2,Z_3)=(\wt w,\swt w, \sswt w)$.
By Corollary~\ref{Cpesos}, $\lambda^{n-5}<\wt w=Z_1$, thus $n<\log_\lambda Z_1+5$.
By Lemma~\ref{Lsuperpeso}, we get
\begin{equation*}
|Z_2|=|\swt w |< 7|\mu|^n< 7|\mu|^{\log_{\lambda} Z_1+5}
=7|\mu|^5 Z_1^{\log_\lambda |\mu|}<14 Z_1^{1/\sigma},
\end{equation*}
using $7|\mu|^5\approx 13.89<14$.
Applying Lemma~\ref{Ltrans}, we get a transition to the twisted coordinates
\begin{equation*}  
Y_2^2+Y_3^2 =(\Re Z_2)^2+(\Im Z_2)^2=|Z_2|^2<196 Z_1^{2/\sigma}=196 Y_1^{2/\sigma}.
\qedhere
\end{equation*}
\end{proof}

Figure~\ref{Fig1} shows a paraboloid but with a smaller constant 3.
We have a weaker bound.
\begin{Corollary}
The monomials of $\QQ$ are inside of a cubic paraboloid:
$$ \sqrt{Y_2^2+Y_3^2}< 14 \sqrt[3]{Y_1}. $$
\end{Corollary}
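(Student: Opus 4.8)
The plan is to deduce this Corollary from Theorem~\ref{Tparab} simply by enlarging the root exponent. Recall that $\sigma=\log_{|\mu|}\lambda\approx 3.068>3$, so $1/\sigma<1/3$, and therefore $Y_1^{1/\sigma}\le Y_1^{1/3}$ whenever $Y_1\ge 1$. Thus, provided every monomial $w$ of $\QQ$ satisfies $Y_1\ge 1$ in its twisted coordinates, chaining with Theorem~\ref{Tparab} would give
$$\sqrt{Y_2^2+Y_3^2}<14\sqrt[\sigma]{Y_1}\le 14\sqrt[3]{Y_1},$$
which is exactly the assertion.

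Hence the only point to verify is $Y_1\ge 1$. First I would recall that, by Lemma~\ref{Lpesos} and Lemma~\ref{Ltrans}, the first twisted coordinate is just the weight, $Y_1=\wt w$. For a monomial $w$ lying in $\QQ$, Theorem~\ref{TZ3graduacao} puts $w$ into a component $\QQ_{n_1n_2n_3}$ with $(n_1,n_2,n_3)\in\NO^3$, and this triple is nonzero because $\QQ$ is generated by $v_0,v_1,v_2$; additivity of the weight (Lemma~\ref{Lpesos}) then yields $\wt w=n_1+n_2\lambda+n_3\lambda^2\ge n_1+n_2+n_3\ge 1$ since $\lambda>1$. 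If one also wants the statement for the quasi-standard (in particular false) monomials appearing in Theorem~\ref{Tparab}, whose multidegrees may have negative entries, it suffices to invoke Corollary~\ref{Cpesos}: a quasi-standard monomial of length $n\ge 5$ has $\wt w>\lambda^{n-5}\ge 1$, and the finitely many false monomials of length below $5$ are checked directly to have weight exceeding $1$ using $\lambda^2-1=2/\lambda$ and $\lambda^3=\lambda+2$.

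There is essentially no obstacle here: all the real work is in Theorem~\ref{Tparab}, and the present Corollary is only the cleaner companion statement with the integer exponent $3$. The single thing worth noticing is that replacing $1/\sigma$ by $1/3$ is legitimate precisely because the monomials live in the region $Y_1\ge 1$, so the weakening enlarges the right-hand side rather than shrinking it.
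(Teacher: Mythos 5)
Your proposal is correct and follows the paper's own argument: the printed proof is exactly the observation that $\sigma>3$ and $Y_1=\wt w\ge 1$ for any standard monomial, so $14\,Y_1^{1/\sigma}\le 14\,Y_1^{1/3}$ and the bound of Theorem~\ref{Tparab} implies the cubic one. Your extra verification that $Y_1\ge 1$ (via the $\NO^3$-grading and Corollary~\ref{Cpesos}) simply fills in the detail the paper states without proof, so there is nothing to add.
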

\begin{proof}
We use that  $\sigma>3$ and $Y_1=Z_1=\wt w\ge 1$ for any standard monomial $w$.
\end{proof}
\begin{Corollary}\label{Cparab}
Consider the "almost cubic paraboloid" of Theorem.
\begin{enumerate}
\item
The volume of a part of the paraboloid cut by plane $Y_1\le m$ is equal to
$$ \mathrm{Volume}(m)=\mathrm{Const}\cdot m^{\log_\lambda 2},\quad m\ge 1. $$
\item
Asymptotically, a nonzero share of lattice points inside the paraboloid
corresponds to monomials of~$\QQ$.
\end{enumerate}
\end{Corollary}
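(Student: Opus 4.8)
The plan is to treat the two assertions separately: part \textbf{(i)} is a one-variable integration whose only subtle point is the value of the exponent, while part \textbf{(ii)} combines \textbf{(i)} with the growth estimate of Theorem~\ref{TgrowthQ} and an elementary lattice-point count.

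For \textbf{(i)}, I would slice the paraboloid region
\[
P_m=\bigl\{(Y_1,Y_2,Y_3)\in\R^3 \;\bigm|\; 0\le Y_1\le m,\ Y_2^2+Y_3^2<196\,Y_1^{2/\sigma}\bigr\}
\]
by the planes $Y_1=y$. Each slice is a disc of radius $14\,y^{1/\sigma}$ and area $196\pi\,y^{2/\sigma}$, so
\[
\mathrm{Volume}(m)=\int_0^m 196\pi\,y^{2/\sigma}\,dy=\frac{196\pi}{2/\sigma+1}\,m^{2/\sigma+1}.
\]
It then remains to identify $2/\sigma+1$ with $\log_\lambda 2$: since $\sigma=\log_{|\mu|}\lambda$ we have $1/\sigma=\log_\lambda|\mu|$, and from $\lambda\mu\bar\mu=2$ together with $\mu\bar\mu=|\mu|^2$ we get $|\mu|^2=2/\lambda$, whence $2/\sigma=2\log_\lambda|\mu|=\log_\lambda(2/\lambda)=\log_\lambda 2-1$, i.e. $2/\sigma+1=\log_\lambda 2$. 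This gives $\mathrm{Volume}(m)=\mathrm{Const}\cdot m^{\log_\lambda 2}$ with $\mathrm{Const}=196\pi/\log_\lambda 2$. (If one prefers to measure the volume in the multidegree coordinates $(X_1,X_2,X_3)$, the answer is only multiplied by $|\det C|^{-1}$, because the transition map $C$ of Section~\ref{Sweight} is linear and invertible; the power of $m$ is unchanged.)

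For \textbf{(ii)}, by Theorem~\ref{Tparab} every standard monomial $w$ of $\QQ$ with $\wt w\le m$ has its twisted coordinates $\WtR(w)$ in $P_m$; by Lemma~\ref{Ltrans}(iv) these equal $C\cdot\Gr^T(w)$, so they are points of the lattice $C\Z^3$, and by Corollary~\ref{Cweights_different} (equivalently Theorem~\ref{Tfine}) distinct standard monomials give distinct such points. Hence, if $N_\QQ(m)$ denotes the number of standard monomials of $\QQ$ of weight at most $m$ and $N_{\mathrm{lat}}(m)$ the number of lattice points of $C\Z^3$ lying in $P_m$, then $N_\QQ(m)\le N_{\mathrm{lat}}(m)$, while the proof of Theorem~\ref{TgrowthQ} already yields $N_\QQ(m)=\tilde\gamma_\QQ(m)\ge 2^{-3}m^{\log_\lambda 2}$. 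To conclude I would bound $N_{\mathrm{lat}}(m)$ from above: attaching to each lattice point in $P_m$ a fixed fundamental parallelepiped $F$ of $C\Z^3$, the resulting bodies are pairwise disjoint and contained in the $(\mathrm{diam}\,F)$-neighbourhood of $P_m$, whose volume is $\mathrm{Volume}(m)+O(\mathrm{area}(\partial P_m))$; since the boundary of $P_m$ has area $O(m^{1/\sigma+1})$ and $1/\sigma+1<2/\sigma+1=\log_\lambda 2$, this gives $N_{\mathrm{lat}}(m)\le\mathrm{Const}'\cdot m^{\log_\lambda 2}$. Therefore
\[
\liminf_{m\to\infty}\frac{N_\QQ(m)}{N_{\mathrm{lat}}(m)}\ \ge\ \frac{2^{-3}}{\mathrm{Const}'}\ >\ 0,
\]
which is precisely the statement that a nonzero share of the lattice points inside the paraboloid corresponds to monomials of $\QQ$. (Replacing $C\Z^3$ by $\{C\cdot n\mid n\in\NO^3\}$ changes $N_{\mathrm{lat}}(m)$ only by a lower-order amount, since $P_m$ is concentrated in a tube around the ray $\R_{\ge 0}(2/\lambda,\lambda,1)$ of Lemma~\ref{L_OY1}, which lies in the open positive octant; so the statement is insensitive to this choice.)

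The one place requiring genuine care is the upper bound in \textbf{(ii)}: because $P_m$ is not a dilate of a fixed body --- it has length $\sim m$ along the $Y_1$-axis but transverse radius only $\sim m^{1/\sigma}\approx m^{1/3}$ --- the crude identification ``number of lattice points $\approx$ volume'' must be justified by the boundary-area estimate above. Everything else (the integral, the exponent identity $2/\sigma+1=\log_\lambda 2$, the injectivity of $w\mapsto\WtR(w)$ on standard monomials, and the lower bound $N_\QQ(m)\gg m^{\log_\lambda 2}$) is already in hand.
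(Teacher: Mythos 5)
Your proof is correct and follows essentially the same route as the paper: the same slice integral together with the identity $1+2/\sigma=\log_\lambda(\lambda|\mu|^2)=\log_\lambda 2$ for part (i), and for part (ii) the comparison of the lower bound $\tilde\gamma_\QQ(m)\ge 2^{-3}m^{\log_\lambda 2}$ from the proof of Theorem~\ref{TgrowthQ} (with injectivity of $w\mapsto\Gr(w)$ supplied by Theorem~\ref{Tfine}) against the number of lattice points in the truncated paraboloid. The only difference is that you justify the required upper bound on the lattice-point count via the fundamental-domain and boundary-area estimate (noting $1+1/\sigma<\log_\lambda 2$), a step the paper merely asserts by saying the number of lattice points is asymptotically equal to the volume, so your version is the same argument carried out with a little more care at precisely the point where care is needed.
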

\begin{proof}
We have a figure of rotation: $0\le Y_1\le m$, $\sqrt{Y_2^2+Y_3^2}\le R(Y_1)=14 Y_1^{1/\sigma}$ with
a volume:
$$
\mathrm{Volume}_{(Y_1,Y_2,Y_3)}(m)=\int_0^m \pi R^2(y)dy
=\int_0^m \pi 196 y^{2/\sigma}=\frac{196\pi}{1+2/\sigma}m^{1+2/\sigma},
$$
where $1+2/\sigma=1+2\log_\lambda |\mu|=\log_\lambda(\lambda|\mu|^2)=\log_\lambda 2$, by Viet's formulas.
Compute volume of the same figure in terms of the standard coordinates:
$\mathrm{Volume}_{(X_1,X_2,X_3)}(m)=\mathrm{Volume}_{(Y_1,Y_2,Y_3)}(m)/\det C$,
because $C$ makes a transition between these coordinates.
A number of lattice points  $\Z^3$ (in the standard coordinates) inside the figure is asymptotically equal to
$\mathrm{Volume}_{(X_1,X_2,X_3)}(m)$.

On the other hand,  by the proof of Theorem~\ref{TgrowthQ} on the growth of $\QQ$,
we have a lower polynomial bound with the same degree.
Recall that the multihomogeneous components of $\QQ$ are at most one-dimensional (Theorem~\ref{Tfine}).
Thus, a nonzero share of the lattice points inside the paraboloid correspond to monomials of~$\QQ$.
\end{proof}

\section{Poisson superalgebra $\PP$}\label{SPoisson}

In this section, we define a Poisson superalgebra $\PP(V_0,V_1,V_2)$,
determined (actually, generated) by the Lie superalgebra $\QQ=\Lie(v_0,v_1,v_2)$.

Recall our basic construction. We take the Grassmann algebra $\Lambda=\Lambda(x_i| i\ge 0)$ and
consider its generators and respective superderivatives
$\{x_i, \dd_i\mid i\ge 0 \}\subset \End_{\bar 1}\Lambda$.
They satisfy  the commutation relations:
\begin{equation}\label{relv}
[\dd_i,x_j]=\delta_{ij},\quad
[x_i,x_j]=[\dd_i,\dd_j]=0,\qquad i,j\ge 0.
\end{equation}
Next, we defined the pivot elements:
\begin{equation}\label{pivotv}
v_i = \dd_i + x_ix_{i+1}(\dd_{i+3} + x_{i+3}x_{i+4}(\dd_{i+6} + x_{i+6}x_{i+7}(\dd_{i+9}+\cdots) ))\in \Der\Lambda,\qquad i\ge 0.
\end{equation}

Now, consider the Grassmann superalgebra
$H_\infty=\Lambda(x_i,y_i| i\ge 0)$ which is turned into a Poisson superalgebra
by a bracket determined by relations:
\begin{equation}\label{relV}
\{y_i,x_j\}=\delta_{i,j},\quad \{x_i,x_j\}=\{y_i,y_j\}=0,\quad  i,j\ge 0.
\end{equation}
We obtain the bracket:
$$
\{f,g\}=(-1)^{|f|-1}\sum_{i=1}^\infty
\bigg(\frac{\partial f}{\partial x_i}\frac{\partial g}{\partial y_i}
+\frac{\partial f}{\partial y_i}\frac{\partial g}{\partial x_i}\bigg),\qquad f,g\in H_\infty.
$$

Next, we define a completion of $H_\infty$.
Denote by $\Xi$ the set of all tuples $\a=(\a_i| \a_i\in \{ 0,1\}, i\ge 0)$ with finitely many nonzero entrees.
Denote by $\epsilon_i\in\Xi$ the tuple with unique 1 on the $i$the place, $i\ge 0$.
Let $\a\in\Xi$, then put $|\a|=\sum_{i\ge 0}\a_i$, $\bar\a=\max\{i\ge 0\mid \a_i\ne 0\}$,
and $x^\a=\prod_{i\ge 0}x_i^{\a_i} \in H_{\infty}$, $y^\a=\prod_{i\ge 0}y_i^{\a_i} \in H_{\infty}$,
products being taken in increasing order.
Let $\a\in\Xi$ and for some $i\ge 0$ we have $\a_i=0$, then we consider that $x^{\a-\epsilon_i}=0$.
Below we assume that all degree tuples $\a,\b$ belong to $\Xi$.
Consider the following completion of $H_\infty$ that consists of all infinite formal sums:
$$
\tilde H_\infty=\bigg\{\sum_{
                              \bar\a<\bar\b}
\lambda_{\a,\b}x^\a y^\b\,\bigg|\, \lambda_{\a,\b}\in K \bigg\}.
$$
Since below $y_i$s will be substituted by derivatives,
we define {\em differential operators of finite order} $k$:
\begin{align*}
\tilde H_\infty^k&=\bigg\{\sum_{
                              \bar\a<\bar\b,\ |\b|= k}
\lambda_{\a,\b}x^\a y^\b\,\bigg|\, \lambda_{\a,\b}\in K \bigg\},\quad k\ge 0;\\
\HH 
    &=\mathop{\oplus}\limits_{k=0}^\infty \tilde H_\infty^k.
\end{align*}

\begin{Lemma} We formally extend the products of $H_\infty$ onto $\tilde H_\infty$. Then
\begin{enumerate}
\item $\tilde H_\infty$ is a Poisson superalgebra;
\item $\HH\subset \tilde H_\infty$ is its subalgebra.
\end{enumerate}
\end{Lemma}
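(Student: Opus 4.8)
The plan is to proceed in three steps: (i) show that the multiplication and the Poisson bracket of $H_\infty$ extend to well--defined $K$--bilinear operations on the completion $\tilde H_\infty$, i.e.\ that for any two of its formal sums the coefficient of each monomial $x^\g y^\delta$ in their product and in their bracket is a \emph{finite} $K$--linear combination of products of the input coefficients, and that the defining condition $\bar\a<\bar\b$ is preserved; (ii) show that the Poisson superalgebra axioms, valid on $H_\infty$, pass to $\tilde H_\infty$; (iii) show that $\HH$ is a unital subalgebra, since both operations respect the filtration of $\tilde H_\infty$ by the $y$--degree. I expect step (i)---concretely, the finiteness for the bracket---to be the only real point; $H_\infty$ itself (a direct limit of the Hamiltonian Poisson superalgebras $H_n$) is a Poisson superalgebra, and everything else reduces to that.

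For the associative product the claim is immediate: in $\Lambda(x_i,y_i\mid i\ge 0)$ one has $x^\a y^\b\cdot x^{\a'}y^{\b'}=\pm\,x^{\a+\a'}y^{\b+\b'}$ when $\a,\a'$ are disjoint and $\b,\b'$ are disjoint, and $0$ otherwise; so a fixed monomial $x^\g y^\delta$ arises only from the finitely many decompositions $\g=\a+\a'$, $\delta=\b+\b'$, and the top indices satisfy $\overline{\a+\a'}=\max(\bar\a,\bar{\a'})<\max(\bar\b,\bar{\b'})=\overline{\b+\b'}$, so $\tilde H_\infty$ is closed under multiplication. The bracket is the crux. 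Writing $\{f,g\}$ by the Hamiltonian formula, a monomial $x^\a y^\b$ of $f$ and a monomial $x^{\a'}y^{\b'}$ of $g$ can contribute to the $i$--th summand $(\dd f/\dd x_i)(\dd g/\dd y_i)$ only when $\a_i=\b'_i=1$, producing then $\pm\,x^{\a-\epsilon_i+\a'}y^{\b+\b'-\epsilon_i}$ (and $0$ unless $\a-\epsilon_i$ is disjoint from $\a'$ and $\b$ from $\b'$). Suppose this monomial is a prescribed $x^\g y^\delta$. From $\a_i=1$ and $\bar\a<\bar\b$ we get $i\le\bar\a<\bar\b$, hence $i\ne\bar\b$; so the index $\bar\b$ is not cancelled by $-\epsilon_i$, and disjointness of $\b,\b'$ gives $\delta_{\bar\b}=1$, i.e.\ $\bar\b\le\bar\delta$. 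Therefore $i<\bar\delta$, so only finitely many $i$ occur; and for each of them the equalities $\a+\a'=\g+\epsilon_i$, $\b+\b'=\delta+\epsilon_i$ admit only finitely many solutions $(\a,\a',\b,\b')$. Hence the coefficient of $x^\g y^\delta$ in $\{f,g\}$ is a finite sum. The summands $(\dd f/\dd y_i)(\dd g/\dd x_i)$ are symmetric, with $\a'_i=1$ now giving $i\le\bar{\a'}<\bar{\b'}$ and then $\bar{\b'}\le\bar\delta$. Moreover the same inequalities show that in every surviving monomial the top $y$--index is $\ge\bar\b>\bar\a\ge\overline{\a-\epsilon_i}$, and it also exceeds $\bar{\a'}$ (either $i\ne\bar{\b'}$, so it is $\ge\bar{\b'}>\bar{\a'}$, or $i=\bar{\b'}$, so $\bar{\a'}<i\le\bar\a<\bar\b\le\bar\delta$); thus it strictly exceeds the top $x$--index $\overline{\a-\epsilon_i+\a'}$, and the condition $\bar\a<\bar\b$ is preserved.

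For (ii): each defining identity of a Poisson superalgebra---supercommutativity and associativity of $\cdot$, super--anticommutativity and super--Jacobi for $\{\,\cdot\,,\cdot\,\}$, the super--Leibniz rule, and, in characteristic $2$, the extra identities $b^2=0$ for odd $b$ and $(ab)^{[2]}=a^2b^{[2]}+ab\{a,b\}$---is a multilinear (polynomial) relation, so to verify it on elements of $\tilde H_\infty$ it suffices to compare, for each monomial $x^\g y^\delta$, its coefficient on both sides. By the finiteness established above only finitely many monomials of the arguments enter that comparison, so one may replace the arguments by finite truncations lying in $H_\infty$ and invoke the corresponding identity there. The formal square on odd elements needed in characteristic $2$ is extended from monomials to $\tilde H_\infty$ by $(u+w)^{[2]}=u^{[2]}+\{u,w\}+w^{[2]}$, its well--definedness resting on the same index bound; and $b^2=0$ for odd $b$ is the elementary fact that an odd element of a Grassmann algebra squares to zero in any characteristic. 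Since $1\in\tilde H_\infty$ is a unit, $\tilde H_\infty$ is a Poisson superalgebra.

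For (iii), and the main obstacle restated: with $\tilde H_\infty^k$ the span of monomials $x^\a y^\b$ having $|\b|=k$, the Grassmann product yields $\tilde H_\infty^k\cdot\tilde H_\infty^l\subseteq\tilde H_\infty^{k+l}$, while one factor $\dd/\dd y_i$ in the bracket drops the $y$--degree by one, so $\{\tilde H_\infty^k,\tilde H_\infty^l\}\subseteq\tilde H_\infty^{k+l-1}$ (with $\tilde H_\infty^{-1}:=0$); since $1\in\tilde H_\infty^0$, the sum $\HH=\oplus_{k\ge0}\tilde H_\infty^k$ is a unital Poisson subalgebra. The only genuine difficulty in the whole argument is the finiteness for the bracket in step (i): a priori the sum over $i$ in the Hamiltonian formula is infinite, and it is exactly the asymmetric condition $\bar\a<\bar\b$ built into $\tilde H_\infty$ that pins the contributing index $i$ below the top $y$--index of the target monomial, making each structure constant a finite sum; the remaining verifications are routine, modulo keeping track of Grassmann signs.
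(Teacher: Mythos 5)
Your proposal is correct and follows essentially the same route as the paper: the key point in both is that, for a fixed target monomial, the Hamiltonian index $i$ is pinned below the top $y$-index by the condition $\bar\a<\bar\b$, so each coefficient of the bracket is a finite sum and the condition $\bar\a<\bar\b$ is preserved, after which $\HH$ is a subalgebra by the $y$-degree count $\{\tilde H_\infty^k,\tilde H_\infty^m\}\subset\tilde H_\infty^{k+m-1}$, $\tilde H_\infty^k\cdot\tilde H_\infty^m\subset\tilde H_\infty^{k+m}$. Your extra remarks on verifying the Poisson axioms by multilinearity/truncation (and the characteristic~2 formal square) are fine details the paper leaves implicit.
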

\begin{proof}
Clearly, the associative product is well defined.
We check that the Poisson bracket is also well defined:
\begin{multline*}
\bigg\{
\sum_{\bar \a' <\bar\b'}\lambda_{\a'\b'}x^{\a'} y^{\b'},
\sum_{\bar \a'' <\bar\b''}\mu_{\a''\b''}x^{\a''} y^{\b''}
\bigg\}\\
=\sum_{\substack{\bar \a' <\bar\b'\\ \bar \a'' <\bar\b''}}\lambda_{\a'\b'}\mu_{\a''\b''}
\Bigg(\sum_{\substack{i\le\bar \a'<\bar\b' \\ i\le\bar \b'' \strut}}
          \pm x^{\a'-\epsilon_i} y^{\b'}x^{\a''}y^{\b''-\epsilon_i}+
      \sum_{\substack{i\le\bar \a''<\bar\b''\\ i\le \bar \b'\strut}}
          \pm x^{\a'} y^{\b'-\epsilon_i}x^{\a''-\epsilon_i} y^{\b''}\Bigg)\\
=\sum_{\a,\b}\sum_{\substack{\a'+\a''=\a\\ \b'+\b''=\b} }
\Bigg(\sum_{i<\bar \b'}\pm \lambda_{\a'+\epsilon_i,\b'}\mu_{\a'',\b''+\epsilon_i}
      +\sum_{i<\bar \b''}\pm \lambda_{\a',\b'+\epsilon_i}\mu_{\a''+\epsilon_i,\b''}
\Bigg)x^\a y^\b,
\end{multline*}
where the signs $\pm$ are uniquely determined.
While deleting $y_i$ above,
we have either $i<\bar \b'$ or $i<\bar \b''$,
the latter yield a factor $y_j$ with $i<j$, inherited by $y^{\b}$.
Hence, $\bar\a<\bar\b$ and the product belongs to $\tilde H_\infty$.

Let $f\in \tilde H_\infty^k$, $g\in \tilde H_\infty^m$, $k,m\ge 0$.
By computations above,
$f\cdot g\in \tilde H_\infty^{k+m}$ and $\{f,g\}\in \tilde H_\infty^{k+m-1}$.
Thus, $\HH$ is a subalgebra.
\end{proof}

The next elements will be referred to as the {\em pivot elements} as well:
\begin{equation}\label{pivotV}
V_i = y_i + x_ix_{i+1}(y_{i+3} + x_{i+3}x_{i+4}(y_{i+6} + x_{i+6}x_{i+7}(y_{i+9}+\cdots) ))\in
\tilde H_\infty^1\subset \HH,\qquad i\ge 0.
\end{equation}

Let $\pi:\Lambda\to H_\infty$ be the natural embedding.
Namely, consider a monomial $x^\a\in \Lambda$, $\a\in\Xi$.
Then $\pi$ maps $x^\a\in \Lambda$ on the same $x^\a\in H_\infty$.
So, we identify a tail $r_m\in\Lambda$ with the respective element $r_m\in H_{\infty}$.
Also, we define the mapping on pure derivatives
 $\pi(x^\a \dd_i)=x^\a y_i\in \tilde H_\infty^1\subset \HH$, for all $i\ge 0$, $\a\in\Xi$, $\bar\a<i$.
We extend the mapping onto infinite sums.
In particular,  we get
$\pi(v_i)=V_i $ for all $i\ge 0$.
We have images of the standard monomials:
\begin{align*}
\pi(r_{n-3}v_n)&=r_{n-3}V_n,\quad n\ge 0;\\
\pi(r_{n-5}x_{n-2}v_n)&=r_{n-5}x_{n-2}V_n,\quad n\ge 3.
\end{align*}

\begin{Lemma}\label{LisomQ}
The mapping $\pi:\QQ=\Lie(v_0,v_1,v_2)\to \Lie(V_0,V_1,V_2)\subset  \tilde H_\infty^1\subset \HH$ is an
isomorphic embedding onto a Lie subsuperalgebra of the Poisson superalgebra $\HH$.
\end{Lemma}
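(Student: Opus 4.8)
The plan is to realize both $\QQ$ and $\Lie(V_0,V_1,V_2)$ as the \emph{same} subalgebra of the Lie superalgebra $\Der\Lambda$, by letting the order-one part of $\HH$ act on $\Lambda$ through the Poisson bracket. Two preliminary facts will be used, both already visible in the proof of the previous lemma: first, $\tilde H_\infty^1$ is closed under the Poisson bracket, since $\{\tilde H_\infty^k,\tilde H_\infty^m\}\subset\tilde H_\infty^{k+m-1}$ with $k=m=1$, so that $(\tilde H_\infty^1,\{\ ,\ \})$ is a Lie subsuperalgebra of the Poisson superalgebra $\HH$; second, via the embedding $\pi\colon\Lambda\hookrightarrow H_\infty$ one identifies $\Lambda$ with the supercommutative Poisson subalgebra $\pi(\Lambda)\subset\HH$, on which every element of $\tilde H_\infty^1$ acts by the Poisson bracket.

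Next I would introduce the action map $\phi\colon\tilde H_\infty^1\to\Der\Lambda$, $\phi(u)(f)=\{u,\pi(f)\}$ for $u\in\tilde H_\infty^1$, $f\in\Lambda$. One first checks it is well defined: as $u$ is linear in the $y$'s and $f$ has none, $\{u,\pi(f)\}$ lies in $\pi(\Lambda)$, and for fixed $f$ only finitely many monomials of the formal sum $u$ contribute, so $\phi(u)$ is a genuine superderivation of $\Lambda$. On generators $\phi(x^\a y_i)(x_j)=\delta_{ij}x^\a$, whence $\phi(x^\a y_i)=x^\a\dd_i$; combining this with $\phi(x^\a u)=x^\a\circ\phi(u)$ (valid since $\pi(\Lambda)$ is Poisson-abelian) and the recursion $V_i=y_i+x_ix_{i+1}V_{i+3}$, a comparison on the $x_j$ against the action formula~\eqref{action} gives $\phi(V_i)=v_i$ for all $i\ge0$. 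The key point is that $\phi$ is a homomorphism of Lie superalgebras: for $u,u'\in\tilde H_\infty^1$ and $f\in\Lambda$ the graded Jacobi identity in $H_\infty$ yields
\begin{equation*}
[\phi(u),\phi(u')](f)=\{u,\{u',f\}\}-(-1)^{|u||u'|}\{u',\{u,f\}\}=\{\{u,u'\},f\}=\phi(\{u,u'\})(f),
\end{equation*}
where $\{u,u'\}\in\tilde H_\infty^1$ by the first step; in characteristic two the compatibility with the formal squares follows in the same way from $\tilde H_\infty^1$ being a Lie subsuperalgebra of the Poisson superalgebra $\HH$. Finally $\phi$ is injective: $\phi(u)=0$ forces $0=\phi(u)(x_j)=\sum_\gamma\lambda_{\gamma,j}x^\gamma$ for every $j$, hence $u=0$.

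Given this, the conclusion is formal. The image $\phi(\tilde H_\infty^1)$ is a Lie subsuperalgebra of $\Der\Lambda$ containing $v_0,v_1,v_2$, hence containing $\QQ=\Lie(v_0,v_1,v_2)$, so $\phi$ restricts to an isomorphism $\Lie(V_0,V_1,V_2)\to\QQ$. Unwinding the definition $\pi(x^\a\dd_i)=x^\a y_i$ extended by linearity to the formal sums occurring in $\QQ$, and using linear independence of the operators $x^\a\dd_i$ in $\Der\Lambda$ to see that $\pi$ is well defined on each $q\in\QQ$, one verifies $\pi(q)=\phi^{-1}(q)$. Therefore $\pi|_\QQ=(\phi|_{\Lie(V_0,V_1,V_2)})^{-1}$ is a Lie-superalgebra isomorphism of $\QQ$ onto $\Lie(V_0,V_1,V_2)$, which is by construction a Lie subsuperalgebra of the Poisson superalgebra $\HH$.

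The step I expect to cost the most care is the bookkeeping imposed by the completions: checking that $\{u,\pi(f)\}$ is genuinely finitely supported for each $u\in\tilde H_\infty^1$, that the Poisson bracket of two order-one elements again has order one (so it stays inside $\tilde H_\infty^1$ rather than a larger completion), and that the formal sums representing elements of $\QQ\subset\Der\Lambda$ satisfy the constraint $\bar\a<i$ in every monomial $x^\a\dd_i$, so that $\pi$ applies to them — this last point being automatic from the fact that $\phi(\tilde H_\infty^1)$ is already a subalgebra containing the generators, so one need not appeal to the explicit shape of the standard monomials of Theorem~\ref{Tbasis3}. The sign computation in the Jacobi step and the identity $\phi(V_i)=v_i$ are then routine.
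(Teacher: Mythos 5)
Your proof is correct, but it follows a genuinely different route from the paper's. The paper's own argument is a direct verification: it observes that the supercommutator relations \eqref{relv} among $\{x_i,\dd_i\}\subset\End\Lambda$ and the Poisson relations \eqref{relV} among $\{x_i,y_i\}$ are formally identical, so the brackets of the pivot elements \eqref{pivotv} and of their images \eqref{pivotV} — and likewise of the standard monomials — coincide, giving $\pi([v_i,v_j])=\{V_i,V_j\}$ directly (injectivity being clear from the termwise correspondence). You instead construct a homomorphism in the opposite direction, the Hamiltonian-type action map $\phi\colon\tilde H_\infty^1\to\Der\Lambda$, $\phi(u)=\{u,\pi(\cdot)\}$, prove it is a Lie-superalgebra map via the super Jacobi identity, check injectivity and $\phi(V_i)=v_i$, and then read off $\pi|_{\QQ}$ as the inverse of its restriction. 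What your route buys: bracket preservation comes for free from Jacobi rather than from a sign-by-sign matching of structure constants, and the fact that every element of $\QQ$ really is a formal sum of monomials $x^\a\dd_i$ with $\bar\a<i$ (so that $\pi$ is defined on it) falls out of $\phi(\tilde H_\infty^1)\supseteq\QQ$ instead of an inspection of the standard monomials; the price is the completion bookkeeping you flag, all of which checks out, since $\{\tilde H_\infty^k,\tilde H_\infty^m\}\subset\tilde H_\infty^{k+m-1}$ and the constraint $\bar\a<i$ guarantee the finiteness you need, and your sign $\phi(x^\a y_i)(x_j)=\delta_{ij}x^\a$ agrees with the paper's bracket convention. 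One minor caveat: in characteristic $2$ the Lie-superalgebra structure includes the formal square, and your one-line claim that compatibility "follows in the same way" is not yet an argument, because the formal square on $\tilde H_\infty$ is only introduced in the paper's subsequent lemma; the paper's own proof is equally silent here, so this is a shared gloss rather than a defect specific to your approach.
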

\begin{proof}
Observe that the Lie brackets~\eqref{relv} and~\eqref{relV} are "the same".
We conclude that the Lie brackets on
the pivot elements~\eqref{pivotv} and their images~\eqref{pivotV} are "the same", thus
$\pi([v_i,v_j])=\{V_i,V_j\}$ for all $i,j\ge 0$.
The same observation applies to the standard monomials and their images.
\end{proof}

Now we define a Poisson subalgebra $\PP=\Poisson(V_0,V_1,V_2)\subset \HH$
generated by $\{V_0,V_1,V_2\}$.
Recursive relation~\eqref{pivot2} is rewritten as:
\begin{equation}\label{pivot2V}
V_i = y_i + x_ix_{i+1} V_{i+3},\qquad i\ge 0.
\end{equation}
\begin{Lemma}
Using the associative product only,
the elements $\{x_i,V_i\mid  i\ge 0\}\subset \HH$ freely generate a Grassmann algebra in the same variables.
\end{Lemma}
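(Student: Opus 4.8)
The plan is to show that the subalgebra of the associative part of $\HH$ generated by $\{x_i, V_i \mid i\ge 0\}$ is a Grassmann algebra on these letters, i.e.\ that these elements are odd, square to zero, pairwise anticommute, and satisfy no further relations. The odd parity is immediate: each $x_i$ is odd and each $V_i\in\tilde H_\infty^1$ is a sum of terms $x^\a y_j$ with $|\a|$ even and one $y$-factor, hence odd. Squaring to zero and anticommutativity among the $x_i$ are built into the Grassmann algebra $H_\infty$, so the only genuine relations to verify are $V_i^2 = 0$, $x_iV_j + V_jx_i = 0$, and $V_iV_j + V_jV_i = 0$ for all $i,j\ge 0$ (as associative products, not Poisson brackets). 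Using the recursion~\eqref{pivot2V}, $V_i = y_i + x_ix_{i+1}V_{i+3}$, one checks $V_i^2 = y_i^2 + y_i x_ix_{i+1}V_{i+3} + x_ix_{i+1}V_{i+3}y_i + (x_ix_{i+1}V_{i+3})^2$; the outer terms cancel since $y_i$ anticommutes with $x_i,x_{i+1}$ and with $V_{i+3}$ (which involves only $y_j$ with $j\ge i+3$), and $y_i^2=0$, $(x_ix_{i+1})^2=0$. For $x_j V_i + V_i x_j$: if $j\notin\{i,i+1\}$ and does not occur among the variables of $V_{i+3}$ we get immediate anticommutation; the remaining finitely many "overlapping" cases are handled by expanding $V_i$ far enough via~\eqref{recursive_present} and using $x_j^2=0$. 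The relation $V_iV_j+V_jV_i=0$ for $i\ne j$ is similar but requires a little more care, comparing the $y$-supports of the two pivot elements; since each $V_i$ is a sum of monomials of associative $y$-degree exactly one and no monomial of $V_i$ shares its unique $y$-letter with a monomial of $V_j$ once both are expanded sufficiently (the index sets $\{i, i+3, i+6,\dots\}$ and $\{j, j+3,\dots\}$ eventually separate, and partial overlaps force a repeated $x$-letter killing the term), the product is a sum of genuine Grassmann monomials in which transposing $V_i$ past $V_j$ introduces a single sign.

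Having established the relations, it remains to prove \emph{freeness}, i.e.\ that the natural surjection from the Grassmann algebra $\Lambda(z_i, w_i\mid i\ge 0)$ (with $z_i\mapsto x_i$, $w_i\mapsto V_i$) onto $\Alg(x_i,V_i\mid i\ge 0)\subset\HH$ is injective. For this I would exhibit a triangular change of variables: by~\eqref{pivot2V} each $V_i$ equals $y_i$ plus terms lying in the ideal generated by $\{x_j, V_{j}\mid j>i\}$ together with $x_i,x_{i+1}$ — more precisely, modulo the Grassmann monomials of higher "length" the map sending $(x_i,V_i)$ to $(x_i,y_i)$ is unitriangular with respect to a suitable filtration of $H_\infty$ (say by the maximal index appearing, or by the degree functions of Section~\ref{Sweight}). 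Since $\{x_i,y_i\mid i\ge 0\}$ freely generate the Grassmann algebra $H_\infty$ by construction, and a unitriangular transformation of a free generating set is again a free generating set, the elements $\{x_i,V_i\mid i\ge 0\}$ are algebraically independent in the Grassmann sense, and the subalgebra they generate is free Grassmann on them.

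The main obstacle I anticipate is the bookkeeping in the freeness argument: one must set up the filtration on $\tilde H_\infty$ precisely enough that the passage $V_i\rightsquigarrow y_i$ is genuinely unitriangular and that the transformation descends to the completion $\HH$ rather than only to $H_\infty$ (the $V_i$ are infinite sums, so one needs that the change of basis is "locally finite" — each Grassmann monomial in the $x_i,y_i$ receives contributions from only finitely many monomials in the $x_i,V_i$, which follows from $\bar\a<\bar\b$ in the definition of $\tilde H_\infty$ and from the fact that the $y$-letter of a term of $V_i$ has index $\ge i$). The verification of the anticommutation relations, though numerous, is routine once one notes that in every potentially dangerous overlap a repeated Grassmann $x$-letter appears and annihilates the term.
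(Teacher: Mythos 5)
Your strategy is essentially the paper's, and in fact fills in more than the paper records: the printed proof consists only of the observation that in the recursion~\eqref{pivot2V} the two summands are odd, anticommute and square to zero, whence $V_i^2=0$; the anticommutation relations and the freeness are left to supercommutativity and the evident triangular shape $V_i=y_i+\cdots$, which is exactly the route you take. Your $V_i^2=0$ computation and your freeness sketch (unitriangular change of generators $V_i\rightsquigarrow y_i$, with the correct remark that the substitution is locally finite so it makes sense in the completion) are sound.

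Two caveats. First, the long verification of $x_jV_i+V_ix_j=0$ and $V_iV_j+V_jV_i=0$ is unnecessary: $\tilde H_\infty$ is a supercommutative associative superalgebra and all of $x_i$, $V_j$ are odd, so any two of them anticommute with no case analysis; moreover the justifications you offer there are partly wrong as stated. When $i\equiv j\pmod 3$ the $y$-index sets $\{i,i+3,\dots\}$ and $\{j,j+3,\dots\}$ do not eventually separate, and a term of $V_i$ and a term of $V_j$ sharing the head $y_m$ are killed by the repeated letter $y_m$ itself (e.g.\ $y_j$ against $x_ix_{i+1}\cdots x_{j-2}y_j$), not necessarily by a repeated $x$-letter. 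Since the relations in question are automatic, this is a detour rather than a gap, but those sentences would not survive scrutiny. Second, in the freeness step the filtration must be strictly triangular for the substitution: filtering by the maximal index occurring does not work (replacing $y_j$ by $x_jx_{j+1}y_{j+3k}\cdots$ need not raise the maximal index of the whole monomial if a larger index is already present), and the weight functions of Section~\ref{Sweight} are useless here because they are constant on all terms of $V_i$ by design. Filtering by the number of $x$-letters does work: the leading term of $x^\a V^\b$ is $x^\a y^\b$ with exactly $|\a|$ letters $x$, every other term of its expansion has strictly more, and the assignment $(\a,\b)\mapsto x^\a y^\b$ is injective, so the minimal-term argument you indicate gives linear independence of the monomials in $\{x_i,V_i\}$ and hence freeness.
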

\begin{proof}
Observe that both terms in~\eqref{pivot2V} are odd, they anticommute,
and their squares are equal to zero.
Thus, we get  $V_i^2=0$, $i\ge 0$.
\end{proof}

\begin{Lemma}
Let $\ch K=2$. Then $\PP$ is Poisson superalgebra,
namely, it has a formal square on the odd part and satisfy the additional axioms for $\ch K=2$.
\end{Lemma}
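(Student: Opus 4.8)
The plan is to realize $\PP$ as a subalgebra of the ambient Hamiltonian Poisson superalgebra $\tilde H_\infty$ (equivalently $\HH$) that is closed under a formal square, so that the two extra axioms of Section~\ref{SSPoisson} for $\ch K=2$ are simply inherited. Since, by the previous lemma, $\PP$ lies inside the Grassmann algebra freely generated by $\{x_i,V_i\mid i\ge 0\}$, every odd $b\in\PP$ already satisfies $b^2=0$: each generator squares to zero, and, writing $b$ as a sum of odd monomials, the cross terms cancel by supercommutativity. Thus the axiom $b^2=0$ is immediate, and what must be produced is a formal square $\PP_{\bar 1}\to\PP_{\bar 0}$ satisfying the remaining axioms.

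For the formal square I would use the one on $\tilde H_\infty$: for odd $f$ the partial derivatives $\partial f/\partial x_i$ and $\partial f/\partial y_i$ are even, hence commute, so over the integers $\frac12\{f,f\}=\sum_{i\ge 0}\frac{\partial f}{\partial x_i}\frac{\partial f}{\partial y_i}$, and the right-hand side has integer coefficients; in characteristic $2$ one defines $f^{[2]}:=\sum_{i\ge 0}\frac{\partial f}{\partial x_i}\frac{\partial f}{\partial y_i}\in\tilde H_\infty$ (it lies in $\HH$ if $f$ does). The axioms $(f+g)^{[2]}=f^{[2]}+\{f,g\}+g^{[2]}$, $(\lambda f)^{[2]}=\lambda^2 f^{[2]}$, $(\ad f)^2 z=\{f^{[2]},z\}$ (from the super Jacobi identity $\{\{f,f\},z\}=2\{f,\{f,z\}\}$, divided by $2$ integrally and reduced), and $(ab)^{[2]}=a^2 b^{[2]}+ab\{a,b\}$ for $a$ even, $b$ odd (by a direct expansion with the super Leibnitz rule and the parities of $\partial_{x_i},\partial_{y_i}$, which reproduces the reduction displayed in Section~\ref{SSPoisson}) are verified directly; so $\tilde H_\infty$ and $\HH$ are Poisson superalgebras in the required sense, and it remains to check that $\PP$ is stable under $f\mapsto f^{[2]}$.

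For stability, recall that by the super Leibnitz rule $\PP$ is spanned by associative products $\pi_1\cdots\pi_m$ of iterated brackets $\pi_j\in\Lie(V_0,V_1,V_2)$; by additivity of $[2]$ (the cross terms being brackets, which lie in $\PP$) it suffices to treat an odd monomial $w=\pi_1\cdots\pi_m$. If $m\ge 2$, write $w=a\cdot b$ with $a$ even — take an even factor if one occurs, otherwise all $\pi_j$ are odd, so $m$ is odd, $m\ge 3$, and put $a=\pi_1\pi_2$ — so that $w^{[2]}=a^2 b^{[2]}+ab\{a,b\}$, which lies in $\PP$ by induction on $m$ since $a^2,ab,\{a,b\}\in\PP$ and $b$ is an odd product of fewer factors. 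For $m=1$ we have $w\in\Lie(V_0,V_1,V_2)$, and by additivity it suffices to take $w$ an odd standard monomial, i.e. $w=rV_n$ with $r$ a Grassmann monomial in the $x_j$ with $j<n$; a short computation gives $\sum_j\frac{\partial(rV_n)}{\partial x_j}\frac{\partial(rV_n)}{\partial y_j}=\pm\, r^2 x_{n+1}V_{n+3}$, which equals $x_{n+1}V_{n+3}\in\Lie(V_0,V_1,V_2)$ when $r=1$ and is $0$ otherwise (since $r^2=0$). This coincides with the image under $\pi$ of the quadratic map $v_n^{[2]}=v_n^2=x_{n+1}v_{n+3}$ of $\QQ$ (Corollary~\ref{Cbasis2}), so the formal square of $\tilde H_\infty$ restricts to one on $\Lie(V_0,V_1,V_2)$, hence by the induction above to one on all of $\PP$. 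The remaining characteristic-$2$ Poisson axioms for $\PP$ then hold because they hold in $\tilde H_\infty$.

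The only steps with genuine content are the verification that $\sum_i\partial_{x_i}f\,\partial_{y_i}f$ turns $\tilde H_\infty$ (hence $\HH$) into a characteristic-$2$ Poisson superalgebra — essentially the Leibnitz-rule reductions to the identities of Section~\ref{SSPoisson} — and the base-case computation of this expression on the standard monomials; the inductive passage to products of brackets and the inheritance of the axioms are bookkeeping.
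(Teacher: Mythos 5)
Your proposal is correct and follows essentially the same route as the paper: both construct the formal square on the ambient completed Hamiltonian superalgebra, verify the characteristic-two axioms there, and then show that it restricts to $\PP$, where it coincides with the square on $\QQ$ (namely $V_n^{[2]}=x_{n+1}V_{n+3}$, and zero on standard monomials with a nontrivial tail). The only difference is packaging: you define the square by the closed formula $f^{[2]}=\sum_i\partial_{x_i}f\,\partial_{y_i}f$ and prove closure of $\PP$ by an explicit induction on the number of Lie factors via $(ab)^{[2]}=a^2b^{[2]}+ab\{a,b\}$, whereas the paper extends the square from the basis values $w^{[2]}=0$ and appeals to the same product axiom to see that no new monomials arise.
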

\begin{proof}
Let us discuss a formal square that should be defined on the odd part of $\PP$.
First, define a formal square on the odd part of $H_\infty$.
Since $(\ad x_i)^2=(\ad y_i)^2=0$, we put $x_i^{[2]}=y_i^{[2]}=0$, $i\ge 0$.
By the additional axiom (Subsection~\ref{SSPoisson}), $w^{[2]}=0$,
where $w$ is any monomial in $\{x_i,y_i|i\ge 0\}$ of odd length,
on the other hand, one checks that $(\ad w)^2=0$.
Similar to the restricted Lie algebras~\cite{JacLie},
this leads to a formal square on the whole of the odd components of $H_\infty$ and $\tilde H_\infty$.
One checks that it satisfies the additional axiom,
as was remarked above, it is sufficient to verify it on a basis consisting of words in $\{x_i,y_i|i\ge 0\}$.
Next, we restrict the formal square to $\PP$ and see that it coincides with the regular square on $\QQ$.
Finally, by the additional axiom,
a formal square on the whole of the odd part of $\PP$ does not lead to new monomials,
i.e. $\PP$ is spanned by products of the basis of $\QQ$.
\end{proof}

Define Poisson superalgebras $P_i=\Poisson(V_i,V_{i+1},V_{i+2})\subset \HH$, $i\ge 0$, so $P_0=\PP$.
We extend the shift endomorphism
$\tau:\QQ\to\QQ$ onto $\PP$ by $\tau(1)=1$ and $\tau(w_1\cdots w_m)=\tau(w_1)\cdots \tau(w_m)$, where $w_j\in \QQ$.
\begin{Corollary}
Let $\PP=\Poisson(V_0,V_1,V_2)$. Then
\begin{enumerate}
\item $V_i\in \PP$, $i\ge 0$;
\item $\tau^i:\PP\to P_i$ is an isomorphism for any $i\ge 0$;
\item we get a proper chain of isomorphic Poisson superalgebras:
$$\PP=P_0\supsetneqq P_1 \supsetneqq \cdots \supsetneqq P_i\supsetneqq P_{i+1}\supsetneqq \cdots,\qquad
\mathop{\cap}_{n=0}^\infty P_i=\langle 1 \rangle_K.
$$
\item $\PP$ is infinite dimensional.
\end{enumerate}
\end{Corollary}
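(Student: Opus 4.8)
The plan is to mimic the proof of Corollary~\ref{Cbases} for the Lie superalgebra $\QQ$, transporting the needed facts through the embedding $\pi$ of Lemma~\ref{LisomQ} and through a suitably upgraded shift map $\tau$. For~(i), I would first observe that $\Lie(V_0,V_1,V_2)\subseteq\Poisson(V_0,V_1,V_2)=\PP$ and that, by Lemma~\ref{LisomQ}, $\pi$ restricts to an isomorphism of $\QQ=\Lie(v_0,v_1,v_2)$ onto $\Lie(V_0,V_1,V_2)$ with $\pi(v_i)=V_i$. Since $v_i\in\QQ$ for every $i\ge0$ by Corollary~\ref{Cbases}(i), applying $\pi$ gives $V_i\in\PP$. (A self-contained version repeats the computation of Lemma~\ref{L_BASIC_PROD} inside $\HH$: one has $\{V_i,V_i\}=2x_{i+1}V_{i+3}$ and $\{x_{i+1}V_{i+3},V_{i+1}\}=-V_{i+3}$, so $V_{i+3}\in\PP$ by induction on $i$; in characteristic $2$ one uses the formal square $V_i^{[2]}=x_{i+1}V_{i+3}$ in place of $\frac12\{V_i,V_i\}$.)

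Before treating~(ii) and~(iii) I would record that the index shift $x_i\mapsto x_{i+1}$, $y_i\mapsto y_{i+1}$ is an injective endomorphism of the Grassmann algebra $H_\infty$ preserving the bracket (because $\{y_{i+1},x_{j+1}\}=\delta_{ij}$), hence extends to an injective Poisson-superalgebra endomorphism of the completion and of $\HH$ with $\tau(V_i)=V_{i+1}$; its restriction to $\PP$ is the map in the statement, so $\tau$ is well defined on $\PP$ without appealing to any basis of $\PP$. Then~(ii) is immediate: being a Poisson homomorphism, $\tau^i$ sends $\Poisson(V_0,V_1,V_2)$ onto $\Poisson(\tau^iV_0,\tau^iV_1,\tau^iV_2)=\Poisson(V_i,V_{i+1},V_{i+2})=P_i$, and it is injective because $\tau$ is injective on $\HH\supseteq\PP$; hence $\tau^i\colon\PP\to P_i$ is an isomorphism.

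For~(iii), the inclusion $P_{i+1}\subseteq P_i$ holds because the generators $V_{i+1},V_{i+2}$ of $P_{i+1}$ lie in $P_i$ by definition and $V_{i+3}=\tau^i(V_3)\in\tau^i(\PP)=P_i$ by~(i) and~(ii). For properness and for the intersection I would use the index filtration: let $\HH_{\ge i}\subseteq\HH$ be spanned by the operators $x^\a y^\b$ that involve only variables of index $\ge i$; the explicit bracket formula shows each $\HH_{\ge i}$ is a Poisson subalgebra, and $V_j\in\HH_{\ge i}$ for $j\ge i$, whence $P_i\subseteq\HH_{\ge i}$. Since $V_i\in P_i$ but $V_i=y_i+\dots\notin\HH_{\ge i+1}$, the chain is strictly decreasing; and since any monomial $x^\a y^\b\ne1$ uses some finite index, $\bigcap_i\HH_{\ge i}=K\cdot1$, so that $\langle1\rangle_K\subseteq\bigcap_iP_i\subseteq\bigcap_i\HH_{\ge i}=\langle1\rangle_K$. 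Finally~(iv) is immediate, since $\PP$ contains the infinite-dimensional copy $\Lie(V_0,V_1,V_2)\cong\QQ$ of Corollary~\ref{Cbases}(iv) (equivalently, the $V_i$, $i\ge0$, are linearly independent, having pairwise distinct weights $\wt(V_i)=\lambda^i$).

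The only point needing genuine care — the main obstacle — is making the shift $\tau$ well defined on $\PP$ without circularity, since the definition quoted in the text presupposes that $\PP$ is spanned by products of basis monomials of $\QQ$; realizing $\tau$ as the restriction of the ambient index shift on $\HH$, as above, avoids this. Everything else is a formal transcription of the Lie-superalgebra argument, using only that the bracket relations~\eqref{relV} on $\{x_i,y_i\}$ coincide with the relations~\eqref{relv} on $\{x_i,\dd_i\}$.
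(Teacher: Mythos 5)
Your proposal is correct, and in outline it follows the same route the paper intends: transport Corollary~\ref{Cbases} through the embedding $\pi$ of Lemma~\ref{LisomQ} and use the shift $\tau$ with $\tau(V_i)=V_{i+1}$. Where you genuinely diverge is in the two points the paper leaves implicit. The paper extends $\tau$ to $\PP$ multiplicatively on products $w_1\cdots w_m$ of elements of $\QQ$, which tacitly uses that $\PP$ is spanned by such products (the fact underlying Theorem~\ref{TbasisP}), and its model argument for $\QQ$ settles $\bigcap_i L_i=\{0\}$ by appealing to the explicit basis of Theorem~\ref{Tbasis3}; the Poisson analogue would similarly lean on the basis description of $\PP$. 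You instead realize $\tau$ as the restriction of the ambient index shift $x_i\mapsto x_{i+1}$, $y_i\mapsto y_{i+1}$ on $H_\infty$, extended to $\tilde H_\infty$ and $\HH$ (legitimate, since $\{y_{i+1},x_{j+1}\}=\delta_{ij}$ and the shift respects the completion and the order filtration), and you prove both properness of the chain and $\bigcap_i P_i=\langle 1\rangle_K$ via the index filtration $\HH_{\ge i}$, using only that $V_j\in\HH_{\ge i}$ for $j\ge i$ while the term $y_i$ in $V_i$ obstructs $V_i\in\HH_{\ge i+1}$. This buys independence from the basis/spanning results for $\PP$ (so no circularity with Section~\ref{SbasesPA}) and works uniformly in all characteristics, at the modest cost of introducing the auxiliary subalgebras $\HH_{\ge i}$; the paper's implicit argument is shorter once the basis of $\PP$ is available. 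One cosmetic point: in the inclusion $\langle 1\rangle_K\subseteq\bigcap_i P_i$ you should note explicitly that $1\in P_i$, which holds because Poisson superalgebras here are unital by definition, so $\Poisson(X)$ contains the unit.
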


\section{Bases of Poisson superalgebra $\PP$ and associative hull $\AA$}
\label{SbasesPA}

In this section, we find bases for  $\PP$ and $\AA$.
In case $\ch K\ne 2$, we prove that for a filtration of $\AA$ one
has $\gr \AA\cong\PP$, in particular, both algebras have "the same" bases.

For a series of previous examples of (self-similar) (restricted) Lie (super)algebras, bases for
respective associative hulls were not found~\cite{PeSh09,PeShZe10,PeSh13fib,Pe16,Pe17}.
Instead, we considered bigger (restricted) Lie (super)algebras $\tilde{\RR}\supset\RR$
whose bases were given by quasi-standard monomials
and we determined and used bases of their associative hulls $\tilde{\AA}=\Alg(\tilde{\RR})\supset\AA$.
The virtue of the example of a Lie superalgebra of linear growth~\cite{PeOtto} is that
for the first time, we were able to describe explicitly a basis of the associative hull.
Now, we are also able to describe bases of $\AA$ and $\PP$.

Consider a filtration $\{\AA^m\mid m\ge 0\}$ of $\AA$, where $\AA^m$ is spanned by all at most $m$-fold
products of standard monomials of the Lie superalgebra $\QQ$, $m\ge 0$.
Define the associated graded algebra
$$
\gr \AA=\mathop{\oplus}\limits_{m= 0}^\infty\AA_m,\quad \text{where}\quad \AA_m=\AA^m/\AA^{m-1},\ m\ge 0,\ \AA^{-1}=\{0\}.
$$
Similarly, let $\PP_m\subset \PP$ denote the linear span of all $m$-fold products
of the standard monomials of $\QQ$, where $m\ge 0$.
We get a direct sum $\PP=\mathop{\oplus}\limits_{m=0}^\infty \PP_m$, which is not a grading of a Poisson superalgebra
because one has $\{\PP_n,\PP_m\}\subset \PP_{n+m-1}$, $n,m\ge 1$.

\begin{Theorem}\label{TbasisP}
Let $\ch K\ne 2$.
A basis of the  Poisson superalgebra $\PP=\Poisson(V_0,V_1,V_2)$ is given by the unit
and the following monomials:
\begin{equation}\label{monomP}
x_0^{\a_0}x_1^{\a_1}\cdots x_{n-2}^{\a_{n-2}}V_0^{\b_0}V_1^{\b_1}\cdots V_{n-1}^{\b_{n-1}}V_n,
\qquad \a_i,\b_i\in\{0,1\},\quad n\ge 0,
\end{equation}
($n$ will be referred to as the {\em length})
where $\a_i$s satisfy the following restrictions:
\begin{enumerate}
\item let $\b_{n-1}=\b_{n-2}=1$, then $\a_0,\ldots,\a_{n-2}$ take all combinations;
\item let $\b_{n-1}=1$, $\b_{n-2}=0$, then at least one of $\{\a_{n-4},\a_{n-3},\a_{n-2}\}$ is zero;
\item let $\b_{n-1}=0$, $\b_{n-2}=1$, then at least one of $\{\a_{n-3},\a_{n-2}\}$ is zero;
\item let $\b_{n-1}=\b_{n-2}=0$, then either $\a_{n-2}=0$ or  $\a_{n-3}=\a_{n-4}=0$;
\item let $\b_{n-1}=\cdots=\b_{0}=0$ then we have the standard monomials of Theorem~\ref{Tbasis3};
\item we exclude finitely many monomials (of degree at most 10) that are products involving
series of standard monomials related with false monomials,  see an algorithm below.
\end{enumerate}
\end{Theorem}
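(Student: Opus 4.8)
The plan is to reduce everything to bookkeeping inside the free Grassmann algebra on $\{x_i,V_i\mid i\ge 0\}$ furnished by the lemma asserting that $\{x_i,V_i\mid i\ge 0\}$ freely generate a Grassmann algebra. First I would show $\PP=\langle 1\rangle_K\oplus\Alg(\QQ)$, where $\Alg(\QQ)\subseteq\HH$ is the associative subalgebra generated by the images of the standard monomials of $\QQ$. Indeed, by the super Leibnitz rule a Poisson bracket of two associative products of ``letters'' is a linear combination of associative products in which a single bracket of two letters occurs; taking the standard monomials of $\QQ$ as letters, that bracket lands in $\QQ=\Lie(V_0,V_1,V_2)$, so $\Alg(\QQ)$ is already a Poisson subalgebra, and adjoining the unit yields all of $\PP$. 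Via Lemma~\ref{LisomQ} the images of the standard monomials are $r_{n-3}V_n$ (first type) and $r_{n-5}x_{n-2}V_n$ (second type), so $\PP$ is spanned by $1$ and by associative products of these.

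Next I would normalise such a product. Since $x_i^2=V_i^2=0$ and all letters anticommute, a product $S_1\cdots S_k$ of standard monomials of $\QQ$ either vanishes or equals $\pm x_0^{\a_0}\cdots x_N^{\a_N}V_{m_1}\cdots V_{m_k}$ with $\a_i\in\{0,1\}$ and pairwise distinct heads $m_1<\cdots<m_k=:n$ (distinctness is forced by $V_m^2=0$). A standard monomial with head $V_m$ has its remaining letters among $\{x_i\mid i\le m-3\}$ in the first type and among $\{x_i\mid i\le m-5\}\cup\{x_{m-2}\}$ in the second type, so the largest $x$-index occurring is $\le n-2$; this gives the shape~\eqref{monomP}. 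Conversely, a monomial~\eqref{monomP} of length $n$ can arise only by choosing exactly one standard monomial of head $V_m$ for every $m$ with $\b_m=1$ and one of head $V_n$, and by distributing the letters $\{x_i\mid \a_i=1\}$ among the tails of these factors.

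The combinatorial core is then to list which factors can carry the three ``top'' letters: $x_{n-2}$ only a second-type head-$V_n$ factor (as its obligatory neck); $x_{n-3}$ only the first-type head-$V_n$ factor or a second-type head-$V_{n-1}$ factor; $x_{n-4}$ only a first-type head-$V_{n-1}$, a first-type head-$V_n$, or a second-type head-$V_{n-2}$ factor; while every $x_i$ with $i\le n-5$ fits in the free part of a second-type head-$V_n$ factor and so is harmless once the top letters are placed. From this list, necessity of (i)--(iv) is a one-line check on $(\b_{n-1},\b_{n-2})$ --- for instance, if $\b_{n-1}=1,\b_{n-2}=0$ and $\a_{n-2}=\a_{n-3}=\a_{n-4}=1$, then $x_{n-2}$ forces the head-$V_n$ factor to be of the second type, $x_{n-3}$ then forces the head-$V_{n-1}$ factor to be of the second type, and $x_{n-4}$ has no carrier left; the remaining cases are analogous. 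Sufficiency is constructive: put all $x_i$ with $i\le n-5$ into the head-$V_n$ factor, use a second-type factor of head $V_{n-1}$ or $V_{n-2}$ precisely when $x_{n-3}$ resp.\ $x_{n-4}$ must be carried, and take every other head $V_m$ bare (as $v_m$); this produces the required factorisation. Case (v), $\b_0=\cdots=\b_{n-1}=0$, is immediate, since the only available head is $V_n$ and the monomial is then a single standard monomial of Theorem~\ref{Tbasis3}. Finally, linear independence of $1$ together with the listed monomials is automatic, as these are pairwise distinct monomials of the free Grassmann algebra.

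I expect the main obstacle to be the finite exceptional list of clause (vi). All the constructions above use only genuine (non-false) standard monomials of $\QQ$ as factors, and conversely a small-degree monomial~\eqref{monomP} may be unrealisable exactly because every candidate factorisation would require a false monomial of $\QQ$ as a factor; since false monomials have length $\le 10$, these exceptions concern only monomials of bounded degree, but determining precisely which ones occur means running through the possible factorisations explicitly --- this is the algorithm referred to in (vi) --- and one must then check that this finite list does not disturb the large-$n$ arguments.
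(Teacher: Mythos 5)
Your proposal is correct and follows essentially the same route as the paper: reduce to products of standard monomials of $\QQ$, normalise inside the Grassmann algebra on $\{x_i,V_i\mid i\ge 0\}$, read off restrictions (i)--(iv) from which factors can carry the top letters, and leave the false-monomial exceptions of (vi) to a finite check. Two points differ in a useful way. Where the paper obtains the spanning statement by quoting the basis of the free Poisson superalgebra from~\cite{Shestakov93}, you get it directly from the super Leibnitz rule: the associative span of the standard monomials of $\QQ$ is already closed under the Poisson bracket, which is more self-contained. And where the paper writes that the case analysis is omitted (``there are technical considerations because the monomials are of two types''), your carrier analysis -- $x_{n-2}$ only as the neck of a second-type head-$V_n$ factor, $x_{n-3}$ only in a first-type head-$V_n$ or as the neck of a second-type head-$V_{n-1}$ factor, $x_{n-4}$ only in a first-type head-$V_{n-1}$ or head-$V_n$ factor or as the neck of a second-type head-$V_{n-2}$ factor, all $x_i$ with $i\le n-5$ being absorbable by the head-$V_n$ factor -- actually supplies the omitted combinatorics, and it is correct; in particular it explains why only $\b_{n-1},\b_{n-2}$ enter, since factors with head $V_m$, $m\le n-3$, carry only letters $x_i$ with $i\le n-5$. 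For clause (vi) you are at the same level of precision as the paper, which organises the finite check through ``series'' of standard monomials related to the false ones; it would be worth adding the one-line remark that for $n$ large the head-$V_n$ factor (and the head-$V_{n-1}$, head-$V_{n-2}$ factors used as carriers) have length exceeding $10$, hence are never false, so the exceptions are indeed confined to bounded degree and do not disturb the large-$n$ argument.
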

\begin{proof}
Using the basis of the free Poisson superalgebra~\cite{Shestakov93},
we conclude that $\PP$ is spanned by all products of the standard monomials of $\QQ$ (Theorem~\ref{Tbasis3}).
Now, we consider all possible at most 3-fold products of the standard monomials,
the first monomial being of lengths $n$, and two optional monomials being of
lengths $n-1$ and $n-2$.
There are technical considerations because the monomials are of two types,
we omit this arguments.
One obtains restrictions (i--iv).
If a product involves only one standard monomial, we get~(v).

We need to exclude products that involve false monomials.
A {\it series} of standard monomials
is the set of the standard monomials with a head $V_n$ (i.e. the length $n$)
and a neck $x_{n-2}^{\a_{n-2}}$ fixed (so, the type is also fixed)
while the tail takes all allowed values so that we do not get a false monomial.
We have the series of the standard monomials related to false monomials:
\begin{align}\label{series1}
\begin{split}
&\hat x_0V_2,\qquad
\{\hat x_0x_1^*V_4\},\qquad
\hat x_0\, x_3 V_5,\qquad
\{\hat x_0x_1^*x_2^*\, x_5 V_7\},\\
&\{\tilde x_0x_1^*x_2^*\tilde x_3x_4^* V_7\},\qquad
\{\tilde x_0x_1^*x_2^*\tilde x_3x_4^*\tilde x_5\, x_8V_{10}\},
\end{split}
\end{align}
where $\tilde{\quad}$ denotes that the series cannot contain all the letters with this sign,
$*$ denotes that all powers are possible.
Above, the first line contains all the series,
that are simply described as not containing $x_0$.
There are some more series, actually consisting of one element, of the standard monomials not containing $x_0$:
\begin{align}\label{series2}
V_0,\quad V_1,\quad x_1V_3,\quad x_2V_4.
\end{align}
We consider a basis of $\PP$ as obtained by products of different series of
the standard monomials.
The series of the standard monomials except~\eqref{series1} and~\eqref{series2}
have arbitrary powers of $x_0$.
Observe that, multiplying by them remove all restrictions of~\eqref{series1}.

Thus, restrictions arise for products of the series,
that include at least one~\eqref{series1} and optionally some~\eqref{series2}.
Of course, we take only products without squares of any letters.
One obtains finitely many families of monomials~\eqref{monomP} with  restrictions on powers of the $x_i$s.
This leads to a finite list of monomials excluded from~\eqref{monomP}.
\end{proof}

\begin{Remark}
Consider  $\ch K=2$.
A basis of $\QQ$ consists of the standard monomials of the first type and squares of the pivot elements
(Corollary~\ref{Cbasis2}), the latter give a specific influence on a basis of $\PP$.
Recall that by the additional axiom (Subsection~\ref{SSPoisson}),
a formal square does not lead to new monomials, i.e. $\PP$ is spanned by products of the basis of $\QQ$.
For our purposes, we give only the following rough description of a basis of~$\PP$.
\end{Remark}

\begin{Corollary}\label{CbasisP2}
Let $\ch K= 2$, and $\PP=\Poisson(V_0,V_1,V_2)\subset \HH$. Then
\begin{enumerate}
\item $\PP$ is contained in a span of monomials~\eqref{monomP};
\item monomials~\eqref{monomP} with $n\ge 8$, $\a_{n-1}=\a_{n-2}=0$, and
arbitrary $\a_0,\ldots,\a_{n-3},\b_0,\ldots,\b_{n-1}\in\{0,1\}$
are linearly independent and belong to $\PP$.
\end{enumerate}
\end{Corollary}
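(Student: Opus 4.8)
The plan is to prove the two inclusions separately, using the basis of $\QQ$ in characteristic $2$ (Corollary~\ref{Cbasis2}), the embedding $\pi$ of Lemma~\ref{LisomQ}, and the fact that $\{x_i,V_i\mid i\ge 0\}$ freely generate a Grassmann algebra inside $\HH$. For (i), I would argue as in the proof of Theorem~\ref{TbasisP}: by the structure of the free Poisson superalgebra~\cite{Shestakov93}, together with the Lemma of Section~\ref{SPoisson} showing that in characteristic $2$ the formal square on $\PP$ produces no new monomials, $\PP$ is spanned by the associative products of the $\pi$-images of the basis monomials of $\QQ$, which by Corollary~\ref{Cbasis2} are the elements $r_{m-3}V_m$ ($m\ge 0$) and the pivot squares $x_{m-2}V_m$ ($m\ge 3$). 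Collecting the factors of an arbitrary such product inside the Grassmann algebra generated by $\{x_i,V_i\}$, and using $V_i^2=0$, a nonzero product has pairwise distinct heads $V_{m_1},\dots,V_{m_k}$; hence it equals, up to sign, a single Grassmann monomial whose $V$-support has largest element $n:=\max_t m_t$, and in which every Grassmann letter $x_j$ comes from the tail or neck of some factor, so $j\le m_t-2\le n-2$. This is precisely a monomial of the shape~\eqref{monomP}, which gives (i).

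For (ii), linear independence is immediate: the listed elements are pairwise distinct monomials in the free generators $\{x_i,V_i\mid i\ge 0\}$ of a Grassmann algebra, hence already linearly independent in $\HH$. For membership in $\PP$, I would use that $\PP$ is closed under the associative product of $\HH$ and realize each such $M$ explicitly as $M=\pm\,r_{n-3}V_n\cdot\prod_{i:\,\b_i=1}V_i$, with $r_{n-3}=x_0^{\a_0}\cdots x_{n-3}^{\a_{n-3}}$; this factorization is available exactly because the top exponent $\a_{n-2}$ is prescribed to vanish, so no letter $x_{n-2}$ has to be produced by a pivot square. Here each $V_i$ lies in $\QQ$, hence in $\PP$, and $r_{n-3}V_n$ is a standard monomial of the first type of length $n$ which is not a false one, since all false monomials of the first type have length $4$ or $7$ whereas $n\ge 8$; so $r_{n-3}V_n\in\QQ\subset\PP$ as well. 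As the heads of all factors are distinct, the product equals $M$ up to a sign, which is trivial in characteristic $2$, and therefore $M\in\PP$.

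The actual computations are only routine reordering in the Grassmann algebra, so there is no serious obstacle; the delicate point is the choice of hypotheses. The bound $n\ge 8$ is exactly what makes the head factor $r_{n-3}V_n$ a genuine (non-false) standard monomial regardless of its tail, and the vanishing of the top $x$-exponent is what lets $M$ be assembled with $r_{n-3}V_n$ in the head instead of the ``second-type'' piece $x_{n-2}V_n$. The monomials~\eqref{monomP} with $\a_{n-2}=1$ are genuinely harder — one must build them from a head $x_{n-2}V_n$ and distribute the remaining letters over lower factors while avoiding the remaining false monomials (including the ones of length $10$) — which is why the corollary only records the weaker two-sided estimate. As in the proof of Theorem~\ref{TgrowthQ}, this estimate is all that is needed to compute $\GKdim\PP$.
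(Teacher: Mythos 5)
Your proof is correct and follows essentially the same route as the paper: part (i) comes from the remark that in characteristic $2$ the formal square produces no new monomials, so $\PP$ is spanned by products of the basis of $\QQ$, which collect in the Grassmann algebra on $\{x_i,V_i\}$ into monomials of shape~\eqref{monomP}; and part (ii) is obtained exactly as in the paper, by taking the (non-false, since $n\ge 8$) standard monomials of the first type $x_0^{\a_0}\cdots x_{n-3}^{\a_{n-3}}V_n$ and multiplying by arbitrary products of $V_0,\dots,V_{n-1}$, linear independence being immediate from the free Grassmann generation of $\{x_i,V_i\}$. The extra details you supply (distinct heads forcing $V$-squares to vanish, the bound $j\le n-2$ on the $x$-indices) are accurate elaborations of what the paper leaves implicit.
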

\begin{proof}
We take the standard monomials of the first type $x_0^{\a_0}\cdots x_{n-3}^{\a_{n-3}}V_n\in\QQ$, where $n\ge 8$,
and multiply by arbitrary powers of $V_0,\ldots,V_{n-1}$.
\end{proof}

\begin{Theorem}\label{TbasisA}
Let $\ch K\ne 2$,  consider the associative hull $\AA=\Alg(v_0,v_1,v_2)\subset\End(\Lambda)$.  Then
\begin{enumerate}
\item a basis of $\AA$ consists of the unit and the replica of monomials~\eqref{monomP}:
\begin{equation}\label{monomA}
x_0^{\a_0}x_1^{\a_1}\cdots x_{n-2}^{\a_{n-2}}v_0^{\b_0}v_1^{\b_1}\cdots v_{n-1}^{\b_{n-1}}v_n,
\qquad \a_i,\b_i\in\{0,1\},\quad n\ge 0,
\end{equation}
that obey to all restrictions of Theorem~\ref{TbasisP} ($n$ will be referred to as the {\em length});
\item $\AA^m$ modulo $\AA^{m-1}$ is spanned by products $w_1\cdots w_m$ of standard monomials $w_i\in\QQ$
of strictly decreasing lengths, where $m\ge1$;
\item one has a natural isomorphisms of vector spaces $\AA_m\cong \PP_m$, $m\ge 0$;
\item $\gr \AA$ has a natural structure of a Poisson superalgebra and $\gr \AA\cong \PP$.
\end{enumerate}
\end{Theorem}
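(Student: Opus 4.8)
The plan is to deduce all four assertions from one statement: for the filtration $\{\AA^m\}$ one has $\gr\AA\cong\PP$ as Poisson superalgebras. Note first that $\gr\AA$ automatically carries a Poisson superalgebra structure. Since $\QQ\subset\AA^{(-)}$ and products of standard monomials span $\AA$ (apply PBW to the canonical surjection $U(\QQ)\twoheadrightarrow\AA$), the supercommutator maps $\AA^a\times\AA^b$ into $\AA^{a+b-1}$, so $\{\bar a,\bar b\}:=\overline{[a,b]}$ is a well-defined bracket on $\gr\AA$ satisfying the Leibniz rule (which in an associative superalgebra is the identity $[ab,c]=a[b,c]\pm[a,c]b$), and $\gr\AA$ is supercommutative. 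Granting a graded Poisson isomorphism $\psi\colon\PP\xrightarrow{\ \sim\ }\gr\AA$ with $\psi(V_i)=\bar v_i$, assertion (iv) is immediate, (iii) is its $m$-th graded piece, (ii) is the description of a spanning set of $\AA_m$ used to build $\psi$, and (i) follows by lifting a basis of each $\AA_m=\AA^m/\AA^{m-1}$ to $\AA$, inductively on $m$, starting from $\AA^0=\langle 1\rangle_K$.

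To construct $\psi$ I would first straighten products. Using the commutation relations of Lemma~\ref{L_BASIC_PROD} and Lemma~\ref{Lproducts}, any $k$-fold product $w_1\cdots w_k$ of standard monomials is rewritten modulo $\AA^{k-1}$ as a product whose heads have strictly decreasing length: a pair of equal heads yields $v_n^2=x_{n+1}v_{n+3}$ and so drops one factor, an out-of-order pair is corrected by supercommuting the pivots (each correction being the derivative of a tail times a pivot, or a commutator $[v_i,v_j]$, hence a product of fewer standard monomials, i.e. an element of $\AA^{k-1}$), and once the heads strictly decrease all tails commute to the front since a tail of $w_i$ involves only letters of index below the head of $w_i$, on which the later pivots vanish. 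The resulting operator is literally $\pm$ a monomial~\eqref{monomA} of length $\max_i n_i$, and the $x$-exponent patterns that occur are exactly those permitted by Theorem~\ref{TbasisP}: modulo $\AA^{k-1}$ the product of $\AA$ is supercommutative (the commutator drops filtration degree), so the count of admissible tails, including the finitely many false-monomial exclusions, is word for word the one that establishes Theorem~\ref{TbasisP} for $\PP$. This yields a surjective graded linear map $\psi\colon\PP\to\gr\AA$ sending each basis monomial~\eqref{monomP} to the class of its replica~\eqref{monomA}. That $\psi$ respects both operations is checked on the spanning elements $p_1\cdots p_k$ with $p_i\in\QQ$: both $\psi(p_1\cdots p_k\cdot q_1\cdots q_l)$ and $\overline{p_1\cdots p_k}\cdot\overline{q_1\cdots q_l}$ are the class in $\AA_{k+l}$ of the same $(k+l)$-fold product, and after expanding $\{p_1\cdots p_k,q_1\cdots q_l\}$ by the Leibniz rule and using $\{V_i,V_j\}_\PP=[v_i,v_j]_\QQ=[v_i,v_j]_\AA$ (Lemma~\ref{LisomQ}), the same holds for the bracket.

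It remains to prove $\psi$ injective, i.e. that the monomials~\eqref{monomA} are linearly independent in $\AA$; this is the substantive step. I would view $\AA$ inside the completed Grassmann--Weyl algebra $\widehat{\mathcal W}$ with topological basis the normal-ordered monomials $x^{\alpha}\dd^{\beta}$, $\alpha,\beta\in\Xi$ (independent because $[\dd_i,x_i]=1$ while all remaining generators anticommute and square to zero). Writing $v_k=\sum_{l\ge 0}(\text{Grassmann factor})_l\,\dd_{k+3l}$ and expanding a monomial $w=x^{\alpha}v_0^{\beta_0}\cdots v_{n-1}^{\beta_{n-1}}v_n$, its term of least $\dd$-content is exactly $x^{\alpha}\prod_{k\in S}\dd_k$ with $S=\{k:\beta_k=1\}\cup\{n\}$, and it occurs once, with coefficient $1$, since any other choice of summands pushes some $\dd$-index up by a positive multiple of $3$. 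As $w\mapsto(\alpha,S)$ is a bijection on the monomials~\eqref{monomA}, distinct $w$ have distinct leading vectors $x^{\alpha}\dd^{\chi_S}$; moreover a term with $\dd$-content $\chi_S$ appears in a different monomial $w'$ only when $|S'|=|S|$ and $\sum S'<\sum S$. Hence ordering the monomials~\eqref{monomA} by $(|S|,\sum S)$ makes the transition matrix to $\{x^{\alpha}\dd^{\beta}\}$ triangular with unit diagonal, so they are linearly independent, $\psi_m$ is an isomorphism, and the theorem follows.

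The main obstacle is keeping this leading-term bookkeeping honest: certifying that the minimal $\dd$-content term is genuinely untouched by the stray derivative corrections which arise because the pivots are infinite, non-commuting operators, and keeping the finitely many false-monomial exceptions of Theorem~\ref{TbasisP} synchronized on the $\AA$-side and the $\PP$-side. A cleaner, if heavier, route to the needed inequality $\dim_K\AA_{n_1n_2n_3}\ge\dim_K\PP_{n_1n_2n_3}$, which I would use as a cross-check, is to introduce the one-parameter family $\AA^{(t)}$ generated over $K[t]$ by $v_i^{(t)}=\dd_i+t\,x_ix_{i+1}v_{i+3}^{(t)}$ (equivalently the Rees algebra of the filtration): proving $\AA^{(t)}$ free over $K[t]$ on the replicas of~\eqref{monomP} forces $\dim_K\AA=\dim_K(\AA^{(t)}/(t-1))=\dim_K(\AA^{(t)}/(t))=\dim_K\PP$ in every multidegree, and identifies $\AA^{(t)}$ as the algebraic quantization of $\PP$.
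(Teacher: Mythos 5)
Your overall architecture (straightening products modulo $\AA^{k-1}$, putting a Poisson bracket on $\gr\AA$, matching the spanning set with the basis of $\PP$ from Theorem~\ref{TbasisP}, and then deducing (i)--(iv)) is essentially the paper's route; the paper gets the isomorphisms $\rho_m\colon\AA_m\cong\PP_m$ by performing the identical reordering of the strictly-decreasing products~\eqref{decrescente} on both sides rather than by a leading-term expansion. The genuine gap is exactly in the step you single out as "the substantive step": your linear-independence argument for the monomials~\eqref{monomA}. When you normal-order a product of pivots, the relations $[\dd_i,x_i]=1$ produce contraction terms in which a $\dd$ and an $x$ disappear simultaneously, so the expansion of a monomial with $|S|$ pivot factors contains normal-ordered terms with \emph{strictly fewer} than $|S|$ derivatives. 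Concretely, $v_0v_3$ contains the term $+\,x_0x_1x_4\dd_6$ (from $x_0x_1\dd_3\cdot x_3x_4\dd_6$ after contracting $\dd_3$ with $x_3$), and $x_0x_1x_4\dd_6$ is precisely the alleged leading vector of the admissible basis monomial $x_0x_1x_4v_6$ (a standard monomial of the second type), which moreover lies in the same multidegree $(3,1,0)$ as $v_0v_3$. Hence the "term of least $\dd$-content" of $w$ is not $x^{\alpha}\dd^{\chi_S}$, and your claim that $x^{\alpha}\dd^{\chi_S}$ occurs in a different monomial $w'$ only when $|S'|=|S|$ and $\sum S'<\sum S$ is false: it also occurs for $|S'|>|S|$ via contractions. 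With your ordering by $(|S|,\sum S)$ increasing, the witness matrix therefore has nonzero entries on both sides of the diagonal, and the triangularity argument collapses. The $\AA^{(t)}$ fallback does not repair this: freeness of $\AA^{(t)}$ over $K[t]$ on the replicas of~\eqref{monomP} is exactly the independence statement at issue, so the specialization comparison of dimensions at $t=1$ and $t=0$ is circular.

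The good news is that your bookkeeping can be repaired rather than abandoned. Since every contraction strictly decreases the number of $\dd$'s, the coefficient of $x^{\alpha}\dd^{\chi_S}$ in a monomial $u'$ with $|S'|<|S|$ is identically zero; and if $|S'|=|S|$ no contraction may occur, so the $\dd$-set is $\{k+3l_k\mid k\in S'\}$ and the $x$-set contains $\alpha'$ together with the chosen prefixes, whence $\sum S=\sum S'+3\sum_k l_k$, with equality $\sum S'=\sum S$ forcing all $l_k=0$ and $u'=u$ (this is your "multiple of $3$" observation, which is correct). Consequently, ordering the monomials~\eqref{monomA} first by $|S|$ \emph{decreasing} and then by $\sum S$ increasing makes the witness matrix triangular with diagonal entries $\pm1$, and linear independence (hence injectivity of $\psi$, and with it (i)--(iv)) follows; note that the multidegree grading alone cannot substitute for this, as the example above shows two distinct basis monomials with interacting leading terms inside one $\NO^3$-component. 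As written, however, the proposal's key independence claim is wrong in the stated form, and you explicitly leave it uncertified, so the proof is incomplete at precisely the point where it goes beyond the paper's argument.
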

\begin{proof}
Let us prove (ii) by induction on $m$. The cases $m=0,1$ are clear. Let $m\ge 2$.
Fix a total order $\prec$ on the standard monomials that obeys to their lengths.
Consider a product $w_1 w_2 \cdots w_m\in \AA^m$, where $w_i$ are standard monomials. 
Since the commutator of two different monomials $[w_i,w_{i+1}]\in\QQ$ is expressed via standard monomials,
we can superpermute these monomials modulo $\AA^{m-1}$.
Thus, we assume that $w_1\succeq w_2\succeq \cdots \succeq w_m$.
Suppose that we obtain two elements of the same length $n$, we treat such a product:
\begin{align*}
w_iw_{i+1}&= r_{n-1}v_n\cdot r'_{n-1}v_n
  =\pm r_{n-1}r'_{n-1}v_n^2
  =\pm \frac 12 r_{n-1}r'_{n-1}[v_n,v_n]\\
&= \frac 12 [r_{n-1}v_n,r'_{n-1}v_n]
  = \frac 12 [w_i,w_{i+1}] \in\QQ.
\end{align*}
Thus, products containing such pairs belong to $\AA^{m-1}$ and we apply the inductive assumption.
As a result, we get products of standard monomials with strictly decreasing lengths, (ii) is proved.

By (ii), $\AA^m$ modulo $\AA^{m-1}$ is spanned by $m$-fold products of the standard monomials as follows:
\begin{equation}\label{decrescente}
r_{n_1-1}v_{n_1}\cdot r_{n_2-1}v_{n_2} \cdots r_{n_m-1}v_{n_m},\quad n=n_1>n_2>\cdots>n_m\ge 0, \quad m\ge 1.
\end{equation}
Now, we move all Grassmann letters in~\eqref{decrescente} to the left.
We proceed as follows.
Let $x_i$ be a Grassmann variable in a standard monomial $r_{n_j-1}v_{n_j}$, $j\ge 2$, then  $i<n_j$.
The standard monomials before it in~\eqref{decrescente} have lengths greater than $n_j$, thus, greater than $i$.
By~\eqref{action}, $x_i$ supercommutes with the preceding heads $\{v_{n_k}\mid  1\le k<j\}$,
and while moving all Grassmann letters to the left we obtain no additional terms.
Since the associative algebra $\PP$ is supercommutative, $\PP_m$ is spanned by
ordered  $m$-fold products of standard monomials the same as~\eqref{decrescente}
(one only needs to replace $v_i$s by $V_i$s).
Both products are reordered (both yield zeros provided that a Grassmann letter appears twice)
to obtain respective bases in the same way,
one of them being given by the list~\eqref{monomP} under the specified restrictions.
We get isomorphisms of vector spaces $\rho_m: \AA_m=\AA^m/\AA^{m-1}\cong \PP_m$ for all $m\ge 0$.
We get an isomorphism $\rho: \gr \AA\cong \PP$,
a check shows that this is an isomorphism of associative superalgebras.
Applying $\rho^{-1}$ to monomials~\eqref{monomP}, we get Claim~(i).
Since $\gr \AA$ is supercommutative, we supply it with a bracket as follows.
Let $a=w_1\cdots w_n\in \AA^n{\setminus} \AA^{n-1}$ and $b=w_1'\cdots w_m'\in \AA^m{\setminus} \AA^{m-1}$,
where $w_i$s, $w_j'$s are standard monomials of $\QQ$, $n,m\ge 1$.
Observe that the order in such products influences the sign only.
Denote by $\bar a$, $\bar b$ the respective images in $\AA_n=\AA^{n}/\AA^{n-1}$ and $\AA_m=\AA^{m}/\AA^{m-1}$.
Put
$$
\{\bar a,\bar b\}=[a,b]\ (\mathrm{mod}\ \AA^{n+m-2})
=\sum_{p,q}\pm\bigg(\prod_{i\ne p}w_i \prod_{j\ne q} w'_j \bigg)   [w_i,w_j']\in \AA^{n+m-1} (\mathrm{mod}\ \AA^{n+m-2}).
$$
This bracket satisfies the Leibnitz rule because it came from a supercommutator of an associative algebra
that satisfies the Leibnitz rule.
We get an isomorphism of Poisson superalgebras $\gr A\cong \PP$ because the brackets coincide
on $\QQ$ that generate both algebras as associative algebras, thus yielding~(iv).
\end{proof}
\begin{Corollary}
Let $\ch K=2$.
\begin{enumerate}
\item
The associative algebra $\AA=\Alg(v_0,v_1,v_2)$ has a basis the same as in other chractristics~\eqref{monomA}.
\item
We have a proper inclusion of Poisson superalgebras $\PP\subsetneqq  \gr\AA$.
\end{enumerate}
\end{Corollary}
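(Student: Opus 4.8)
The plan is to treat the two claims separately, keeping as much of the proof of Theorem~\ref{TbasisA} as survives in characteristic $2$ and supplying a fresh argument where it does not.

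\textbf{Claim (i).} First I would check that the monomials~\eqref{monomA} still span $\AA$. The straightening carried out in the proof of Theorem~\ref{TbasisA} — reducing an arbitrary product of pivot elements to the form~\eqref{monomA} — uses only the commutation formulas of Lemma~\ref{Lproducts}, the identity $v_i^2=x_{i+1}v_{i+3}$ of Lemma~\ref{L_BASIC_PROD}, and the supercommutation of the operators $x_i,\dd_i$; none of these depends on the characteristic, and the exclusion of the false monomials together with the restrictions of Theorem~\ref{TbasisP} is derived from the same relations. Hence~\eqref{monomA} spans $\AA$ over an arbitrary field. For linear independence the char $\ne 2$ argument via $\gr\AA\cong\PP$ is unavailable, so I would argue inside $\End\Lambda$ directly. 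Grade $\End\Lambda$ by $x$-degree, $\deg x_i=1$, $\deg\dd_i=-1$; every element of $\AA$ expands as a sum of pure operators $x^\gamma\dd^\delta$ with only finitely many of each degree. For $w=x_0^{\a_0}\cdots x_{n-2}^{\a_{n-2}}v_0^{\b_0}\cdots v_{n-1}^{\b_{n-1}}v_n$ each pivot factor contributes a term of degree $\ge -1$, and the minimal total degree is attained uniquely by choosing the $\dd$-term of every $v_i$, so the component of $w$ of least $x$-degree is the nonzero pure operator $\pm\,x_0^{\a_0}\cdots x_{n-2}^{\a_{n-2}}\dd_{i_1}\cdots\dd_{i_k}\dd_n$ with $\{i_1<\cdots<i_k\}=\{i:\b_i=1\}$. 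From this operator the triple $(\a,\b,n)$ is read off, so distinct monomials~\eqref{monomA} have distinct least-degree components; hence in any vanishing linear combination, equating the components of least $x$-degree kills the coefficients of the monomials whose leading operators lie in that degree, and iterating forces all coefficients to vanish. Together with spanning, this shows~\eqref{monomA} is a basis.

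\textbf{Claim (ii).} The bracket on $\gr\AA$ built in the proof of Theorem~\ref{TbasisA} comes from the associative commutator of $\AA$ and so makes $\gr\AA$ a Poisson superalgebra in every characteristic; by Claim~(i) the classes of the monomials~\eqref{monomA} form a basis of $\gr\AA$, so as a graded vector space $\gr\AA$ is independent of the characteristic. I would then realize $\PP$ as a Poisson subalgebra of $\gr\AA$ and show the inclusion is proper; properness is the easy half. In characteristic $2$ a basis of $\QQ$ consists of the first-type standard monomials together with the pivot squares $x_{n-2}V_n$ (Corollary~\ref{Cbasis2}), so $\PP=\Poisson(V_0,V_1,V_2)$ is spanned by associative products of these; since each $V_i$ is a differential operator of order $1$ on $H_\infty$, a product of $m$ such monomials has order $m$, whence every order-$1$ element of $\PP$ is a combination of single such monomials. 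Now $x_1x_2x_5V_7$ has order $1$ but is neither of the form $r_{k-3}V_k$ (the tail of a length-$7$ first-type monomial reaches only $x_4$) nor equal to $x_5V_7$, so $x_1x_2x_5V_7\notin\PP$, although it is a valid basis monomial~\eqref{monomP} for $\ch K\ne 2$. On the other hand $x_1x_2x_5v_7=-x_1v_4\cdot x_2v_4\in\AA$, and since $x_1x_2x_5v_7$ is neither a first-type standard monomial nor a pivot square it is not in $\QQ$ (Corollary~\ref{Cbasis2}); thus $x_1x_2x_5v_7\in\AA^2\setminus\AA^1$, and its class is a nonzero element of $\gr\AA$ not coming from $\PP$. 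Consequently $\PP\subsetneqq\gr\AA$.

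The main obstacle is the remaining half of Claim~(ii): producing the embedding $\PP\hookrightarrow\gr\AA$ and verifying it is a well-defined, injective homomorphism of Poisson superalgebras. The delicate point is that in characteristic $2$ the Grassmann identities $V_i^2=0$ valid in $\PP\subset H_\infty$ have no literal analogue in $\gr\AA$ — there $[v_i]\cdot[v_i]=0$ for filtration reasons, while products such as $[x_1v_4]\cdot[x_2v_4]$ are nonzero — so one must match the two filtrations carefully and show that the char $\ne 2$ isomorphism $\gr\AA\cong\PP$ of Theorem~\ref{TbasisA} degenerates, in characteristic $2$, to a Poisson embedding of the smaller algebra $\PP$ onto a proper subalgebra of $\gr\AA$ rather than to an isomorphism. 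Once the embedding is in place, properness is already established above.
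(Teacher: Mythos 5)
Your linear-independence argument (lowest $x$-degree components of the operators in $\End\Lambda$) is correct and is a nice characteristic-free alternative to the paper's route, but the proposal has two genuine gaps, one in each claim.

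For claim (i): what you actually argue is that $\AA\subseteq\operatorname{span}$ of the monomials~\eqref{monomA} and that these monomials are linearly independent; you never show that the listed monomials themselves lie in $\AA$ when $\ch K=2$, and that is precisely the characteristic-sensitive point. In $\ch K\ne 2$ the standard monomials of the second type lie in $\QQ$ only because of brackets carrying a factor $2$, e.g. $[r_{n-6}v_{n-3},x_{n-5}^*v_{n-3}]=\pm 2\,r_{n-6}x_{n-5}^*x_{n-2}v_n$; in characteristic $2$ they disappear from $\QQ$ (Corollary~\ref{Cbasis2}), so one must prove they, and hence the monomials~\eqref{monomA} built from them, still belong to the associative algebra $\AA$. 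This is the bulk of the paper's proof: $r_{n-6}v_{n-3}\cdot x_{n-5}^*v_{n-3}=r_{n-6}x_{n-5}^*x_{n-2}v_n\in\AA$ for $n\ge 6$, together with the separate products handling the cases where $r_{n-6}v_{n-3}$ is a false monomial of the first type (lengths $4$ and $7$, producing the length $7$ and $10$ monomials of the second type). Your computation $x_1v_4\cdot x_2v_4=-x_1x_2x_5v_7$ in part (ii) is exactly one instance of this, but the general case is absent, so "the same basis" is not established — a priori $\AA$ could be spanned by a proper subset of~\eqref{monomA}. Relatedly, your assertion that the restrictions of Theorem~\ref{TbasisP} "are derived from the same relations" glosses over the fact that their derivation used the $\ch K\ne 2$ basis of $\QQ$ (both types of standard monomials), which is not available here.

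For claim (ii): you prove only properness (your element $x_1x_2x_5V_7\notin\PP$ versus $x_1x_2x_5v_7\in\AA$ is correct and is essentially the paper's argument that $\PP_1\subsetneqq\AA_1=\AA^1/\AA^0$), but you explicitly leave the construction of the embedding $\PP\hookrightarrow\gr\AA$ as "the main obstacle", i.e. the inclusion half of the statement is not proved. In the paper this half comes almost for free once claim (i) is in place: in characteristic $2$, $\PP$ is spanned by products of the char-$2$ basis of $\QQ$ (first-type monomials and pivot squares), and the reordering argument of Theorem~\ref{TbasisA} identifies these products with the corresponding classes of monomials~\eqref{monomA} in $\gr\AA$, compatibly with the supercommutative product and the induced bracket; the mismatch you worry about ($x_1V_4\cdot x_2V_4=0$ in $\PP$ while $[x_1v_4]\cdot[x_2v_4]\ne 0$ in $\gr\AA$) is exactly why the inclusion is proper rather than an isomorphism, not an obstruction to defining it. So the missing pieces are: the general membership of the second-type monomials in $\AA$ (claim (i)) and the actual identification of $\PP$ with a Poisson subalgebra of $\gr\AA$ (claim (ii)).
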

\begin{proof}
Let us show that all standard monomials of the second type belong to $\AA$
by repeating the arguments of the proof of Theorem~\ref{Tbasis3}.
Recall that $x_{n-2}v_{n}\in\QQ\subset\AA$ for all $n\ge 3$, thus yielding
all standard monomials of the second type of length at most~5.
Let $n\ge 6$, then
$$
r_{n-6} v_{n-3}\cdot x_{n-5}^* v_{n-3}= r_{n-6}x_{n-5}^* v_{n-3}^2
=  r_{n-6}x_{n-5}^* x_{n-2}v_{n}\in\AA,\quad n\ge 6.
$$
We obtain all monomials of the second type except the cases
when $r_{n-6} v_{n-3}$ is false (of the first type).
a) The case of a false monomial of the first type of length 4, we get
the required standard monomials of the second type of length 7 by
$x_1^*v_4\cdot x_2^*v_4=  x_1^*x_2^*x_5v_7\in\AA$.
b) Consider that $r_{n-6} v_{n-3}$ is a false monomial of the first type of length 7.
We get all monomials of the second type of length 10, i.e. those that contain at most two of
the letters $\{x_0,x_3,x_5\}$ by:
\begin{align*}
   x_1^*x_2^*x_3^*x_4^* v_7\cdot x_5^*v_7   &=  \hat x_0 x_1^*x_2^*x_3^*x_4^* x_5^*\, x_8 v_{10}\in \AA; \\
   x_0^*x_1^*x_2^*\hat x_3x_4^* v_7\cdot x_5^*v_7 &=  x_0^* x_1^*x_2^*\hat x_3x_4^* x_5^*\, x_8 v_{10}\in \AA; \\
   x_1^*x_2^*x_3^*x_4^* v_7\cdot x_0^*v_7 &=  x_0^*x_1^*x_2^*x_3^*x_4^* \hat x_5\, x_8 v_{10}\in \AA.
\end{align*}
Now, the arguments on products of standard monomials of both types~\eqref{decrescente} above yield the same basis of $\AA$ as
that in case $\ch K\ne 2$.

To prove the second claim recall that
respective products of $\PP$ similar to~\eqref{decrescente} contain only standard monomials of the first type and squares of the pivot elements.
As a result, in the case $m=1$ we get $\PP_1\subsetneqq \AA_1=\AA^1/\AA^0$.
\end{proof}

\begin{Lemma}
Define $A_i=\Alg(v_i,v_{i+1},v_{i+2})\subset\End\Lambda$ for $i\ge 0$. Then
\begin{enumerate}
\item $\tau^i:\AA\to A_i$ is an isomorphism for any $i\ge 0$;
\item we get a proper chain of isomorphic associative superalgebras:
$$\AA=A_0\supsetneqq A_1 \supsetneqq \cdots \supsetneqq A_i\supsetneqq A_{i+1}\supsetneqq \cdots,\qquad
\mathop{\cap}_{n=0}^\infty A_i=\{0\}.
$$
\end{enumerate}
\end{Lemma}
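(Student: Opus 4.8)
The plan is to imitate the proof of Corollary~\ref{Cbases} (the analogous statement for $\QQ$) and of its Poisson counterpart. For claim~(i), recall that the shift $\tau$ is an endomorphism of $\WW(\Lambda)$, hence extends to an algebra endomorphism of the associative hull $\Alg(\WW(\Lambda))\subset\End\Lambda$, with $\tau(v_j)=v_{j+1}$ for all $j\ge 0$. Therefore
\[
\tau^i(\AA)=\tau^i\bigl(\Alg(v_0,v_1,v_2)\bigr)=\Alg(v_i,v_{i+1},v_{i+2})=A_i .
\]
It then suffices to check that $\tau^i$ is injective on $\AA$. I would deduce this from the fact that $\tau$ is injective already on $\Lambda$ (it sends distinct Grassmann monomials to distinct ones) together with the intertwining identity $\tau(\phi)\circ\tau=\tau\circ\phi$ on $\Lambda$, valid for every operator $\phi\in\Alg(\WW(\Lambda))$ (it holds on the generators $x_j,\dd_j$ and is preserved under products and sums): if $\tau(w)=0$ then $\tau(w(f))=\tau(w)(\tau(f))=0$ for all $f\in\Lambda$, whence $w(f)=0$ and $w=0$. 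Finally $\tau^i$ preserves parity, since every $x_j,\dd_j$ is odd and $\tau$ maps odd generators to odd generators; so each $\tau^i\colon\AA\to A_i$ is an isomorphism of associative superalgebras. Alternatively, injectivity of $\tau^i$ also follows at once from the explicit basis of Theorem~\ref{TbasisA}, since $\tau^i$ maps the basis~\eqref{monomA} bijectively onto the basis of $A_i$.

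For the chain in claim~(ii), the inclusions $A_{i+1}\subseteq A_i$ hold because $v_{i+1},v_{i+2}$ generate $A_i$ while $v_{i+3}=-[v_i^2,v_{i+1}]\in A_i$ by Lemma~\ref{L_BASIC_PROD}; hence $A_{i+1}=\Alg(v_{i+1},v_{i+2},v_{i+3})\subseteq A_i$. The inclusions are proper: $v_i\in A_i$, but $v_i\notin A_{i+1}$, because $A_{i+1}$ is spanned by the words $v_{m_1}\cdots v_{m_l}$ with $m_1,\dots,m_l\ge i+1$, and the rightmost factor $v_{m_l}$ of any such word already annihilates $x_i$ by~\eqref{action} (as $i<m_l$), so every element of $A_{i+1}$ annihilates $x_i$, whereas $v_i(x_i)=1\ne 0$.

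It remains to prove $\bigcap_{i\ge 0}A_i=\{0\}$, and here I would avoid the basis and argue directly via the action on $\Lambda$. If $0\ne w\in\AA$, pick $f\in\Lambda$ with $w(f)\ne 0$ and choose $N$ so large that $f$ is a polynomial in $x_0,\dots,x_N$. For any $i>N$, an element of $A_i$ is a linear combination of words $v_{m_1}\cdots v_{m_l}$ with all $m_j\ge i>N$; the rightmost factor $v_{m_l}$ is a derivation that kills each $x_k$ with $k\le N<m_l$ (again by~\eqref{action}), hence kills $f$, so each such word, and therefore every element of $A_i$, annihilates $f$. Thus $w\notin A_{N+1}$, so $w\notin\bigcap_i A_i$, which gives $\bigcap_i A_i=\{0\}$. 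I do not expect a genuine obstacle here: every step is routine given Theorem~\ref{TbasisA}, Lemma~\ref{L_BASIC_PROD} and~\eqref{action}; the one point that deserves care is injectivity of $\tau^i\colon\AA\to A_i$ (so that it is an isomorphism, not merely a surjection onto $A_i$), which in any case is immediate from the explicit basis of Theorem~\ref{TbasisA}.
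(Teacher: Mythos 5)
Your proof is correct. The paper states this lemma without giving an argument — it is the associative analogue of Corollary~\ref{Cbases}, whose proof for $\QQ$ runs: $v_{i+3}=-[v_i^2,v_{i+1}]$ gives the inclusions, the shift $\tau$ gives the isomorphisms, and the triviality of the intersection is deduced from the description of the basis (there Theorem~\ref{Tbasis3}, here it would be Theorem~\ref{TbasisA}). Your treatment of claim~(i) and of the inclusions $A_{i+1}\subseteq A_i$ follows exactly this pattern; where you genuinely diverge is in the properness of the inclusions and in $\bigcap_i A_i=\{0\}$, which you settle by evaluating operators on $\Lambda$ (every word in generators of $A_{i+1}$ kills $x_i$ via~\eqref{action}, while $v_i(x_i)=1$; every element of $A_i$ with $i$ large kills a fixed $f$) rather than by citing the basis. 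This is a sound and in fact slightly more robust route: it is uniform in the characteristic, whereas the basis of $\AA$ is stated in Theorem~\ref{TbasisA} for $\ch K\ne 2$ with a separate corollary for $\ch K=2$, and it also supplies the injectivity of $\tau^i$ via the intertwining relation $\tau(\phi)\circ\tau=\tau\circ\phi$ without invoking the basis at all. The only points to keep at the back of your mind are the (paper-level) conventions you rely on: that $\tau$ is indeed a well-defined algebra endomorphism on the associative hull (the paper uses this freely in Lemma~\ref{Lend}), and that the intertwining identity extends from the generators $x_j,\dd_j$ to the infinite formal sums $v_j$, which is harmless since for each fixed $f\in\Lambda$ only finitely many terms act nontrivially.
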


\begin{Theorem}\label{Tquant}
Let $\ch K\ne 2$. The Poisson superalgebra $\PP$ admits an algebraic quantization.
\end{Theorem}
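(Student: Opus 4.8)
The plan is to realize the quantization by a Rees-type construction attached to the filtration $\{\AA^m\}$ of the associative hull, using the identification $\gr\AA\cong\PP$ of Theorem~\ref{TbasisA} (this is where the hypothesis $\ch K\ne 2$ enters). First I would record the two structural facts I need. Since $\AA^m$ is spanned by at most $m$-fold products of standard monomials of $\QQ$, the filtration is multiplicative, $\AA^p\AA^q\subseteq\AA^{p+q}$; and supercommutativity of $\gr\AA$ (Theorem~\ref{TbasisA}(iv)) says exactly that $[\AA^p,\AA^q]\subseteq\AA^{p+q-1}$, the induced map into $\AA^{p+q-1}/\AA^{p+q-2}$ being the Poisson bracket of $\PP$, while the leading term of a product $\AA^p\times\AA^q\to\AA^{p+q}/\AA^{p+q-1}$ is the associative product of $\PP$. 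By Theorem~\ref{TbasisA} the monomial basis~\eqref{monomA} of $\AA$ is partitioned by the subspaces $\AA_m=\AA^m/\AA^{m-1}$, so each basis monomial $w$ gets a well-defined \emph{filtration degree} $m(w)$ (its number of standard factors), matching the $\PP_m$-degree of the replica~\eqref{monomP}.

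Next I would introduce the deformed superalgebra
$$
\AA^{(t)}=\sum_{m\ge 0}\AA^{m}t^{m}\subseteq\AA[t],
$$
a $\Z_2$-graded associative subalgebra of the polynomial superalgebra $\AA[t]$ ($t$ central and even); it contains $t$ and is closed under multiplication by the multiplicativity of the filtration. As a $K[t]$-module it is free, with basis $\{\,w\,t^{m(w)}\mid w\text{ a basis monomial of }\eqref{monomA}\,\}$. Hence there is a unique even $K[t]$-linear isomorphism $\Phi\colon\PP[t]\to\AA^{(t)}$ sending the basis monomial of $\PP$ corresponding to $w$ to $w\,t^{m(w)}$, and $\Phi(t)=t$. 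Transport the product: set $a*b:=\Phi^{-1}\bigl(\Phi(a)\Phi(b)\bigr)$. Then $*$ is $K[t]$-bilinear and associative, it respects the grading $\PP[t]=\PP_{\bar 0}[t]\oplus\PP_{\bar 1}[t]$ because $\Phi$ does, and $f*t=t*f=ft$ since $\Phi$ is $K[t]$-linear with $\Phi(t)=t$.

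It remains to check the two congruences, which I would do on basis monomials $a=\bar w$, $b=\bar u$ of $\PP$ with $p=m(w)$, $q=m(u)$ and extend by $K$-bilinearity. Here $\Phi(a)\Phi(b)=(wu)\,t^{p+q}$ with $wu\in\AA^{p+q}$; writing $wu=z_{0}+z_{1}$ where $z_{0}$ collects the basis monomials of filtration degree exactly $p+q$ (so its class in $\AA_{p+q}$ corresponds to $a\cdot b$ in $\PP$, whence $\Phi(a\cdot b)=z_{0}t^{p+q}$) and $z_{1}\in\AA^{p+q-1}$, every monomial of $z_{1}$ carries a factor $t^{p+q}$ with exponent exceeding its filtration degree, so $z_{1}t^{p+q}\in t\,\AA^{(t)}$; thus $a*b\equiv a\cdot b\pmod t$. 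Likewise $\Phi(a)\Phi(b)-(-1)^{|a||b|}\Phi(b)\Phi(a)=[w,u]\,t^{p+q}$ with $[w,u]\in\AA^{p+q-1}$; writing $[w,u]=z_{0}'+z_{1}'$ with $z_{0}'$ of filtration degree exactly $p+q-1$ (its class corresponding to $\{a,b\}$ in $\PP$, so $z_{0}'t^{p+q}=t\,\Phi(\{a,b\})$) and $z_{1}'\in\AA^{p+q-2}$, one gets $z_{1}'t^{p+q}\in t^{2}\AA^{(t)}$, whence $a*b-(-1)^{|a||b|}b*a\equiv t\{a,b\}\pmod{t^{2}}$. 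I expect the only delicate part to be this filtration bookkeeping — the well-definedness and additive behaviour of $m(w)$ on leading terms — but that is precisely the content of Theorem~\ref{TbasisA} and its proof; the rest is the standard Rees-algebra argument, valid verbatim once $\ch K\ne 2$ guarantees $\gr\AA\cong\PP$.
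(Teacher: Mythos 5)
Your proof is correct, and it reaches the quantization by a route that is related to, but mechanically different from, the paper's. The paper does not form a Rees algebra: it deforms the underlying representation, replacing each Grassmann generator $x_i$ by multiplication by $tx_i$ on $\Lambda^{(t)}=K[t]\otimes_K\Lambda$, so that $[\dd_i,x_j^{(t)}]=t\,\delta_{ij}$; this yields a deformed associative superalgebra $\AA^{(t)}=K[t]\otimes_K\AA$ in which supercommutators of elements of $\QQ$ acquire a factor $t$, together with a parallel deformed Poisson superalgebra $\PP^{(t)}$, and the two are identified as $K[t]$-modules through the common bases~\eqref{monomA}/\eqref{monomP} of Theorem~\ref{TbasisA}; the two congruences are then checked essentially as you check them, by counting the deformed commutators $[w_i,w_j']_{(t)}=t[w_i,w_j']$ produced when reordering products of standard monomials. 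You instead stay inside the undeformed $\AA$ and take the Rees algebra $\sum_m\AA^mt^m$ of the multiplicative filtration, transporting its product to $\PP[t]$ via $\bar w\mapsto w\,t^{m(w)}$. What your version buys is that nothing new has to be constructed or re-proved: the paper's steps ``one checks that $\AA^{(t)}=K[t]\otimes_K\AA$'' and ``repeating arguments of Theorem~\ref{TbasisA} we get $\gr\AA^{(t)}\cong\PP^{(t)}$'' are replaced by the purely formal facts that $\AA^p\AA^q\subseteq\AA^{p+q}$, that $[\AA^p,\AA^q]\subseteq\AA^{p+q-1}$ with induced bracket the Poisson bracket, and that $\gr\AA\cong\PP$ matches bases, all of which are already contained in Theorem~\ref{TbasisA}; what the paper's version buys is an explicit operator realization of the quantized product on $\Lambda^{(t)}$, specializing at $t=1$ to the genuine action of $\AA$ on $\Lambda$. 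Both arguments use $\ch K\ne 2$ only through Theorem~\ref{TbasisA}. One small bookkeeping point: your claims $t\in\AA^{(t)}$ and $\Phi(t)=t$ (hence $f*t=ft$) presuppose the unit sits in $\AA^0$; this is exactly the convention of Theorem~\ref{TbasisA}(i) (the unit is listed in the basis of $\AA$, in filtration degree $0$), and the paper's own identification relies on the same convention, so this is not a gap.
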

\begin{proof}
Consider a polynomial extension
$\Lambda^{(t)}=K[t]\otimes _K\Lambda(x_i| i\ge 0)$, where $t$ commutes with the Grassmann variables.
As above $\dd_i$ denote  the superderivative $\dd_i(x_j)=\delta_{i,j}$, $i,j\ge 0$.
Let $x_i^{(t)}$ be the operator of the left multiplication by $t x_i$ on $\Lambda^{(t)}$, $i\ge 0$.
These operators anticommute except for nontrivial relations:
\begin{equation}\label{quant_xd}
[\dd_i,x_j^{(t)}]_{(t)}=\dd_ix_j^{(t)}+x_j^{(t)}\dd_i=t \delta_{ij},\qquad
(x_i^{(t)})^2=0,\quad \dd_i^2=0,\qquad i\ge 0.
\end{equation}
Below we omit the indices $x_i^{(t)}=x_i$, $i\ge 0$.
Let elements of the Lie superalgebra $\QQ$ act on $\Lambda^{(t)}$ using relations above.
Their respective Lie products are sums of commutators of pure Lie monomials, the latter involving one commutator
of type~\eqref{quant_xd}.
Thus, $\QQ^{(t)}=K[t]\otimes _K\QQ$ is supplied with a deformed Lie superbracket:
$$
[f(t)\otimes a, g(t)\otimes b]_{(t)}=t\cdot f(t)g(t)\otimes  [a,b],\qquad f(t), g(t)\in K[t],\quad a,b\in\QQ.
$$
The actions of $\QQ^{(t)}$ generate an associative superalgebra $\AA^{(t)}=\Alg(\QQ^{(t)})\subset \End \Lambda^{(t)}$.
One checks that $\AA^{(t)}=K[t]\otimes _K\AA$, where elements of $A$ commute using the deformed superbracket.

Similarly, we define the deformed Poisson superalgebra
$H_\infty^{(t)}=K[t]\otimes_K \Lambda(x_i,y_i| i\ge 0)$ with the deformed superbracket
is uniquely determined by relations:
\begin{equation} \label{quant_relV}
\{y_i,x_j\}_{(t)}=t\delta_{i,j},\quad \{x_i,x_j\}_{(t)}=\{y_i,y_j\}_{(t)}=0,\quad  i,j\ge 0.
\end{equation}
We continue our considerations above and construct the deformed Poisson superalgebra $\PP^{(t)}=K[t]\otimes_K \PP$,
the  bracket $\{\ ,\ \}_{(t)}$  obeying to~\eqref{quant_relV}.

Let $\{\AA^m|m\ge 0\}$ be the filtration discussed in Theorem~\ref{TbasisA}.
By its arguments $\{K[t]\otimes_K\AA^m|m\ge 0\}$ is a filtration of $\AA^{(t)}$.
By construction, $\AA^{(t)}$ and $\PP^{(t)}$ are free left $K[t]$-modules with "the same" bases~\eqref{monomA}.
Repeating arguments of Theorem~\ref{TbasisA} we
get an isomorphism of associative superalgebras $\gr \AA^{(t)}\cong \PP^{t}$.

We identify the vector spaces $\AA^{(t)}=\PP^{(t)}$, this will be our algebraic quantization.
Let $*$ be the associative product of $\AA^{(t)}$ and $\cdot$ the associative product of $\PP^{(t)}$.
Consider $a=w_1\cdots w_n\in \AA^n{\setminus} \AA^{n-1}$ and $b=w_1'\cdots w_m'\in \AA^m {\setminus} \AA^{m-1}$,
where $w_i$s, $w_j'$s are standard monomials of $\QQ$, $n,m\ge 1$.
Denote respective images $\bar a\in \AA^n/ \AA^{n-1}\cong \PP_n$, and $\bar b\in \AA^m/ \AA^{m-1}\cong \PP_m$.
Permuting two basis elements yields a factor $[w_i,w_j]_{(t)}=t[w_i,w_j]\in t\AA^{(t)}t$, we simply write $O(t)$.
We have
$$
a*b=\bar a\cdot \bar b \ (\mathrm{mod}\ t).
$$
Similarly, products of $\AA^{(t)}$ that involve either two commutators e.g. $[w_i,w_j]_{(t)}$, or
a triple commutator in $w_i,w_j'$ yield a factor $t^2$.
Thus, such products belong to $t^2\AA^{(t)}$, we simply write $O(t^2)$.
We have
\begin{align*}
a*b-(-1)^{|a||b|}b*a&=[a,b]_{(t)}
=\sum_{p,q}\pm\bigg(\prod_{i\ne p}w_i \prod_{j\ne q} w'_j \bigg)   [w_i,w_j']_{(t)}\ (\mathrm{mod}\ O(t^2))\\
&=t\sum_{p,q}\pm\bigg(\prod_{i\ne p}w_i \prod_{j\ne q} w'_j \bigg)   [w_i,w_j'] \ (\mathrm{mod}\ O(t^2))\\
&=t [a,b] \ (\mathrm{mod}\ O(t^2)) =t \{\bar a,\bar b\} \ (\mathrm{mod}\ O(t^2)).\qedhere
\end{align*}
\end{proof}
\section{Weights, growth, and paraboloid for superalgebras $\PP$ and $\AA$}
\label{SweightboundsAP}

In this section, we
establish bounds on weights of algebras $\PP$ and $\AA$, prove that both algebras have a polynomial growth,
and determine positions of their multihomogeneous $\NO^3$-components in space.

In Section~\ref{Sweight} we defined different weight functions on the Lie superalgebra $\QQ$.
Since theses functions are determined by the weights of the letters $\{x_i,y_i\mid i\ge 0\}$,
these functions are extended onto $\PP$ by additivity.
A  {\it Poisson monomial} is a product in the letters $\{ x_i, V_i\mid i\ge 0\}$,
they are either even or odd with respect to $\Z_2$ superalgebra grading.
The next result is proved as Theorem~\ref{TZ3graduacao}.

\begin{Lemma}\label{LZ3graduacaoP}
The Poisson superalgebra $\PP=\Poisson(V_0,V_1,V_2)$ is $\NO^3$-graded
by multidegree in the generators $\lbrace V_0,V_1,V_2\rbrace$:
$$\PP=\mathop{\oplus}\limits_{n_1,n_2,n_3\geq 0}\PP_{n_1, n_2,n_3}.$$
\end{Lemma}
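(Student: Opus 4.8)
The plan is to mimic, essentially verbatim, the proof of Theorem~\ref{TZ3graduacao}, replacing the pivot elements $v_i$ by $V_i$ and Lie products by \emph{Poisson monomials}, i.e. products formed with both the associative multiplication and the Poisson bracket. First I would recall from Section~\ref{Sweight} and Lemma~\ref{LisomQ} that the three weight functions $\wt,\swt,\sswt$ (hence the vector weight $\Wt$), originally defined on the letters by $\wt(y_i)=-\wt(x_i)=\alpha_i$ with the recurrence~\eqref{recorrencia}, extend from $\{x_i,y_i\mid i\ge 0\}$ to the whole completion $\HH$ by additivity: the associative product of $H_\infty$ is additive on monomials, and the Poisson bracket~\eqref{relV} is additive as well, since it deletes a matching pair $x_i$, $y_i$ and $\wt(x_i)+\wt(y_i)=0$. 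By construction of the pivot elements~\eqref{pivotV}, every Grassmann monomial $x^\a y^\b$ occurring in the expansion of $V_i$ carries the single vector weight $\Wt(V_i)=\Wt(v_i)=(\lambda^i,\mu^i,\bar\mu^i)$; in particular $\Wt(V_0)=(1,1,1)$, $\Wt(V_1)=(\lambda,\mu,\bar\mu)$, $\Wt(V_2)=(\lambda^2,\mu^2,\bar\mu^2)$.

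Next, for each $(n_1,n_2,n_3)\in\NO^3$ let $\PP_{n_1 n_2 n_3}\subset\PP$ be the linear span of all Poisson monomials of multidegree $(n_1,n_2,n_3)$ in $\{V_0,V_1,V_2\}$. Since the multidegree in the generators is additive under both operations, and the weight is additive on $\HH$, every element of $\PP_{n_1 n_2 n_3}$ has vector weight $n_1\Wt(V_0)+n_2\Wt(V_1)+n_3\Wt(V_2)$, and, viewed inside $\HH$, is an (a priori infinite) linear combination of Grassmann monomials $x^\a y^\b$ all sharing that same vector weight. Because $\Wt(V_0),\Wt(V_1),\Wt(V_2)$ are linearly independent — the corresponding $3\times 3$ matrix is the Vandermonde matrix $B$ of~\eqref{matrixB} in the pairwise distinct roots $\lambda,\mu,\bar\mu$ — distinct triples $(n_1,n_2,n_3)$ give distinct vector weights, so the subspaces $\PP_{n_1 n_2 n_3}$ are supported on pairwise disjoint sets of monomials $x^\a y^\b$ and the sum $\sum_{n_1,n_2,n_3\ge 0}\PP_{n_1 n_2 n_3}$ is direct. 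Finally, $\PP$ equals this sum, since each generator $V_j$ lies in one component and products and brackets of homogeneous elements are homogeneous; and the decomposition is a grading of a Poisson superalgebra because $\PP_{n_1 n_2 n_3}\cdot\PP_{m_1 m_2 m_3}\subseteq\PP_{n_1+m_1,\,n_2+m_2,\,n_3+m_3}$, and likewise for $\{\ ,\ \}$.

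The only delicate point — exactly as in Theorem~\ref{TZ3graduacao} — is passing through the completion $\HH$: one must be sure that the vector weight genuinely separates the Grassmann monomials occurring in elements of distinct components, so that a vanishing sum of finitely many homogeneous pieces forces each piece to vanish. This is secured by the two facts above, that a fixed vector weight determines $(n_1,n_2,n_3)$ uniquely, and that every Grassmann monomial appearing in a Poisson monomial of multidegree $(n_1,n_2,n_3)$ carries precisely that vector weight (inherited from~\eqref{pivotV}). Beyond this, the reasoning is purely formal and structurally identical to the Lie/associative case already settled.
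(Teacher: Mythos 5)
Your proof is correct and follows essentially the same route as the paper: the paper simply states that this lemma "is proved as Theorem~\ref{TZ3graduacao}", and your argument is precisely that proof transferred to $\PP$ — additivity of the vector weight on both the associative product and the bracket (since $\{y_i,x_j\}$ removes a zero-weight pair), constancy of the weight on all terms of each pivot element~\eqref{pivotV}, and linear independence of $\Wt(V_0),\Wt(V_1),\Wt(V_2)$ via the Vandermonde matrix to separate the components inside the completion.
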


Below, $\lambda$, $\mu$ are the roots of the characteristic polynomial (Section~\ref{Sweight}).
Since $\PP$ and $\AA$ have the same bases (they differ only in case $\ch K=2$),
the proofs below are given only in case of $\PP$.
\begin{Lemma}\label{LestP}
Let $w$ be a monomial~\eqref{monomP} of $\PP$ (or a monomial~\eqref{monomA} of $\AA$) of length $n$, $n\ge 0$.
Then
$$
\lambda^{n-5}<\wt w< 2\lambda^{n+1},\quad
|\swt w|< 8|\mu|^n,\qquad n\geq 0.
$$
\end{Lemma}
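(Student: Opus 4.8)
The plan is to read off both inequalities directly from the explicit form~\eqref{monomP} of the basis monomials by using additivity of the weight functions (Lemma~\ref{Lpesos}), and to fall back on the product decomposition of Theorem~\ref{TbasisA} only for the lower weight bound, where a direct estimate fails.

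First I would record that, since $\wt(V_i)=\lambda^i=-\wt(x_i)$ and $\swt(V_i)=\mu^i=-\swt(x_i)$, additivity applied to a monomial $w=x_0^{\a_0}\cdots x_{n-2}^{\a_{n-2}}V_0^{\b_0}\cdots V_{n-1}^{\b_{n-1}}V_n$ of length $n$ gives
\[
\wt w=\lambda^n+\sum_{i=0}^{n-1}(\b_i-\a_i)\lambda^i,\qquad
\swt w=\mu^n+\sum_{i=0}^{n-1}(\b_i-\a_i)\mu^i,
\]
with the convention $\a_{n-1}:=0$ and with each coefficient $\b_i-\a_i\in\{-1,0,1\}$. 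The point to notice is that the factors $x_i$ and $V_i$ sharing the same index contribute $-\mu^i$ and $+\mu^i$, hence combine into a single coefficient of absolute value at most $1$; without this pairing the triangle inequality would only yield a constant near $15$ in place of $8$. From here the three upper estimates are routine geometric-series bounds: $|\swt w|\le|\mu|^n+\sum_{i=0}^{n-1}|\mu|^i<|\mu|^n\cdot\frac{|\mu|}{|\mu|-1}<8|\mu|^n$, the last step being the numerical inequality $\frac{|\mu|}{|\mu|-1}<8$, i.e. $|\mu|>8/7$; and likewise $\wt w\le\lambda^n+\sum_{i=0}^{n-1}\lambda^i<\frac{\lambda^{n+1}}{\lambda-1}<2\lambda^{n+1}$, using $\lambda>3/2$. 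I would justify the two numerical facts rigorously from the characteristic polynomial: $t^3-t-2$ is increasing for $t>1/\sqrt3$ and is negative at $3/2$ and positive at $49/32$, so $3/2<\lambda<49/32$, whence $|\mu|=\sqrt{2/\lambda}>\sqrt{2/(49/32)}=8/7$.

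The remaining bound $\lambda^{n-5}<\wt w$ cannot be obtained from the displayed formula, since $\lambda^n-\sum_{i=0}^{n-1}\lambda^i$ is eventually negative. Instead I would invoke Theorem~\ref{TbasisA}(ii) together with the statement proved alongside it for $\PP$: up to a nonzero scalar, $w$ equals an ordered product $w_1\cdots w_m$ of standard monomials of $\QQ$ of strictly decreasing lengths $n=n_1>n_2>\cdots>n_m\ge0$, the length $n$ of $w$ being exactly $n_1$. By additivity $\wt w=\sum_{j=1}^m\wt w_j$, and by Corollary~\ref{Cpesos} each summand satisfies $\wt w_j>\lambda^{n_j-5}>0$, so $\wt w\ge\wt w_1>\lambda^{n_1-5}=\lambda^{n-5}$. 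The case of the monomials~\eqref{monomA} of $\AA$ is identical since $v_i$ and $V_i$ carry the same weight and superweight, so it suffices to argue for $\PP$. The one place needing genuine care is the tight numerical inequality $\frac{|\mu|}{|\mu|-1}<8$ (and, correspondingly, the recognition that the $x_i$ and $V_i$ contributions cancel in pairs); everything else is bookkeeping with geometric series.
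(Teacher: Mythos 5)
Your proof is correct and follows essentially the same route as the paper: the upper bounds come from pairing $x_i^{\a_i}V_i^{\b_i}$ into a single contribution $\pm\lambda^i$ (resp.\ $\pm\mu^i$) and summing a geometric series, while the lower bound $\lambda^{n-5}<\wt w$ is obtained exactly as in the paper, by writing $w$ (up to sign) as a product of standard monomials of $\QQ$ with leading length $n$, each of positive weight, and applying Corollary~\ref{Cpesos} to the leading factor. The only difference is that you replace the paper's numerical approximations $(\lambda-1)^{-1}\approx 1.92$ and $(1-1/|\mu|)^{-1}\approx 7.8$ by the rigorous rational bounds $3/2<\lambda<49/32$, hence $|\mu|>8/7$, which is a minor sharpening rather than a different argument.
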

\begin{proof}
Recall that $w$ arises from a product of standard monomials, one of them of length $n$,
each monomial being of positive weight.
Thus, the lower bound on the weight function follows from the lower bound of Corollary~\ref{Cpesos}.
We compute the upper bound, using that $(\lambda-1)^{-1}\approx 1.92<2$.
$$
\wt(x_0^{\a_0}\cdots x_{n-2}^{\a_{n-2}}V_0^{\b_0}\cdots V_{n-1}^{\b_{n-1}}V_n)\le \sum_{i=0}^n\lambda^i
<\frac{\lambda^{n+1}}{\lambda-1}<2\lambda^{n+1}, \quad n\ge 0.
$$
Observe that
$\swt(x_i^{\a_i}V_i^{\b_i})=\mu^i(\b_i-\a_i)\in \{0,\pm \mu^i\}$ for all $i\ge 0$. Then
$$ |\swt(x_0^{\a_0}\cdots x_{n-2}^{\a_{n-2}}V_0^{\b_0}\cdots V_{n-1}^{\b_{n-1}}V_n)|\le \sum_{i=0}^n|\mu|^i
<\frac{|\mu|^n}{1-1/|\mu|}<8|\mu|^n, $$
where we used that $(1-1/|\mu|)^{-1}\approx 7.8<8$.
\end{proof}

\begin{Theorem}\label{TgrowthP}
Consider the Poisson superalgebra $\PP$ and associative hull $\AA$ over an arbitrary field.
Then
$$\GKdim\PP=\LGKdim\PP=\GKdim\AA=\LGKdim\AA=2\log_\lambda 2\approx 3.3036.$$
\end{Theorem}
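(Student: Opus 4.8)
The plan is to follow the proof of Theorem~\ref{TgrowthQ} almost verbatim, now using the explicit monomial bases of $\PP$ and $\AA$ (Theorems~\ref{TbasisP} and~\ref{TbasisA}) in place of the standard monomials of $\QQ$. Since the weight function $\wt$ is additive on associative and Poisson products, it assigns positive weights $\wt(V_0)=1$, $\wt(V_1)=\lambda$, $\wt(V_2)=\lambda^2$ to the generators, and by the $\NO^3$-grading (Lemma~\ref{LZ3graduacaoP}) the weight of a basis monomial equals the weighted total degree in the generators; hence the weight growth function $\tilde\gamma_\PP(m)$, counting basis monomials $w$ with $\wt w\le m$, is equivalent to the ordinary growth function, and likewise for $\AA$. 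As $\PP$ and $\AA$ share the same basis~\eqref{monomP}, \eqref{monomA} with identical weight behaviour (Lemma~\ref{LestP}), it suffices to estimate $\tilde\gamma_\PP(m)$.

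\emph{Upper bound.} Fix $m\ge 1$. By Lemma~\ref{LestP} a basis monomial of length $n$ has $\wt w>\lambda^{n-5}$, so $\wt w\le m$ forces $n\le n_0:=[\log_\lambda m]+5$. A basis monomial~\eqref{monomP} of length $n$ is determined by the choice of $\a_0,\dots,\a_{n-2},\b_0,\dots,\b_{n-1}\in\{0,1\}$, so there are at most $2^{2n-1}$ of them. Summing over lengths,
\begin{equation*}
\tilde\gamma_\PP(m)\le 1+\sum_{n=0}^{n_0}2^{2n-1}<2^{2n_0}\le 2^{2\log_\lambda m+10}=2^{10}\,m^{2\log_\lambda 2},
\end{equation*}
using $2^{2\log_\lambda m}=m^{\log_\lambda 4}=m^{2\log_\lambda 2}$. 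Thus $\GKdim\PP\le 2\log_\lambda 2$.

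\emph{Lower bound.} Fix $m$ large and put $n=[\log_\lambda(m/2)]-1$, so that $2\lambda^{n+1}\le m$; by the upper bound of Lemma~\ref{LestP} every basis monomial of length $n$ then has $\wt w<2\lambda^{n+1}\le m$. Among these I would keep the monomials of case~(i) of Theorem~\ref{TbasisP}, namely $\b_{n-1}=\b_{n-2}=1$ with $\a_0,\dots,\a_{n-2},\b_0,\dots,\b_{n-3}$ arbitrary; since case~(vi) excludes only finitely many monomials, for $n$ large there remain at least $2^{n-1}\cdot2^{n-2}-\mathrm{const}\ge 2^{2n-4}$ of them. Since $n\ge\log_\lambda(m/2)-2$,
\begin{equation*}
\tilde\gamma_\PP(m)\ge 2^{2n-4}\ge 2^{2\log_\lambda(m/2)-8}=2^{-8}(m/2)^{2\log_\lambda 2},
\end{equation*}
so $\LGKdim\PP\ge 2\log_\lambda 2$. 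Combining the two estimates gives $\GKdim\PP=\LGKdim\PP=2\log_\lambda 2$, and the same computation applied to the basis~\eqref{monomA} (with the same weight bounds of Lemma~\ref{LestP}) yields the identical conclusion for $\AA$. In characteristic $2$ one argues as above, using Corollary~\ref{CbasisP2}(i) (that $\PP$ lies in the span of~\eqref{monomP}) for the upper estimate and Corollary~\ref{CbasisP2}(ii) (that $\PP$ contains $\succcurlyeq 4^{\,n}$ linearly independent monomials of length $n$) for the lower estimate, the basis of $\AA$ being~\eqref{monomA} in every characteristic.

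The only real work is the bookkeeping in the lower bound: one must check that the restrictions~(i)--(vi) of Theorem~\ref{TbasisP} do not spoil the $\Theta(4^n)$ count of basis monomials of length $n$. This is immediate, since~(vi) removes only boundedly many monomials and~(ii)--(iv) constrain at most three of the top variables $\a_{n-4},\a_{n-3},\a_{n-2}$, hence cost at most a bounded multiplicative factor; everything else is a direct transcription of the proof of Theorem~\ref{TgrowthQ}.
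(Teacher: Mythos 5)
Your proposal is correct and follows essentially the same route as the paper: both bound the weight growth function above by counting all monomials \eqref{monomP} of length at most $[\log_\lambda m]+5$ via the lower weight estimate of Lemma~\ref{LestP}, and below by exhibiting roughly $4^n$ basis monomials of length $n=[\log_\lambda(m/2)]-1$ of weight at most $m$ via the upper estimate of that lemma. The only (immaterial) difference is the family used for the lower bound: the paper takes monomials with two tail exponents set to zero, citing Corollary~\ref{CbasisP2} to cover all characteristics at once, while you use case~(i) of Theorem~\ref{TbasisP} for $\ch K\ne 2$ and Corollary~\ref{CbasisP2} only for $\ch K=2$.
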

\begin{proof}
Let us find an upper bound on the weight growth function
$\tilde\gamma_\PP(m)$ which counts basis monomials $w$ such that  $\wt w\le m$, where $m\ge 1$.
Consider such a monomial $w$  of length $n$.
By Lemma~\ref{LestP}, $\lambda^{n-5}<\wt w\le m$, hence $n\le n_0=[\log_\lambda m]+5$.
We get an upper bound by counting the number of  all monomials~\eqref{monomP} of length at most $n_0$
$$
\tilde\gamma_\PP(m)
\le 1+\sum_{n=1}^{n_0}2^{2n-1}
<1+\frac 23 4^{n_0}<4^{n_0}\le 4^{\log_\lambda m+5}=2^{10}m^{2{\log_\lambda}2}.
$$
Fix $m$ and set $n=[\log_\lambda (m/2)]-1$, we may assume that $n\ge 8$.
By Corollary~\ref{CbasisP2}, monomials~\eqref{monomP}
of length $n$ with $\a_{n-1}=\a_{n-2}=0$ belong to a basis of $\PP$ in case of any characteristic.
By Lemma~\ref{LestP}, $\wt w< 2\lambda^{n+1}\le m$.
Our monomials $w$ contain $2n-2$ arbitrary powers and their number yields a lower bound:
\begin{equation*}
\tilde\gamma_\PP(m)\ge 2^{2n-2}\ge 2^{2\log_\lambda (m/2)-6}=2^{-6-2\log_\lambda 2}m^{2\log_\lambda 2}.
\qedhere
\end{equation*}
\end{proof}

\begin{Theorem}
\label{TparabP}
Put $\sigma=\log_{|\mu|}\lambda \approx 3.068$.
The lattice points of space corresponding to basis monomials of the Poisson superalgebra $\PP$ (or the associative hull $\AA$)
in terms of the standard (i.e. multidegree) coordinates
are inside an "almost cubic paraboloid", given by an equation in terms of
the twisted coordinates $\WtR(w)=(Y_1,Y_2,Y_3)$:
$$\sqrt{Y_2^2+Y_3^2}<16\sqrt[\sigma]  {Y_1}. $$
\end{Theorem}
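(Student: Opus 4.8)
This is essentially the Poisson/associative analogue of Theorem~\ref{Tparab}, so the plan is to mimic that proof verbatim, only replacing the weight estimates for standard monomials of $\QQ$ (Corollary~\ref{Cpesos}, Lemma~\ref{Lsuperpeso}) by the corresponding estimates for the monomials~\eqref{monomP}, which are supplied by Lemma~\ref{LestP}. Let $w$ be a basis monomial of $\PP$ (equivalently of $\AA$) of length $n\ge 0$, and let $\Wt(w)=(Z_1,Z_2,Z_3)=(\wt w,\swt w,\sswt w)$ be its weight coordinates. By Lemma~\ref{LestP} we have $\lambda^{n-5}<\wt w=Z_1$, hence $n<\log_\lambda Z_1+5$, and also $|Z_2|=|\swt w|<8|\mu|^n$.

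Combining these two inequalities in exactly the same way as in Theorem~\ref{Tparab}: since $\sigma=\log_{|\mu|}\lambda$, one has $\log_\lambda|\mu|=1/\sigma$, so
\begin{equation*}
|Z_2|<8|\mu|^n<8|\mu|^{\log_\lambda Z_1+5}=8|\mu|^5\,Z_1^{\log_\lambda|\mu|}=8|\mu|^5\,Z_1^{1/\sigma}.
\end{equation*}
A numerical check gives $8|\mu|^5\approx 15.87<16$, so $|Z_2|<16\,Z_1^{1/\sigma}$. Now by Lemma~\ref{Ltrans}, the twisted coordinates satisfy $(Y_1,Y_2,Y_3)=(Z_1,\Re Z_2,\Im Z_2)$, hence $Y_2^2+Y_3^2=(\Re Z_2)^2+(\Im Z_2)^2=|Z_2|^2<256\,Z_1^{2/\sigma}=256\,Y_1^{2/\sigma}$, which is the claimed inequality $\sqrt{Y_2^2+Y_3^2}<16\sqrt[\sigma]{Y_1}$.

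The only points requiring a word of care are bookkeeping ones: first, that the estimate $|Z_2|<16Z_1^{1/\sigma}$ is applied with $Z_1=Y_1\ge 1$ (true for every basis monomial since its weight is a sum of positive terms $\lambda^i\ge 1$), so that the exponent manipulation $Z_1^{1/\sigma}\le$ the bound is legitimate and the constant $16$ is not spoiled; and second, that Lemma~\ref{LestP} is stated for exactly the monomials~\eqref{monomP} of $\PP$ and~\eqref{monomA} of $\AA$, which are precisely the basis monomials by Theorems~\ref{TbasisP} and~\ref{TbasisA}, so no separate argument is needed for the two algebras. There is no real obstacle here; the substantive content is all in Lemma~\ref{LestP} and in the identity $1/\sigma=\log_\lambda|\mu|$, and the present proof is a two-line consequence, the only ``hard'' part being the elementary numerical verification that $8|\mu|^5<16$ (with $|\mu|\approx 1.147$).
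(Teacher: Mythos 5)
Your proof is correct and follows essentially the same route as the paper: invoke Lemma~\ref{LestP} to get $n<\log_\lambda Z_1+5$ and $|Z_2|<8|\mu|^n$, combine them via $\log_\lambda|\mu|=1/\sigma$ with the numerical check $8|\mu|^5<16$, and pass to twisted coordinates by Lemma~\ref{Ltrans}. The extra remarks about $Z_1\ge 1$ and the identification of the basis monomials are harmless but not needed; the argument already goes through since $Z_1=\wt w>0$ by the lower bound of Lemma~\ref{LestP}.
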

\begin{proof}
Let $w$ be a monomial~\eqref{monomP} of $\PP$ of length $n\geq 0$
with the weight coordinates $\Wt(w)=(Z_1,Z_2,Z_3)=(\wt w,\swt w, \sswt w)$.
By Lemma~\ref{LestP}, $\lambda^{n-5}<\wt w=Z_1$, thus $n<\log_\lambda Z_1+5$.
The second inequality of Lemma~\ref{LestP} yields
\begin{equation*}
|Z_2|=|\swt w |< 8|\mu|^n< 8|\mu|^{\log_\lambda Z_1+5}
=8|\mu|^5 Z_1^{\log_\lambda |\mu|}<16 Z_1^{1/\sigma},
\end{equation*}
using $8|\mu|^5\approx 15.86<16$.
By Lemma~\ref{Ltrans}, we have relations $Z_1=Y_1$ and $Z_2=Y_2+iY_3$. We obtain:
\begin{equation*}
\sqrt{Y_2^2+Y_3^2}=|Z_2|<16 Z_1^{1/\sigma}=16 Y_1^{1/\sigma}. \qedhere
\end{equation*}
\end{proof}

\section{Jordan superalgebra $\JJ$, its $\Z^4$-grading and properties}\label{SJordan}

Assume that $\ch K\ne 2$.
Now we consider the Poisson superalgebra $\PP=\Poisson(V_0,V_1,V_2)$, its Kantor double yields
a Jordan superalgebra $\JJ=\Kan(\PP)=\PP\oplus \bar \PP$.
In this section, we determine its properties.
Namely, we establish a $\Z^4$-grading of the Jordan superalgebra $\JJ$, determine its growth,
and determine positions of its basis monomials in $\R^4$.

First, let us determine its generators.
\begin{Lemma}\label{LgeneratorsJ}
The Jordan superalgebra $\JJ=\Kan(\PP(V_0,V_1,V_2))$ is generated by $\{V_0,V_1,V_2,\bar 1\}$.
\end{Lemma}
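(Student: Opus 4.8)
The plan is to use the two structural identities of the Kantor double that let one travel from $\bar\PP$ back into $\PP$: the \emph{conjugation} identity $a\bullet\bar 1=\overline{a\cdot 1}=\bar a$ for $a\in\PP$, and the bracket identity $\bar a\bullet\bar b=(-1)^{|b|}\{a,b\}$, which is the \emph{only} place the Poisson bracket enters the Jordan multiplication. Write $J'=\Jord(V_0,V_1,V_2,\bar 1)\subseteq\JJ$; since $\JJ=\PP\oplus\bar\PP$ as a vector space, it suffices to prove $\PP\subseteq J'$ and $\bar\PP\subseteq J'$. Observe first that $V_0,V_1,V_2$ alone are hopeless: being odd with $V_i^2=0$ and supercommuting, they generate under $\bullet$ (which on $\PP$ is just the supercommutative associative product) only the finite-dimensional span of the monomials $V_{i_1}\cdots V_{i_k}$; so the bracket — and hence the generator $\bar 1$ — is indispensable.

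The first step I would carry out is the conjugation observation: if $a\in\PP\cap J'$ then $\bar a=a\bullet\bar 1\in J'$. The main step is then to show that $S:=\PP\cap J'$ is a Poisson subalgebra of $\PP$. It is a subspace containing $1,V_0,V_1,V_2$ (here $J'$, like $\PP$, is understood as a unital subalgebra, so $1\in J'$). It is closed under the associative product of $\PP$, because $a\cdot b=a\bullet b$ for $a,b\in\PP$. It is also closed under the Poisson bracket: given $a,b\in S$, the conjugation step puts $\bar a,\bar b$ in $J'$, hence $\{a,b\}=(-1)^{|b|}\,\bar a\bullet\bar b\in J'$, while $\{a,b\}\in\PP$; thus $\{a,b\}\in S$. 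Therefore $S$ is a Poisson subalgebra of $\PP$ containing the generators $V_0,V_1,V_2$, so $S=\Poisson(V_0,V_1,V_2)=\PP$, i.e. $\PP\subseteq J'$.

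Finally, with $\PP\subseteq J'$ the conjugation step applies to every $a\in\PP$, giving $\bar\PP=\{\,a\bullet\bar 1\mid a\in\PP\,\}\subseteq J'$; hence $J'\supseteq\PP\oplus\bar\PP=\JJ$, so $J'=\JJ$. The one point requiring genuine care — and the place where the bookkeeping in a full write-up will live — is the closure of $S$ under the bracket: the bracket is not itself a Jordan operation, and it can be produced only by first lifting $a,b\in\PP$ to $\bar a,\bar b\in\bar\PP$ via $\bar 1$ and multiplying back down, so one must apply the identity $\bar a\bullet\bar b=(-1)^{|b|}\{a,b\}$ with the correct parity of $b$ and check it for \emph{all} $a,b\in S$, not merely for the three generators. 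Everything else is formal.
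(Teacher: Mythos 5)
Your proposal is correct and takes essentially the same route as the paper: the paper's proof also recovers the Poisson bracket inside $\JJ$ from the two Kantor-double identities $a\bullet\bar 1=\bar a$ and $\bar a\bullet\bar b=\pm\{a,b\}$, only it carries the generation out explicitly (an induction producing all pivot elements $V_n$, then all standard monomials of $\QQ$, then the products of these that span $\PP$), whereas you package exactly the same facts as the observation that $J'\cap\PP$ is a Poisson subalgebra containing $V_0,V_1,V_2$ and hence equals $\PP=\Poisson(V_0,V_1,V_2)$. The two caveats you flag are genuine but routine and are treated the same way in the paper: $\Jord(V_0,V_1,V_2,\bar 1)$ is understood unitally (as in the paper's Corollary~\ref{Cspecial}), and the sign rule $\bar a\bullet\bar b=(-1)^{|b|}\{a,b\}$ should be applied to $\Z_2$-homogeneous elements, which suffices since the Poisson subalgebra generated by the odd elements $V_0,V_1,V_2$ is spanned by homogeneous elements.
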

\begin{proof}
Let $J=\Jord(V_0,V_1,V_2,\bar 1)\subset\JJ$ be a Jordan superalgebra generated by $\{V_0,V_1,V_2,\bar 1\}$.
We identify $\QQ$ with $\Lie(V_0,V_1,V_2)$ (Lemma~\ref{LisomQ}).
Let us prove by induction on $n$ that $V_n,\bar V_n\in J$ for all $n\ge 0$.
Using Lemma~\ref{L_BASIC_PROD}, we get
\begin{align*}
  V_n\bullet \bar 1&=\bar V_n; \\
 \bar V_{n-1}\bullet \bar V_n&=\{V_{n-1},V_n\}=-x_{n-1}V_{n+2};\\
 x_{n-1}V_{n+2}\bullet \bar 1&=\overline {x_{n-1}V_{n+2}};\\
 \bar V_{n-1}\bullet \overline {x_{n-1}V_{n+2}}&=\{V_{n-1},x_{n-1}V_{n+2}\}= V_{n+2}.
\end{align*}
Similarly, for any standard monomial $w\in \QQ$ we show that $w,\bar w\in J$.
Indeed, consider standard monomials $w_1,w_2\in \QQ$ and suppose that $w_1,w_2\in J\cap\PP$.
Then
$w=(w_1\bullet 1)\bullet (w_2\bullet 1)=\bar w_1\bullet \bar w_2=\{w_1,w_2\}\in J\cap \PP$.
Recall that $\PP$ is spanned by products of standard monomials (proof of Theorem~\ref{TbasisP}).
Let $w_1,\ldots,w_m\in\QQ$ be standard monomials.
Then $w=w_1\cdots w_m=w_1\bullet\cdots\bullet w_m\in J$ and $\overline{w}=w\bullet\bar 1\in \bar J$.
Therefore, $J=\PP\oplus\bar\PP=\JJ$.
\end{proof}
\begin{Corollary}
$(V_{n-1}\bullet\bar 1)\bullet (((V_{n-1}\bullet\bar 1)\bullet (V_n\bullet \bar 1))\bullet \bar 1)=-V_{n+2}$,\
$n\ge 1$.
\end{Corollary}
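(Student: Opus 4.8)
The plan is to prove this identity as a one-line consequence of the four equalities already displayed inside the proof of Lemma~\ref{LgeneratorsJ}, obtained by a step-by-step substitution. First I would use the identity $V_k\bullet\bar 1=\bar V_k$ (the first of those four, valid for $k=n-1$ and $k=n$) to rewrite the left-hand side as $\bar V_{n-1}\bullet\bigl((\bar V_{n-1}\bullet\bar V_n)\bullet\bar 1\bigr)$, where throughout we identify $\QQ$ with $\Lie(V_0,V_1,V_2)\subset\PP$ as in Lemma~\ref{LisomQ}.

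Next I would substitute the second identity, $\bar V_{n-1}\bullet\bar V_n=-x_{n-1}V_{n+2}$ (which rests on the Kantor double multiplication rule together with Lemma~\ref{L_BASIC_PROD}(iii)), getting $\bar V_{n-1}\bullet\bigl((-x_{n-1}V_{n+2})\bullet\bar 1\bigr)$. Applying $K$-linearity and the third identity $x_{n-1}V_{n+2}\bullet\bar 1=\overline{x_{n-1}V_{n+2}}$ turns this into $-\,\bar V_{n-1}\bullet\overline{x_{n-1}V_{n+2}}$, and finally the fourth identity $\bar V_{n-1}\bullet\overline{x_{n-1}V_{n+2}}=\{V_{n-1},x_{n-1}V_{n+2}\}=V_{n+2}$ gives $-V_{n+2}$, which is the claimed equality.

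I do not expect any real obstacle: the entire content is the concatenation of four equalities that were already verified in the proof of Lemma~\ref{LgeneratorsJ}. The only points needing a moment of care are the $\Z_2$-parities entering the Kantor double rules ($|\bar 1|=\bar 1$, $|V_k|=\bar 1$, while $x_{n-1}V_{n+2}$ is even) and the fact that, by the super Leibnitz rule and $x_{n-1}^2=0$, the bracket $\{V_{n-1},x_{n-1}V_{n+2}\}$ collapses to $V_{n+2}$; both were already handled there, so I would simply present the computation as a single chain of substitutions.
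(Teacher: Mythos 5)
Your proposal is correct and follows exactly the route the paper intends: the Corollary is simply the concatenation of the four displayed identities from the proof of Lemma~\ref{LgeneratorsJ}, namely $V_k\bullet\bar 1=\bar V_k$, $\bar V_{n-1}\bullet\bar V_n=-x_{n-1}V_{n+2}$, $x_{n-1}V_{n+2}\bullet\bar 1=\overline{x_{n-1}V_{n+2}}$, and $\bar V_{n-1}\bullet\overline{x_{n-1}V_{n+2}}=V_{n+2}$, combined by $K$-linearity. No gap; your remarks on the $\Z_2$-parities and the Leibnitz-rule collapse of $\{V_{n-1},x_{n-1}V_{n+2}\}$ are exactly the points already settled in that Lemma.
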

We extend the shift endomorphism $\tau:\PP\to\PP$ onto $\JJ$ by
$\tau(\bar v)=\overline{\tau(v)}$,  $v\in \PP$.
We get $\tau(\bar 1)=\bar 1$, and $\tau(V_i)=V_{i+1}$, for all $i\ge 0$.
Define Jordan superalgebras $J_i=\Jord(V_i,V_{i+1},V_{i+2},\bar 1)$ for all $i\ge 0$, so $J_0=\JJ$.
\begin{Corollary}\label{Cspecial}
Let $\JJ=\Jord(V_0,V_1,V_2,\bar 1)$. Then
\begin{enumerate}
\item $\{V_i\mid i\ge 0\}\subset \JJ$;
\item $\tau^i:\JJ\to J_i$ is an isomorphism for any $i\ge 0$;
\item we get a proper chain of isomorphic subalgebras:
$$\JJ=J_0\supsetneqq J_1 \supsetneqq \cdots \supsetneqq J_i\supsetneqq J_{i+1}\supsetneqq \cdots,\qquad
\mathop{\cap}_{n=0}^\infty J_i=\langle 1,\bar 1\rangle.
$$
\item $\JJ$ is infinite dimensional;
\item $\JJ$ is weakly special but not special.
\end{enumerate}
\end{Corollary}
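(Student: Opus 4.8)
The plan is to obtain items (i)--(iv) as bookkeeping on Lemma~\ref{LgeneratorsJ} and the chain of Poisson subalgebras $P_i$ from Section~\ref{SPoisson}, and to put the real work into item (v). Item (i) is already inside the proof of Lemma~\ref{LgeneratorsJ}: that induction places $V_n$ and $\bar V_n$ in $\Jord(V_0,V_1,V_2,\bar 1)=\JJ$ for every $n\ge 0$. For (ii) I would start from the shift $\tau$, which on $\tilde H_\infty$ merely relabels $x_i\mapsto x_{i+1}$, $y_i\mapsto y_{i+1}$ and is therefore an injective endomorphism respecting the associative product, the Poisson bracket, and the bar-involution; hence each $\tau^i$ restricts to an injective Jordan endomorphism of $\JJ=\PP\oplus\bar\PP$. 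As a Jordan homomorphism $\tau^i$ carries the generating set $\{V_0,V_1,V_2,\bar 1\}$ of $\JJ$ onto $\{V_i,V_{i+1},V_{i+2},\bar 1\}$, which generates $J_i$, so $\tau^i(\JJ)=J_i$ and $\tau^i\colon\JJ\to J_i$ is an isomorphism. For (iii): applying $\tau^i$ and Lemma~\ref{LgeneratorsJ} to $J_i$ gives $J_i=\Kan(P_i)=P_i\oplus\bar P_i$, so $\bigcap_i J_i=\bigl(\bigcap_i P_i\bigr)\oplus\bigl(\bigcap_i\bar P_i\bigr)=\langle 1,\bar 1\rangle$ using that $\bigcap_i P_i=\langle 1\rangle_K$; properness $J_i\supsetneqq J_{i+1}$ follows because $V_i\in J_i$ but $V_i\notin J_{i+1}$ --- every Poisson monomial spanning $P_{i+1}$ is a product of the letters $\{x_j,V_j\mid j\ge i+1\}$ and hence, rewritten in $\tilde H_\infty$, involves no $y_i$, whereas $V_i$ does. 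Item (iv) is then immediate: the $V_n$, $n\ge 0$, are linearly independent in $\JJ$ (equivalently, $\JJ\supseteq\PP$ is infinite dimensional).

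The content of the statement is (v). For weak speciality I would use that the Kantor double is functorial on Poisson superalgebras and carries surjections to surjections: a Poisson homomorphism $\varphi\colon A\to B$ induces the Jordan homomorphism $\varphi\oplus\bar\varphi\colon\Kan(A)\to\Kan(B)$, which one checks directly against the four defining rules of $\bullet$. Since $\PP$ is a homomorphic image of the free Poisson superalgebra on three generators~\cite{Shestakov93}, $\JJ=\Kan(\PP)$ is a homomorphic image of the Kantor double of that free Poisson superalgebra, which is special (it embeds into $B^{(+)}$ for a suitable associative superalgebra $B$); hence $\JJ$ is i-special, that is, weakly special. For non-speciality the plan is to produce a special identity ($s$-identity) of Jordan superalgebras that is violated in $\JJ$. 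The Poisson bracket on $\PP$ is visibly non-abelian and non-nilpotent even on iterated brackets --- e.g. $\{V_0,V_1\}=-x_0V_3$ and $\{V_0,x_0\}=1$, whence $\{V_0,\{V_0,V_1\}\}=-V_3-x_0\{V_0,V_3\}\ne 0$ --- so $\PP$ fails the degeneracy condition forcing a Kantor double to be special; translating the corresponding forbidden bracket/product configuration into the $\bullet$-operation of $\Kan(\PP)$ gives explicit elements of $\JJ$, built from $V_0,V_1,V_2,\bar 1$, on which the chosen $s$-identity does not vanish.

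The hard part will be exactly this last step. Items (i)--(iv) are routine, whereas non-speciality of $\JJ$ needs either the precise criterion for speciality of Kantor doubles of Poisson superalgebras, together with a verification that $\PP$ violates it, or an explicit evaluation of an $s$-identity; in either route one must make sure the witnessing elements really lie in the Jordan subalgebra generated by $\{V_0,V_1,V_2,\bar 1\}$, and not merely in the ambient Poisson superalgebra $\HH$.
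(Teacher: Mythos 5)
Your items (i)--(iv) are fine and amount to the bookkeeping the paper leaves implicit (Lemma~\ref{LgeneratorsJ}, the shift $\tau$ extended to $\JJ$, and $\bigcap_i P_i=\langle 1\rangle_K$). The trouble is item (v), which is the real content. Your route to weak speciality rests on the claim that the Kantor double of the free Poisson superalgebra is special; that is false, and it contradicts the very criterion you appeal to afterwards: by Shestakov's theorem (\cite{Shestakov93}, the fact the paper cites), $\Kan(A)$ is special if and only if $A$ is Lie nilpotent of class $2$, i.e.\ satisfies $\{X,\{Y,Z\}\}=0$, and a free Poisson superalgebra on three generators certainly does not. What is true, and what the paper uses, is that the Kantor double of \emph{any} Poisson superalgebra is weakly special (\cite{Shestakov93,Sko94}; for arbitrary Poisson brackets \cite{MaShZe01}); this is proved via quantization, not via speciality of a free object. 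Your functoriality argument can be repaired by replacing ``special'' by ``i-special'' for the free double and noting that a homomorphic image of an i-special superalgebra is i-special, but then you are invoking exactly the cited theorem, so nothing is gained over the paper's one-line citation.

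For non-speciality your proposal stops at a plan: you say one needs either ``the precise criterion for speciality of Kantor doubles'' or an explicit $s$-identity, and you supply neither. That criterion is precisely the missing ingredient and is again Shestakov's theorem quoted above; once it is invoked, your computation is the right verification -- by Lemma~\ref{L_BASIC_PROD} and~\eqref{squares}, $[v_0,[v_0,v_1]]=[v_0^2,v_1]=-v_3\ne 0$, so via Lemma~\ref{LisomQ} $\{V_0,\{V_0,V_1\}\}=-V_3\ne 0$ (your extra term $x_0\{V_0,V_3\}$ in fact vanishes, since $\{V_0,V_3\}=2x_0x_1x_4V_6$ already contains $x_0$), hence $\PP$ is not Lie nilpotent of class $2$ and $\JJ=\Kan(\PP)$ is not special. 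Without stating the criterion (or actually evaluating a concrete $s$-identity on elements of $\JJ$), the non-speciality claim is not proved. Finally, your worry about whether the witnessing elements lie in $\Jord(V_0,V_1,V_2,\bar 1)$ rather than merely in $\HH$ is moot: Lemma~\ref{LgeneratorsJ} says that this Jordan subalgebra is all of $\Kan(\PP)$.
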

\begin{proof}
The last claim follows from the known fact that
the Kantor double of a Poisson superalgebra is weakly special~\cite{Shestakov93,Sko94}
(a more general similar fact for arbitrary Poisson brackets is established in~\cite{MaShZe01}).
On the other hand, the Kantor double is special if and only if the Poisson superalgebra is Lie nilpotent of class 2,
namely, it satisfies the identity $\{X,\{Y,Z \}\}= 0$~\cite{Shestakov93}, which is not true in our case.
\end{proof}

We extend the weight functions of $\QQ$ and $\PP$ onto $\JJ$ by setting
$\wt(\bar 1)=\swt(\bar 1)=0$.
Using Lemma~\ref{LZ3graduacaoP}, we get.
\begin{Lemma}\label{LZ3graduacaoJ}
The Jordan superalgebra $\JJ=\Jord(V_0,V_1,V_2,\bar 1)$ is $\NO^3$-graded
by a partial multidegree in $\lbrace V_0,V_1,V_2\rbrace$:
\begin{equation*}
\JJ=\mathop{\oplus}\limits_{n_1,n_2,n_3\geq 0}\JJ_{n_1, n_2,n_3}.
\end{equation*}
\end{Lemma}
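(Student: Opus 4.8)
The plan is to follow the proof of Theorem~\ref{TZ3graduacao} almost verbatim, with the separating invariant again played by the vector weight, and to add a single extra check forced by the Kantor double. First I would extend the vector weight $\Wt$ from $\PP$ to $\JJ=\Kan(\PP)=\PP\oplus\bar\PP$ by setting $\Wt(\bar 1)=(0,0,0)$; since $\bar v=v\bullet\bar 1$, this amounts to letting $\bar v$ inherit the weight of $v$, so $\bar\PP$ decomposes into weight components exactly as $\PP$ does by Lemma~\ref{LZ3graduacaoP}. The crucial structural point is that $\Wt$ is additive on the Jordan product $\bullet$ of $\JJ$. This is checked against the four Kantor rules $a\bullet b=ab$, $\bar a\bullet b=(-1)^{|b|}\overline{ab}$, $a\bullet\bar b=\overline{ab}$, $\bar a\bullet\bar b=(-1)^{|b|}\{a,b\}$: the first three reduce to additivity of $\Wt$ on the associative product of $\PP$ together with the weight-invariance of the bar map, while the last one reduces to additivity of $\Wt$ on the Poisson bracket of $\PP$; both ingredients are contained in Lemma~\ref{Lpesos} and in Lemma~\ref{LZ3graduacaoP}, the latter being a grading of $\PP$ as a Poisson superalgebra and hence compatible with both operations.

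Next, for $n_1,n_2,n_3\ge 0$ I would let $\JJ_{n_1,n_2,n_3}\subset\JJ$ be the span of all Jordan monomials in $\{V_0,V_1,V_2,\bar 1\}$ whose partial multidegree in $V_0,V_1,V_2$ (the generator $\bar 1$ not being counted) equals $(n_1,n_2,n_3)$. By Lemma~\ref{LgeneratorsJ} these subspaces span $\JJ$. By additivity of $\Wt$ on $\bullet$, every element of $\JJ_{n_1,n_2,n_3}$ has the single vector weight $n_1\Wt(V_0)+n_2\Wt(V_1)+n_3\Wt(V_2)$. Since $\Wt(V_0)=(1,1,1)$, $\Wt(V_1)=(\lambda,\mu,\bar\mu)$, $\Wt(V_2)=(\lambda^2,\mu^2,\bar\mu^2)$ are linearly independent (the Vandermonde matrix $B$ of~\eqref{matrixB} is invertible, Lemma~\ref{Linv}), distinct triples give distinct vector weights, so elements of distinct $\JJ_{n_1,n_2,n_3}$ lie in distinct weight eigenspaces of $\JJ$ and the sum $\sum\JJ_{n_1,n_2,n_3}$ is direct. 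Compatibility of $\bullet$ with the decomposition, $\JJ_{n_1,n_2,n_3}\bullet\JJ_{m_1,m_2,m_3}\subseteq\JJ_{n_1+m_1,\,n_2+m_2,\,n_3+m_3}$, is once more just additivity of the partial multidegree, which is non-negative and adds on products, giving the asserted $\NO^3$-grading.

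An equivalent, shorter route is to observe directly that $\JJ_{n_1,n_2,n_3}=\PP_{n_1,n_2,n_3}\oplus\overline{\PP_{n_1,n_2,n_3}}$, so the sought grading of $\JJ$ is literally the direct sum of the grading of $\PP$ furnished by Lemma~\ref{LZ3graduacaoP} and of its image under the bar map, after which one only verifies the four Kantor rules against these components. I do not expect a genuine obstacle here; the one point requiring care is the clause $\bar a\bullet\bar b=(-1)^{|b|}\{a,b\}$, which is precisely what forces us to invoke that Lemma~\ref{LZ3graduacaoP} grades $\PP$ as a Poisson superalgebra, both the associative product and the bracket, rather than merely as an associative algebra; everything else is the verbatim argument of Theorem~\ref{TZ3graduacao}.
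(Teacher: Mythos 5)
Your proposal is correct and follows essentially the same route as the paper: the paper's (very brief) argument is precisely to extend the weight functions to $\JJ$ by setting $\wt(\bar 1)=\swt(\bar 1)=0$ and invoke the grading of $\PP$ from Lemma~\ref{LZ3graduacaoP}, which is what you do, with the additivity check against the four Kantor products spelled out explicitly. The identification $\JJ_{n_1,n_2,n_3}=\PP_{n_1,n_2,n_3}\oplus\overline{\PP_{n_1,n_2,n_3}}$ is exactly the implicit content of the paper's one-line proof.
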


\begin{Remark}
Consider the case $\ch K=2$. The Kantor double of the Poisson superalgebra $\PP$
yields an algebra with a binary operation $\JJ=\Kan(\PP)$.
Similarly, below we can define an algebra with a binary operation $\KK=\Jor(\QQ)$ as well.
Probably, these superalgebras can be supplied with appropriate ternary operations and
be considered as Jordan superalgebras in characteristic 2.
\end{Remark}

Now let us study $\JJ$ in more details.
A {\it monomial} of the Jordan superalgebra $\JJ=\PP\oplus\bar\PP$ is
either $w\in\PP$ or $\bar w\in \PP$, where $w$ is
a product in the letters $\{ x_i, V_i\mid i\ge 0\}$,
such monomials are either even or odd with respect to the $\Z_2$-grading of the superalgebra.
Below, formulas involving $\JJ$ are written for $\Z_2$-homogeneous elements either of $\PP$ or $\bar \PP$.

Let $w\in \JJ_{n_1, n_2,n_3}$ we keep the notation $\deg(w)=n_1+n_2+n_3$,
the total degree in the set $\lbrace V_0,V_1,V_2\rbrace$.
Now we are going to introduce functions specific to the Jordan algebra $\JJ$.
Consider a monomial
\begin{equation}\label{monomP2}
u=x_{i_1}\cdots x_{i_k}V_{j_1}\cdots V_{j_m}\in \PP,\quad
\quad i_1<\cdots< i_k,\quad 0 \leq j_1<\cdots< j_m,\qquad m\ge 0.
\end{equation}
We count a {\em multiplicity of the pivot elements} in this record of $u\in\PP$
(or in its copy $\bar u=u\bullet \bar 1\in \overline\PP$) by setting:
$$ \mult_V(u)=\mult_V(\bar u)= m.$$
Let $w\in\JJ=\PP\oplus \bar\PP$, put
$$
\epsilon(w)=\begin{cases} 0,\quad & w\in \PP;\\ 1, & w\in\overline \PP,\end{cases}
$$
where using $\epsilon(w)$ we assume that either $w\in\PP$ or $w\in\overline\PP$.
Define a specific {\em Jordan weight} function $\jwt(*)$:
\begin{equation}\label{mult1}
\jwt(w)=2\mult_V(w)-\epsilon(w)
,\qquad w\in\JJ.
\end{equation}
\begin{Lemma} \label{LjordProd}
The Jordan weight $\jwt(*)$ has the following properties. Let $a,b$ be monomials of $\JJ$. Then
\begin{enumerate}
\item $\jwt(1)=0$;
\item $\jwt(\bar 1)=-1$;
\item $\jwt(V_j)=2$, $j\ge 0$;
\item $\jwt(\overline V_j)=1$, $j\ge 0$;
\item $1\le \jwt(a)$ for $a\ne 1,\bar 1$;
\item $\jwt(a\bullet b)=\jwt(a)+\jwt(b)$ (i.e. the function is additive);
\item $-1\le \jwt(a)< 12+2\log_\lambda \wt(a)$.
\end{enumerate}
\end{Lemma}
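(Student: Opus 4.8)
The plan is to establish each of the seven properties of $\jwt$ essentially by unwinding the definition~\eqref{mult1}, using that $\jwt$ is built from two ingredients: the pivot-multiplicity $\mult_V$ and the bar-parity $\epsilon$. Properties (i)--(iv) are direct substitutions: for $1\in\PP$ we have $\mult_V(1)=0$, $\epsilon(1)=0$, so $\jwt(1)=0$; for $\bar 1\in\overline\PP$ we have $\mult_V(\bar 1)=0$, $\epsilon(\bar 1)=1$, so $\jwt(\bar 1)=-1$; for $V_j\in\PP$ the record~\eqref{monomP2} has $m=1$ and $\epsilon=0$, giving $\jwt(V_j)=2$; and $\overline V_j\in\overline\PP$ gives $2\cdot 1-1=1$.

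For property (v), I would argue that any monomial $a\ne 1,\bar 1$ of $\JJ=\PP\oplus\overline\PP$ either lies in $\PP$ and is $\ne 1$, hence contains at least one pivot factor $V_j$ (since a nonzero Grassmann-only product $x_{i_1}\cdots x_{i_k}$ does occur in $\PP$, one must check this case separately --- but such monomials are not among the basis monomials~\eqref{monomP}, every one of which ends in a head $V_n$, so $\mult_V(a)\ge 1$ and $\jwt(a)\ge 2$); or $a\in\overline\PP$ with $a\ne\bar 1$, so again the underlying Poisson monomial has $\mult_V\ge 1$, giving $\jwt(a)=2\mult_V(a)-1\ge 1$. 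The only subtle point here is confirming that every basis monomial of $\PP$ other than $1$ genuinely has $\mult_V\ge 1$, which is immediate from the form~\eqref{monomP} ending in $V_n$.

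For the additivity (vi), which is the heart of the lemma, I would check it on the four types of Kantor products $a\bullet b$ according to whether $a,b$ lie in $\PP$ or $\overline\PP$. If $a,b\in\PP$ then $a\bullet b=ab$ is the associative product, whose record~\eqref{monomP2} concatenates the pivot factors, so $\mult_V(ab)=\mult_V(a)+\mult_V(b)$ and $\epsilon=0$ throughout, giving additivity. If $a\in\overline\PP$, $b\in\PP$ (and symmetrically), then $\bar a\bullet b=\pm\overline{ab}$, so $\mult_V$ still adds, while $\epsilon$ changes from $(1,0)$ on the left to $1$ on the right: $2(\mult_V a+\mult_V b)-1=(2\mult_V a-1)+(2\mult_V b-0)$. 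The genuinely delicate case is $\bar a\bullet\bar b=\pm\{a,b\}$: the Poisson bracket on $\HH$ drops exactly one $y_i$ (equivalently one $V$-factor after substitution) from the combined monomial, so $\mult_V(\{a,b\})=\mult_V(a)+\mult_V(b)-1$; meanwhile $\epsilon$ goes from $1+1$ to $0$, and indeed $2(\mult_V a+\mult_V b-1)-0=(2\mult_V a-1)+(2\mult_V b-1)$. \textbf{This bracket case is the main obstacle}, because one must verify that the bracket~\eqref{relV}, after the substitution $y_i\mapsto$ the derivative role, always removes precisely one pivot factor and not more (it cannot remove an $x_i$, by the structure of~\eqref{relV}), and that the result, when re-expressed in the monomial basis~\eqref{monomP} of $\PP$, still has its pivot-multiplicity equal to that count --- this uses the reordering procedure from the proof of Theorem~\ref{TbasisA} and the fact that moving Grassmann letters past heads introduces no new terms.

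Finally, for property (vii), the lower bound $-1\le\jwt(a)$ follows from (ii) and (v). For the upper bound, write $a$ as (a copy of) a basis monomial~\eqref{monomP} of length $n$, so $\mult_V(a)\le n+1$ (the factors $V_0^{\b_0}\cdots V_{n-1}^{\b_{n-1}}V_n$ contribute at most $n+1$), hence $\jwt(a)\le 2(n+1)=2n+2$. By Lemma~\ref{LestP}, $\wt(a)>\lambda^{n-5}$, so $n<\log_\lambda\wt(a)+5$ and therefore $\jwt(a)<2(\log_\lambda\wt(a)+5)+2=12+2\log_\lambda\wt(a)$, as claimed. I would note that this upper bound is stated for monomials, where $\wt(a)$ is well-defined and positive, and that the estimate is uniform over both $\PP$ and $\overline\PP$ since passing to the bar copy only decreases $\jwt$ by~$1$.
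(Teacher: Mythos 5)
Your proposal is correct and follows essentially the same route as the paper: items (i)--(v) by direct inspection of the definition, additivity checked case-by-case with the only nontrivial case $\bar a\bullet\bar b=\pm\{a,b\}$ handled by noting that each term of the Leibniz expansion (a factor $\{V_{i_p},V_{j_q}\}$ or $\{V_{i_p},x_{j_q}\}$) loses exactly one pivot letter, and (vii) from $\mult_V\le n+1$ combined with the lower weight bound of Lemma~\ref{LestP}. The only cosmetic remark is that your parenthetical about the bracket "not removing an $x_i$" should be read as saying that $\{x_i,x_j\}=0$, so no term can consume Grassmann letters without consuming precisely one pivot; the substance of the argument matches the paper's proof.
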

\begin{proof}
Items (i--iv) follow by definition.
Consider (v), we observe that $\mult_V(a)\ge 1$, hence $\jwt(a)\ge 1$.

Let us prove the additivity.
The cases $a,b\in \PP$ and $a\in\PP$, $b\in\overline\PP$ are trivial.
Consider the case $a,b\in\overline\PP$.
Then we can consider that $a=\overline{a'(x)V_{i_1}\cdots V_{i_k}}$, $i_1<\cdots <i_k$ and
$b=\overline{b'(x)V_{j_1}\cdots V_{j_m}}$, $j_1<\cdots< j_m $,
where $a'(x),b'(x)$ are monomials in $\{x_i\mid i\ge 0\}$.
The product $a\bullet b$ is a linear combination of products,
where the original factors are being kept except those of
either $\{V_{i_p},V_{j_q}\}=\sum_{l} c_l(x) V_{l}$, $c_l(x)\in \Lambda(x_i\mid i\ge 0)$
or $\{V_{i_p},x_{j_q}\}=c(x)\in \Lambda(x_i\mid i\ge 0)$.
In both cases, we lose one pivot letter $V_i$, thus
\begin{equation*}
\jwt(a\bullet b)=2(k+m-1)=2k-1+2m-1=\jwt(a)+\jwt(b). 
\end{equation*}

Let us check bounds (vii).
The lower bound follows from the definition~\eqref{mult1}.
Let $u\in\PP$ be a monomial~\eqref{monomP2} of length $n\ge 0$, i.e. $j_m=n.$
Then $\mult_V(u)=m\le n+1$.
By Lemma~\ref{LestP}, $\lambda^{n-5}< \wt(u)$.
Hence, $\mult_V(u)< 6+\log_\lambda \wt(u)$.
Finally, either $a=u$ or $a=\bar u$ and we apply~\eqref{mult1}.
\end{proof}

\begin{Lemma}\label{LJGr}
Consider the Jordan superalgebra $\JJ$ as generated by the set $\{V_0,V_1,V_2,\bar 1\}$.
Let $w\in \JJ$ be a monomial with the multidegree coordinates $\Gr(w)=(X_1,X_2,X_3)$ and a (partial) degree $\deg w=X_1+X_2+X_3$
(see Lemma~\ref{LZ3graduacaoJ}).
\begin{enumerate}
  \item there exists a well-defined degree $\deg_{\bar 1}(w)$  with respect to $\bar 1$ for a monomial $w\in\JJ$.
  \item $\deg_{\bar 1}(w)=2\deg w-\jwt w$, $w\in\JJ$;
  \item $\deg_{\bar 1}(w)=2(\deg w-\mult_V(w))+\epsilon(w)$, $w\in\JJ$;
  \item $\deg_{\bar 1}(*)$ is additive on $\JJ$.
\end{enumerate}
\end{Lemma}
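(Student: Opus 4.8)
The plan is to build everything on the additivity of the ingredients already established and on the explicit multiplication rules of the Kantor double. The key point for (i) is that $\JJ$ is $\NO^3$-graded by $\Gr$ (Lemma \ref{LZ3graduacaoJ}), and the generator $\bar 1$ has $\Gr(\bar 1)=(0,0,0)$, so a naive "count the $\bar 1$'s" is not detected by $\Gr$. Instead I would define $\deg_{\bar 1}$ by the formula proposed in (ii), namely $\deg_{\bar 1}(w):=2\deg w-\jwt w$, and then \emph{check} that this is a nonnegative integer that behaves like a degree under the Jordan product, rather than trying to read it off directly from words. Concretely, first I would note that $\deg w=X_1+X_2+X_3$ is well defined on multihomogeneous monomials (Lemma \ref{LZ3graduacaoJ}) and additive on $\bullet$-products in the sense $\deg(a\bullet b)=\deg a+\deg b$, because each of the four Kantor product rules ($ab$, $\overline{ab}$, $\overline{ab}$, $\{a,b\}$) preserves the total partial degree in $\{V_0,V_1,V_2\}$: the associative product and the Poisson bracket of $\PP$ are both $\NO^3$-graded (Lemma \ref{LZ3graduacaoP}), and passing to the bar-copy does not change $\Gr$. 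Combining this with additivity of $\jwt$ (Lemma \ref{LjordProd}(vi)) gives (iv) immediately, since a $\Z$-linear combination of additive functions is additive; and then (i) follows because a function that is additive on products and integer-valued on the generators is a well-defined degree function in the usual sense, with $\deg_{\bar 1}(\bar 1)=2\cdot 0-(-1)=1$ and $\deg_{\bar 1}(V_j)=2\cdot 1-2=0$, $\deg_{\bar 1}(1)=0$, as one expects of the "number of $\bar 1$ factors".

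For (iii) I would just substitute the definition $\jwt(w)=2\mult_V(w)-\epsilon(w)$ from \eqref{mult1} into (ii): $\deg_{\bar1}(w)=2\deg w-\bigl(2\mult_V(w)-\epsilon(w)\bigr)=2(\deg w-\mult_V(w))+\epsilon(w)$, which is purely formal once (ii) is in hand. So the real content is to justify (ii), i.e. that the quantity $2\deg w-\jwt w$ genuinely counts occurrences of $\bar 1$. I would argue this on monomials by induction on the length of a Jordan word in the generators $\{V_0,V_1,V_2,\bar 1\}$, tracking how each elementary product changes the three quantities $\deg$, $\mult_V$, and $\epsilon$: multiplying by $\bar 1$ in the rule $w\bullet\bar 1=(-1)^{|w|}\bar w$ (or the $\PP\times\overline\PP\to\overline\PP$ rule) leaves $\deg$ and $\mult_V$ unchanged and flips $\epsilon$, hence raises $\deg_{\bar1}$ by $1$ as it should; a product landing in $\PP$ via the associative multiplication adds degrees and multiplicities and keeps $\epsilon=0$; and the crucial rule $\bar a\bullet\bar b=(-1)^{|b|}\{a,b\}$ is exactly where a pivot letter is consumed — here $\deg$ is still additive but $\mult_V$ drops by one relative to $\mult_V(a)+\mult_V(b)$ while two $\bar 1$'s are absorbed, and one checks $\deg_{\bar1}(\bar a\bullet\bar b)=\deg_{\bar1}(\bar a)+\deg_{\bar1}(\bar b)$ holds with the formula. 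This is precisely the bookkeeping already carried out in the additivity proof of Lemma \ref{LjordProd}(vi), so I would reuse that computation verbatim.

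The main obstacle is item (i): one must make sure the definition of $\deg_{\bar 1}$ is \emph{well defined on elements}, not merely on a fixed spanning set of Jordan words, since a given element of $\JJ$ has many expressions in the generators. The clean way around this is the one indicated above — do not define $\deg_{\bar 1}$ combinatorially from a word at all, but set $\deg_{\bar1}(w):=2\deg w-\jwt w$ on multihomogeneous monomials, where both $\deg$ (from the $\NO^3$-grading, Lemma \ref{LZ3graduacaoJ}) and $\jwt$ (from the intrinsic data $\mult_V$ and $\epsilon$ of a monomial $w\in\PP\oplus\overline\PP$, not of a word producing it) are already manifestly well defined on $\JJ$. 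Then (i) is the assertion that this function is additive on products and nonnegative-integer-valued, which is exactly (iv) together with the base-case values on generators; and the interpretation "$\deg_{\bar1}$ = number of $\bar 1$'s" is justified post hoc by the inductive check sketched for (ii). With that ordering — $\deg$ additive, $\jwt$ additive (cited), hence $\deg_{\bar1}$ additive, hence (i) and (iv); then (ii) by the induction; then (iii) by substitution — no step requires more than the already-proved lemmas plus the four Kantor product formulas.
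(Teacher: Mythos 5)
Your proposal is correct and follows essentially the same route as the paper: the well-definedness and formula (ii) come from additivity of $\jwt$ (Lemma~\ref{LjordProd}(vi)) together with the values $\jwt(V_j)=2$, $\jwt(\bar 1)=-1$, item (iii) is the substitution of~\eqref{mult1}, and (iv) follows from additivity of $\deg$ and $\jwt$. The only difference is cosmetic — you take $2\deg w-\jwt w$ as the definition and then verify it counts the $\bar 1$-factors, whereas the paper counts the $\bar 1$-factors in a word and derives the same formula, which is the identical computation read in the opposite direction.
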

\begin{proof} Let $w\in\JJ$ be a Jordan monomial, which involves $X_1,X_2,X_3$ factors $V_0,V_1,V_2$, respectively,
and  $\deg_{\bar 1}(w)$ factors $\bar 1$.
Using additivity of $\jwt(*)$ and its basic values (Lemma~\ref{LjordProd}), we get
\begin{align*}
\jwt(w)&=X_1\jwt(V_0)+X_2\jwt(V_1)+X_3\jwt(V_2)+\deg_{\bar 1}(w)\jwt(\bar 1)=2\deg w-\deg_{\bar 1}(w);\\
\deg_{\bar 1}(w)&=2\deg w-\jwt w,
\end{align*}
thus proving (i), (ii).
Using~\eqref{mult1} we get (iii).
Additivity of $\deg(*)$ and $\jwt(*)$  yields additivity of $\deg_{\bar 1}(*)$.
\end{proof}
Consider a monomial $w\in\JJ$ with the multidegree coordinates $\Gr(w)=(X_1,X_2,X_3)$.
We introduce one more coordinate $X_4=\deg_{\bar 1}(w)$.
Define an {\em extended multidegree} with respect to the generators
$\{V_0,V_1,V_2,\bar 1\}$ and an {\em extended degree}:
\begin{align*}
\wGr(w)&=(X_1,X_2,X_3,X_4)\in\NO^4;\\
\wdeg(w)&=X_1+X_2+X_3+X_4=\deg w+\deg_{\bar 1}(w),\quad w\in\JJ;
\end{align*}
in particular,  $\wdeg(1)=0$, $\wdeg(\bar 1)=1$.
We draw monomials $w\in\JJ$ using the extended multidegree coordinates,
thus putting monomials at lattice points $\wGr(w)\in \Z^4\subset \R^4$.

\begin{Corollary}\label{CgradedZ4J-A}
Consider the Jordan superalgebra $\JJ$.
\begin{enumerate}
\item The functions $\wGr(*)$, $\wdeg(*)$ are additive on $\JJ$;
\item $\JJ$ is $\NO^4$-graded using the extended
multidegree $\wGr(w)=(X_1,X_2,X_3,X_4)\in\NO^4$ in the generators $\{V_0,V_1,V_2,\bar 1\}$.
\end{enumerate}
\end{Corollary}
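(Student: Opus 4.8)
The plan is to deduce both claims directly from the additivity facts already established, namely the $\NO^3$-grading of Lemma~\ref{LZ3graduacaoJ}, the additivity of $\jwt(*)$ (Lemma~\ref{LjordProd}), and the identity $\deg_{\bar 1}(w)=2\deg w-\jwt w$ together with the additivity of $\deg_{\bar 1}(*)$ (Lemma~\ref{LJGr}). Nothing new needs to be computed; the work is purely organizational.

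First I would settle claim (i). By definition $\wGr(w)=(X_1,X_2,X_3,X_4)$, where $(X_1,X_2,X_3)=\Gr(w)$ is the partial multidegree in $\{V_0,V_1,V_2\}$ and $X_4=\deg_{\bar 1}(w)$. The first three coordinates are additive by Lemma~\ref{LZ3graduacaoJ} (they are precisely the multidegree functions attached to that $\NO^3$-grading), and the fourth is additive by Lemma~\ref{LJGr}(iv). Hence $\wGr(a\bullet b)=\wGr(a)+\wGr(b)$ for $\Z_2$-homogeneous monomials $a,b$, and $\wdeg=X_1+X_2+X_3+X_4$, being a sum of additive functions, is additive. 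One should note that the target is genuinely $\NO^4$: the first three entries are nonnegative because they count occurrences of $V_0,V_1,V_2$, while $X_4=\deg_{\bar1}(w)\ge 0$ because it counts occurrences of $\bar 1$ (cf.\ Lemma~\ref{LJGr}(i)).

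For claim (ii) I would refine the $\NO^3$-grading of Lemma~\ref{LZ3graduacaoJ}. Since $\JJ=\PP\oplus\bar\PP$ and $\PP$ has the explicit monomial basis of Theorem~\ref{TbasisP}, every basis monomial $w$ of $\JJ$ has a well-defined value $\wGr(w)\in\NO^4$; I would then define $\JJ_{n_1,n_2,n_3,n_4}$ as the span of the basis monomials $w$ with $\wGr(w)=(n_1,n_2,n_3,n_4)$. This partitions the basis, so $\JJ=\bigoplus_{(n_1,n_2,n_3,n_4)\in\NO^4}\JJ_{n_1,n_2,n_3,n_4}$ as vector spaces; equivalently, this decomposition is the common refinement of the $\NO^3$-grading $\JJ=\bigoplus\JJ_{n_1,n_2,n_3}$ by the level sets of the additive function $\deg_{\bar 1}$. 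Finally, additivity of $\wGr$ from claim (i) gives $\JJ_{\alpha}\bullet\JJ_{\beta}\subseteq\JJ_{\alpha+\beta}$ for all $\alpha,\beta\in\NO^4$, which is exactly the grading axiom.

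There is essentially no serious obstacle: the conceptual content — that the degree in $\bar 1$ is well defined despite $\bar 1$ entering only implicitly through the Kantor-double operations, and that it is additive — has already been isolated in Lemma~\ref{LJGr}. The only points requiring a line of care are that the four-fold decomposition remains a direct sum after refining (immediate from the monomial basis) and that all four coordinates are nonnegative, so that one indeed obtains an $\NO^4$-grading rather than merely a $\Z^4$-grading.
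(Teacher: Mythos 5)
Your proposal is correct and follows essentially the same route as the paper, whose proof is precisely that the corollary follows from Lemma~\ref{LJGr} (well-definedness and additivity of $\deg_{\bar 1}$, hence of $\wGr$ and $\wdeg$) together with the definitions, refining the $\NO^3$-grading of Lemma~\ref{LZ3graduacaoJ} by the fourth coordinate. Your added remarks on nonnegativity of $X_4$ and on the direct-sum refinement via the monomial basis are exactly the routine details the paper leaves implicit.
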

\begin{proof}
Follow from Lemma and the definitions.
\end{proof}

\begin{Corollary}\label{Cextended_growth}
Consider a monomial $w\in\JJ$. Then
\begin{enumerate}
\item $\wdeg w=3\deg w-\jwt w$;
\item
$ \deg w\le \wdeg w< 3\deg w$, where $w\ne \bar 1, 1$;
\item
$\wdeg V_n=3\deg V_n-2$, $n\ge 0$.
\end{enumerate}
\end{Corollary}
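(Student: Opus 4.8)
The plan is to obtain all three items as immediate consequences of the structural identities already established, so the argument is essentially bookkeeping with the weight functions. The starting point is the definition $\wdeg(w)=\deg(w)+\deg_{\bar 1}(w)$ together with the identity $\deg_{\bar 1}(w)=2\deg(w)-\jwt(w)$ from Lemma~\ref{LJGr}(ii) and the elementary properties of $\jwt(*)$ collected in Lemma~\ref{LjordProd}.

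For (i) I would simply substitute: $\wdeg(w)=\deg(w)+\deg_{\bar 1}(w)=\deg(w)+\bigl(2\deg(w)-\jwt(w)\bigr)=3\deg(w)-\jwt(w)$, which uses nothing beyond Lemma~\ref{LJGr}.

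For (ii) the upper estimate uses the positivity statement $\jwt(a)\ge 1$, valid for every monomial $a\ne 1,\bar 1$ (Lemma~\ref{LjordProd}(v)): combined with (i) this yields $\wdeg(w)=3\deg(w)-\jwt(w)\le 3\deg(w)-1<3\deg(w)$. The lower estimate is even softer: $\deg_{\bar 1}(w)\ge 0$, since it counts the number of occurrences of the generator $\bar 1$ in a monomial record, hence $\wdeg(w)=\deg(w)+\deg_{\bar 1}(w)\ge\deg(w)$. For consistency of the two bounds I would also remark that $w\ne 1,\bar 1$ forces $\mult_V(w)\ge 1$ and therefore $\deg(w)\ge 1$, so that indeed $3\deg(w)-1\ge\deg(w)\ge 0$.

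For (iii) I apply (i) to $w=V_n$ and use the value $\jwt(V_n)=2$ from Lemma~\ref{LjordProd}(iii), obtaining $\wdeg(V_n)=3\deg(V_n)-\jwt(V_n)=3\deg(V_n)-2$. There is no genuine obstacle in this corollary; the only point that needs a moment's attention is selecting the correct item of Lemma~\ref{LjordProd}, namely invoking the \emph{strict} bound $\jwt(w)\ge 1$ for $w\ne 1,\bar 1$ in order to upgrade the inequality in (ii) to a strict one.
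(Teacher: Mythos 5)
Your proposal is correct and follows essentially the same route as the paper: item (i) by substituting Lemma~\ref{LJGr}(ii) into the definition of $\wdeg$, item (ii) from $\deg_{\bar 1}(w)\ge 0$ together with the bound $\jwt(w)\ge 1$ of Lemma~\ref{LjordProd}(v), and item (iii) from $\jwt(V_n)=2$ of Lemma~\ref{LjordProd}(iii). No gaps.
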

\begin{proof}
We use item (ii) of Lemma, definition of the extended degree,
and items (iii), (v) of Lemma~\ref{LjordProd}.
\end{proof}

\begin{Theorem}\label{TgrowthJ}
Consider the Jordan superalgebra $\JJ=\Jord(V_0,V_1,V_2,\bar 1)$. Then
$$\GKdim\JJ=\LGKdim\JJ=2\log_\lambda 2\approx 3.3036.$$
\end{Theorem}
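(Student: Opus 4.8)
The plan is to reduce the growth of $\JJ=\PP\oplus\bar\PP$ to that of the Poisson superalgebra $\PP$, whose Gelfand--Kirillov dimension is $2\log_\lambda 2$ by Theorem~\ref{TgrowthP}. Recall that $\JJ$ is generated by the four elements $\{V_0,V_1,V_2,\bar 1\}$ (Lemma~\ref{LgeneratorsJ}) and that the extended multidegree $\wGr$ is additive, so $\JJ$ is $\NO^4$-graded by it (Corollary~\ref{CgradedZ4J-A}). Consequently the subspace $\JJ^{(X,n)}$ spanned by Jordan products of at most $n$ generators equals $\bigoplus_{X_1+X_2+X_3+X_4\le n}\JJ_{X_1X_2X_3X_4}$, so $\gamma_\JJ(n)$ is the number of basis monomials $w$ of $\JJ$ (namely $1,\bar 1$ together with the monomials~\eqref{monomP} of $\PP$ and their copies $\bar w=w\bullet\bar 1$) satisfying $\wdeg w\le n$.

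The key ingredient is the two-sided comparison of $\wdeg$ with the partial degree $\deg$ in $\{V_0,V_1,V_2\}$: by Corollary~\ref{Cextended_growth} one has $\deg w\le\wdeg w<3\deg w$ for every monomial $w\ne 1,\bar 1$. Hence $\{w:\deg w\le\lfloor n/3\rfloor\}\subseteq\{w:\wdeg w\le n\}\subseteq\{w:\deg w\le n\}$, and writing $\delta_\JJ(m)$ for the dimension of the span of $\JJ$-monomials of partial degree at most $m$ this yields $\delta_\JJ(\lfloor n/3\rfloor)\le\gamma_\JJ(n)\le\delta_\JJ(n)$. Next I would compare $\delta_\JJ$ with the standard growth function $\gamma_\PP$ of $\PP$: since $\JJ=\PP\oplus\bar\PP$ as a vector space, every monomial of $\JJ$ is a monomial $w$ of $\PP$ or its copy $\bar w$, and $\wGr(\bar w)=\wGr(w)+(0,0,0,1)$ forces $\deg\bar w=\deg w$; because additivity of $\Gr$ makes $\gamma_\PP(m)$ precisely the number of basis monomials of $\PP$ of partial degree at most $m$, we get $\gamma_\PP(m)\le\delta_\JJ(m)\le 2\gamma_\PP(m)+1$. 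Combining, $\gamma_\PP(\lfloor n/3\rfloor)\le\gamma_\JJ(n)\le 2\gamma_\PP(n)+1$, so $\gamma_\JJ\sim\gamma_\PP$, and Theorem~\ref{TgrowthP} gives $\GKdim\JJ=\LGKdim\JJ=2\log_\lambda 2$.

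A more self-contained variant, paralleling the proof of Theorem~\ref{TgrowthP}, would count directly on the basis of $\JJ$: for a monomial $w$ coming from a monomial~\eqref{monomP} of $\PP$ of length $\ell$ one has $\deg w\le\wt w\le\lambda^2\deg w$ (since $\Gr w\in\NO^3$ and $\lambda>1$), together with $\lambda^{\ell-5}<\wt w<2\lambda^{\ell+1}$ (Lemma~\ref{LestP}) and $\wdeg w=3\deg w-\jwt w$ with $1\le\jwt w$ (Lemma~\ref{LjordProd}, Corollary~\ref{Cextended_growth}), whence $\lambda^{\ell-7}<\wdeg w<6\lambda^{\ell+1}$; thus a basis monomial with $\wdeg\le m$ has $\PP$-length at most $\log_\lambda m+7$, and counting the at most $2^{2\ell-1}$ monomials~\eqref{monomP} of each length $\ell$ and doubling for the bars gives $\gamma_\JJ(m)\le\mathrm{Const}\cdot m^{2\log_\lambda 2}$, while Corollary~\ref{CbasisP2} supplies $2^{2\ell-2}$ independent monomials of each large length for the matching lower bound. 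In either route the load-bearing, and the only delicate, part is the degree bookkeeping: that a bar costs one unit of $\wdeg$ but none of $\deg$, and that each Poisson bracket arising in the Kantor product consumes exactly one pivot letter $V_i$. All of this is already packaged in Lemmas~\ref{LjordProd} and~\ref{LJGr} and Corollary~\ref{Cextended_growth}, so beyond Theorem~\ref{TgrowthP} no genuinely new estimate is needed.
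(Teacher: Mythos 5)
Your main argument is correct and is essentially the paper's own proof: both sandwich $\gamma_\JJ$ between shifted copies of $\gamma_\PP$ via Corollary~\ref{Cextended_growth} (the bound $\deg w\le\wdeg w<3\deg w$, with a bar costing one unit of $\wdeg$ but none of $\deg$) and then invoke Theorem~\ref{TgrowthP}. The extra self-contained counting variant is also sound but adds nothing beyond the first route.
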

\begin{proof}
Fix $m\ge 0$.
The ordinary growth function $\gamma_{\JJ}(m, \{V_0,V_1,V_2,\bar 1\})$
counts basis monomials $w\in\JJ$ such that $\wdeg(w)\le m$,
by the lower inequality of Corollary~\ref{Cextended_growth}, we have $\deg w\le \wdeg(w)\le  m$.
Thus, the above set of monomials is contained in $\{u,\bar u\mid u \text{ basis monomial of } \PP,\ \deg u\le m\}$.
Since  $\gamma_{\PP}(m, \{V_0,V_1,V_2\})$ counts basis monomials $u\in\PP$ such that $\deg u\le m$,
we obtain the upper bound below
\begin{equation}\label{estimates_gr_J}
2(\gamma_{\PP}(m/3, \{V_0,V_1,V_2\})-1)
\le \gamma_{\JJ}(m, \{V_0,V_1,V_2,\bar 1\})
\le 2\gamma_{\PP}(m, \{V_0,V_1,V_2\}),\quad m\ge 1.
\end{equation}
Similarly,
let $u\in\PP$ be a basis monomial with $\deg u\le m/3$ and $u\ne 1$. Then $w=u$ and $w=\bar u$ are basis elements of $\JJ$
with  $\wdeg w\le 3\deg w\le m$ by the upper bound of Corollary~\ref{Cextended_growth},
thus we prove the claimed lower bound.
Now, it remains to use bounds of Theorem~\ref{TgrowthP}.
\end{proof}

Consider a monomial $w\in\JJ$, then either $w=u\in\PP$ or $w=\bar u\in\overline\PP$.
By our constructions above, this monomial has the twisted coordinates
$\WtR (w)=(Y_1,Y_2,Y_3)\in\R^3$.
We add one more coordinate $Y_4=\jwt w$.
Now we define {\em extended twisted coordinates}:
$\wWtR(w)=(Y_1,Y_2,Y_3,Y_4)\in\R^4$.

\begin{Lemma} \label{LwGr_J}
Let $w\in\JJ$ be a monomial.
\begin{enumerate}
\item the function $\wWtR(*)$ is additive on $\JJ$;
\item the first three components of
$\wGr(w)=(X_1,X_2,X_3,X_4)$ and $\wWtR(w)=(Y_1,Y_2,Y_3,Y_4)$ are
related by~$\mathrm{(iv)}$ of Lemma~\ref{Ltrans}. The forth coordinates are related by
$$ Y_4=2(X_1+X_2+X_3)-X_4; $$
\item $-1\le Y_4< 12+2\log_\lambda Y_1 $;
\item
The axis $OY_1$ in terms of the standard coordinates 
is given by $(2/\lambda,\lambda,1, 2\lambda^2+2\lambda)$.
\end{enumerate}
\end{Lemma}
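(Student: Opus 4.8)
The plan is to verify each of the four claims by reducing to facts already established for $\QQ$, $\PP$, and the coordinate systems of Section~\ref{Sweight}. The key observation underpinning everything is that the map $w\mapsto \wWtR(w)$ is built from $\WtR(w)$ (already additive by Lemma~\ref{Lpesos}(iv) and Lemma~\ref{Ltrans}) in its first three slots, and from $\jwt(w)$ in its fourth slot. Thus claim~(i) is immediate: additivity of the first three coordinates is inherited from $\WtR$ on $\PP$ (extended to $\overline\PP$ by declaring $\WtR(\bar 1)=0$, consistently with $\wt(\bar 1)=\swt(\bar 1)=0$), and additivity of the fourth coordinate $Y_4=\jwt w$ is exactly Lemma~\ref{LjordProd}(vi). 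So the first step is simply to assemble these two additivity statements and note that a direct sum of additive functions is additive.

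For claim~(ii), the first three coordinates of $\wGr(w)=(X_1,X_2,X_3,X_4)$ are the multidegree $\Gr(w)$, and those of $\wWtR(w)=(Y_1,Y_2,Y_3,Y_4)$ are $\WtR(w)$; so $(Y_1,Y_2,Y_3)=C\cdot(X_1,X_2,X_3)^T$ by Lemma~\ref{Ltrans}(iv), giving the stated relation among the first three components verbatim. For the fourth component, I would invoke Lemma~\ref{LJGr}(ii), which reads $\deg_{\bar 1}(w)=2\deg w-\jwt w$, i.e. $X_4=2(X_1+X_2+X_3)-Y_4$, and simply solve for $Y_4$ to get $Y_4=2(X_1+X_2+X_3)-X_4$. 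This is a one-line rearrangement once Lemma~\ref{LJGr} is cited.

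Claim~(iii) is a restatement of the two-sided bound in Lemma~\ref{LjordProd}(vii): since $Y_4=\jwt w$ and $Y_1=Z_1=\wt w$ (the first coordinate is unchanged under the transition $B$, equivalently $\WtR$ has $\wt$ as its first entry), the inequality $-1\le \jwt(a)<12+2\log_\lambda\wt(a)$ becomes $-1\le Y_4<12+2\log_\lambda Y_1$ directly. Here one should note the edge cases $w=1,\bar 1$ are covered because $\jwt(1)=0$ and $\jwt(\bar 1)=-1$ both satisfy the lower bound, and $\wt(1)=\wt(\bar 1)=0$ makes the upper bound vacuous — so I would either restrict to $w\ne 1$ for the upper bound or simply remark that the statement is understood for monomials with $\wt w\ge 1$, consistently with how Lemma~\ref{LjordProd}(vii) is applied.

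The only claim requiring a genuine (if short) computation is claim~(iv): the axis $OY_1$ is cut out by $Y_2=Y_3=Y_4=0$. By Lemma~\ref{L_OY1} the conditions $Y_2=Y_3=0$ already force the direction vector $(2/\lambda,\lambda,1)$ in the first three standard coordinates (equivalently $(X_1,X_2,X_3)\parallel(2/\lambda,\lambda,1)$ after scaling by $3\lambda^2-1$). It then remains to compute the fourth standard coordinate $X_4=\deg_{\bar 1}$ along this ray using $Y_4=0$: from claim~(ii), $0=Y_4=2(X_1+X_2+X_3)-X_4$, so $X_4=2(X_1+X_2+X_3)$, and substituting $(X_1,X_2,X_3)=(2/\lambda,\lambda,1)$ gives $X_4=2(2/\lambda+\lambda+1)$. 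Using the characteristic relation $2/\lambda=\lambda^2-1$ from~\eqref{frac2}, $2/\lambda+\lambda+1=\lambda^2+\lambda$, hence $X_4=2\lambda^2+2\lambda$, yielding the direction vector $(2/\lambda,\lambda,1,2\lambda^2+2\lambda)$ as claimed. I expect the main (very mild) obstacle to be bookkeeping: making sure the transition $B^{-1}$ of Lemma~\ref{Linv} is applied in the right direction so that $Y_2=Y_3=0$ really does pin down the first three coordinates, and that the scaling factor $1/(3\lambda^2-1)$ is treated as an irrelevant positive scalar for a direction vector; otherwise the argument is a direct assembly of Lemmas~\ref{Lpesos}, \ref{Ltrans}, \ref{L_OY1}, \ref{LjordProd}, and~\ref{LJGr}.
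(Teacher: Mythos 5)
Your proposal is correct and follows essentially the same route as the paper: additivity of $\wWtR$ from additivity of $\WtR$ and $\jwt$, claim (ii) by rearranging Lemma~\ref{LJGr}(ii), claim (iii) as a restatement of Lemma~\ref{LjordProd}(vii) with $Y_1=\wt w$, $Y_4=\jwt w$, and claim (iv) by combining Lemma~\ref{L_OY1} with $Y_4=0$ and the relation $2/\lambda=\lambda^2-1$ from~\eqref{frac2}. Your extra remark on the edge cases $w=1,\bar 1$ in (iii) is a harmless refinement of what the paper leaves implicit.
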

\begin{proof}
The additivity of $\wWtR(*)$ follows from that for $\jwt(*)$.
By~(ii) of~Lemma~\ref{LJGr},
$X_4=\deg_{\bar 1}(w)=2\deg w-\jwt w=2(X_1+X_2+X_3)-Y_4$, thus yielding the second claim.

Recall that $Y_1=\wt w$ and $Y_4=\jwt w$.
Using estimates (vii) of Lemma~\ref{LjordProd}, we have
$-1\le Y_4< 12+2\log_\lambda \wt(w)= 12+2\log_\lambda Y_1$.

Let us prove (iv).
By Lemma~\ref{L_OY1}, let $(X_1,X_2,X_3)=(2/\lambda,\lambda,1)$.
The condition $Y_4=0$, (ii), and~\eqref{frac2}  yield
$X_4=2(X_1+X_2+X_3)=2(2/\lambda+\lambda+1)=2(\lambda^2-1+\lambda+1)=2\lambda^2+2\lambda$.
\end{proof}

\begin{Theorem}\label{TparabJ}
Let monomials $w$ of the Jordan superalgebra $\JJ$ be drawn in $\R^4$ using the extended multidegrees
$\wGr(w)=(X_1,X_2,X_3,X_4)\in\NO^4\subset \R^4.$
In terms of the extended twisted coordinates $(Y_1,Y_2,Y_3,Y_4)$,
the respective points are inside a figure determined by inequalities:
\begin{align*}
\sqrt{Y_2^2+Y_3^2}<16\sqrt[\sigma]  {Y_1},& \qquad (\text{where}\ \sigma=\log_{|\mu|}\lambda \approx 3.068);\\
-1\le Y_4 < 12+2\log_\lambda Y_1,&\qquad
(1 \le Y_4 < 12+2\log_\lambda Y_1, \quad \text{if}\quad  w\ne 1,\bar 1).
\end{align*}
\end{Theorem}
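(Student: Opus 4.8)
The plan is to deduce Theorem~\ref{TparabJ} by combining two essentially independent estimates that have already been prepared: the "paraboloid" bound on the first three coordinates, which is inherited verbatim from the Poisson superalgebra $\PP$, and the logarithmic bound on the fourth coordinate, which is the content of the Jordan weight estimate. Concretely, given a monomial $w\in\JJ$, it is either $w=u\in\PP$ or $w=\bar u\in\overline\PP$ with $u$ a basis monomial of $\PP$ of some length $n\ge 0$, and in either case the first three twisted coordinates $(Y_1,Y_2,Y_3)=\WtR(w)$ coincide with those of $u$, since $\WtR$ is additive and $\wt(\bar 1)=\swt(\bar 1)=0$ by construction. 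Hence the first displayed inequality is literally Theorem~\ref{TparabP} applied to $u$, with no extra work.

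For the second displayed inequality, the extended twisted coordinate $Y_4$ is by definition $\jwt(w)$, so I would simply invoke part~(vii) of Lemma~\ref{LjordProd}, which gives $-1\le\jwt(w)<12+2\log_\lambda\wt(w)=12+2\log_\lambda Y_1$, using $Y_1=\wt(w)$ from Lemma~\ref{LwGr_J}(ii). For the sharper range $1\le Y_4$ when $w\ne 1,\bar 1$, I would use part~(v) of Lemma~\ref{LjordProd}, which says $\jwt(a)\ge 1$ for $a\ne 1,\bar 1$. This is the whole argument: the theorem is a repackaging of Theorem~\ref{TparabP} and Lemma~\ref{LjordProd} into the four-dimensional coordinate frame introduced just before the statement.

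The only point that deserves a line of care is making sure the coordinate identifications are stated correctly: that the monomials are being drawn using $\wGr$ but the region is described in the $\wWtR$-coordinates, and that the passage between them on the first three coordinates is exactly the linear map $C$ of~\eqref{matrixB} (as recorded in Lemma~\ref{LwGr_J}(ii) referring to Lemma~\ref{Ltrans}(iv)), while the fourth coordinate transforms by $Y_4=2(X_1+X_2+X_3)-X_4$. Since $\JJ$ is $\NO^4$-graded by $\wGr$ (Corollary~\ref{CgradedZ4J-A}) and $\wWtR$ is additive (Lemma~\ref{LwGr_J}(i)), every basis monomial sits at a genuine lattice point and the region inequalities are coordinatewise, so nothing beyond substitution is needed.

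I do not anticipate a genuine obstacle here; the work was all done in the preceding lemmas. If anything, the "hard part" is purely expository — one must be careful not to conflate the two coordinate systems, and to note explicitly that passing to $\bar u$ changes neither $\wt$ nor $\swt$ (hence neither $Y_1,Y_2,Y_3$), so that the first inequality is genuinely unaffected by the bar. I would therefore write the proof as: "By Lemma~\ref{LwGr_J}, a monomial $w\in\JJ$ has $\WtR$-coordinates equal to those of its underlying Poisson monomial $u$, and $Y_4=\jwt(w)$. The first inequality is Theorem~\ref{TparabP} applied to $u$. The second follows from Lemma~\ref{LjordProd}(vii), and the sharper lower bound $Y_4\ge 1$ for $w\ne 1,\bar 1$ from Lemma~\ref{LjordProd}(v)." — a proof of three or four sentences.
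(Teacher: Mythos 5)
Your proposal is correct and is essentially the paper's own proof: the paper also obtains the first inequality directly from Theorem~\ref{TparabP} and the second from Lemma~\ref{LwGr_J} (whose part (iii) is exactly the repackaging of Lemma~\ref{LjordProd}(vii) that you invoke, with the sharper lower bound coming from Lemma~\ref{LjordProd}(v)). Your extra remarks on the coordinate identifications and on $\bar u$ leaving $(Y_1,Y_2,Y_3)$ unchanged are just a more explicit writing of the same argument.
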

\begin{proof}
The inequalities are established in Theorem~\ref{TparabP} and Lemma~\ref{LwGr_J}.
\end{proof}

\begin{Corollary}\label{CgradingZ4J}
Consider the $\NO^4$-grading of the Jordan superalgebra $\JJ$ by multidegree in the generators $\{V_0,V_1,V_2,\bar 1\}$:
$$
\JJ=\mathop{\oplus}\limits_{n_1,n_2,n_3,n_4\ge 0} \JJ_{n_1n_2n_3n_4}.
$$
The numbers $\{\dim \JJ_{n_1n_2n_3n_4} \mid (n_1,n_2,n_3,n_4)\in \NO^4\}$ are not bounded.
\end{Corollary}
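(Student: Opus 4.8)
The plan is to produce, for every sufficiently large $n$, a single $\NO^4$-homogeneous component of $\JJ$ whose dimension is a central binomial coefficient, by inserting \emph{neutral blocks} $x_iV_i$ into a fixed pivot monomial. The key observation is that $\Wt(x_i)=-\Wt(V_i)$, so by Lemma~\ref{Ltrans} also $\Gr(x_i)=-\Gr(V_i)$; hence a block $x_iV_i$ contributes $0$ to the multidegree $\Gr$ in the generators $\{V_0,V_1,V_2\}$ and never moves a monomial out of its $\Gr$-component, while it raises $\mult_V$ — and therefore changes only the extra coordinate $\deg_{\bar 1}$ — by exactly one.

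Concretely, for a subset $T\subseteq\{0,1,\dots,n-3\}$ I would consider the element
$$w_T=\bigl(\textstyle\prod_{i\in T}x_iV_i\bigr)\,V_n\ \in\ \PP,$$
the product taken in increasing order of indices. Up to sign $w_T$ has the form~\eqref{monomP}, namely $x^{(\a)}V^{(\b)}V_n$ with $\a_i=\b_i=[\,i\in T\,]$ for $i\le n-2$ and $\b_{n-1}=0$; since $T$ contains neither $n-2$ nor $n-1$ one has $\a_{n-2}=\b_{n-2}=\b_{n-1}=0$, so $w_T$ satisfies restriction~(iv) of Theorem~\ref{TbasisP}, and since $\deg w_T=\deg V_n\to\infty$ with $n$ the finitely many excluded low-degree monomials of restriction~(vi) are irrelevant once $n$ is large; thus each $w_T$ is a basis vector of $\PP$, and the $w_T$ for distinct $T$ are linearly independent. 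By additivity, $\Gr(w_T)=\Gr(V_n)$ and $\deg w_T=\deg V_n$ are independent of $T$, while $\epsilon(w_T)=0$ and $\mult_V(w_T)=|T|+1$, so by~\eqref{mult1} and Lemma~\ref{LJGr}(ii) the extended multidegree is $\wGr(w_T)=\bigl(\Gr(V_n),\,2\deg V_n-2|T|-2\bigr)$. Consequently, for a fixed cardinality $k=|T|$, all $\binom{n-2}{k}$ monomials $w_T$ lie in the single component $\JJ_{n_1n_2n_3n_4}$; taking $k=\lfloor(n-2)/2\rfloor$ and letting $n\to\infty$ shows that $\dim\JJ_{n_1n_2n_3n_4}$ is unbounded.

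I do not foresee a genuine obstacle. The one step that needs care is the verification that $w_T$ is honestly an element of the stated basis of $\PP$, i.e. matching it against the list of restrictions (i)--(vi) of Theorem~\ref{TbasisP} and confirming it is not one of the finitely many excluded monomials; the index set $T\subseteq\{0,\dots,n-3\}$ is chosen precisely so that only the benign case~(iv) is triggered and the large degree of $w_T$ rules out the exceptional list. The remaining bookkeeping — that $\Gr$, $\deg$, $\mult_V$, and hence $\deg_{\bar 1}$, are simultaneously constant across $\{w_T:|T|=k\}$ — is immediate from the additivity assertions of Lemma~\ref{Ltrans}, Lemma~\ref{LjordProd}(vi) and Lemma~\ref{LJGr}, and linear independence is automatic since the $w_T$ are distinct basis monomials. (The same construction also shows, incidentally, that already the $\NO^3$-graded components of $\PP$ are of unbounded dimension, in contrast with Theorem~\ref{Tfine} for $\QQ$.)
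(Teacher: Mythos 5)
Your proof is correct, but it takes a genuinely different route from the paper. The paper argues by contradiction: if all $\dim \JJ_{n_1n_2n_3n_4}$ were bounded by $C$, then the growth function of $\JJ$ would be at most $C$ times the number of lattice points in the region of Theorem~\ref{TparabJ} cut by $Y_1\le m$, which is enclosed in a cylinder of volume $O(m^{1+2/\sigma}\log m)=O(m^{5/3})$; this contradicts $\GKdim\JJ=2\log_\lambda 2\approx 3.30$ from Theorem~\ref{TgrowthJ}. You instead exhibit explicit components of unbounded dimension: inserting the blocks $x_iV_i$, which have zero vector weight, into $V_n$ leaves the $\NO^3$-multidegree unchanged (since $\Wt$ is additive over the letters and $\Gr$ is recovered from $\Wt$ by the invertible matrix $B$ of Lemma~\ref{Ltrans}), while $\jwt$, hence the fourth coordinate $X_4=2\deg w-\jwt w$ of Lemma~\ref{LJGr}, depends only on $|T|$; so for fixed $|T|=k$ all $\binom{n-2}{k}$ monomials $w_T$ sit in one $\NO^4$-component, and these are distinct basis monomials of $\PP$ by Theorem~\ref{TbasisP} (restriction~(iv) holds since $\a_{n-2}=\b_{n-2}=\b_{n-1}=0$, and the finite exceptional list in~(vi) is avoided for large $n$). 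Your approach is more elementary and quantitative (central binomial lower bounds, and as you note it already shows the $\NO^3$-components of $\PP$ are unbounded, which does not contradict Theorem~\ref{TallK} since that concerns $\KK$), at the cost of leaning on the detailed basis description of $\PP$; the paper's argument is softer, reusing the global growth and paraboloid results without any explicit construction. The only cosmetic point is that writing $\Gr(x_i)=-\Gr(V_i)$ is an abuse, since $x_i$ alone is not an element of $\PP$; the honest statement is that $\Wt(x_iV_i)=0$, so the block does not change the weight and therefore not the multidegree, which is exactly how you use it.
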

\begin{proof}
By way of contradiction, suppose that the dimensions are bounded by a constant $C$.
The ordinary growth function $\gamma_\JJ(m, \{V_0,V_1,V_2,\bar 1\})$
counts basis monomials of $\JJ$ with $\wdeg (w)\le m$, where $m\ge 0$.
By Corollary~\ref{Cextended_growth},
$Y_1=\deg w\le \wdeg (w)\le m$.
So, we introduce a bigger function $g(m)=\dim\langle w\in\JJ\mid \deg w\le m\rangle$.
We cut the figure of Theorem by the hyperplane $Y_1\le m$,
consider a larger cylinder, and evaluate volume of the latter (in the extended twisted coordinates):
\begin{align}
&\{(Y_1,Y_2, Y_3,Y_4)\mid 0\le Y_1\le m,\ \sqrt{Y_2^2+Y_3^2}<16\sqrt[\sigma]  {m},\ -1 \le Y_4 < 12+2\log_\lambda m\};\\
&\mathrm{Volume}(m)= m\cdot \pi 256 m^{2/\sigma}
\cdot(13+2\log_\lambda m)
\le C_1 m^{5/3},
\quad m\gg1,
\end{align}
because $\sigma>3$.
The volume of the cylinder in the extended standard coordinates and
the number of lattice points in it (in terms of the extended standard coordinates) have the same asymptotic, with a constant $C_2$.
Thus, $\gamma_\JJ(m)\le g(m)\le CC_2m^{5/3}$, $m\gg 1$,
a contradiction with Theorem~\ref{TgrowthJ}.
\end{proof}

\section{Jordan superalgebra $\KK$ and its properties}\label{SJordanK}

Now we introduce our last object, the Jordan superalgebra $\KK$ and study its properties.
We show that $\KK$ is a factor algebra of the Jordan superalgebra $\JJ$ constructed above, thus
we can apply all the machinery developed for $\JJ$.

Let $L$ be an arbitrary Lie superalgebra.
Its symmetric algebra $S(L)$ has the structure of a Poisson superalgebra.
Observe, that the subspace $H\subset S(L)$ spanned by all tensors of length at least two is its ideal.
Thus, one obtains a (rather trivial) Poisson superalgebra $P(L)=S(L)/H$,
which equivalently can be obtained as a vector space endowed with Poisson products which are nontrivial in the following cases only:
$$
P(L)=\langle 1\rangle \oplus L,\qquad 1\cdot x=x,\ \{x,y\}=[x,y], \quad x,y\in L.
$$
Using Kantor double, define a Jordan superalgebra $\Jor(L)=\Kan(P(L))$.
Equivalently, one can just take a vector space supplied with a product $\bullet$
which is nontrivial in the following cases
(see an example at the end~\cite{She99}):
$$
\Jor(L)=\langle 1\rangle \oplus L\oplus \langle \bar 1\rangle \oplus \bar L,\qquad
\bar x\bullet \bar y=[x,y], \quad x\bullet \bar 1=(-1)^{|x|}\bar 1\bullet x=\bar x,\quad x,y\in L;\ 1\text{ the unit}.
$$

Now we define the Jordan superalgebra $\KK=\Jor(\QQ)$.
\begin{Lemma} Let $\KK=\Jor(\QQ)$. Then
\begin{enumerate}
\item  We have generators: $\KK=\Jord(v_0,v_1,v_2,\bar 1)$;
\item define Jordan superalgebras $K_i=\Jord(v_i,v_{i+1},v_{i+2},\bar 1)\subset\KK$ for all $i\ge 0$, so $K_0=\KK$.
We get a proper chain of isomorphic subalgebras:
$$\KK=K_0\supsetneqq K_1 \supsetneqq \cdots \supsetneqq K_i\supsetneqq K_{i+1}\supsetneqq \cdots,\qquad
\mathop{\cap}_{n=0}^\infty K_i=\langle 1,\bar 1\rangle;$$
\end{enumerate}
\end{Lemma}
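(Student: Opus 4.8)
The plan is to prove (i) directly and then deduce (ii) from Corollary~\ref{Cbases}. Throughout $\ch K\ne 2$, as in this section, and I write $\bullet$ for the product of $\KK=\Jor(\QQ)=\langle 1\rangle\oplus\QQ\oplus\langle\bar 1\rangle\oplus\bar\QQ$, so that $\bar x\bullet\bar y=[x,y]$, $x\bullet\bar 1=(-1)^{|x|}\bar 1\bullet x=\bar x$, and $x\bullet y=0$ for $x,y\in\QQ$, with $1$ the unit.

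For (i) I would set $J=\Jord(v_0,v_1,v_2,\bar 1)\subseteq\KK$; since $J\subseteq\KK$ is clear, everything reduces to showing $\QQ\subseteq J$, because then $\bar\QQ=\{\,x\bullet\bar 1\mid x\in\QQ\,\}\subseteq J$ and $1\in J$ as the unit, forcing $J=\KK$. To get $\QQ\subseteq J$ I would check that $W:=J\cap\QQ$ is a Lie subsuperalgebra of $\QQ$ containing $v_0,v_1,v_2$: it is a $\Z_2$-graded subspace containing the generators, and for $x,y\in W$ one has $\bar x=x\bullet\bar 1\in J$, $\bar y=y\bullet\bar 1\in J$, hence $[x,y]=\bar x\bullet\bar y\in J$, while $[x,y]\in\QQ$ since $\QQ$ is a Lie subsuperalgebra of $\Der\Lambda$; so $[x,y]\in W$ and $W$ is bracket-closed. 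Since $\ch K\ne2$, the Lie subsuperalgebra generated by $\{v_0,v_1,v_2\}$ is exactly this bracket-closure --- in particular it contains the second-type basis monomials $x_{i+1}v_{i+3}=v_i^2=\tfrac12[v_i,v_i]$ of Theorem~\ref{Tbasis3}, whose extraction uses $\ch K\ne2$ --- so $W\supseteq\Lie(v_0,v_1,v_2)=\QQ$, and (i) follows. As a more hands-on variant, one could instead copy the induction in the proof of Lemma~\ref{LgeneratorsJ} line by line, replacing the Poisson bracket by the Lie bracket and invoking Lemma~\ref{L_BASIC_PROD} and Lemma~\ref{Lproducts}, to obtain first all pivot elements $v_n\in J$ and then all standard monomials of $\QQ$.

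For (ii) I would put $L_i=\Lie(v_i,v_{i+1},v_{i+2})$, so $L_0=\QQ$, and recall from Corollary~\ref{Cbases} that $\tau^i\colon\QQ\to L_i$ is an isomorphism, that $\QQ=L_0\supsetneqq L_1\supsetneqq\cdots$, and that $\bigcap_{i\ge0}L_i=\{0\}$. Extending the shift to $\KK$ via $\tau(1)=1$, $\tau(\bar 1)=\bar 1$, $\tau(\bar x)=\overline{\tau(x)}$ gives $\tau(v_i)=v_{i+1}$ and an isomorphism $\tau^i\colon\KK\to K_i$; applying (i) to $L_i$ in place of $\QQ$ (equivalently, transporting (i) along $\tau^i$) identifies
$$K_i=\Jord(v_i,v_{i+1},v_{i+2},\bar 1)=\langle 1\rangle\oplus L_i\oplus\langle\bar 1\rangle\oplus\bar L_i$$
as a subspace of $\KK$ respecting the fixed decomposition $\KK=\langle 1\rangle\oplus\QQ\oplus\langle\bar 1\rangle\oplus\bar\QQ$. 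Then $L_{i+1}\subsetneq L_i$ makes the chain $K_0\supsetneqq K_1\supsetneqq\cdots$ strict, and intersecting the four summands separately --- the $\langle 1\rangle$ and $\langle\bar 1\rangle$ parts independent of $i$, the $L_i$ and $\bar L_i$ parts having trivial intersection --- yields $\bigcap_{i\ge0}K_i=\langle 1\rangle\oplus\langle\bar 1\rangle=\langle 1,\bar 1\rangle$. The one genuinely non-formal point is that $\Jord(v_0,v_1,v_2,\bar 1)$ really recovers \emph{all} of $\QQ$, which is why $\ch K\ne2$ is essential: in characteristic $2$ the squares $v_i^{[2]}$ of the pivot elements lie outside the $\bullet$-subalgebra generated by $v_0,v_1,v_2,\bar 1$, and $\KK$ then has to be treated merely as an algebra with a binary operation. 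Everything else is routine bookkeeping with the multidegree grading and Corollary~\ref{Cbases}.
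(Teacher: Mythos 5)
Your proposal is correct and takes essentially the paper's route: part (i) rests on the same identities $x\bullet\bar 1=\bar x$ and $\bar x\bullet\bar y=[x,y]$ that the paper uses (via the computations of Lemma~\ref{LgeneratorsJ}) to recover all of $\QQ$ inside $\Jord(v_0,v_1,v_2,\bar 1)$, and part (ii) is obtained by applying the shift $\tau$ together with Corollary~\ref{Cbases}, just as in the paper's one-line argument. Your only variation is to package the generation step structurally ($J\cap\QQ$ is a bracket-closed subspace containing $v_0,v_1,v_2$, hence contains $\Lie(v_0,v_1,v_2)=\QQ$ when $\ch K\ne 2$) instead of rerunning the explicit induction of Lemma~\ref{LgeneratorsJ}, which is a harmless and slightly cleaner shortcut based on the same mechanism.
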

\begin{proof}
Define the subalgebra $K'=\Jord(v_0,v_1,v_2,\bar 1)\subset \KK$.
Computations of Lemma~\ref{LgeneratorsJ} yield that $v_i\in K'$ for all $i\ge 0$, moreover all basis elements of $\QQ$ belong to $K'$.
Thus, $K'=\KK$. The second claim follows by applying the endomorphism $\tau$.
\end{proof}

If an associative superalgebra $A$ is just infinite then
the related Jordan superalgebra $A^{(+)}$ is just infinite as well~\cite{ZhePan17}.
We establish a similar fact.
\begin{Lemma}\label{Ljust-inf-Jor}
Let $L$ be a Lie superalgebra, consider the Jordan superalgebra $\Jor(L)$.
\begin{enumerate}
\item $\Jor(L)$  is just infinite if and only if $L$ is just infinite.
\item The ideal without unit $\Jor^o(L)=L\oplus \langle \bar 1\rangle \oplus \bar L$ is solvable of length 3.
\item This is a nil-ideal of bounded degree: $a^6=0$ for $a\in\Jor^o(L)$.
\end{enumerate}
\end{Lemma}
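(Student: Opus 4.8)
The plan is to run everything off the explicit multiplication table of $\Jor(L)=\Kan(P(L))$. Since $P(L)=\langle 1\rangle\oplus L$ has trivial associative product on $L$ and bracket $\{x,y\}=[x,y]$ on $L$, the Kantor double formulas give, on the spanning set $\{1\}\cup L\cup\{\bar 1\}\cup\bar L$: $\,1$ a unit, $L\bullet L=0$, $x\bullet\bar 1=(-1)^{|x|}\bar 1\bullet x=\bar x$, $L\bullet\bar L=0$, $\bar 1\bullet\bar 1=\bar 1\bullet\bar L=0$, and $\bar x\bullet\bar y=[x,y]\in L$. Hence $\Jor^o(L)=L\oplus\langle\bar 1\rangle\oplus\bar L$ is the augmentation ideal, of codimension $1$, and, writing $M:=\Jor^o(L)$, these relations say precisely that $M=M_{-1}\oplus M_1\oplus M_2$ with $M_{-1}=\langle\bar 1\rangle$, $M_1=\bar L$, $M_2=L$ is a $\Z$-graded algebra, $M_i\bullet M_j\subseteq M_{i+j}$ (the formal slots $M_0,M_3,M_4$ being $0$). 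This single $\Z$-grading is the tool I would use throughout.

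For (ii), I would compute the derived series of $M$ in this grading: from $L\bullet\bar 1=\bar L$ and $\bar L\bullet\bar L=[L,L]$ one gets $M\bullet M=\bar L\oplus L^2$; then $(M\bullet M)\bullet(M\bullet M)=L^2$, since only the term $\bar L\bullet\bar L$ survives ($L^2\bullet\bar L\subseteq\overline{L\cdot L}=0$); and $L^2\bullet L^2\subseteq L\bullet L=0$. So the derived series of $M$ reaches $0$ in three steps, i.e. $M$ is solvable of length $\le 3$ (length exactly $3$ when $L^2\ne 0$). For (iii), bound powers of a homogeneous $a\in M$. If $a$ is odd then $a=u+\beta\bar 1+\bar v$ with $u\in L_{\bar 1}$, $v\in L_{\bar 0}$; the table kills all cross-terms in $a\bullet a$ except $\beta(u\bullet\bar 1+\bar 1\bullet u)=0$ and $\bar v\bullet\bar v=[v,v]=0$ (as $v\in L_{\bar 0}$), so $a^2=0$. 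If $a$ is even then $a=u+\bar v$ with $u\in L_{\bar 0}$, $v\in L_{\bar 1}$, the subalgebra it generates is an ordinary (hence power-associative) Jordan algebra, $a^2=\bar v\bullet\bar v=-[v,v]\in L_{\bar 0}$, and $a^3=a\bullet a^2=\bar v\bullet(-[v,v])\in\overline{L\cdot L}=0$. Either way $a^3=0$, a fortiori $a^6=0$.

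Part (i) is the substance. The quick direction: if $\Jor(L)$ is just infinite it is infinite-dimensional, hence so is $L$, and for any nonzero ideal $J\triangleleft L$ the subspace $\widetilde J:=J\oplus\bar J$ is a nonzero ideal of $\Jor(L)$ (the only nontrivial closure check being $\bar x\bullet\bar y=[x,y]\in J$) of codimension $2\operatorname{codim}_L J+2$, which must be finite; so $L$ is just infinite. For the converse, assume $L$ is just infinite. Then $L$ is infinite-dimensional and non-abelian (an infinite-dimensional abelian Lie superalgebra has nonzero ideals of infinite codimension), and I would draw out two facts: $L/L^2$ is finite-dimensional ($L^2\ne 0$ is an ideal, hence of finite codimension), and $Z(L)=0$ (otherwise $Z(L)$ is an ideal of finite codimension; but then a subspace of the abelian $Z(L)$ of infinite codimension in $Z(L)$ would be a nonzero ideal of $L$ of infinite codimension, so $\dim Z(L)<\infty$, whence $\dim L<\infty$, a contradiction).

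Now let $I\ne 0$ be an ideal of $\Jor(L)$; the goal is $\operatorname{codim}I<\infty$. The obstacle is that $I$ need not be homogeneous for $\langle 1\rangle\oplus L\oplus\langle\bar 1\rangle\oplus\bar L$, so I would split by which layer $I$ reaches. If some $a\in I$ has nonzero $\langle 1\rangle$-component, multiplying $a$ by $\bar y$ and then by $\bar z$ (using $1\bullet\bar y=\bar y$, $\bar y\bullet\bar z=\pm[y,z]$, $[v,y]\bullet\bar z\in\overline{L\cdot L}=0$) yields $[y,z]\in I$ for all $y,z\in L$, hence $L^2\subseteq I$ and $\overline{L^2}\subseteq I$, so $I\supseteq\widetilde{L^2}$ is of finite codimension. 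If some $a\in I$ has zero $\langle 1\rangle$- but nonzero $\langle\bar 1\rangle$-component, then $a\bullet y$ is a nonzero multiple of $\bar y$ for each $y\in L$, so $\bar L\subseteq I$, hence $L^2\subseteq I$ and $I\supseteq L^2\oplus\bar L$ is of finite codimension. Otherwise $I\subseteq L\oplus\bar L$; then $I\cap L$ (the kernel of the projection $\bar p\colon I\to\bar L$) and $\bar p(I)\subseteq L$ are both ideals of $L$ (via $u\mapsto\bar u=u\bullet\bar 1$ and $\bar u\bullet\bar y=\pm[u,y]$), and both are nonzero: if $\bar p(I)=0$ then $I\subseteq L$ and $u\in I\Rightarrow\bar u\in I\cap\bar L=0$, so $I=0$; if $I\cap L=0$ then $\bar p|_I$ is injective while $[\bar p(I),L]\subseteq I\cap L=0$ forces $\bar p(I)\subseteq Z(L)=0$, again $I=0$. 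Hence $I\cap L$ and $\bar p(I)$ are of finite codimension in $L$, and $\operatorname{codim}_{\Jor(L)}I=2+\operatorname{codim}_L(I\cap L)+\operatorname{codim}_L\bar p(I)<\infty$; so $\Jor(L)$ is just infinite. The hard part is exactly this converse: coping with a non-homogeneous ideal through the layer-by-layer analysis, supported by the two facts $\dim L/L^2<\infty$ and $Z(L)=0$ extracted from just infiniteness.
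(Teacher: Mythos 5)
Your parts (i) and (ii) are correct and in substance follow the paper's route. For (i) the paper argues by contradiction: it intersects an ideal $H$ of infinite codimension with the ideal $L\oplus\bar L$ and studies the two projections $H_0,H_1\subseteq L$, showing $[L,H_1]\subseteq H_0\subseteq H_1$, then splits into the cases $H_0$ of finite codimension / $H_0=0$ (the latter forcing central elements, which contradict just-infiniteness). Your direct three-case analysis (nonzero $1$-component, nonzero $\bar 1$-component, $I\subseteq L\oplus\bar L$), supported by the two preliminary facts $\dim L/L^2<\infty$ and $Z(L)=0$, is the same mechanism with the top two layers handled by hand instead of by intersecting with $L\oplus\bar L$; both the kernel/image-of-projection argument and the codimension count are sound. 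Your (ii) is the paper's chain $J^2\subseteq L\oplus\bar L$, $(J^2)^2\subseteq L$, $((J^2)^2)^2=0$, sharpened to $J^2=\bar L\oplus L^2$ via the $\Z$-grading $M_{-1}\oplus M_1\oplus M_2$ — a nice way to package it.

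Part (iii) has a genuine gap: you prove $a^3=0$ only for $\Z_2$-homogeneous $a$, while the statement quantifies over all $a\in\Jor^o(L)$, and the step ``a fortiori $a^6=0$'' cannot be extended to non-homogeneous elements because $\Jor^o(L)$ is not power-associative on them. Concretely, take $L$ purely even and $a=x+\bar 1+\bar y$: then $a\bullet a=2\bar x$, $(a\bullet a)\bullet a=2[x,y]$, and iterating right multiplication by $a$ gives the left-normed powers $a^4=2\,\overline{[x,y]}$, $a^5=2[[x,y],y]$, $a^6=2\,\overline{[[x,y],y]}$, which is nonzero whenever $(\ad y)^2x\ne 0$; at the same time $(a\bullet a)\bullet(a\bullet a)=4[x,x]=0$, so different sixth powers of the same element disagree. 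Thus the claim must be read with a fixed power convention, namely $a^6:=\bigl(a^2\bullet a^2\bigr)\bullet a^2$, and with that reading it holds for \emph{every} $a$ and follows in one line from your own grading computation in (ii): $a^2\in\bar L\oplus L^2$, hence $a^2\bullet a^2\in L^2\subseteq L$, and $L\bullet(\bar L\oplus L^2)=0$, so $(a^2\bullet a^2)\bullet a^2=0$. You should either add this one-line argument (covering arbitrary elements) or state explicitly that your nillity claim concerns $\Z_2$-homogeneous elements, where your computation indeed gives the stronger bound $a^3=0$ (and, for odd $a$, even $a^2=0$).
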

\begin{proof}
Let $L$ be not just infinite. Then there exists an ideal of infinite codimension $I\triangleleft L$
and $I\oplus \bar I$ is an ideal of infinite codimension in $\Jor(L)$. Therefore, $\Jor(L)$ is not just infinite.

Conversely, suppose that $L$ is just infinite.
By way of contradiction,
assume that $H\subset \Jor(L)$ is an ideal of infinite codimension.
Then $\tilde H=H\cap (L\oplus \bar L)\subset \Jor(L)$ is also an ideal of infinite codimension.
Denote by $H_0$ and $\bar H_1$
the projections of $\tilde H$ onto  $L$, $\bar L$, respectively ($\bar H_1$ being the copy of a subspace $H_1\subset L$).
Since $\tilde H$ is an ideal,
$\bar 1\bullet \tilde H=\bar H_0\subset \bar H_1$ and
$\bar L\bullet \tilde H=[L,H_1]\subset H_0$ and
we get $[L,H_1]\subset H_0\subset H_1\subset L$.
Hence $H_0\subset L$ is an ideal, which must be either zero or of finite codimension by our assumption.
Let $H_0\subset L$ be of finite codimension then $\tilde H\subset \Jor(L)$ is of finite codimension, a contradiction.
Now assume that $H_0=0$. Then $[L,H_1]=0$ and $H_1$ is central. By taking $0\ne z\in H_1$,
we get an ideal $\langle z\rangle \subset L$ of infinite codimension, a contradiction.
Thus, $\Jor(L)$ is just infinite.

We repeat the arguments of~\cite{She99}.
Denote $J=\Jor^o(L)$.
Then $J^2\subset L\oplus \bar L$, $(J^2)^2\subset L$, and $((J^2)^2)^2=0$.
Thus, $J$ is solvable of length 3.
\end{proof}

Let $\KK^o$ be the ideal of the Jordan superalgebra $\KK=\Jor(\QQ)$ without unit. We have a basis
$$\KK=\langle 1, \bar 1, w, \bar w\mid w\text{ are standard monomials of } \QQ \rangle. $$
In particular, all the pivot elements $\{v_i|i\ge 0\}$, as well as their copies $\{\bar v_i|i\ge 0\}$ belong to $\KK$.

\begin{Lemma}
One has a canonical isomorphism of Jordan superalgebras
$\KK\cong \JJ/I$,
where $I$ is the ideal of $\JJ$ spanned by all its monomials containing two pivot letters $V_i$ or
two their copies $\bar V_i$.
\end{Lemma}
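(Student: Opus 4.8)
The plan is to exhibit an explicit surjective homomorphism $\varphi:\JJ\to\KK$ of Jordan superalgebras whose kernel is the ideal $I$ described in the statement. Recall that $\JJ=\Kan(\PP)=\PP\oplus\bar\PP$ with $\PP=\Poisson(V_0,V_1,V_2)\subset\HH$, while $\KK=\Jor(\QQ)=\langle 1\rangle\oplus\QQ\oplus\langle\bar 1\rangle\oplus\bar\QQ$, where $\QQ=\Lie(V_0,V_1,V_2)$ sits inside $\PP$ as its generating Lie subsuperalgebra (Lemma~\ref{LisomQ}). The natural candidate is the ``forget the higher products'' map: using the decomposition $\PP=\mathop{\oplus}\limits_{m\ge 0}\PP_m$ into spans of $m$-fold products of standard monomials of $\QQ$, let $\varphi$ be the identity on $\PP_0\oplus\PP_1=\langle 1\rangle\oplus\QQ$ and on its copy $\bar\PP_0\oplus\bar\PP_1=\langle\bar 1\rangle\oplus\bar\QQ$, and send $\PP_m$ and $\bar\PP_m$ to zero for $m\ge 2$. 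First I would check that $\varphi$ is well defined and a homomorphism by comparing the four Kantor-double multiplication rules on $\JJ$ with the four $\Jor(L)$ rules on $\KK$: the associative product $a\bullet b=ab$ on $\PP$ lands in $\PP_2$ whenever $a,b\in\QQ=\PP_1$, hence maps to $0$, matching that $\langle 1\rangle\oplus\QQ$ has trivial associative product in $P(\QQ)$ except multiplication by $1$; the bracket $\bar a\bullet\bar b=\{a,b\}$ with $a,b\in\QQ$ gives $\{a,b\}=[a,b]\in\QQ$, matching $\bar x\bullet\bar y=[x,y]$; and $a\bullet\bar 1=\bar a$, $\bar 1\bullet a=(-1)^{|a|}\bar a$ are preserved verbatim.

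Next I would identify $\Ker\varphi$ with $I$. By Theorem~\ref{TbasisP}, a basis of $\PP$ consists of monomials $x_0^{\a_0}\cdots x_{n-2}^{\a_{n-2}}V_0^{\b_0}\cdots V_{n-1}^{\b_{n-1}}V_n$, and the number of pivot letters in such a monomial is precisely $1+\sum\b_i=\mult_V$ in the notation of Lemma~\ref{LjordProd}; a standard monomial of $\QQ$ (i.e.\ an element of $\PP_1$) is exactly one with $\mult_V=1$, i.e.\ all $\b_i=0$. So the span of basis monomials of $\PP$ with at least two pivot letters is a complement to $\langle 1\rangle\oplus\QQ$ inside $\PP$, and likewise for the barred copies; together these span $I$, and $\varphi$ kills exactly $I$ and is injective on the complement. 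It remains to confirm $I$ is an ideal of $\JJ$: this is immediate once $\varphi$ is known to be a homomorphism (kernels are ideals), but one may also check it directly from the multiplication rules — multiplying a monomial with $\ge 2$ pivot letters by anything either keeps $\ge 2$ pivot letters (associative product, or the product with $\bar 1$), or, in the bracket $\bar a\bullet\bar b=\{a,b\}$, loses at most one pivot letter (Leibniz rule applied to $\{V_i,\cdot\}$), so $\ge 2$ becomes $\ge 1$; but for the result to leave $I$ one would need to land in $\QQ\subset\PP_1$ with $\mult_V=1$, which requires the other factor to contribute no pivot letters, i.e.\ to lie in the Grassmann part, and then the bracket $\{a,b\}$ of an element of $I$ with a pure Grassmann monomial still has $\ge 1$ pivot letter but, crucially, as an element of $\PP$ it is a product of $\ge 2$ standard monomials unless it reduces — a short bookkeeping argument finishes this.

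**The main obstacle** I anticipate is the last point: verifying cleanly that the bracket on $\JJ$ cannot move an element of $I$ back into the copy $\langle 1\rangle\oplus\QQ\oplus\langle\bar 1\rangle\oplus\bar\QQ$, i.e.\ that ``$\ge 2$ standard-monomial factors'' is genuinely preserved under the Poisson bracket and not just ``$\ge 2$ pivot letters''. Here one must use that $\{\PP_n,\PP_m\}\subset\PP_{n+m-1}$ (stated just before Theorem~\ref{TbasisP}), so a bracket of two elements of $I=\mathop{\oplus}\limits_{m\ge 2}(\PP_m\oplus\bar\PP_m)$ of degrees $n,m\ge 2$ lands in $\PP_{n+m-1}$ with $n+m-1\ge 3\ge 2$, hence stays in $I$; and a bracket of an element of $\PP_m$, $m\ge 2$, with an element of $\PP_1=\QQ$ lands in $\PP_m\subset I$. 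The associative product is even easier: $\PP_n\cdot\PP_m\subset\PP_{n+m}$. So the grading-by-number-of-$\QQ$-factors, together with $\deg_{\bar 1}\le 1$ on generators and additivity, controls everything, and $I$ is exactly the sum of all components of $\wGr$-degree (in the four generators $V_0,V_1,V_2,\bar 1$) whose ``$V$-part'' records two or more pivot letters. Once this is in place, the isomorphism $\KK\cong\JJ/I$ is canonical because $\varphi$ is the identity on generators $\{V_0,V_1,V_2,\bar 1\}$ of both sides (with $v_i$ identified with $V_i$ via Lemma~\ref{LisomQ}), so the induced map $\JJ/I\to\KK$ is a surjective homomorphism between algebras generated by the images of the same four elements satisfying the same relations, hence an isomorphism.
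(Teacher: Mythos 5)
Your proposal is correct and follows essentially the same route as the paper: decompose $\PP=\mathop{\oplus}\limits_{m\ge 0}\PP_m$ by the number of standard-monomial factors (equivalently, pivot letters), use the product rules $\PP_n\bullet\PP_m\subset\PP_{n+m}$, $\PP_n\bullet\bar\PP_m\subset\bar\PP_{n+m}$, $\bar\PP_n\bullet\bar\PP_m\subset\PP_{n+m-1}$ to see that $I=\mathop{\oplus}\limits_{m\ge 2}(\PP_m\oplus\bar\PP_m)$ is an ideal, and identify the quotient with $\Jor(\QQ)=\KK$ (your explicit map $\varphi$ is just the quotient projection). The one case your grading argument glosses over is $\bar\PP_m\bullet\bar\PP_0$ with $m\ge 2$, which formally could drop into $\PP_{m-1}$ (e.g.\ into $\PP_1$ for $m=2$); it vanishes because the Poisson bracket with the unit is zero, which is exactly the check the paper singles out.
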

\begin{proof}
Consider the Jordan superalgebra $\JJ=\Kan(\PP)=\PP\oplus \bar\PP$ with the product $\bullet$.
Fix $m\ge 0$, as above, denote by  $\PP_m\subset \PP$ a linear span of all $m$-fold products
of standard monomials of $\QQ$,
equivalently, $\PP_m$ is spanned by the basis monomials containing exactly $m$ letters $V_i$.
We get vector space decompositions $\PP=\mathop{\oplus}\limits_{m=0}^\infty \PP_m$ and
$\bar \PP=\mathop{\oplus}\limits_{m=0}^\infty \bar \PP_m$.
Observe that
$$
\PP_n\bullet \PP_m\subset \PP_{n+m},\quad
\PP_n\bullet \bar \PP_m=\bar \PP_m\bullet\PP_n \subset \bar\PP_{n+m}, \quad
\bar \PP_n\bullet \bar \PP_m\subset \PP_{n+m-1},\qquad n,m\ge 0.
$$
Let $I=\mathop{\oplus}\limits_{n\ge 2}(\PP_n\oplus \bar\PP_n)$.
The multiplication rules above imply that $I$ is an ideal in $\JJ$.
Indeed, one needs to check the last product, where we use
that $\bar\PP_0=\langle\bar 1\rangle$ and $\bar 1\bullet \bar L=0$.
We get
\begin{equation*}
\JJ/I\cong \PP_0\oplus \PP_1\oplus \bar\PP_0\oplus\bar \PP_1
=\langle 1\rangle \oplus\QQ\oplus\langle \bar 1\rangle \oplus \bar \QQ\cong \Jor(\QQ)=\KK.\qedhere
\end{equation*}
\end{proof}
By~\eqref{mult1}, $\jwt(\PP_n)=2n$ and $\jwt(\bar \PP_n)=2n-1$ for all $n\ge 0$.
We have another description of the ideal above, namely, $I$ is spanned by monomials $u=w$ or $u=\bar w$,
where $w$ are basis monomial of $\PP$ such that $\jwt(u)\ge 3$.

\begin{Theorem}\label{TallK}
Consider the Jordan superalgebra $\KK=\Jor(\QQ)$. Then
\begin{enumerate}
\item $\KK$ is generated by $\{v_0,v_1,v_2,\bar 1\}$;
\item $\KK$ is $\NO^3$-graded by multidegree in $\{v_0,v_1,v_2\}$,
the respective components are either trivial or two-dimensional and are inside
the "almost cubic paraboloid" of Theorem~\ref{Tparab} (see also Figure~\ref{Fig1});
\item $\KK$ is $\NO^4$-graded by multidegree in the generators $\{v_0,v_1,v_2,\bar 1\}$:
$$
\KK=\mathop{\oplus}\limits_{n_1,n_2,n_3,n_4\ge 0} \KK_{n_1n_2n_3n_4},
$$
the components are at most one-dimensional, so, the $\NO^4$-grading is fine.
The points for nontrivial $\NO^4$-components in $\R^4$ satisfy the following inequalities using
the extended twisted coordinates $\WtR^\sharp(w)=(Y_1,Y_2,Y_3,Y_4)$:
\begin{align*}
\sqrt{Y_2^2+Y_3^2}&< 14 \sqrt[\strut\sigma]{Y_1},\qquad  \sigma=\log_{|\mu|}\lambda \approx 3.068;\\
-1&\le Y_4 \le 2; \qquad (1 \le Y_4 \le 2\quad \text{if}\quad  w\ne 1,\bar 1).
\end{align*}
\item consider a (natural) gradation $\KK=\mathop{\oplus}\limits_{n\ge 0}\KK_n$ by degree in the generators;
except the initial components $\KK_0=\langle 1\rangle$, $\KK_1=\{v_0,v_1,v_2,\bar 1\}$, we have:
$$\KK_{3n-2}=\QQ_n,\quad \KK_{3n-1}=\bar \QQ_n, \quad \KK_{3n}=0, \qquad n\ge 1;$$
\item $\GKdim\KK=\LGKdim\KK=\log_\lambda 2\approx 1.6518$;
\item $\KK$ is just infinite but not hereditary just infinite;
\item the ideal without unit $\KK^o$ is solvable of length 3;
\item elements of $\KK^o$ are nil of degree at most 6;
\item $\KK$ is weakly special but not special.
\end{enumerate}
\end{Theorem}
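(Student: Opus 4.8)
The plan is to reduce everything about $\KK$ to the already-established facts about $\QQ$, $\PP$, and $\JJ$, using the isomorphism $\KK\cong\JJ/I$, where $I=\mathop{\oplus}\limits_{n\ge 2}(\PP_n\oplus\bar\PP_n)$ is the ideal spanned by monomials of Jordan weight $\ge 3$. Items (i) and (iv) come first, since they fix the basis: from the preceding lemma $\KK=\Jor(\QQ)=\langle 1\rangle\oplus\QQ\oplus\langle\bar 1\rangle\oplus\bar\QQ$, and Lemma~\ref{LgeneratorsJ}-style computations (already recorded) show $v_n,\bar v_n$ and all standard monomials $w,\bar w$ lie in $\Jord(v_0,v_1,v_2,\bar 1)$, giving (i). For (iv), the Kantor-double product sends $\QQ_n\bullet\bar 1\to\bar\QQ_n$ and $\bar\QQ_n\bullet\bar\QQ_m\to\QQ_{n+m}^{\,(-)}\subset\QQ_{n+m}$, while products landing in $\PP_{\ge 2}$ are killed in the quotient; tracking the degree in $\{v_0,v_1,v_2,\bar 1\}$ versus the degree in $\{v_0,v_1,v_2\}$ via Corollary~\ref{Cextended_growth} (for $w\in\QQ$, $\wdeg=3\deg-2$, and for $\bar w$, $\wdeg=3\deg-1$) yields $\KK_{3n-2}=\QQ_n$, $\KK_{3n-1}=\bar\QQ_n$, $\KK_{3n}=0$ for $n\ge 1$, with the two exceptional low components as stated.

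Items (v), (ii), (iii) are then geometric consequences. For (v), the growth function $\gamma_\KK(m)$ counts basis monomials with $\wdeg\le m$; by (iv) this is, up to the bounded factor $2$ and the rescaling $\wdeg\approx 3\deg$, the growth function $\gamma_\QQ$, so $\GKdim\KK=\LGKdim\KK=\GKdim\QQ=\log_\lambda 2$ by Theorem~\ref{TgrowthQ}. For (ii): the $\NO^3$-grading is inherited from $\JJ$ (Lemma~\ref{LZ3graduacaoJ}) via the quotient, and by (iv) the component $\KK_{n_1n_2n_3}$ is $\QQ_{n_1n_2n_3}\oplus\bar\QQ_{n_1n_2n_3}$, hence two-dimensional when $\QQ_{n_1n_2n_3}\ne 0$ (at most one-dimensional by Theorem~\ref{Tfine}) and trivial otherwise; the paraboloid bound is exactly that of Theorem~\ref{Tparab} since the weight coordinates of $w$ and $\bar w$ coincide with those of $w\in\QQ$. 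For (iii): the $\NO^4$-grading by $\{v_0,v_1,v_2,\bar 1\}$ is inherited from $\JJ$ (Corollary~\ref{CgradedZ4J-A}); the key point distinguishing $\KK$ from $\JJ$ is that in $\KK$ one has $\jwt\in\{-1\}\cup\{1,2\}$ (only $\PP_0,\PP_1,\bar\PP_0,\bar\PP_1$ survive, with Jordan weights $0,2,-1,1$), so $Y_4=\jwt w$ satisfies $-1\le Y_4\le 2$; this finiteness of the fourth coordinate forces each $\NO^4$-component to sit inside a single one-dimensional $\NO^3$-component of $\QQ$ or $\bar\QQ$, hence is at most one-dimensional and the grading is fine. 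The first two twisted inequalities are Theorem~\ref{Tparab} again.

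Items (vi)–(ix) are the structural/identity statements. For (vi), apply Lemma~\ref{Ljust-inf-Jor}(i): $\Jor(L)$ is just infinite iff $L$ is, and $\QQ$ is just infinite by Theorem (the just-infinite theorem for $\QQ$); not hereditary just infinite follows because the preimage in $\KK$ of the ideal $\QQ(m)\triangleleft\QQ$ of Lemma~\ref{Lnonjustinf} — more precisely the ideal $\QQ(m)\oplus\overline{\QQ(m)}$ — has finite codimension but contains the infinite-codimension abelian ideal $x_0\QQ(m)\oplus\overline{x_0\QQ(m)}$, so is not just infinite. Items (vii) and (viii) are immediate from Lemma~\ref{Ljust-inf-Jor}(ii),(iii) applied to $L=\QQ$: $\KK^o$ is solvable of length $3$ and $a^6=0$ for all $a\in\KK^o$. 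Finally (ix): $\KK=\Kan(P(\QQ))$ is a Kantor double of a Poisson superalgebra, hence weakly special by the result cited in Corollary~\ref{Cspecial}; it is not special because by that same cited criterion specialness would force $P(\QQ)$ to satisfy $\{X,\{Y,Z\}\}=0$, i.e. $[X,[Y,Z]]=0$ in $\QQ$, which fails (e.g. $[v_0,[v_0,v_1]]=v_0^2 v_1$-type brackets are nonzero by Lemma~\ref{L_BASIC_PROD}).

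The main obstacle I anticipate is item (iv) together with the one-dimensionality in (iii): one must carefully verify that passing to the quotient $\JJ/I$ does not merge distinct multihomogeneous monomials and that the bookkeeping between the three gradings ($\deg$ in $\{v_0,v_1,v_2\}$, $\deg_{\bar 1}$, and $\jwt$) is consistent — in particular that $\KK_{3n}=0$ really holds, i.e. no genuinely new monomials of that degree survive. Everything else is a direct invocation of the machinery already built for $\QQ$, $\PP$, $\JJ$.
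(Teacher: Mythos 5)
Your proposal is correct and follows essentially the same route as the paper: inherit the $\NO^3$/$\NO^4$-gradings and the paraboloid bound of Theorem~\ref{Tparab} from $\JJ$ via the quotient, use the Jordan weight values $\jwt\in\{-1,0,1,2\}$ to separate $w$ from $\bar w$ (fineness of the $\NO^4$-grading and the correspondence $\wdeg u=3\deg u-\jwt u$ giving (iv)), reduce the growth to Theorem~\ref{TgrowthQ} via~\eqref{estimates_gr_J}, invoke Lemma~\ref{Ljust-inf-Jor} together with the $\QQ(m)\oplus\overline{\QQ(m)}$ construction of Lemma~\ref{Lnonjustinf} for (vi)--(viii), and Shestakov's speciality criterion for (ix). The only trivial slip is in (ii): the component $\KK_{000}=\langle 1,\bar 1\rangle$ is also two-dimensional, which your formula $\QQ_{n_1n_2n_3}\oplus\bar\QQ_{n_1n_2n_3}$ does not cover.
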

\begin{proof}
Since $\KK$ is a factor algebra of $\JJ$ by a homogeneous ideal,
all the weight functions $\Gr$, $\Gr^\sharp$, $\Wt$, $\WtR$, $\WtR^\sharp$ as well as
the $\NO^3$ and $\NO^4$-gradings are inherited.
We get the almost cubic paraboloid by Theorem~\ref{Tparab}.
Let $w=r_{n-2}v_n\in\QQ_{n_1n_2n_3}$ be a standard monomial of $\QQ$.
We get a two-dimensional component $\KK_{n_1n_2n_3}=\langle w,\bar w\rangle$.
Also, $\KK_{000}=\langle 1,\bar 1\rangle $.
By~\eqref{mult1}, $\jwt w=2$ and $\jwt \bar w=1$, also $\jwt 1=0$ and $\jwt \bar 1=-1$.
Hence, due to the forth different coordinates,
the components of the $\NO^4$-grading of $\KK$ are at most one dimensional.
Also, we get $Y_4=\jwt u\in \{1,2\}$, where $u\in\KK$ is a basis element distinct from $1,\bar 1$.

Consider the gradation of $\KK$ be degree in all generators, which was called the extended degree.
Let $u\in\KK$ be a basis monomial, distinct from $1,\bar 1$. We have two cases.
First, assume that $u$ is a standard monomial of $\QQ$, which has a unique letter $V_i$.
By~\eqref{mult1} and (i) of Corollary~\ref{Cextended_growth}, we have $\jwt u=2$ and $\deg^\sharp u=3\deg u-\jwt u=3\deg u-2$.
Second, $u=\bar w$, $w$ being a standard monomial. Then $\jwt \bar w=1$ and $\deg^\sharp u=3\deg u-1$.
Recall that the condition $\deg w=n$ is equivalent  to $w\in\QQ_n$. We obtain the desired correspondence between components.

To evaluate the growth we use estimates~\eqref{estimates_gr_J} and Theorem~\ref{TgrowthQ}.

By Lemma~\ref{Ljust-inf-Jor}, $\KK$ is just infinite.
Let us prove that $\KK$ is not hereditary just infinite.
We use notations of Lemma~\ref{Lnonjustinf}.
Fix $m\ge 1$.
Let $\QQ(m)\subset\QQ$ be the linear span of the standard monomials of length at least $m$.
By multiplication rules, $H=\QQ(m)\oplus \overline{\QQ(m)}\subset \KK $ is an ideal of finite codimension.
Let $J=x_0\QQ(m)\oplus \overline{x_0\QQ(m)}\subset H$ be the subspace spanned by the monomials involving $x_0$.
We see that $J$ is an abelian ideal of $H$.
Since $v_i\in H{\setminus} J$, where $i\ge m$, we conclude that
$\dim H/J=\infty$ and the ideal $H$ is not just infinite.

Consider the last claim.
As an image of $\JJ$, $\KK$ is weakly special as well.
Also, $\KK$  is a Kantor double of the Poisson superalgebra $P(\QQ)=\langle 1\rangle \oplus \QQ $ which is not Lie nilpotent of class 2,
hence, $\KK$ is not special by~\cite{Shestakov93}.
\end{proof}

In particular, we get a Jordan superalgebra $\KK$ which Gelfand-Kirillov dimension belongs to $(1,2)$,
that is not possible for associative and Jordan algebras~\cite{KraLen,MaZe96}.
A more general fact that the gap $(1,2)$ does not exist for {\it Jordan super}algebras is proved in~\cite{PeSh18Jslow}.

\section{Nillity of superalgebras $\QQ$, $\AA$, $\PP$, $\JJ$, and $\KK$}

In this section, we establish different statements on nillity of our five superalgebras.

First, we prove that $\QQ$, $\AA$, and superalgebras without unit $\PP^o$, $\JJ^o$, $\KK^o$
are direct sums of two locally nilpotent subalgebras
and there are continuum such different decompositions (Theorem~\ref{Ttwosums}).
Second, $\QQ$ is ad-nil for $\Z_2$-homogeneous elements (Theorem~\ref{Tadnilpotente}).
Third, in case $\ch K=2$, the restricted Lie algebra $\QQ=\Lie_p(v_0,v_1,v_2)$ has a nil $p$-mapping.
Proofs of the last two facts are omitted because they are the same as that in supplied references.
We start with a technical fact.

\begin{Lemma}\label{Ldense}
Let $\lambda,\mu,\bar\mu$ be the real and complex roots of the polynomial $t^3-t-2$. Then
\begin{enumerate}
\item $\mu^n\notin\R$ for any $n\ge 1$.
\item The set $\{\arg(\mu^n)\mid n\ge 1\}$ is dense on $[0,2\pi]$.
\end{enumerate}
\end{Lemma}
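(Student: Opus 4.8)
The statement concerns the complex root $\mu = \epsilon\theta_1 + \epsilon^2\theta_2$ of $t^3 - t - 2$. Writing $\mu = |\mu|e^{i\varphi}$ with $|\mu| = \sqrt{2/\lambda}$, both claims reduce to facts about the argument $\varphi = \arg\mu$. Claim (i) says $n\varphi \not\equiv 0 \pmod\pi$ for all $n\ge 1$, and claim (ii) says $\{n\varphi \bmod 2\pi \mid n\ge 1\}$ is dense in $[0,2\pi]$. By the classical Weyl equidistribution / Kronecker theorem, both follow at once from the single assertion that $\varphi/\pi$ is irrational: if $\varphi/\pi$ were irrational then $\{n\varphi \bmod 2\pi\}$ is dense (Kronecker), which gives (ii), and in particular $n\varphi$ is never an integer multiple of $\pi$, which gives (i). So the whole proof collapses to: \emph{show $\varphi/\pi \notin \mathbb{Q}$}, i.e. $\mu$ is not a root of unity times a positive real, equivalently $\mu/|\mu|$ is not a root of unity, equivalently $\mu/\bar\mu$ is not a root of unity (since $|\mu|=|\bar\mu|$ gives $\mu/\bar\mu = e^{2i\varphi}$).

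**The key step.** The plan is to argue by contradiction: suppose $(\mu/\bar\mu)^N = 1$ for some $N\ge 1$. I would exploit that $\mu$ is an algebraic integer whose minimal polynomial over $\mathbb{Q}$ is $t^3 - t - 2$ (irreducible: it has no rational root, as $\pm1,\pm2$ are not roots). If $\mu/\bar\mu$ were a root of unity $\zeta$, then $\mu = \zeta\bar\mu$; since $\mu\bar\mu\lambda = 2$ (Vieta) we have $\mu\bar\mu = 2/\lambda \in \mathbb{Q}(\lambda) = \mathbb{Q}(\mu)$... Cleaner: work in the splitting field $F = \mathbb{Q}(\lambda,\mu)$, which has degree $6$ over $\mathbb{Q}$ with Galois group $S_3$, and note that complex conjugation is a transposition in this group. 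The relation $\mu = \zeta\bar\mu$ with $\zeta$ a root of unity forces $\zeta = \mu/\bar\mu \in F$. Then $|\zeta| = 1$ and all Galois conjugates of $\zeta$ also have the form (conjugate of $\mu$)/(conjugate of $\bar\mu$); I would check that not all conjugates of $\mu/\bar\mu$ can have absolute value $1$ — in fact the conjugate $\lambda/\lambda$... no: applying the transposition that swaps $\mu$ and $\lambda$ sends $\mu/\bar\mu$ to $\lambda/\bar\mu$, and $|\lambda/\bar\mu| = \lambda/|\mu| = \lambda/\sqrt{2/\lambda} = \lambda^{3/2}/\sqrt2 \ne 1$ since $\lambda \approx 1.52$. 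A root of unity has all its algebraic conjugates on the unit circle, so this is a contradiction. Hence $\mu/\bar\mu$ is not a root of unity.

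**The main obstacle.** The only genuinely delicate point is pinning down the Galois-theoretic setup cleanly — i.e. confirming $t^3-t-2$ is irreducible over $\mathbb{Q}$ (easy: rational root test) and that the splitting field $F$ is a degree-$6$ $S_3$-extension, so that there is a field automorphism interchanging $\mu$ with $\lambda$. An alternative that avoids Galois theory entirely: a root of unity is an algebraic integer all of whose conjugates have absolute value $1$ (a theorem of Kronecker), so if $\mu/\bar\mu$ were a root of unity then in particular it would be an algebraic integer; but one can directly compute the minimal polynomial of $\mu/\bar\mu = \mu^2/(\mu\bar\mu) = \mu^2\lambda/2$ over $\mathbb{Q}$ using $\mu^2 = 2/\mu + 1$ from~\eqref{frac2} and $\lambda\mu\bar\mu=2$, obtain an explicit cubic, and observe its other roots do not lie on the unit circle. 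Either route is short; I would present the conjugate-absolute-value argument as the cleanest. Once $\varphi/\pi$ is irrational, I invoke Kronecker/Weyl for (ii) and note (i) as the special case $n\varphi \notin \pi\mathbb{Z}$, and write $|\mu^n| = |\mu|^n \ne 0$ to confirm $\mu^n \ne 0$, so $\mu^n \in \mathbb{R}$ would require $n\varphi \in \pi\mathbb{Z}$, which is excluded.
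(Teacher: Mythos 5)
Your proof is correct, and while its overall skeleton matches the paper's (both reduce the lemma to showing that $\xi=\mu/\bar\mu=\mu^{2}\lambda/2$ is not a root of unity, both work in the degree-$6$ splitting field $\Q[\lambda,\mu]$ with Galois group $S_3$, and both get (ii) from (i) via the irrational rotation / Kronecker--Weyl argument), the key exclusion step is genuinely different. The paper argues through cyclotomic degrees: $\Q[\xi]$ would be an abelian subextension of an $S_3$-field, so $\phi(n)$ for the order $n$ of $\xi$ must properly divide $6$, restricting $n$ to a few small values; each of these is then eliminated because it would force $\mu$ to satisfy a monic quadratic over $\Q[\lambda]$ whose coefficients conflict with the minimal polynomial $t^{2}+\lambda t+2/\lambda$ of $\mu$ over $\Q[\lambda]$. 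You instead apply the Galois automorphism interchanging $\lambda$ and $\mu$ (and fixing $\bar\mu$) to $\xi$: its image $\lambda/\bar\mu$ has modulus $\lambda/|\mu|=\lambda^{3/2}/\sqrt{2}\ne 1$, whereas every conjugate of a root of unity lies on the unit circle. This buys you a shorter argument with no case analysis on the order; what it costs is nothing beyond what the paper already uses, namely irreducibility of $t^{3}-t-2$ (rational root test) and that the Galois group is all of $S_3$ (complex conjugation is a transposition), which you correctly identify as the only structural input. Two cosmetic points: your inequality can be made exact, since $\lambda^{3/2}/\sqrt{2}=1$ would give $\lambda^{3}=2$, i.e. $\lambda+2=2$, i.e. $\lambda=0$, which is absurd; and you do not need Kronecker's theorem on algebraic integers at all, only the trivial fact that a field automorphism sends a root of unity to a root of unity.
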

\begin{proof}
Consider the field extension $\Q\subset\Q[\lambda,\mu]$.
Since the Galois group has the conjugation, this is an extension of degree 6 and the Galois group is $S_3$.
Assume that $\mu^n\in\R$ for some $n\ge 1$.
By Viet's formulas,
$\lambda\mu\bar\mu=2$.
Denote $\xi=\mu^2\lambda/2$, then $|\xi|=1$.
We obtain $\xi^{n}=\mu^{2n}\lambda^n/2^n\in\R^+$ and $|\xi^{n}|=1$.
Hence, $\xi^{n}=1$, we have a root of unity such that  $\xi\in\Q[\lambda,\mu]$.
Moreover, we can assume that $\xi$ is primitive of degree $n$.
Let $n=\prod_p p^{n_p}$, then by Euler's formula,
$|\Q[\xi]:\Q|=\phi(n)=\prod_p (p-1)p^{n_p-1}$.
Since the Galois group of a cyclotomic extension is abelian, $\phi(n)$ properly divides 6.
Clearly, $p\in\{2,3\}$ and $n\in\{2,3,4\}$.
We have $\mu^3=\mu+2\notin \Q$.
For two remaining cases observe that $\R\cap \Q[\lambda,\mu]=\Q[\lambda]$.
We have either $\mu^2\in \Q[\lambda]$, or $\mu^4=\mu\mu^3=\mu(\mu+2)= \mu^2+2\mu\in\Q[\lambda]$,
in both cases, $\mu$ satisfies a polynomial of degree 2 over $\Q[\lambda]$.
On the other hand, $\mu$ satisfies the following irreducible polynomial of degree 2:
$ h(t)=(t-\mu)(t-\bar\mu)=t^2-(\mu+\bar \mu)t+\mu\bar\mu=t^2+\lambda t+2/\lambda \in \Q[\lambda][t].$
A contradiction proves the first claim.

By the first claim, $2\pi/\arg(\mu)\notin\Q$, we obtain an {\em irrational rotation} of the unit circle,
the classical example of ergodic theory.
Ergodic theory says that an orbit of an irrational rotation of a circle is dense.
\end{proof}

Let $\PP^o\subset \PP$, $\JJ^o\subset \JJ$, and $\KK^o\subset\KK$
be the respective Poisson and Jordan superalgebras without unit.
\begin{Theorem} \label{Ttwosums}
Consider the Lie superalgebra $\QQ=\Lie(v_0,v_1,v_2)$,
its associative hull $\AA=\Alg(v_0,v_1,v_2)$,
the Poisson superalgebra without unit $\PP^o$, and
the Jordan superalgebras without unit $\JJ^o$ and $\KK^o$.
\begin{enumerate}
\item
there exist decompositions into direct sums of two locally nilpotent subalgebras:
$$\QQ=\QQ_+\oplus\QQ_-,\qquad \AA=\AA_+\oplus\AA_-,\qquad
\PP^o=\PP_+\oplus\PP_-,\qquad \JJ^o=\JJ_+\oplus\JJ_-,\qquad \KK^o=\KK_+\oplus\KK_-. $$
\item there are continuum such different decompositions.
\end{enumerate}
\end{Theorem}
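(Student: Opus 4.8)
The plan is to use the fine $\NO^3$-grading of $\QQ$ together with the weight functions from Section~\ref{Sweight}, and to build the decompositions by "splitting space along a generic hyperplane through the origin". The key idea is that the axis $OY_1$ (equivalently, the direction $(2/\lambda,\lambda,1)$ in standard coordinates, see Lemma~\ref{L_OY1}) is irrational, so a hyperplane transverse to it separates the lattice points $\NO^3$ into two cones, and monomials of $\QQ$ land at lattice points by Theorem~\ref{Tfine}. First I would fix a linear functional $\ell$ on $\R^3$ whose kernel is a plane in "generic position" (to be made precise below), and set
$$
\QQ_+ = \bigoplus_{\ell(\Gr(w))>0}\QQ_{\Gr(w)},\qquad
\QQ_- = \bigoplus_{\ell(\Gr(w))\le 0}\QQ_{\Gr(w)},
$$
the sum over multihomogeneous components. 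Since $\Gr$ is additive on Lie products and multidegree components are at most one-dimensional (Theorem~\ref{Tfine}), $\QQ_\pm$ are subspaces, and $\QQ = \QQ_+\oplus\QQ_-$.

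The main point is local nilpotency of each summand. Here I would argue as follows. Take a finitely generated subalgebra $S\subseteq\QQ_+$; it is spanned by finitely many standard monomials $w_1,\dots,w_r$, each with $\ell(\Gr(w_i))>0$. Any iterated bracket in the $w_i$ of length $s$ is multihomogeneous of degree $\sum \Gr(w_{i_j})$, hence its $\ell$-value is $\ge s\cdot\delta$ where $\delta = \min_i \ell(\Gr(w_i))>0$. Using the geometry of the "almost cubic paraboloid" (Theorem~\ref{Tparab}): a monomial $w$ of length $n$ has $\wt(w)\le\lambda^n$ and $|\swt(w)|<7|\mu|^n$ (Corollary~\ref{Cpesos}, Lemma~\ref{Lsuperpeso}), while its multidegree coordinates satisfy $\Gr^T(w)=B^{-1}\Wt^T(w)$ (Lemma~\ref{Ltrans}, Lemma~\ref{Linv}). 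From the explicit form of $B^{-1}$, a large value of $\ell(\Gr(w))$ forces the weight $\wt(w)$ to be comparable to $\ell(\Gr(w))$ up to a bounded multiplicative error — because the $\mu,\bar\mu$ contributions to $\Gr(w)$ are bounded by $C|\mu|^n = o(\lambda^n)$, so the degree vector of a long monomial is asymptotically parallel to $(2/\lambda,\lambda,1)$. Choosing $\ell$ so that its value on $(2/\lambda,\lambda,1)$ is \emph{negative} (or: choosing $\QQ_+$ to be the side \emph{not} containing that asymptotic direction), one gets: monomials with $\ell(\Gr(w))>0$ have bounded weight, hence bounded length, hence $\QQ_+$ is finite-dimensional modulo... no — more carefully: the set of multidegrees in the open half-space $\{\ell>0\}$ that actually support a nonzero component lies in a \emph{bounded} region (a "cap" of the paraboloid), so $\QQ_+$ is finite-dimensional and in particular nilpotent; and $\QQ_-$, the complementary side, is locally nilpotent because any $s$-fold bracket of generators of a fin. gen. subalgebra has $\ell$-value $\le s\cdot(\max_i\ell(\Gr(w_i)))<0$ going to $-\infty$, forcing the weight (hence length, by Corollary~\ref{Cpesos}) to grow, and then one checks the bracket must vanish once the required length exceeds what the finitely many generators can produce — here one invokes that products of standard monomials of strictly controlled lengths are governed by the multiplication rules of Section~\ref{Srelations}, cf. the just-infiniteness proof. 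Actually the clean statement is: pick $\ell$ generic so \emph{both} half-spaces $\{\ell>0\}$ and $\{\ell<0\}$ contain only bounded caps of the paraboloid away from the single asymptotic ray $\R_{>0}(2/\lambda,\lambda,1)$; since that ray contains no nonzero lattice point (Lemma~\ref{Laxis}), every nonzero component sits strictly on one side, and a finitely generated subalgebra of either side generates brackets whose $\ell$-values run off to $+\infty$ or $-\infty$, leaving the bounded cap after finitely many steps — hence nilpotent.

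For the associative hull $\AA$, the Poisson algebra $\PP^o$, and the Jordan algebras $\JJ^o$, $\KK^o$, the same functional $\ell$ (extended to the fourth coordinate by $0$ in the Jordan cases, using Corollary~\ref{CgradedZ4J-A}) induces the splitting, because all these algebras are $\NO^3$- (resp. $\NO^4$-) graded by multidegree and their products are additive on multidegree; the bases are described by monomials~\eqref{monomP},~\eqref{monomA}, all lying in the same paraboloid (Theorem~\ref{TparabP}, Theorem~\ref{TparabJ}). One must check additivity of the grading under the \emph{Poisson bracket} ($\{\PP_n,\PP_m\}\subset\PP_{n+m-1}$ affects total degree but not multidegree in the generators — good) and under the Jordan product $\bullet$; the Kantor-double multiplication rules $\PP_n\bullet\PP_m\subset\PP_{n+m}$, $\bar\PP_n\bullet\bar\PP_m\subset\PP_{n+m-1}$ are multidegree-additive, and $\bar 1$ contributes $0$ to the multidegree in $\{v_0,v_1,v_2\}$, so the argument transfers verbatim. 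Local nilpotency of the pieces follows as above, using that in each case a monomial in the open half-space $\{\ell>0\}$ has bounded weight hence bounded length.

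Finally, for part (ii): the functional $\ell$ can be taken from a continuum of genuinely different choices. Concretely, parametrize $\ell$ by a real direction; the only constraint is that $\ker\ell$ avoid the ray $\R(2/\lambda,\lambda,1)$ and separate the two paraboloid caps — an open condition satisfied by a continuum of directions. Different $\ell$ give different decompositions because they assign opposite signs to infinitely many multidegree components (one exhibits, for two non-proportional generic functionals, a lattice direction on which they disagree, using Lemma~\ref{Ldense}: the arguments $\arg(\mu^n)$ are dense, so the $\Gr(v_n)$ are "spread out" in all directions of the plane $Y_2,Y_3$, hence their projections straddle any two distinct hyperplanes). I expect the \textbf{main obstacle} to be the careful verification that each half-space piece is genuinely locally nilpotent rather than merely "graded-triangular" — i.e. ruling out that a finitely generated subalgebra on the unbounded side could produce an infinite-dimensional nilpotent-but-not-bounded tower; this needs the quantitative paraboloid bounds (Theorem~\ref{Tparab}, Corollary~\ref{Cpesos}) to convert "$\ell$-value $\to\infty$" into "length $\to\infty$" and then the multiplication rules to see the products vanish. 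The rest is bookkeeping with the already-established gradings and bases.
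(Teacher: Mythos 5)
There is a genuine gap, and it sits exactly where you flagged the main obstacle: local nilpotency of the two pieces. Your primary construction --- a functional $\ell$ chosen negative on the asymptotic direction $(2/\lambda,\lambda,1)$, so that $\{\ell>0\}$ cuts only a bounded cap of the paraboloid --- cannot work: the complementary side $\QQ_-$ then contains every component of sufficiently large weight, in particular $v_i,v_{i+1},v_{i+2}$ for all large $i$ (their multidegree vectors are asymptotically along the ray, with the $\mu,\bar\mu$-corrections of smaller exponential order), hence $\QQ_-$ contains $L_i=\Lie(v_i,v_{i+1},v_{i+2})\cong\QQ$ (Corollary~\ref{Cbases}), a finitely generated infinite-dimensional, hence non-nilpotent, subalgebra; so $\QQ_-$ is not locally nilpotent. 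The construction the paper uses is different in an essential way: the splitting plane must \emph{contain} the axis $OY_1$ of the paraboloid (planes $\alpha Y_2+\beta Y_3=0$ in twisted coordinates), so both halves are unbounded but every nonzero component lies strictly on one side; Lemma~\ref{Laxis} (no lattice points on $OY_1$ except the origin) is what makes the sum direct, and rotating the plane gives the continuum. Your ``clean statement'' gestures at this (it invokes Lemma~\ref{Laxis}), but the condition you state --- that both half-spaces meet the paraboloid in bounded caps --- is impossible for any plane, and the actual requirement $\ker\ell\supset OY_1$ is never identified.

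Second, even with the correct plane your justification of local nilpotency is not a proof. The mechanism is not ``$\ell$-values run off to infinity, forcing length to grow, and then the multiplication rules make products vanish'': weights and lengths of products grow linearly by additivity, and $\QQ$ has nonzero products of every length, so nothing vanishes on those grounds. The correct argument (the one the paper imports from~\cite{PeShZe10}) is a linear-versus-sublinear comparison entirely inside the grading: for finitely many homogeneous $w_1,\dots,w_r$ with $\alpha Y_2+\beta Y_3\ge\delta>0$, any nonzero $s$-fold product is homogeneous with transverse value at least $s\delta$, while its weight is at most $sM$ with $M=\max_i\wt(w_i)$, so Theorem~\ref{Tparab} bounds the transverse value by $14(sM)^{1/\sigma}$ with $\sigma>1$; for large $s$ this is contradictory, so all long products vanish and the generated subalgebra is nilpotent. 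Finally, for $\JJ^o$ and $\KK^o$ you must also decide where $\bar 1$ goes: it has zero multidegree in $\{V_0,V_1,V_2\}$, i.e. it sits at the origin, so a strict sign condition does not assign it; the paper adjoins $\langle\bar 1\rangle$ to one part and checks $(\JJ_+)^2\subset J_+$ to keep that part locally nilpotent. Your argument for part (ii) (density of $\arg(\mu^n)$, Lemma~\ref{Ldense}, so distinct planes disagree on infinitely many pivot elements) does match the paper and is fine once the construction is repaired.
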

\begin{proof}
First, we consider the Lie superalgebra $\QQ$.
Consider a plane $\Pi$ passing through the axis $OY_1$,
it is determined by an equation $\a Y_2+\b  Y_3=0$ in the twisted coordinates, where $\a,\b\in \R$ are some constants.
By Lemma~\ref{Laxis}, the axis $OY_1$ does not contain lattice points $\Z^3$, except the origin.
By rotation of the plane $\Pi$ around $OY_1$, we obtain a continuum of planes
that intersect $\Z^3$ only at origin, because the number of points of the lattice is countable.
Fix such a plane  $\Pi$.
Let $\QQ_+$, $\QQ_-$ be sums of homogeneous components of $\QQ$ that lie on different sides of $\Pi$.
By construction, we get a vector space decomposition $\QQ=\QQ_+\oplus\QQ_-$.
Additivity of the multidegree implies that $\QQ_+$ and $\QQ_-$ are subalgebras.
The plane $\Pi$ splits the "paraboloid" (Theorem~\ref{Tparab}) into two halves, see Figure~\ref{Fig1}.
Now the same geometric arguments as in~\cite{PeShZe10} prove that the subalgebras $\QQ_+$, $\QQ_-$ are locally nilpotent.
Two such different planes yield different decompositions.
Indeed, consider all pivot elements $\{v_k\mid k\ge 0\}$,
and their weight and twisted coordinates
$$\swt(v_k)=\mu^k=Z_2(k)=Y_2(k)+iY_3(k),\quad  k\ge 0.$$
Since the set of their arguments is dense on $[0,2\pi]$ (Lemma~\ref{Ldense}),
the decompositions determined by two different planes differ by (infinitely many) pivot elements.
Similarly, we get decompositions for $\AA$ and $\PP^o$
because their monomials are inside another paraboloid (Theorem~\ref{TparabP}).

Finally, consider the Jordan superalgebra without unit $\JJ^o$.
We use $\Z^3$-grading of $\JJ$ by multidegree in $\{V_0,V_1,V_2\}$ only.
Let $J$ be a span of all monomials $u, \bar u$, where $u$ is a basis monomial of $\PP$ such that $u\ne 1$.
All such monomials belong to lattice points $\Z^3$ distinct from the origin.
As above, using continuum different appropriate planes passing through $OY_1$,
we split monomials into two parts
and get decompositions into direct sums of two locally nilpotent subalgebras $J=J_+\oplus J_-$.
Since $\bar 1$ is at the origin,
a multiplication by $\bar 1$ keeps the lattice points, thus,
$\bar 1\bullet J_+\subset J_+$ and $\bar 1\bullet  J_-\subset J_-$.
By our construction, $\JJ^o=\langle \bar 1\rangle_K\oplus J$.
Put $\JJ_-=J_-$ and $\JJ_+=J_+\oplus \langle \bar 1\rangle_K$.
Then $\JJ^o=\JJ_+\oplus\JJ_-$. We have $(\JJ_+)^2\subset J_+$,
and $\JJ_+$ is a locally nilpotent subalgebra as well.
\end{proof}

\begin{Theorem}\label{Tadnilpotente}
Consider the Lie superalgebra $\QQ=\Lie(v_0,v_1,v_2)=\QQ_{\bar{0}}\oplus\QQ_{\bar{1}}$. For any $a\in\QQ_{\bar{n}}$,
$\bar{n}\in\lbrace\bar{0},\bar{1}\rbrace$, the operator $\ad(a)$ is nilpotent.
\end{Theorem}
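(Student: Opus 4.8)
The plan is to prove the stronger claim that every $\Z_2$-homogeneous element $a\in\QQ$ is a \emph{nilpotent operator} on the Grassmann algebra $\Lambda$, i.e.\ $a^k=0$ in $\End\Lambda$ for some $k$. Since $\QQ$ sits inside the associative superalgebra $\End\Lambda$, and nilpotence of an element of an associative algebra forces $\ad$ of it to be nilpotent, this yields nilpotence of $\ad a$ on $\End\Lambda$ and hence on $\QQ$; the reduction is characteristic-free (in case $\ch K=2$ the formal square $y^{[2]}$ of an odd $y$ is the operator $y^2$, which is again an element to which the claim applies).

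First I would dispose of a single basis monomial. Write a (quasi)standard monomial as $w=r_{n-2}v_n$, where $r_{n-2}$ is a Grassmann monomial involving only variables $x_k$ with $k\le n-2$. By~\eqref{action}, $v_n$ annihilates every $x_k$ with $k<n$, so as operators $v_n$ and multiplication by such an $x_k$ anticommute; hence $v_n r_{n-2}=(-1)^{|r_{n-2}|}r_{n-2}v_n$, and with $v_n^{\,2}=x_{n+1}v_{n+3}$ (Lemma~\ref{L_BASIC_PROD}),
\[
w^2=r_{n-2}v_n\,r_{n-2}v_n=(-1)^{|r_{n-2}|}r_{n-2}^{\,2}\,x_{n+1}v_{n+3}.
\]
If $r_{n-2}\neq 1$ this vanishes because $r_{n-2}^{\,2}=0$ in $\Lambda$; and if $w=v_n$, then $v_n^{\,2}=x_{n+1}v_{n+3}$ has nonempty Grassmann part, so $(v_n^{\,2})^2=0$. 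Thus $w^4=0$ in $\End\Lambda$ in all cases, and $\ad w$ is nilpotent.

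The real work is a general $\Z_2$-homogeneous element $a=\nu_1 w_1+\dots+\nu_m w_m$, a finite combination of standard monomials of one parity: each $w_j$ is nilpotent, but cross terms in the powers of $a$ need not vanish, so global control is needed. I would argue by induction on $m$ (equivalently on the largest length occurring in $a$), peeling off the lowest homogeneous component of $a$ for the $\Z$-grading of $\End\Lambda$ by degree in the Grassmann variables (which refines the $\Z_2$-grading); for odd $a$ that lowest component is a finite sum of distinct $\partial$'s, which squares to zero since distinct $\partial$'s anticommute. The decisive point is to track the $\NO^3$-support of $a^k$ for the multidegree grading of Theorem~\ref{TZ3graduacao}: an associative product by $w_j$ translates the multidegree by the fixed vector $\Gr(w_j)\in\NO^3$, so $a^k$ is supported at multidegrees $\sum_{l=1}^k\Gr(w_{j_l})$ up to a bounded shift. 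By Corollary~\ref{Cpesos} the weight of such a multidegree grows linearly in $k$, while by Theorem~\ref{Tparab} the corresponding superweight must stay of size $O\big((\mathrm{weight})^{1/\sigma}\big)=O(k^{1/3})$; since moreover the $\NO^3$-components of $\QQ$ are at most one-dimensional (Theorem~\ref{Tfine}), these constraints confine the support of $a^k$ to so few admissible components that, pushing the estimates, $a^k=0$ for all large $k$. Equivalently, one fixes a plane through the axis $OY_1$ meeting $\NO^3$ only at the origin (Lemma~\ref{Laxis}), uses the induced decomposition $\QQ=\QQ_+\oplus\QQ_-$ into two locally nilpotent subalgebras (Theorem~\ref{Ttwosums}), and runs the geometric argument of~\cite{PeShZe10}. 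This is exactly the argument of~\cite{Pe16,PeOtto}, whence the lengthy but routine bookkeeping is omitted here.

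The main obstacle is precisely this last step: passing from the (cheap) nilpotence of the individual monomials to nilpotence of the whole $\Z_2$-homogeneous sum, since nilpotence is not additive. What makes it work is the combination of $\Z_2$-homogeneity — which aligns the parities of all monomials appearing in $a$ — with the multidegree and ``paraboloid'' estimates, or equivalently with the splitting into two locally nilpotent subalgebras; these keep the supports of $a^2,a^3,\dots$ from surviving indefinitely. Everything else in the proof is formal.
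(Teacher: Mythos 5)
Your reduction is fine as far as it goes: passing to the stronger claim that a $\Z_2$-homogeneous $a$ is nilpotent in $\End\Lambda$, reducing odd $a$ to $a^2\in\QQ_{\bar 0}$ via~\eqref{squares}, and the computation $w^4=0$ for a single standard monomial $w=r_{n-2}v_n$ are all correct. But the actual content of the theorem is the passage from single monomials to an arbitrary homogeneous finite sum, and there your sketch has a genuine gap. First, you control the powers $a^k$ by quoting Theorem~\ref{Tfine} and Theorem~\ref{Tparab}, which are statements about $\QQ$; the powers $a^k$ live in $\AA$, whose multihomogeneous components are far from one-dimensional, and the relevant bound there (Theorem~\ref{TparabP}) is satisfied by \emph{every} basis monomial of $\AA$, so by itself it forces no vanishing. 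Second, the mechanism you invoke --- weight of the support grows linearly in $k$ while the superweight ``must stay'' of size $O(k^{1/3})$ --- only eliminates those $\NO^3$-components of $a^k$ whose total superweight happens to be large. For a general $\Z_2$-homogeneous $a$ the superweights $\swt(w_j)$ of its monomials point in essentially arbitrary directions of the complex plane (their arguments are even dense, Lemma~\ref{Ldense}), so $k$-fold sums of them can cancel, stay small, and pass the paraboloid test; nothing in your argument kills those mixed products. This is precisely the difference with Theorem~\ref{Ttwosums}: for $\QQ_+$ or $\QQ_-$ the superweights of the finitely many generators lie strictly on one side of a plane through $OY_1$, cancellation is impossible, and the geometric argument of~\cite{PeShZe10} gives \emph{local nilpotence of each half}; but a general homogeneous $a$ has components in both halves, and local nilpotence of the two subalgebras says nothing about $\ad a$ for such a mixed element --- which is exactly the non-additivity obstacle you name and then do not overcome.

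So the decisive step is missing, and it is not ``routine bookkeeping'': the proofs you appeal to in~\cite{Pe16,PeOtto} (which is also all the present paper does --- Theorem~\ref{Tadnilpotente} is proved there by citation to \cite[Theorem~10.1]{Pe16} and \cite[Theorem~12.1]{PeOtto}) proceed by a separate, delicate combinatorial analysis of how heads act on tails of the monomials occurring in $a$, not by the multidegree-support/paraboloid estimate you describe, so the sentence ``this is exactly the argument of~\cite{Pe16,PeOtto}'' cannot substitute for a proof. Note also that you have silently replaced the statement by the stronger claim of associative nilpotence of $a$ in $\End\Lambda$; that claim is plausible (and your examples of small sums do satisfy it), but it is nowhere established by the estimates you give, so as written the proposal proves the theorem only for monomials and for elements lying entirely in one of the halves $\QQ_\pm$.
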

\begin{proof} The same as~\cite[Theorem 10.1]{Pe16} or~\cite[Theorem 12.1]{PeOtto}.
\end{proof}
\begin{Corollary}
For any $a\in \QQ_{n_1, n_2,n_3}$, where $n_1,n_2,n_3\ge 0$, the the operator $\ad(a)$ is nilpotent.
\end{Corollary}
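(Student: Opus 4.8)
The plan is to deduce the corollary immediately from Theorem~\ref{Tadnilpotente} once we observe that every element of a multihomogeneous component $\QQ_{n_1,n_2,n_3}$ is automatically $\Z_2$-homogeneous in $\QQ=\QQ_{\bar 0}\oplus\QQ_{\bar 1}$. Concretely, I would first record that the generators $v_0,v_1,v_2$ are odd (they are products of the odd elements $\{x_i,\dd_i\}$, as noted at the opening of Section~\ref{Srelations}). Since a supercommutator $[a,b]$ of $\Z_2$-homogeneous elements has parity $|a|+|b|$, an iterated bracket of $k$ of the generators has parity $\bar k$; by induction on $k=n_1+n_2+n_3$ this shows that any Lie monomial of multidegree $(n_1,n_2,n_3)$ lies in $\QQ_{\overline{n_1+n_2+n_3}}$. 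In characteristic $2$ the only additional basis elements are the formal squares $v_i^{[2]}$ (Corollary~\ref{Cbasis2}), and the quadratic map doubles the multidegree, so these too have parity equal to the total degree modulo $2$. Hence $\QQ_{n_1,n_2,n_3}\subseteq\QQ_{\overline{n_1+n_2+n_3}}$, i.e. the whole component is pure of that parity; this is also visible from Theorem~\ref{Tfine}, since each component is at most one-dimensional and thus spanned by a single $\Z_2$-homogeneous basis monomial.

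Granting this, the second and final step is a one-line application: given $a\in\QQ_{n_1,n_2,n_3}$, the element $a$ is $\Z_2$-homogeneous, so Theorem~\ref{Tadnilpotente} applies verbatim and $\ad(a)$ is nilpotent, over an arbitrary field.

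There is essentially no real obstacle here; the only point requiring a moment of care is the bookkeeping that the multidegree determines the $\Z_2$-parity of a monomial, including checking the behaviour of the quadratic map in characteristic $2$.
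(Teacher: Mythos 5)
Your proof is correct and is exactly the argument the paper leaves implicit: since the generators $v_0,v_1,v_2$ are odd, every multihomogeneous component $\QQ_{n_1,n_2,n_3}$ lies in a single $\Z_2$-component (of parity $\overline{n_1+n_2+n_3}$), so the corollary follows at once from Theorem~\ref{Tadnilpotente}. The extra check for formal squares in characteristic $2$ is a sensible piece of bookkeeping and does not change the argument.
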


Recall that in case $\ch K=2$ the Lie superalgebra $\QQ=\Lie(v_0,v_1,v_2)$
coincides with the restricted Lie algebra generated by the same elements, i.e. $\QQ=\Lie_p(v_0,v_1,v_2)$ (Corollary~\ref{Cbasis2}).
\begin{Theorem}\label{TnilQ2}
Let $\ch K=2$.
The restricted Lie algebra $\QQ=\Lie_p(v_0,v_1,v_2)$ has a nil $p$-mapping.
\end{Theorem}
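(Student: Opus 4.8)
The plan is to follow the nil $p$-mapping arguments for the Fibonacci restricted Lie algebra and its relatives in \cite{Pe06,PeSh09,PeShZe10,Pe17}, transplanting the combinatorics to the basis of $\QQ$ provided by Corollary~\ref{Cbasis2}. Since $\QQ=\Lie_p(v_0,v_1,v_2)\subset(\End\Lambda)^{(-)}$, the $p$-mapping is simply the associative square $x\mapsto x^2$ computed in $\End\Lambda$, and in characteristic $2$ one has $(x+y)^{[2]}=x^{[2]}+y^{[2]}+[x,y]$ for all $x,y\in\QQ$; hence it is enough to show that every $x\in\QQ$ satisfies $x^{[2^k]}=0$ for some $k=k(x)$. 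For a multihomogeneous $0\ne a\in\QQ_{n_1n_2n_3}$ this is already immediate: $a^{[2^k]}\in\QQ_{2^kn_1,2^kn_2,2^kn_3}$, and since $(n_1,n_2,n_3)$ is a nonzero lattice point it does not lie on the axis $OY_1$ (Lemma~\ref{Laxis}), so the ray $\R_{>0}(n_1,n_2,n_3)$ eventually leaves the "almost cubic paraboloid" of Theorem~\ref{Tparab}, forcing $\QQ_{2^kn_1,2^kn_2,2^kn_3}=0$. The work is to remove the homogeneity assumption.

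The first computational step is to square the basis monomials. If $w=r_{n-3}v_n$ is a standard monomial of the first type with $r_{n-3}\ne1$, then by \eqref{action} the head $v_n$ annihilates every Grassmann letter $x_i$ with $i<n$, so $v_n$ commutes with $r_{n-3}$ in $\End\Lambda$ (in characteristic $2$), whence $w^{[2]}=w^2=r_{n-3}^2v_n^2=0$ because $r_{n-3}^2=0$ in the Grassmann algebra. For a pivot element, $v_n^{[2]}=v_n^2=x_{n+1}v_{n+3}$ by Lemma~\ref{L_BASIC_PROD}, which is a standard monomial of the second type, and the same cancellation yields $(x_{n+1}v_{n+3})^{[2]}=x_{n+1}^2v_{n+3}^2=0$. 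Thus every basis monomial $w$ satisfies $w^{[4]}=0$, and $w^{[2]}=0$ unless $w$ is a pivot element.

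For a general $0\ne x\in\QQ$ I would write $x=\sum_{j=1}^m\nu_jw_j$ with $w_j$ standard monomials of pairwise distinct weights $\wt w_1<\cdots<\wt w_m$ (Corollary~\ref{Cweights_different}). Expanding $x^{[2]}=\sum_j\nu_j^2w_j^{[2]}+\sum_{i<j}\nu_i\nu_j[w_i,w_j]$, using additivity of $\wt$ (so $\wt w^{[2]}=2\wt w$ and $\wt[w_i,w_j]=\wt w_i+\wt w_j$) together with $\wt w_{m-1}+\wt w_m<2\wt w_m$, I would put $x^{[2]}$ back into the same normal form and run an induction --- primarily on $m$, with a secondary well-founded parameter attached to the senior monomial, exactly as in \cite{Pe06,PeSh09,PeShZe10} --- showing that each application of the $p$-mapping either strictly decreases $m$ or drives the senior monomial to a pivot element, whose next two squares then vanish by the previous step and make $m$ drop. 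The weight--length bounds of Corollary~\ref{Cpesos} and Lemma~\ref{Lestimativas} cap the lengths of all monomials that can occur, which keeps the induction well founded and yields $x^{[2^k]}=0$.

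The main obstacle is the inductive control of the cross-commutator terms $[w_i,w_j]$: one must rule out that the family of standard monomials appearing in $x^{[2^\ell]}$ keeps regenerating indefinitely as $\ell$ grows. Since the monomial basis of $\QQ$, its weight functions, and the multiplication rules of Section~\ref{Srelations} are direct analogues of the corresponding data in \cite{Pe06,PeSh09,PeShZe10}, this bookkeeping transfers with only notational changes, which is why the statement is recorded here without a detailed proof.
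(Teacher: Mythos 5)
Your preliminary computations are correct and useful: in characteristic $2$ the $p$-mapping on $\QQ\subset(\End\Lambda)^{(-)}$ is the associative square, a standard monomial with nonempty tail satisfies $w^{[2]}=r^2v_n^2=0$ because $v_n$ commutes with all $x_i$, $i<n$, a pivot satisfies $v_n^{[2]}=x_{n+1}v_{n+3}$ and $v_n^{[4]}=0$, and the multihomogeneous case does follow from additivity of the multidegree together with Lemma~\ref{Laxis} and Theorem~\ref{Tparab} (the ray $2^k(n_1,n_2,n_3)$ leaves the paraboloid), exactly in the spirit of Theorem~\ref{Tadnilpotente}. But the theorem is about arbitrary elements, and that is precisely where your argument stops being a proof. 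The induction you describe --- ``each application of the $p$-mapping either strictly decreases $m$ or drives the senior monomial to a pivot element, whose next two squares then vanish and make $m$ drop'' --- is not substantiated and, as stated, does not work: a cross term $[w_i,w_j]$ of two standard monomials is in general a \emph{sum} of several standard monomials (see the terms~\eqref{sum_above} in part B of the proof of Theorem~\ref{Tbasis3}, whose number grows with the difference of lengths), so the number of summands $m$ can increase under squaring; moreover, if the senior monomial $w_m$ is not a pivot then $w_m^{[2]}=0$ and the senior part of $x^{[2]}$ comes from commutators such as $[w_{m-1},w_m]$, so there is no mechanism forcing the senior term towards a pivot. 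The appeal to Corollary~\ref{Cpesos} and Lemma~\ref{Lestimativas} does not make the induction well founded either, since weights double at every squaring and hence the admissible lengths are not capped uniformly. You acknowledge this as ``the main obstacle'' and then assert that the bookkeeping ``transfers with only notational changes''; that assertion is exactly the content that has to be proved, so the proposal has a genuine gap at its core.

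For comparison, the paper does not run an induction on the number of weight-homogeneous summands at all: it invokes verbatim the argument of Shestakov--Zelmanov~\cite[Proposition~1]{ShZe08}, whose ideas were developed further in~\cite[Corollary~2.9]{Bartholdi15} and~\cite[Theorem~8.6]{Pe17}. That method controls iterated $p$-th powers of \emph{inhomogeneous} elements by a descent along the self-similar structure (splitting an element into its ``head'' part and a part lying in a shifted copy multiplied by nilpotent Grassmann coefficients), rather than by counting standard monomials; this is the mechanism your sketch would need to reproduce or replace. If you want a self-contained write-up, you should either carry out that self-similarity descent for $\QQ$ explicitly, or produce a genuinely different well-founded invariant that survives the expansion of the cross-commutators.
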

\begin{proof}
The same as in~\cite[Proposition 1]{ShZe08}.
The ideas of that proof were further developed in~\cite[Corollary 2.9]{Bartholdi15}
and \cite[Theorem 8.6]{Pe17}.
\end{proof}


\end{document}